\documentclass[reqno,12pt]{amsart}
\usepackage{etex}
\usepackage{amsthm,amssymb}
\usepackage{epic,eepic}
\usepackage{epsfig}
\usepackage{xypic}
\usepackage{tikz}
\usetikzlibrary{arrows,positioning} 
\tikzset{
    >=stealth',
    pun/.style={
           rectangle,
           rounded corners,
           draw=black, very thick,
           text width=6em,
           minimum height=2em,
           text centered},
    par/.style={
           ->,
           thick,
           shorten <=2pt,
           shorten >=2pt,}
}

\usepackage[arrow]{xy}
\usepackage{young}

\usepackage{latexsym,cite,epsf,graphics}
\usepackage[a4paper,height=20cm,top=26mm,bottom=26mm,left=26mm,right=26mm]{geometry}
\usepackage{graphicx,color}
\usepackage{tabmac1}
\usepackage{hyperref}

\numberwithin{equation}{section}

\newtheorem{thm}{Theorem}[section]
\newtheorem{cor}[thm]{Corollary}
\newtheorem{lem}[thm]{Lemma}
\newtheorem{prop}[thm]{Proposition}
\theoremstyle{definition}
\newtheorem{definition}[thm]{Definition}
\newtheorem{example}[thm]{Example}

\theoremstyle{remark}
\newtheorem{remark}[thm]{Remark}

\def\i{{\mathbf{i}}}
\def\M{{\mathcal{M}}}
\def\sign{{\rm sign}}
\def\mS{{\mathfrak{S}}}
\def\SI{{\mathfrak{S}(I)}}

\def\tx{{\tilde x}}
\def\tcR{{\tilde {\mathcal{R}}}}
\def\tR{{\tilde R}}

\def\F{{\mathcal F}}
\def\tF{\tilde \F}
\def\Z{{\mathbb Z}}
\def\R{{\mathbb R}}
\def\CC{{\mathcal{C}}}
\def\QP{{\mathbb {QP}}}
\def\P{{\mathbb P}}

\def\Q{{\mathcal Q}}

\def\mR{{\mathcal R}}
\def\QQ{{\mathbb Q}}

\def\C{{\mathbb C}}
\def\Z{{\mathbb Z}}

\def\p{{\mathbf{p}}}
\def\wt{{\rm wt}}
\def\a{{\mathfrak{a}}}
\def\b{{\mathfrak{b}}}

\def\Y{{\mathcal Y}}

\def\bx{{\boldsymbol{x}}}
\def\bX{{\boldsymbol{X}}}
\def\boldy{{\boldsymbol{y}}}

\def\x{\mathbf{x}}

\def\bq{\mathbf{q}}
\def\p{\mathbf{p}}
\def\q{\mathbf{q}}
\def\r{\mathbf{r}}
\def\e{\epsilon}
\def\wte{{\rm wt}_\e}
\def\tQ{{\tilde Q}}
\def\asln{\hat {\mathfrak {sl}}_n}

\def\Sym{{\rm Sym}}

\def\comb{{\rm comb}}
\def\id{{\rm id}}

\tolerance = 3000

\usepackage{lipsum}

\begin{document}

\baselineskip 16pt

\title[Cluster nature of $R$-matrices]
{On the cluster nature and quantization of geometric $R$-matrices}

\author[Rei Inoue]{Rei Inoue}
\address{Rei Inoue, Department of Mathematics and Informatics,
   Faculty of Science, Chiba University,
   Chiba 263-8522, Japan.}
\email{reiiy@math.s.chiba-u.ac.jp}
\thanks{R.~I. was partially supported by JSPS KAKENHI Grant Number
26400037.}

\author[Thomas Lam]{Thomas Lam}
\address{Thomas Lam, Department of Mathematics, 
University of Michigan, Ann Arbor, MI 48109, USA.}
\email{tfylam@umich.edu}
\thanks{T.L. was partially supported by NSF grants DMS-1160726, DMS-1464693, and a Simons Fellowship.}

\author[Pavlo Pylyavskyy]{Pavlo Pylyavskyy} 
\address{Pavlo Pylyavskyy, School of Mathematics, University of Minnesota, 
Minneapolis, MN 55414, USA.}
\email{ppylyavs@umn.edu}
\thanks{P.~P. was partially supported by NSF grants DMS-1148634, DMS-1351590, and Sloan Fellowship.}


\date{Nov 7, 2017}
\subjclass[2010]{13F60, 17B37}

\keywords{}

\begin{abstract}
We define cluster $R$-matrices as sequences of mutations in triangular grid quivers on a cylinder, and show that the affine geometric $R$-matrix of symmetric power representations for the quantum affine algebra $U_q^\prime(\asln)$
can be obtained from our cluster $R$-matrix.  
A quantization of the affine geometric $R$-matrix is defined,  compatible with the cluster structure.  We construct invariants of the quantum affine geometric $R$-matrix as quantum loop symmetric functions.
\end{abstract}

\maketitle

\setcounter{tocdepth}{1}
\tableofcontents

\section{Introduction}

The $R$-matrix of a quantum group, or that of a representation of a quantum group, plays a vital role in the connection between representation theory and the theory of integrable systems.  
The transformations appearing in this paper arise from $R$-matrices of Kirillov-Reshitikhin modules of the quantum affine algebra $U_q^\prime(\asln)$,
from which {\it affine combinatorial $R$-matrices} are obtained as crystal limits, see \cite{KKMMNN}.   
The combinatorial $R$-matrix is a bijection $R_{\comb}\colon B \otimes B' \to B' \otimes B$ between tensor products of Kirillov-Reshetikhin crystals $B$ and $B'$.   In this paper, we focus on the case where $B = B_\ell$ (resp. $B' = B_{\ell'}$) is the crystal graph of the symmetric tensor module $\Sym^\ell \C^n$ (resp. $\Sym^{\ell'} \C^n$).  

In this case, the bijection $R_{\comb}$ can be interpreted as the restriction of a piecewise-linear map $\R^n \times \R^n \to \R^n \times \R^n$ to a finite subset of lattice points.  This piecewise-linear map is the tropicalization of a rational morphism $R\colon (\C^*)^n \times (\C^*)^n  \to (\C^*)^n \times (\C^*)^n $ called the {\it geometric $R$-matrix} \cite{KNY,KNO,Et,TP}, which plays an important role in the theory of affine geometric crystals.  We let $\p = (p_1,\ldots,p_n)$ and $\q = (q_1,\ldots,q_n)$ be coordinates on $(\C^*)^n \times (\C^*)^n$.  Then the geometric $R$-matrix is an isomorphism $R\colon \QQ(\p,\q) \to \QQ(\p,\q)$ of fields.  It satisfies the Yang-Baxter relation, giving an action of the symmetric group $\mS_m$ on $\QQ(\q_1,\q_2,\ldots,\q_m)$ by rational transformations, where $\q_j = (q_{j,1},\ldots,q_{j,n})$. 

\subsection{Quantum geometric $R$-matrix}
Our first main result is the construction of a quantization of the rational map $R$, called the {\it quantum geometric $R$-matrix} $R^\e$.  The quantum geometric $R$-matrix is an isomorphism $R^\e\colon \QQ_\e \langle \p,\q \rangle \to \QQ_\e \langle \p,\q \rangle$ between the skew fraction fields of certain quantum tori.  
Here $\e$ is an indeterminate, commuting with all $p_i$ and $q_i$.
Theorem \ref{thm:q-qYBR} states that, like the classical, combinatorial, and geometric $R$-matrices, the quantum geometric $R$-matrix also satisfies a Yang-Baxter relation.  We may summarize the four types of $R$-matrix transformations as follows.

\bigskip

\begin{center}
\begin{tabular}{|c|c|c|}
\hline
 & Transformation & Acts on \\
\hline
(quantum) $R$-matrix & $\C(q)$-linear & $U_q^\prime(\asln)$-representations $V \otimes V'$ \\
\hline
combinatorial $R$-matrix & piecewise linear & $\asln$-crystals $B \otimes B'$ \\
\hline
geometric $R$-matrix & rational map & affine geometric crystals $X \times X'$ \\
\hline
quantum geometric $R$-matrix & skew field map & quantum geometric crystals? \\
\hline
\end{tabular}
\end{center}

\bigskip

The quantum geometric $R$-matrix generates an action of the symmetric group $\mS_m$ on a skew field $\mathcal{Q}_\e := \QQ_\e \langle \q_1,\ldots,\q_m \rangle$.  We give a `symmetric function' description of certain elements in the invariant subfield $\Q_\e^{\mS_m}$.  Namely, we define  quantum loop elementary symmetric functions (Corollary~\ref{cor:elem}), quantum loop Schur functions (Theorem \ref{thm:schur}) and quantum cylindric loop Schur functions (Theorem \ref{thm:q-cylindric-loop-schur}), and prove that they are invariants.  This generalizes results from \cite{Yam01,TP,LPS}.  

Quantum loop Schur functions and quantum cylindric loop Schur functions have two closely related descriptions: (1) as the (noncommutative) generating function of certain semistandard Young tableaux or semistandard cylindric tableaux, and (2) as the (noncommutative) generating function of families of {\it highway paths} in a network $N_{n,m}$ on the cylinder, previously studied in \cite{LP}.  We quantize an argument from \cite{LP} to show that the quantum geometric $R$-matrix can be obtained as a composition of certain local transformations of the cylindric network $N_{n,m}$.

\subsection{Cluster $R$-matrix}
We discovered the quantum geometric $R$-matrix after realizing the geometric $R$-matrix as a composition of cluster mutations for a certain `triangular grid' quiver $Q_{n,m}$ 
on a cylinder.  Our second main result is the construction of this {\em cluster $R$-matrix}.  The cylindric quiver $Q_{n,m}$ and the cylindric network $N_{n,m}$ are related in a similar way to how the quiver of the cluster algebra for a 
Bruhat cell is related to the wiring diagram of a reduced word \cite{FZ3}.

The cluster $R$-matrix is defined to be the composition of $2n-2$ cluster mutations of $Q_{n,m}$ which (rather non-trivally) sends $Q_{n,m}$ back to itself, and furthermore satisfies a Yang-Baxter relation (Theorem \ref{thm:Rbraidsimple}).
Further we consider a decorated quiver $\tilde Q_{n,m}'$ of $Q_{n,m}$ with 
frozen variables, and the natural coordinates $\q_i$ of the geometric $R$-matrix are related to the cluster variables $x$ and frozen variables $X$ of the corresponding cluster algebra by a relation of the form
\begin{equation}\label{eq:xxxx}
q = \frac{x x'}{x'' x'''} X.
\end{equation}
This is reminiscent of $\tau$-function substitutions in the theory of integrable systems, and of formulae occurring in the chamber ansatz \cite{BFZ}.

Broadly speaking, there are two notions of quantization for cluster algebras.  Berenstein and Zelevinsky \cite{BZ} have defined quantum cluster algebras that quantize cluster mutation. 
This notion however depends on additional choices.  Fock and Goncharov\cite{FockGoncharov03} have defined a canonical quantization of mutation of cluster $y$-seeds, or equivalently, of the coefficient dynamics.  Working with the same sequence of mutations of $Q_{n,m}$, but this time with quantum cluster $y$-seeds, we construct the {\em quantum cluster $R$-matrix}.
We show that the quantum cluster $R$-matrix satisfies a Yang-Baxter relation (Theorem \ref{thm:q-YBR}) and that it is compatible with the quantum geometric $R$-matrix (Theorem \ref{thm:psi-RR}) via an embedding $\phi\colon \Y'_\e \to \Q_\e$, where $\Y'_\e$ is a subfield of the quotient skew field of the quantum torus of a $y$-seed.

The following diagram is an overview of the relations between the different $R$-matrices.
We denote our results in this paper in red. 
The arrow $(\ast)$ is established by the `$\tau$-function substitution' \eqref{eq:xxxx}.  The arrow $(\ast \ast)$ is established by the embedding of skew fields $\phi$.

\begin{center}
\begin{tikzpicture}[node distance=1cm, auto,]
 \node[pun]  at (-8,0) (comb) {combinatorial $R$-matrix};
 \node[pun] at (-3.5,3)  (geom) {geometric $R$-matrix} edge[par] node[right] {\;tropicalization} (comb);
 \node[pun] at (-12.5,3) (R) {(quantum) $R$-matrix} edge[par] node[left] {$q \to 0\;$} (comb); 
  \node at (1,4) (trop) {?};
 \node at (-0.3,3.5) (crys) {?};
 \node[pun] at (-3.5,6) (quantum) {\color{red} quantum geometric $R$-matrix} edge[par] node[left] {$\epsilon \to 1$} (geom) edge[dashed,->] node[right] {\;tropicalization?} (trop) edge[dashed,->] node[left] {$\epsilon \to 0$?\;} (crys);
 \node[pun] at (-8,3) (cluster) {\color{red} cluster $R$-matrix} edge[par,<->] node[below]{\color{red}$(\ast)$} (geom);
 \node[pun] at (-8,6) (qcluster) {\color{red} quantum cluster $R$-matrix} edge[par] node[left] {$\epsilon \to 1$} (cluster) edge[par,<->] node[below] {\color{red}$(\ast \ast)$} (quantum);
\end{tikzpicture}

\end{center}

{\bf Outline.}
In \S\ref{sec:clust} we recall the necessary background on cluster algebras and their properties. 
In \S\ref{sec:clR} we introduce the cluster $R$-matrix
as a sequence of mutations on a triangular grid quiver on a cylinder.
First we study the case of a simple quiver without frozen vertices,
then we consider the case of the decorated quiver with frozen vertices.
In both cases we prove that the $R$-matrices generate the symmetric group 
action on the field of cluster variables.
In \S\ref{sec:geom} we show how the geometric $R$-matrix is induced by the cluster $R$-matrix. 
In \S\ref{sec:qgeom}, we define a skew field $\mathcal{Q}_\e$ 
and the quantum geometric $R$-matrices acting on it. 
We state the result that the quantum geometric $R$-matrices generate a symmetric group action on $\mathcal{Q}_\e$.
In \S\ref{sec:inv} we define some distinguished invariants of the quantum geometric $R$-matrix. We use a network model on a cylinder to prove the invariance. 
In \S\ref{sec:qclus} we define the quantum cluster $R$-matrices, using Fock-Goncharov quantization of cluster $y$-seeds. 
The quantum cluster $R$-matrices generate the symmetric group action on the skew field of quantum $y$-variables.
We then show that
the quantum cluster $R$-matrix is compatible with the quantum geometric $R$-matrix. In particular, we use the Yang-Baxter relation for the former to deduce the Yang-Baxter relation for the latter.
In \S\ref{sec:proof} we collect the proofs of some technical results in \S\ref{sec:clR}. 

\medskip
\noindent
{\bf Acknowledgments.}  
We thank the anonymous referees for helpful comments.

\section{Cluster algebra preliminaries} \label{sec:clust}
\subsection{Seeds and mutation}
We recall the notion of seeds and mutation of cluster algebras from \cite{FominZelev02}.  Our cluster algebras will be skew-symmetric.

Fix a finite set $I$, and let $N$ be its cardinality. 
Let $(B,\boldsymbol{x})$ denote a seed,  
where $\boldsymbol{x}=(x_i)_{i \in I}$ is a collection of cluster variables
and $B=(b_{ij})_{i,j \in I}$ is an $N\times N$ skew-symmetric integral matrix called the exchange matrix.  The variables $x_i$ are assumed to be algebraically independent elements of some ambient field.  For $k \in I$, 
the mutation $\mu_k$ of a seed $(B,\boldsymbol{x})$ is defined to be 	
$(\tilde B, \tilde{\boldsymbol{x}}) = \mu_k(B,\boldsymbol{x})$, where
\begin{align}
&  \tilde x_i
  =
  \begin{cases}
    x_i & i \neq k,
    \\
    \displaystyle{
    \frac{\prod_{j: b_{jk}>0} x_j^{~b_{jk}}
      + \prod_{j: b_{jk} < 0} x_j^{~-b_{jk}}
    }{x_k}} 
    & i = k,
  \end{cases}
  \\
  \label{eq:B-mutation}
&  \tilde b_{ij}
  =
  \begin{cases}
    - b_{ij}
    & \text{$i=k$ or $j=k$},
    \\
    \displaystyle{
    b_{ij}
    + \frac{
      \bigl| b_{ik} \bigr| \, b_{kj}
      +
      b_{ik} \, \bigl| b_{kj} \bigr|
    }{2}}
    & \text{otherwise.}
  \end{cases}
\end{align}
Mutation is an involutive operation, and $\mu_i$ and $\mu_j$ commute when $b_{ij}=0$.  For an exchange matrix $B$, 
we have a quiver $Q = Q(B)$ with vertices identified with the set $I$, without $1$-cycles and $2$-loops, satisfying
\begin{align*}
  b_{ij}
  =
  \# \left\{ \text{arrows from $i$ to $j$} \right\}
  -
  \# \left\{ \text{arrows from $j$ to $i$} \right\},
\end{align*}
for $i,j \in I$.  The mutation $\mu_k(Q) = Q(\mu_k(B))$ of a quiver $Q = Q(B)$ is obtained by the following procedure:
\begin{enumerate}
\item
for every pair of directed edges $i \to k$ and $k \to j$, we add a directed edge $i \to j$;
\item
reverse all directed edges going into or coming out of $k$;
\item
remove any $2$-cycles in pairs $\{i \to j, j \to i\}$.
\end{enumerate}

Often one is in the situation where some vertices of a quiver $Q$ (resp. some cluster variables) are never mutated.  We call such vertices (resp. variables) frozen, and the other vertices (resp. variables) are called mutable.  In practice, edges between frozen vertices can be disregarded (resp. columns of the exchange matrix $B$ indexed by frozen vertices can be ignored).

\subsection{Seeds with coefficients and $y$-seeds}

We recall the notion of seeds with coefficients and the $y$-seeds
from \cite{CA4}.

Let $(\mathbb{P},\cdot,\oplus)$ be a semifield, that is,
an abelian multiplicative group with an addition $\oplus$
which is commutative, associative, and distributive
with respect to the multiplication $\cdot$ in $\mathbb{P}$.
Let $\QP$ denote the fraction field of the group
ring $\Z\P$.  Let $\F$ denote a rational function field over $\QP$ generated by some algebraically independent elements.  

A seed with coefficients $(B,\bx,\boldy)$ consists of a $N \times N$ skew-symmetric integral matrix $B$, a collection $\bx=(x_i)_{i \in I}$ of algebraically independent elements in $\F$, and a collection $\boldy = (y_i)_{i \in I}$ of elements in $\P$.
Each $y_i$ is called a coefficient, or a $y$-variable.
For $k \in I$, the mutation $\mu_k(B,\bx,\boldy) = (\tilde B,\tilde \bx,\tilde \boldy)$ is defined as follows.  The matrix $\tilde B$ is defined as before, and we have
\begin{align}
&\tilde x_i = \begin{cases} x_i & i \neq k, \\
  \displaystyle{
    \frac{y_k\, \prod_{j: b_{jk}>0} x_j^{~b_{jk}}
      + \prod_{j: b_{jk} < 0} x_j^{~-b_{jk}}
    }{(1 \oplus y_k) x_k}} 
    & i = k,
\end{cases}
\\[1mm] 
\label{eq:classicalY}
&\tilde y_i
  =
  \begin{cases}
    y_k^{~-1} & i = k,
    \\
    y_i \, \left( 1 \oplus y_k^{~-1} \right)^{-b_{ki}} 
    & i \neq k, ~b_{ki} \geq 0,
    \\
    y_i \left( 1 \oplus y_k \right)^{-b_{ki}} 
    & i \neq k,~b_{ki} \leq 0.
  \end{cases}
\end{align}
A $y$-seed $(B,\boldy)$ and its mutations are
defined by ignoring the cluster variables $\bx$ of the seed $(B,\bx,\boldy)$.

Two particular semifields will be important to us: the universal semifield $\mathbb{P}_{\rm univ}(\boldy^0)$
and the tropical semifield $\mathbb{P}_{\rm trop}(\boldy^0)$,
generated by a set $\boldy^0 = (y_i^0)_{i \in I}$ of initial $y$-variables.
The universal semifield $\mathbb{P}_{\rm univ}(\boldy^0)$ is the subset of 
all rational functions $f$ in $y_i^0 ~(i \in I)$ such that 
$f \in \mathbb{P}_{\rm univ}(\boldy^0)$ has a subtraction-free rational expression, endowed with the ordinary multiplication and 
addition `$+$' of rational functions.
The tropical semifield $\mathbb{P}_{\rm trop}(\boldy^0)$ is an abelian multiplicative group on $y_i^0 ~(i \in I)$ endowed with the addition $\oplus$ defined by
$$
  \prod_{i \in I} (y_i^0)^{n_i} \oplus \prod_{i \in I} (y_i^0)^{m_i}
  = \prod_{i \in I} (y_i^0)^{\min(n_i,m_i)},
$$  
for $n_i, m_i \in \Z$.

We define a surjection $\pi \colon \mathbb{P}_{\rm univ}(\boldy^0) \to
\mathbb{P}_{\rm trop}(\boldy^0)$ given by
$$
  f \mapsto [f] := \prod_{i \in I} (y_i^0)^{n_i}.
$$ 
Here $[f]$ is the called the `principal coefficient' of $f$, determined uniquely by writing
$$
  f = \prod_{i \in I} (y_i^0)^{n_i} \cdot \frac{f_1}{f_2}
$$
where $f_1$ and $f_2$ are polynomials of the $y_i^0$ with non-zero constant terms.  It is easy to check that $\pi$ is a homomorphism of semifields.

\subsection{Fock-Goncharov quantization}\label{ssec:FG}

We introduce the quantization of the $y$-seeds, following
\cite{FockGoncharov03}.

Let $(B,\boldy)$ be a quantum $y$-seed, where
$B=(b_{ij})_{i,j \in I}$ be an $N$ by $N$ skew-symmetric integral matrix as
in the classical case, and $\boldy = (y_i)_{i \in I}$ generate 
the quantum torus $\Z_\e[\boldy]$, the noncommutative $\Z[\e,\e^{-1}]$- algebra with relations given by
$$
y_i y_j = \e^{2 b_{ji}} y_j y_i.
$$
Here $\e$ is an indeterminate belonging to the center of the algebra.
Let $\mathcal{Y}_\e(B)$ be the skew fraction field of $\Z_\e[\boldy]$.
Let $\mu_k^\e$ be the quantum mutation of a $y$-seed $(B,\boldy)$, where 
$(\tilde B,\tilde{\boldsymbol{y}}) = \mu_k^\e (B,\boldsymbol{y})$ is given by
%
%
\begin{equation}\label{eq:quantumY}
  \tilde y_i
  =
  \begin{cases}
    y_k^{-1}  & \mbox{if $i = k$,}
    \\
    y_i \,
    \prod_{m=1}^{b_{ki}}
    \left( 1+\e^{2m-1} \, y_k^{-1} \right)^{-1} 
    & \mbox{if $i \neq k, ~b_{ki} \geq 0$,}
    \\
    y_i \,
    \prod_{m=1}^{-b_{ki}}
    \left( 1+ \e^{2m-1} \,  y_k \right) 
    & \mbox{if $i \neq k, ~b_{ki} \leq 0$,}
  \end{cases}
\end{equation}
and $\tilde B$ is the same as in the classical case \eqref{eq:B-mutation}.
Let $\mathcal{Y}_{\e}(\tilde B)$  be the skew fraction field generated by the $\tilde y_i$ with the $\e$-commutativity determined by $\tilde B$.
It is known that $\mu_k^\e$ induces a morphism of the skew fraction fields
$\mathcal{Y}_\e(\tilde B) \to \mathcal{Y}_\e(B)$.
Similarly to the classical case,  $\mu_i^\e$ is involutive, and 
$\mu_i^\e$ and $\mu_j^\e$ commute when $b_{ij} = 0$. 
By setting $\e = 1$, \eqref{eq:quantumY} reduces to \eqref{eq:classicalY}
of the case $\mathbb{P} = \mathbb{P}_{\rm univ}(\boldy^0)$.

\subsection{Periods of cluster mutations}\label{ssec:periods}
The permutation group $\SI$ naturally acts on $y$-seeds by permuting the variables and the rows and columns of the exchange matrix.
For $\sigma \in \SI$, a {\it $\sigma$-period} of a $y$-seed is a sequence of mutations that sends a seed back to itself up to $\sigma$; see \cite{Nak11} for more details.  
The following theorem gives an important and useful relation among
the classical, tropical, and quantum cluster $y$-seeds.
 
\begin{thm}\label{thm:period}
For a positive integer $k$ and an $I$-sequence
${\bf i} := (i_1,i_2,\ldots,i_k) \in I^k$,
define a sequence of mutations 
$\mu_{\bf i} 
:= \mu_{i_k} \circ \mu_{i_{k-1}} \circ \cdots \circ \mu_{i_1}$.  Let $\sigma \in \SI$ be a permutation.
Then the following three statements are equivalent:
\begin{itemize}
\item[(1)]
Assume that the semifield is $\mathbb{P}_{\rm univ}(\boldy)$.
The $I$-sequence $\bf i$ is a $\sigma$-{\it period} of 
the $y$-seed $(B,\boldsymbol{y})$, i.e.,   
$\mu_{\bf i}(B,\boldsymbol{y}) =\sigma(B,\boldsymbol{y})$. 

\item[(2)]
Assume that the semifield is $\mathbb{P}_{\rm trop}(\boldy)$.
The $I$-sequence $\bf i$ is a $\sigma$-{\it period} of 
the tropical $y$-seed $(B,\boldsymbol{y})$, i.e.,   
$\mu_{\bf i}(B,\boldsymbol{y})= \sigma(B,\boldsymbol{y})$. 

\item[(3)]
The $I$-sequence $\bf i$ is a $\sigma$-{\it period} of 
the quantum $y$-seed $(B,\boldsymbol{y})$, i.e.,   
$\mu_{\bf i}^\e (B,\boldsymbol{y}) = \sigma(B,\boldsymbol{y})$. 

\end{itemize}
\end{thm}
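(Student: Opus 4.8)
The plan is to isolate what actually differs among the three settings. The exchange matrix mutates by the identical rule \eqref{eq:B-mutation} in all of (1), (2), (3), so for a fixed sequence ${\bf i}$ the intermediate matrices $B = B^{(0)}, \ldots, B^{(k)} = \mu_{\bf i}(B)$ coincide, and the ``$B$-part'' of the period condition, $\mu_{\bf i}(B) = \sigma(B)$, is literally one statement shared by all three. Since this condition is necessary for each of (1)--(3), I would assume it throughout and reduce the theorem to comparing the evolution of the $y$-variables. Two implications are then immediate. For $(1)\Rightarrow(2)$, the $y$-mutation \eqref{eq:classicalY} is assembled solely from the semifield operations $\cdot,\oplus$ and integer powers, so the projection $\pi\colon \mathbb{P}_{\rm univ}(\boldy)\to\mathbb{P}_{\rm trop}(\boldy)$ intertwines universal and tropical mutation and commutes with the relabelling $\sigma$; applying $\pi$ to $\mu_{\bf i}(\boldy)=\sigma(\boldy)$ yields the tropical identity. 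For $(3)\Rightarrow(1)$, specializing $\e=1$ turns \eqref{eq:quantumY} into \eqref{eq:classicalY}, as recorded after \eqref{eq:quantumY}, so the quantum period equation degenerates to the universal one.

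The substance lies in climbing back up. For $(2)\Rightarrow(1)$ I would invoke the separation-of-additions formula of \cite{CA4}: in $\mathbb{P}_{\rm univ}(\boldy)$ every $y$-variable generated along ${\bf i}$ equals a Laurent monomial in $\boldy$ whose exponent vector is the associated $c$-vector, times a ratio of $F$-polynomials; since $F$-polynomials have constant term $1$, the monomial factor is exactly the tropical $y$-variable. Hypothesis (2) therefore says precisely that the terminal $C$-matrix is the permutation matrix of $\sigma$. By sign-coherence of the $c$-vectors and the tropical duality between $C$- and $G$-matrices (with the already-matching $B$-trajectory), the $G$-matrices, and hence the $F$-polynomials, are determined by the tropical data; the tropical period thus forces the $F$-polynomials to return to their initial, $\sigma$-relabelled, trivial values, giving $\mu_{\bf i}(\boldy)=\sigma(\boldy)$. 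This is the synchronicity principle of \cite{Nak11}, which I would cite for the precise bookkeeping.

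For the quantum equivalence I would establish $(1)\Rightarrow(3)$ by the parallel argument, using the Fock--Goncharov factorization \cite{FockGoncharov03}: each quantum mutation $\mu_k^\e$ decomposes as a \emph{monomial} automorphism of the quantum torus---the quantum lift of the tropical mutation, depending only on the sign of the $c$-vector of $y_k$---followed by conjugation by a quantum dilogarithm in $y_k$, the composite being independent of the chosen sign. Assuming (1) (equivalently, by the previous paragraph, the tropical period (2)), the entire composite $\mu_{\bf i}^\e$ is steered by the same $c$-vector data that controls the classical case: the monomial parts reassemble the tropical period, while the dilogarithm conjugations, indexed by the identical sign sequence, contribute the quantum analogues of the $F$-polynomial factors. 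Since the final $\e$-commutation relations are fixed by $B^{(k)}=\sigma(B)$ and hence already agree, one obtains the quantum period (3); combined with $(3)\Rightarrow(1)$ from the first paragraph this yields $(1)\Leftrightarrow(3)$ and completes the equivalences.

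The main obstacle, common to both halves, is the rigidity assertion that the tropical $C$-matrix trajectory determines the full ($F$-polynomial) content of the mutation sequence: equal tropicalizations do \emph{not} force equal semifield elements in general, so one genuinely needs sign-coherence together with the separation/duality package to promote a coincidence of monomials at the endpoint into an honest identity of (noncommutative) rational functions. In the quantum setting one must additionally check that this bookkeeping survives in the skew fraction field $\mathcal{Y}_\e(B)$---tracking $\e$-powers through the monomial parts and making sense of the dilogarithm conjugations in a suitable completion---which is the one place where noncommutativity requires care.
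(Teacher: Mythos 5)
Your proposal has the same architecture as the paper's proof, which is almost entirely citation-based: the paper gets $(1)\Rightarrow(2)$ from the homomorphism $\pi$ exactly as you do, cites \cite[Theorem 5.1]{IIKKN} for $(2)\Rightarrow(1)$ (whose proof is the separation-formula / sign-coherence / $F$-polynomial argument you sketch, with the bookkeeping in \cite{CA4,Nak11}), and cites \cite[Lemma 2.22]{FockGonc09a} together with \cite[Proposition 3.4]{KN} for $(1)\Leftrightarrow(3)$ (whose proofs are the monomial-part-plus-dilogarithm-conjugation decomposition you sketch). So for those implications you are reconstructing, at sketch level and with essentially the same references, the arguments the paper outsources.

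The genuine gap is your first-paragraph claim that $(3)\Rightarrow(1)$ is immediate by ``specializing $\e=1$.'' There is no evaluation at $\e=1$ on the skew fraction field $\mathcal{Y}_\e(B)$: if $b_{ji}\neq 0$, then $y_iy_j-y_jy_i=(\e^{2b_{ji}}-1)\,y_jy_i$ is a nonzero, hence invertible, element of the skew field, and any ring map fixing the $y_i$ and sending $\e\mapsto 1$ would send this unit to $0$, which is impossible. So the (true) observation that the one-step formula \eqref{eq:quantumY} degenerates to \eqref{eq:classicalY} does not let you push the skew-field identity $\mu_{\bf i}^\e(B,\boldy)=\sigma(B,\boldy)$ down to the commutative one: you would first have to show that the composite mutation images lie in a subring (a localization at the subtraction-free factors $1+\e^{2m-1}y_k^{\pm 1}$) on which specialization is defined and through which the identity passes, and producing that normal form is essentially the Fock--Goncharov decomposition all over again. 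This is exactly why the paper cites \cite{FockGonc09a,KN} for the full equivalence $(1)\Leftrightarrow(3)$ rather than treating either direction as formal; without repairing this step your four implications do not close up into the stated equivalence. A secondary issue: the conjugation argument you sketch for $(1)\Rightarrow(3)$ only sees the adjoint action, whose kernel is the center of the quantum torus; when $B$ is degenerate the center is large, and this is precisely the case \cite[Proposition 3.4]{KN} was needed for --- and the case that actually occurs for $Q_{n,m}$ (e.g.\ whenever $n(m+1)$ is odd one automatically has $\det B=0$). Your proposal should either invoke \cite{FockGonc09a,KN} for both directions of $(1)\Leftrightarrow(3)$, as the paper does, or carry out the decomposition argument in both directions including the degenerate case.
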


\begin{proof}
The claim $(1) \Leftrightarrow (3)$ is proved in 
\cite[Lemma 2.22]{FockGonc09a} when the matrix $B$ is non-degenerate, and
generalized to degenerate $B$ in \cite[Proposition 3.4]{KN}.
The claim $(2) \Rightarrow (1)$ is included in \cite[Theorem 5.1]{IIKKN}. 
The claim $(1) \Rightarrow (2)$ is deduced from the map $\pi$.
\end{proof} 

We will also use the following result from \cite[Theorem 5.1]{IIKKN}.

\begin{thm}\label{thm:periodxy}
Let $\sigma \in \SI$ be a permutation and $\i$ be an $I$-sequence as in Theorem \ref{thm:period}.  Suppose that $\P$ is the semifield $\mathbb{P}_{\rm trop}(\boldy)$, and $\i$ is a $\sigma$-period of the seed $(B,\boldy)$.  Then $\i$ is a $\sigma$-period of the seed $(B,\bx,\boldy)$, i.e., $\mu_{\bf i}(B,\bx,\boldy)= \sigma(B,\bx,\boldy)$.  
\end{thm}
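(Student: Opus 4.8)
The plan is to reduce the statement to a claim about the cluster variables alone and then apply the structure theory of cluster algebras with principal coefficients. A seed with coefficients is the triple $(B,\bx,\boldy)$, and $\mu_k$ acts on the three components separately (with the $\bx$-dynamics depending on the other two, but not conversely). The hypothesis that $\i$ is a $\sigma$-period of the $y$-seed already gives $\mu_{\bf i}(B)=\sigma(B)$ and $\mu_{\bf i}(\boldy)=\sigma(\boldy)$, so it remains only to prove $\mu_{\bf i}(\bx)=\sigma(\bx)$; that is, that the cluster variable occupying position $\ell$ of the final seed equals $x_{\sigma^{-1}(\ell)}$.

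First I would observe that, since $\P=\mathbb{P}_{\rm trop}(\boldy)$ with $\boldy$ the generators, $(B,\bx,\boldy)$ is precisely a cluster pattern with principal coefficients in the sense of Fomin--Zelevinsky, so all of their structural machinery applies. I then invoke the separation-of-additions formula, which writes each cluster variable as
\begin{equation*}
x^{(t)}_\ell=\Bigl(\textstyle\prod_i x_i^{(g^{(t)}_\ell)_i}\Bigr)\,\frac{F^{(t)}_\ell(\hat y)}{F^{(t)}_\ell|_{\mathbb{P}_{\rm trop}}(\boldy)},\qquad \hat y_j=y_j\textstyle\prod_i x_i^{b_{ij}},
\end{equation*}
where the $g$-vector $g^{(t)}_\ell$ and the $F$-polynomial $F^{(t)}_\ell$ depend only on the combinatorial data $(B,\i)$ and not on the coefficients. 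In particular $x^{(t)}_\ell$ is a cluster variable of the pattern, pinned down by its $g$-vector.

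The crux is to compute the $g$-vector of the final seed and show $g^{(t_k)}_\ell=e_{\sigma^{-1}(\ell)}$. Here I would use that in $\mathbb{P}_{\rm trop}(\boldy)$ the $y$-variable at step $t$ is exactly the $c$-vector monomial $\prod_j y_j^{(C^{(t)})_{j\ell}}$, so the hypothesis $\mu_{\bf i}(\boldy)=\sigma(\boldy)$ says the final $c$-matrix is the permutation matrix $P_\sigma$. Tropical duality relating the $g$- and $c$-matrices then forces the final $g$-matrix to equal $P_\sigma$ as well, since a permutation matrix is self-dual under $C\mapsto (C^{-1})^{T}$. Finally, because distinct cluster variables have distinct $g$-vectors (linear independence of cluster monomials, available as our cluster algebras are skew-symmetric) and $e_{\sigma^{-1}(\ell)}$ is the $g$-vector of the initial variable $x_{\sigma^{-1}(\ell)}$, I conclude $x^{(t_k)}_\ell=x_{\sigma^{-1}(\ell)}$, i.e.\ $\mu_{\bf i}(\bx)=\sigma(\bx)$.

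The main obstacle is the $g$-vector input: both tropical duality and the injectivity of $g$-vectors are nontrivial structural theorems, and one must track the permutation $\sigma$ consistently through the duality. An alternative closer to \cite{IIKKN} keeps the $F$-polynomials: one uses Theorem \ref{thm:period} to upgrade the tropical period to a universal $y$-seed period, then argues from the subtraction-free expressions of the universal $y$-variables that every $F^{(t_k)}_\ell$ must specialize to $1$; feeding this and the $g$-vector computation into the separation formula again yields $x^{(t_k)}_\ell=x_{\sigma^{-1}(\ell)}$. The delicate point on that route is controlling the cancellation that forces each $F^{(t_k)}_\ell=1$.
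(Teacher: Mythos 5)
The paper contains no proof of this theorem: it is imported verbatim from \cite[Theorem 5.1]{IIKKN}, the same source cited for the implication $(2)\Rightarrow(1)$ of Theorem \ref{thm:period}, so the only possible comparison is with the argument behind that citation rather than with anything in the paper itself. Your first route is a correct proof in outline, and it is essentially the modern, duality-based form of that cited argument. The reduction to $\mu_{\bf i}(\bx)=\sigma(\bx)$, the identification of the tropical-coefficient seed with the principal-coefficient pattern of \cite{CA4}, the separation formula, and the reading of the hypothesis as ``the final $C$-matrix equals $P_\sigma$'' are all accurate; and the two black boxes you invoke are genuine theorems in the skew-symmetric setting, which is all this paper needs since every seed here comes from a quiver. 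Three caveats. (i) With the conventions of \cite{CA4}, which this paper follows, the skew-symmetric tropical duality reads $G_t=\bigl(C_t^{-1}\bigr)^{T}$ with the \emph{same} initial exchange matrix; some statements in the literature carry $B^{T}$ on one side because of a transposed convention for $C$-matrices, and quoting such a form literally would require tropical periodicity for $-B$, which is not a hypothesis --- so the convention check you allude to is essential, not cosmetic. (ii) ``Linear independence of cluster monomials'' is not literally the input you need; what pins $x^{(t_k)}_\ell$ down to $x_{\sigma^{-1}(\ell)}$ is that distinct cluster variables have distinct $g$-vectors, a companion theorem from the same circle of results, also available in the skew-symmetric case. (iii) Both inputs lie at the same depth as Derksen--Weyman--Zelevinsky sign-coherence, so your proof is not more elementary than the cited one, just differently packaged. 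Your alternative route has the gap you yourself flag, and it is real: universal $y$-periodicity plus the coefficient formula of \cite{CA4} gives only $\prod_j F_{j;t_k}^{\,b_{j\ell;t_k}}=1$, which yields each $F_{j;t_k}=1$ immediately only when $B_{t_k}$ is non-degenerate; degenerate exchange matrices are exactly the case the paper is careful about in the proof of Theorem \ref{thm:period}, and handling them is where \cite{IIKKN} must do extra work that your sketch omits.
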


\section{Cluster $R$-matrix} \label{sec:clR}

\subsection{Triangular grid quivers on a cylinder}

We consider triangular grid quivers on a cylinder, as in Figure \ref{fig:Rcl1}.  Let $Q_{n,m}$ denote such a quiver with $n$ vertices on each of the two boundary circles and $m+1$ parallel vertical cycles $M_0,M_1,\ldots,M_m$ that go around the cylinder, each with $n$ vertices.  
See Figure \ref{fig:Rcl1} for the example of $Q_{5,5}$,
where the $6$ vertical cycles $M_0,\ldots,M_5$ are arranged from left to right.

\begin{figure}[ht]
\scalebox{0.8}{\input{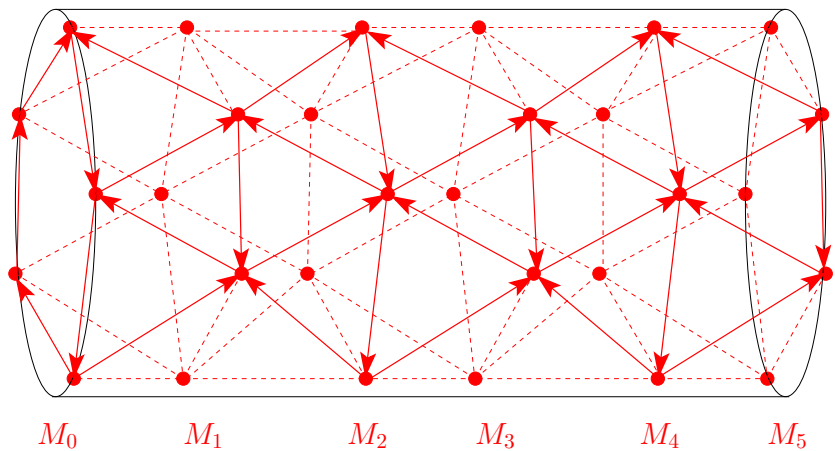}}
    \caption{The quiver $Q_{5,5}$. 
    }
    \label{fig:Rcl1}
\end{figure}

\subsection{Definition of the simple cluster $R$-matrix}\label{subsec:x-R}

Let $(\x^-,\x,\x^+)$ be a tuple of $3n$-variables $x_i^-,x_i,x_i^+$ where $i \in \Z/n\Z$.  The {\it simple cluster $R$-matrix} is the rational transformation $R_\x\colon \QQ(\x^-,\x,\x^+) \to \QQ(\x^-,\x,\x^+)$ given by 
\begin{equation}\label{eq:Rx}
R_\x(x_i) =  
            \displaystyle{
      \frac{\sum_{j=1}^n x_{j+1}^- 
            \left(\prod_{\ell = j+2}^{j-1} x_{\ell} \right) x_j^+}
           {\prod_{j \not = i} x_j}
      }, \qquad R_\x(x^-_i) = x_i^-, \qquad R_\x(x_i^+) = x_i^+.
\end{equation}

For example, if $n = 4$, we have
$$R_\x(x_1) = \frac{x_2^- x_3 x_4 x_1^+ + x_3^- x_4 x_1 x_2^+ + x_4^- x_1 x_2 x_3^+ + x_1^- x_2 x_3 x_4^+}{x_2 x_3 x_4}.$$

Let $Q$ be a quiver.  For a vertex $v \in Q$, we have a cluster variable $x_v$.  Let $\F(Q) = \QQ(x_v \mid v \in Q)$ be the field generated by all the cluster variables indexed by vertices of $Q$.  In this section we take $\F =\F(Q_{n,m})$.  Fix $1 \leq i \leq m-1$.  Let $(M^-,M,M^+)$ denote the three cycles $M_{i-1},M_i,M_{i+1}$.   Let the vertices of $M$ (resp. $M^-$, $M^+$) be denoted $i \in \Z/n\Z$ (resp. $i^-$, $i^+$).  The edges to and from $i$ are given by 
\begin{equation}\label{eq:arrows} i \rightarrow i+1, \qquad  i \rightarrow i^-, \qquad i \rightarrow i-1^+, \qquad i \leftarrow i-1,  \qquad i \leftarrow i+1^-, \qquad i \leftarrow i^+.
\end{equation}
Denote the corresponding cluster variables on $M^-, M, M^+$ by $x_i^-, x_i, x_i^+$ respectively.  We then have a rational transformation $R_M = R_{\x}\colon \F \to \F$ acting on the cluster variables $(\x^-,\x,\x^+)$ and fixing all cluster variables not contained in $M^- \cup M \cup M^+$.

\begin{figure}[ht]
\scalebox{0.8}{\input{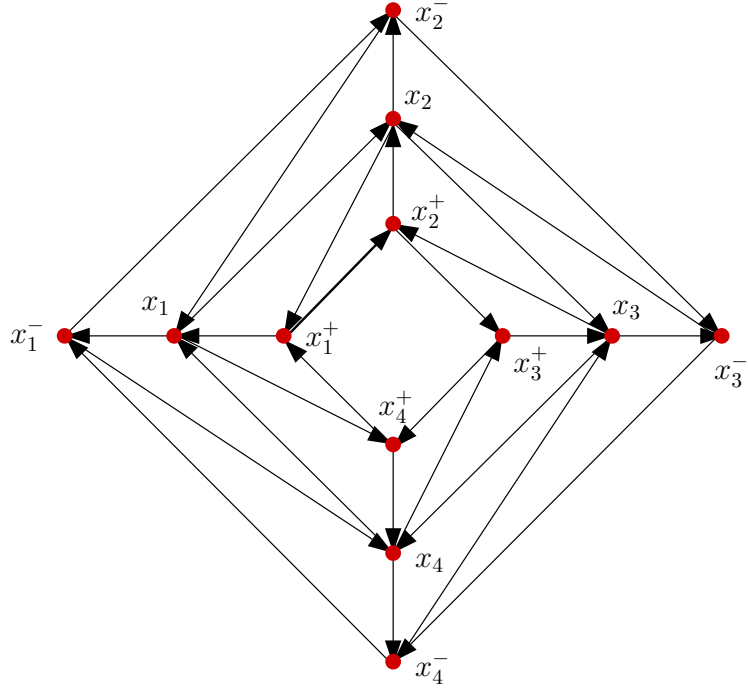}}
    \caption{Adjacent vertical closed cycles $M^-, M, M^+$.}
    \label{fig:clR2}
\end{figure}

Figure \ref{fig:clR2} shows all edges of $Q_{n,m}$ incident with a closed vertical cycle $M$, where $n = 4$.  
Denote by $\mu_i$ the mutation at the vertex $i$ of $M$, and $s_{i,j}$ the permutation of the vertices $i$ and $j$ on $M$.
For $j \in \Z/n \Z$, we define a sequence of $2n-2$ mutations and a permutation by 
\begin{align}\label{eq:R-op}
\mathcal{R}_{M,j} 
:= s_{j-2,j-1} \circ 
(\mu_{j} \circ \mu_{j+1} \cdots \circ \mu_{j-4} \circ \mu_{j-3}) \circ 
(\mu_{j-1} \circ \mu_{j-2} \circ \cdots \circ \mu_{j+1} \circ \mu_j).
\end{align}
The operation $\mathcal{R}_{M,j}$ acts both on the quiver $Q_{n,m}$, sending it to $\mathcal{R}_{M,j}(Q_{n,m})$, and acts as a rational transformation of $\F$. 

\begin{thm}\label{thm:Rcluster}
For any $j \in \Z/n\Z$, we have
$\mathcal{R}_{M,j}(Q_{n,m}, \bx_{Q_{n,m}}) = (Q_{n,m}, R_M(\bx_{Q_{n,m}}))$.
%
\end{thm}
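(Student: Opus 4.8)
The plan is to prove the two assertions packaged in the statement---that $\mathcal{R}_{M,j}$ returns $Q_{n,m}$ to itself and that it acts on cluster variables by $R_M=R_\x$---by tracking the seed $(Q_{n,m},\bx_{Q_{n,m}})$ explicitly through the $2n-2$ mutations and the final transposition in \eqref{eq:R-op}. Two reductions make this tractable. First, every mutation in \eqref{eq:R-op} is performed at a vertex of $M$, and by \eqref{eq:arrows} each such vertex is adjacent only to vertices of $M^-\cup M\cup M^+$; hence no arrow incident to $M_{i-2}$ or $M_{i+2}$ is ever altered, the variables $x_i^\pm$ are never mutated (so they are fixed, as in the last two equalities of \eqref{eq:Rx}), and the whole computation takes place inside the local picture of Figure \ref{fig:clR2}. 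Second, $Q_{n,m}$ admits the rotation symmetry $\rho\colon i\mapsto i+1$ of $\Z/n\Z$, under which $\rho\circ\mathcal{R}_{M,j}\circ\rho^{-1}=\mathcal{R}_{M,j+1}$ while $R_M$ is $\rho$-equivariant; thus it suffices to treat one value of $j$, say $j=1$, the rest following by relabeling.

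I would first settle the quiver assertion. Beginning from Figure \ref{fig:clR2}, I would record the intermediate quiver after each of the $n$ mutations of the first sweep $\mu_{j-1}\circ\cdots\circ\mu_{j+1}\circ\mu_j$ (at $j,j+1,\ldots,j-1$ in this order). The delicate point is that mutating along $M$ creates transient arrows directly between $M^-$ and $M^+$, and also among the vertices of $M^-$ and of $M^+$: already at $\mu_j$ the path $j+1^-\to j\to j-1^+$ produces an edge $j+1^-\to j-1^+$, while the path $j+1^-\to j\to j^-$ produces an edge $j+1^-\to j^-$ that cancels the existing $j^-\to j+1^-$ as a $2$-cycle. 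I would then verify that the second sweep $\mu_j\circ\cdots\circ\mu_{j-3}$ (the $n-2$ mutations at $j-3,j-4,\ldots,j$) together with the transposition $s_{j-2,j-1}$ deletes exactly the transient arrows and reverses the appropriate edges, so that the quiver returns to $Q_{n,m}$. Concretely this identifies \eqref{eq:R-op} as an $s_{j-2,j-1}$-period of $Q_{n,m}$, organised as an induction on the mutation step.

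Intertwined with this, I would compute the cluster variables, since the exchange monomials at each step are read off the current intermediate quiver. Abbreviating $m_\ell:=x_{\ell+1}^-\bigl(\prod_{p=\ell+2}^{\ell-1}x_p\bigr)x_\ell^+$, the numerator of \eqref{eq:Rx} is $S=\sum_\ell m_\ell$ and the target is simply $R_\x(x_i)=S\,x_i\big/\prod_p x_p$. The guiding observation is that the intermediate variables are \emph{partial sums} of the $m_\ell$: a direct computation of the first step gives $x_j\mapsto (m_{j-1}+m_j)\big/\prod_{p\ne j-1,\,j+1}x_p$. I would posit a closed form for the variable created at each step and verify inductively that it obeys the exchange relation dictated by the intermediate quiver. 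In this scheme a vertex mutated twice picks up the remaining summands $m_\ell$ and acquires the denominator $\prod_{p\ne i}x_p$ on its second mutation, whereas the two vertices $j-1,j-2$---mutated only in the first sweep---are assigned their correct final values by $s_{j-2,j-1}$. Combining, the variable at position $i$ after $\mathcal{R}_{M,1}$ equals $R_\x(x_i)$, as required.

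The main obstacle is the combinatorial bookkeeping of the quiver evolution: one must pin down the exact intermediate quiver after every mutation, with correct handling of the transient $M^-$--$M^+$ arrows and the repeated $2$-cycle cancellations they trigger, because the cluster-variable induction depends on knowing these quivers. Once the intermediate quivers are correctly catalogued, both the return of $Q_{n,m}$ and the partial-sum induction for the variables become routine, if lengthy; I would relegate this verification to \S\ref{sec:proof}.
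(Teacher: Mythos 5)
Your proposal is correct in outline, and its first half coincides with what the paper actually does in \S\ref{subsec:Rcluster}: reduce to $j=1$, catalogue the intermediate quivers $Q_i=A_i(Q)$ of the first sweep by induction (including the transient $M^-$--$M^+$ arrows and $2$-cycle cancellations you describe), and prove the partial-sum formula \eqref{eq:half} for the variables created by the first $n-2$ mutations; your first-step computation $x_j\mapsto(m_{j-1}+m_j)/\prod_{p\neq j-1,j+1}x_p$ agrees with \eqref{eq:half}. Where you genuinely diverge is the second sweep. The paper writes $\mathcal{R}_{M,1}=A^{-1}\circ B\circ A$, where $A$ is the mutation sequence at $1,\ldots,n-2$ and $B$ consists of the mutations at $n-1,n$ together with $s_{n-1,n}$ (the final transposition commutes with the second sweep, since it fixes the mutated vertices), and then uses two shortcuts that eliminate your second-sweep induction entirely: (i) Proposition \ref{prop:R-AB} shows $B(Q')=Q'$ for $Q'=A(Q)$, so the quiver returns to $Q_{n,m}$ automatically by involutivity of mutation, with no cataloguing of second-sweep quivers; (ii) for the variables, since mutations are invertible rational maps it suffices to prove the operator identity $A\circ R_M=B\circ A$, and this follows from a one-line homogeneity argument: $R_M(x_i)=Sx_i$ with $S$ independent of $i$, and each $\tilde x_i$ in \eqref{eq:half} is homogeneous of degree zero in $x_1,\ldots,x_n$, so $\tilde x_i(R_M(x_1),\ldots,R_M(x_n))=\tilde x_i$. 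Your direct route is viable --- indeed the identity $B\circ A=A\circ R_M$ confirms a posteriori your guess that each second-sweep mutation at $i$ produces exactly $R_\x(x_i)$ --- but it roughly doubles the bookkeeping: you must know the second-sweep quivers (which, by $B(Q')=Q'$ and involutivity, are just the $Q_i$ traversed in reverse, a fact worth exploiting if you pursue your version) and verify $n-2$ additional exchange-relation identities. The paper's conjugation-plus-homogeneity argument buys all of that for free, at the cost of the small structural observation about $A$, $B$, and degree-zero homogeneity.
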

%


\begin{thm} \label{thm:Rbraidsimple}
The $m-1$ transformations $R_{M_1},R_{M_2},\ldots,R_{M_{m-1}}\colon \F \to \F$ satisfy $R_{M_i} R_{M_{i+1}} R_{M_i} = R_{M_{i+1}} R_{M_i} R_{M_{i+1}}$ and $R_{M_i}^2 = \id$.  
Furthermore, for $i,j$ such that $|i-j|>1$ we have
$R_{M_i} R_{M_j} = R_{M_j} R_{M_i}$.
Thus $R_{M_i}$ generates an action of the symmetric group $\mS_m$ on $\F$.
\end{thm}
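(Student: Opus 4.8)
The plan is to verify the three identities separately, since together they are the Coxeter relations for $\mS_m$. The involution and the far-commutation are elementary and I would dispatch them directly from the formula \eqref{eq:Rx}; the braid relation is the substantial point, and for it I would pass to the cluster model of Theorem \ref{thm:Rcluster} and reduce to a tropical computation via the period theory of \S\ref{sec:clust}.

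For $R_{M_i}^2=\id$, the key observation is that the numerator $S:=\sum_{j=1}^n x_{j+1}^-\bigl(\prod_{\ell=j+2}^{j-1}x_\ell\bigr)x_j^+$ in \eqref{eq:Rx} is independent of $i$, so that $R_\x(x_i)=S\,x_i/\Pi$ with $\Pi:=\prod_j x_j$, while each $x_i^\pm$ is fixed. Applying $R_\x$ a second time replaces every $x_\ell$ by $S\,x_\ell/\Pi$, which multiplies $S$ by $(S/\Pi)^{\,n-2}$ (the product runs over $n-2$ indices) and multiplies $\Pi$ by $(S/\Pi)^{\,n}$; a one-line cancellation then returns $x_i$. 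For the far-commutation with $|i-j|>1$, I would argue by locality: $R_{M_i}$ changes only the variables on the single cycle $M_i$ and reads those on $M_{i-1},M_{i+1}$ without altering them. If $j\ge i+2$ then $M_j\notin\{M_{i-1},M_i,M_{i+1}\}$ and $M_i\notin\{M_{j-1},M_j,M_{j+1}\}$, so neither map writes a cycle read or written by the other, the sole possibly shared cycle $M_{i+1}=M_{j-1}$ (when $j=i+2$) being read-only for both; hence the two maps commute.

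The braid relation $R_{M_i}R_{M_{i+1}}R_{M_i}=R_{M_{i+1}}R_{M_i}R_{M_{i+1}}$ is the crux. By Theorem \ref{thm:Rcluster} each $R_{M_i}$ is induced by the mutation sequence $\mathcal{R}_{M_i,j}$, which returns the quiver $Q_{n,m}$ exactly to itself; composing, both sides of the braid relation are realized by sequences of mutations that are periods of $Q_{n,m}$ with trivial quiver permutation. To prove the two sides agree on $\F$, I would form the composite period by following the mutation sequence realizing one side with the reversal of the sequence realizing the other. By Theorem \ref{thm:period} a period in the universal semifield is equivalent to a period in the tropical semifield, and by Theorem \ref{thm:periodxy} a tropical period lifts to a period of the full seed $(B,\bx,\boldy)$, so the composite is the identity on cluster variables precisely when it is the identity on the tropical $y$-seed. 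The rational braid identity on $\F$ is thereby reduced to the corresponding identity for the tropicalization of $R_\x$.

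That tropical identity is the Yang-Baxter relation for the piecewise-linear map obtained by tropicalizing \eqref{eq:Rx}, namely
\[
\wt(x_i)\;\longmapsto\;\min_{1\le j\le n}\Bigl[\wt(x_{j+1}^-)+\textstyle\sum_{\ell=j+2}^{j-1}\wt(x_\ell)+\wt(x_j^+)\Bigr]-\sum_{k\neq i}\wt(x_k),
\]
which is the affine combinatorial $R$-matrix on $\Sym^\bullet\C^n$-crystals, whose braid relation is classical \cite{KKMMNN}. I expect this tropical step to be the main obstacle: one must either match the tropicalized formula with the known combinatorial $R$-matrix, thereby invoking its established Yang-Baxter relation, or verify the piecewise-linear braid identity directly by a case analysis of the competing minima. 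The value of the period-theoretic reduction is precisely that it replaces an intractable identity of rational functions by this finite, piecewise-linear verification.
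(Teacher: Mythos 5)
Your handling of $R_{M_i}^2=\id$ and of the commutation for $|i-j|>1$ is correct, and is essentially the paper's own argument (in \S\ref{subsec:Rbraidsimple} the paper sets $S$ as in \eqref{eq:S}, so that $R_M(x_i)=Sx_i$, and gets $R_M(S)=1/S$ from the same homogeneity count you use). The braid relation, however, is where your proposal has a genuine gap. The period machinery of Theorems \ref{thm:period} and \ref{thm:periodxy} reduces an identity of cluster variables to a period check for the \emph{tropical $y$-seed}, i.e.\ for the coefficient dynamics \eqref{eq:classicalY} evaluated in $\mathbb{P}_{\rm trop}(\boldy)$, where the $y_v$ are independent generators attached to the vertices of $Q_{n,m}$. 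This is \emph{not} the piecewise-linear map obtained by applying $\min$ to the $x$-variable formula \eqref{eq:Rx}, which is what your final display verifies; the two objects are unrelated as far as the hypotheses of Theorems \ref{thm:period} and \ref{thm:periodxy} are concerned. Worse, the implication you would actually need at that point --- ``tropical braid identity for the tropicalized $x$-map $\Rightarrow$ rational braid identity'' --- is false in general: tropicalization of subtraction-free expressions is far from injective (e.g.\ $x+y$ and $x+2y$ have the same tropicalization), so verifying a tropicalized identity can never, by itself, yield the rational one. Finally, identifying your tropicalized formula with the combinatorial $R$-matrix of \cite{KKMMNN} is also unjustified: the combinatorial $R$-matrix arises by tropicalizing the geometric $R$-matrix in the $\q$-coordinates, which are related to the chamber variables $\x$ only through the $\tau$-function substitution \eqref{eq:x2q}, and no tropical analogue of that substitution is set up in your argument.

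Your route can be repaired, and in repaired form it is exactly how the paper proves the decorated version, Theorem \ref{thm:Rbraid}: one computes the tropical $y$-seed dynamics of the mutation sequence explicitly (Lemma \ref{lem:yRcluster-trop}: $y_i\mapsto y_{i+1}^{-1}$, $y_{i^-}\mapsto y_iy_i^-$, $y_{i^+}\mapsto y_{i+1}y_i^+$), checks the braid relation for these monomial maps by a short computation (done in the proof of Theorem \ref{thm:q-YBR}), and then invokes Theorems \ref{thm:period} and \ref{thm:periodxy} together with \cite[Theorem 3.7]{CA4} to pass from the seed with principal (tropical) coefficients to the coefficient-free variables of $\F$ --- this last specialization step is also missing from your sketch, since Theorem \ref{thm:periodxy} concerns $(B,\bx,\boldy)$, whose $x$-mutation rule differs from the coefficient-free one. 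For the theorem at hand, though, the paper's actual proof is far simpler and needs only the observation you already made for the involution: since $R_M(x_i)=Sx_i$ with $S$ homogeneous of degree one in each set of $\pm$-variables, one gets $R_{M^+}(S)=S^+S$ and $R_M(S^+)=SS^+$, whence both $R_MR_{M^+}R_M$ and $R_{M^+}R_MR_{M^+}$ send $x_i\mapsto SS^+x_i$ (and similarly on $x_i^\pm$); no cluster, period, or crystal technology is required.
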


See \S \ref{subsec:Rcluster} and \S \ref{subsec:Rbraidsimple} for the proofs.

\subsection{Definition of the cluster $R$-matrix}
To the $3n$ variables $(\x^-,\x,\x^+)$, we add $5n$ additional variables denoted $X_{j,j+1}, X_{j^+,j}, X_{j,j^-}, X_{j+1^-,j}, X_{j+1,j^+}$, where $j \in \Z/n\Z$.  Denote by $\bX$ the set of these new variables, and by $\QQ(\bx,\bX)$ the rational function field in these variables.
The {\it cluster $R$-matrix} is the rational transformation $\tR_\x\colon \QQ(\bx,\bX) \to \QQ(\bx,\bX)$ given by 
\begin{align}\label{eq:RonXx}
\tR_\x(x_i) &= \frac{\sum_{j=1}^n  x_{j+1}^- \left(\prod_{\ell = j+2}^{j-1} x_{\ell} \right) x_j^+   X_{j,j+1} \left( \prod_{\ell=j+1}^{i-1} X_{\ell^+,\ell} X_{\ell+1^-,\ell}\right) \left(\prod_{\ell=i+1}^j  X_{\ell,\ell^-} X_{\ell,\ell-1^+}\right) }{\prod_{j \not = i} x_j} 
\end{align}
\begin{align*}
\tR_\x(x_i^-) = x_i^-, \qquad \tR_\x(x_i^+) = x_i^+,
\end{align*}
and
\begin{align}\label{eq:RonX}
\begin{split}
&\tR_\x(X_{i^+,i}) = X_{i+1,i+1^-}, \qquad \tR_\x(X_{i+1,i+1^-}) = X_{i^+,i}, \\
&\tR_\x(X_{i ,i-1^+}) = X_{i^-,i-1}, \qquad \tR_\x(X_{i^-,i-1}) = X_{i, i-1^+}, \\
&\tR_\x(X_{i,i+1}) = X_{i,i+1}.
\end{split}
\end{align}
For example, if $n = 4$, we have
\begin{align*}x_2 x_3 x_4 \tR_\x(x_1) = \, & x_2^- x_3 x_4 x_1^+ X_{1,2} X_{2^+,2} X_{3^-,2} X_{3^+,3} X_{4^-,3} X_{4^+,4} X_{1^-,4}\\ &+ x_3^- x_4 x_1 x_2^+ X_{2,3} X_{3^+,3} X_{4^-,3} X_{4^+,4} X_{1^-,4} X_{2,2^-} X_{2,1^+} \\
&+ x_4^- x_1 x_2 x_3^+ X_{3,4} X_{4^+,4} X_{1^-,4} X_{2,2^-} X_{2,1^+} X_{3,3^-} X_{3,2^+} \\
&+ x_1^- x_2 x_3 x_4^+X_{4,1} X_{2,2^-} X_{2,1^+} X_{3,3^-} X_{3,2^+} X_{4,4^-} X_{4,3^+}.
\end{align*}

To each arrow $a \rightarrow a'$ in the quiver $Q_{n,m}$ associate a new (frozen) vertex $v_{aa'}$, connecting it to already existing vertices of the quiver via arrows 
$a' \rightarrow v_{a,a'}$  and $v_{a,a'} \rightarrow a$. Denote the resulting enriched quiver by $\tilde Q_{n,m}$.  Associate to each of the new vertices $v_{a,a'}$ a frozen variable $X_{a,a'}$, and let $\tF = \F(\tilde Q_{n,m})$ be the rational function field generated by all the cluster variables, including the new frozen variables.

As before, let $(M^-,M,M^+)$ denote the three adjacent cycles $M_{i-1},M_i,M_{i+1}$.
The cycle $M$ is connected to vertices from the two adjacent cycles $M^-$ and $M^+$. Let 
$n$ be the length of $M$, and denote the vertices on $M^-,M,M^+$ as $i^-, i, i^+$ respectively, where $i$ takes values $\mod n$ and the arrows around $i$ are now 
$$i \rightarrow i+1, \qquad i \rightarrow i^-, \qquad i \rightarrow i-1^+, \qquad i \leftarrow i-1, \qquad i \leftarrow i+1^-, \qquad i \leftarrow i^+,$$ 
$$i \rightarrow v_{i-1,i}, \qquad i \rightarrow v_{i+1^-, i}, \qquad i \rightarrow v_{i^+, i}, \qquad i \leftarrow v_{i ,i+1}, \qquad i \leftarrow v_{i ,i^-}, \qquad i \leftarrow v_{i, i-1^+}.$$

We then have a rational transformation $\tR_M = \tR_\x\colon \tF \to \tF$ acting on the mutable variables on $M^- \cup M \cup M^+$, and permuting some of the frozen variables; the transformation fixes all the variables not appearing in \eqref{eq:RonXx} or \eqref{eq:RonX}.

\begin{figure}[ht]
\scalebox{0.8}{\input{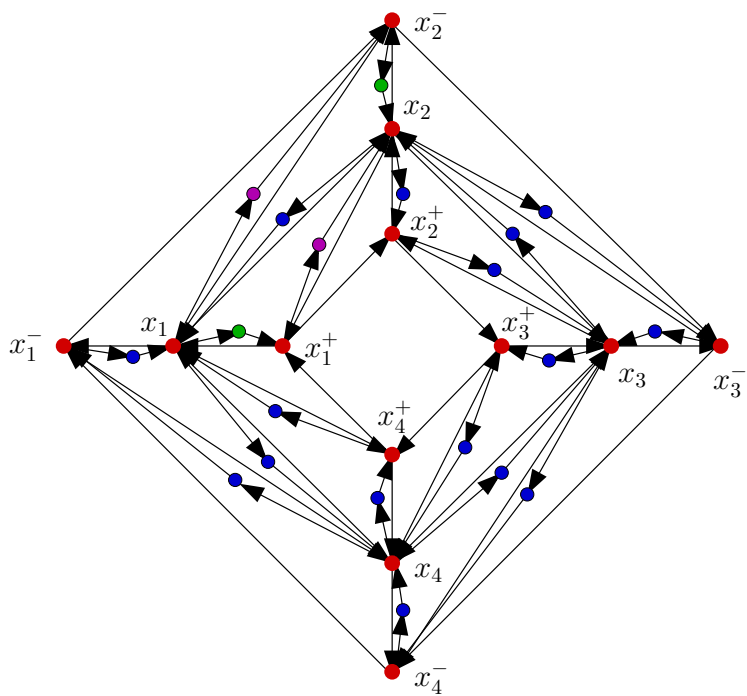}}
    \caption{Adjacent vertical closed cycles $M^-, M, M^+$ with frozen vertices added.  The cluster $R$-matrix $\tR_M$ swaps some of the frozen variables, such as $X_{1^+,1}$ with $X_{2,2^-}$ (shown in green), and $X_{2,1^+}$ with $X_{2^-,1}$ (shown in purple).}
    \label{fig:clR3}
\end{figure}
%

For $j \in \Z/n \Z$, we define a sequence of $2n-2$ mutations and $2n+1$ permutations by 
\begin{align}\label{eq:tR-op}
\begin{split}
\tcR_{M,j} &:= \prod_{i \in \Z/n\Z} s_{v_{i^+,i},v_{i+1,i+1^-}} \circ \prod_{i \in \Z/n\Z} s_{v_{i,i-1^+},v_{i^-,i-1}}
\circ s_{j-2,j-1} \\
& \qquad \circ (\mu_{j} \circ \mu_{j+1} \cdots \circ \mu_{j-4} \circ \mu_{j-3}) \circ 
(\mu_{j-1} \circ \mu_{j-2} \circ \cdots \circ \mu_{j+1} \circ \mu_j).
\end{split}
\end{align}

The operation $\tcR_{M,j}$ acts both on the quiver $\tQ_{n,m}$, sending it to $\tcR_{M,j}(\tQ_{n,m})$, and acts as a rational transformation of $\tF$. 

\begin{thm}\label{thm:Rcluster2}
For the enriched quiver $\tilde Q_{n,m}$, let $(\bx_{Q_{n,m}},\bX_{Q_{n,m}})$ be the cluster consisting of mutable variables and frozen variables.  For any $j \in \Z/n\Z$, we have 
$$\tcR_{M,j}(\tilde Q_{n,m}, \bx_{Q_{n,m}}, \bX_{Q_{n,m}}) 
= (\tilde Q_{n,m}, \tR_M(\bx_{Q_{n,m}}), \tR_M(\bX_{Q_{n,m}})).$$
\end{thm}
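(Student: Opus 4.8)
The plan is to bootstrap from Theorem~\ref{thm:Rcluster}, treating the frozen data as a decoration layered on top of the frozen-free computation. The $2n-2$ mutations in $\tcR_{M,j}$ are literally the same sequence $\mu_{\mathbf i}$ as in $\mathcal R_{M,j}$; the only genuinely new ingredients are the two products of frozen transpositions $\prod_i s_{v_{i^+,i},v_{i+1,i+1^-}}$ and $\prod_i s_{v_{i,i-1^+},v_{i^-,i-1}}$. Because frozen vertices are never mutated and arrows between frozen vertices are disregarded, the mutable--mutable block of the exchange matrix evolves under $\mu_{\mathbf i}$ exactly as for $Q_{n,m}$: the update \eqref{eq:B-mutation} of a mutable--mutable entry $b_{ij}$ at a mutable vertex $k$ uses only the mutable--mutable entries $b_{ik},b_{kj}$. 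Thus Theorem~\ref{thm:Rcluster} applies verbatim to the mutable subquiver, so after $\mu_{\mathbf i}$ and $s_{j-2,j-1}$ the mutable part of $\tQ_{n,m}$ returns to itself. Moreover, specializing all frozen variables to $1$ sends $\tF\to\F$ and $\tR_\x\to R_\x$, so the $\x$-parts of the transformed cluster variables are already pinned down by Theorem~\ref{thm:Rcluster}; what remains is to account for the frozen vertices, the frozen arrows, and the $X$-weights.

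I would organize the frozen bookkeeping through the identification of a seed with frozen variables with a seed with coefficients in the tropical semifield $\mathbb P_{\rm trop}(\bX)$. Setting $y_k=\prod_f X_f^{b_{fk}}$, where $f$ ranges over the frozen vertices, a direct comparison of \eqref{eq:classicalY} in $\mathbb P_{\rm trop}(\bX)$ with the quiver rules shows two things: the frozen-augmented cluster-variable mutation coincides with the coefficient mutation, and the evolution of the frozen rows of the exchange matrix is precisely the tropical $y$-variable dynamics. Under this dictionary the remaining tasks are (i) to track the frozen rows---equivalently the tropical $y$-variables---through $\mu_{\mathbf i}$; (ii) to check that after the two products of frozen transpositions all frozen--mutable arrows are restored, so that the whole quiver is $\tQ_{n,m}$ again; and (iii) to read off the $X$-monomial attached to each of the $n$ terms of the numerator of $\tR_\x(x_i)$.

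For (iii) I would invoke separation of additions \cite{CA4}: each monomial of the numerator of a cluster variable is an $F$-polynomial monomial evaluated at $\hat y_k=y_k\prod_i x_i^{b_{ik}}$, so its $\x$-part---already determined by Theorem~\ref{thm:Rcluster}---is decorated by the tropical $y$-weight supplied by (i). Carrying out the tracking of (i) along the two sweeps that make up $\mu_{\mathbf i}$ should then reproduce exactly the products $X_{j,j+1}\bigl(\prod_{\ell=j+1}^{i-1}X_{\ell^+,\ell}X_{\ell+1^-,\ell}\bigr)\bigl(\prod_{\ell=i+1}^{j}X_{\ell,\ell^-}X_{\ell,\ell-1^+}\bigr)$ of \eqref{eq:RonXx}. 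For (ii), the two products of transpositions are precisely the relabelings that undo the displacement of the frozen vertices $v_{a,a'}$ caused by the mutations; they realize the swaps $X_{i^+,i}\leftrightarrow X_{i+1,i+1^-}$ and $X_{i,i-1^+}\leftrightarrow X_{i^-,i-1}$ while fixing $X_{i,i+1}$, which is exactly \eqref{eq:RonX}.

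The main obstacle is the explicit combinatorial tracking in (i)--(ii): one must follow every frozen arrow incident to $M^-\cup M\cup M^+$ through all $2n-2$ mutations, recording how the creation, reversal, and cancellation of arrows at each mutated vertex moves the frozen vertices and accumulates $X$-factors, and then verify that the prescribed transpositions restore $\tQ_{n,m}$ on the nose. This is the decorated analogue of the computation behind Theorem~\ref{thm:Rcluster} in \S\ref{sec:proof}, and I would carry it out by the same inductive analysis of the two sweeps, now additionally carrying the frozen rows at each step; confirming that the endpoint of that induction matches \eqref{eq:RonXx} and \eqref{eq:RonX} simultaneously is the delicate part.
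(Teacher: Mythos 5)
Your overall skeleton --- dispose of the mutable part via Theorem \ref{thm:Rcluster}, then treat the frozen vertices through the dictionary with coefficients in $\mathbb{P}_{\rm trop}(\bX)$ --- is sound and close in spirit to the paper's proof in \S\ref{subsec:Rcluster2}; your reading of the transpositions in step (ii) is also the right intuition. The genuine gap is in your step (iii), the mechanism that is supposed to attach the correct $X$-monomial to each of the $n$ terms. The separation-of-additions formula of \cite{CA4} expresses a cluster variable through its $F$-polynomial, and the $F$-polynomial is defined by the \emph{principal}-coefficient dynamics; it is not determined by the tropical $y$-tracking of your step (i), which only records which coefficient monomial enters each exchange relation and what the tropical denominators are. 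Nor can the $F$-polynomial be read off from the frozen-free formula of Theorem \ref{thm:Rcluster}: for that you would need to know that distinct decorated monomials do not collapse when $\bX \to 1$ (a positivity statement you never invoke), and you would still have to identify the exponent vector behind each surviving monomial, which cannot be done by inverting the exchange matrix --- here $B(Q_{n,m})$ is degenerate, since the columns indexed by the vertices of any vertical cycle sum to zero. So as written, (iii) either fails or silently collapses back into the full decorated induction through all $2n-2$ mutations, which is precisely the ``main obstacle'' your proposal defers.

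The paper closes this hole with two devices absent from your plan. First, a superposition principle: mutation never creates arrows between frozen vertices, so the frozen vertices of $\tilde Q_{n,m}$ can be split into three families giving quivers $Q_L$, $Q_C$, $Q_D$ (the last two exchanged by the symmetry of Lemma \ref{lem:sigma-op}), and the effect of $\tcR_{M,1}$ on the decorated seed is the superposition of three separate, explicitly computable transformations (Proposition \ref{prop:nearly}). Second, the Lee--Schiffler positivity theorem \cite{LS} (Theorem \ref{thm:LS}): positivity, together with the fact that $\tcR_{M,1}(x_i)|_{\bX \to 1}$ consists of $n$ distinct monomials of coefficient $1$, forces each frozen-free numerator monomial to acquire a \emph{single} Laurent-monomial decoration $\beta_j$ in $\bX$; the three specializations then determine every $\beta_j$, because each frozen variable lies in exactly one of the families $L$, $C$, $D$. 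This positivity-plus-specialization step is exactly the missing mechanism in your (iii); without it, or without the superposition trick that makes direct tracking feasible, your outline does not yield \eqref{eq:RonXx} and \eqref{eq:RonX} short of redoing, with frozen decorations carried along, the entire induction behind \eqref{eq:half}.
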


\begin{thm} \label{thm:Rbraid}
The $m-1$ transformations $\tR_{M_1},\tR_{M_2},\ldots,\tR_{M_{m-1}}\colon \tF \to \tF$ satisfy $\tR_{M_i} \tR_{M_{i+1}} \tR_{M_i} = \tR_{M_{i+1}} \tR_{M_i} \tR_{M_{i+1}}$ and $\tR_{M_i}^2 = \id$.  Furthermore, for $i,j$ such that $|i-j|>1$ we have  
$\tR_{M_i} \tR_{M_j} = \tR_{M_j} \tR_{M_i}$.
Thus $\tR_{M_i}$ generates an action of the symmetric group $\mS_m$ on $\tF$.
\end{thm}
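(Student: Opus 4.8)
The plan is to prove the three families of relations---the involution $\tR_{M_i}^2=\id$, the braid relation for adjacent cycles, and commutativity for $|i-j|>1$---by reducing each to the frozen-free case of Theorem~\ref{thm:Rbraidsimple} while separately controlling the permutation of the frozen variables $\bX$. The organising principle is Theorem~\ref{thm:Rcluster2}: each $\tR_{M_i}$ is induced by the mutation-and-permutation sequence $\tcR_{M_i,j}$, which is a $\sigma$-period of the enriched quiver $\tQ_{n,m}$. Thus every relation among the $\tR_{M_i}$ is equivalent to the assertion that a suitable composite of such sequences is a $\sigma$-period of the seed $(\tQ_{n,m},\bx,\bX)$ for the appropriate $\sigma\in\mS_m$, and the frozen variables play the role of coefficients.

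I would dispose of the two easy families first. For the involution, the rule \eqref{eq:RonX} is a product of disjoint transpositions together with fixed points, hence manifestly involutive on $\bX$; on the mutable variables one checks $\tR_M^2(x_i)=x_i$ as in the computation behind $R_M^2=\id$, now carrying along the frozen decorations, which cancel because \eqref{eq:RonX} is an involution. For commutativity with $|i-j|>1$ I would argue by locality. The transformation $\tR_{M_i}$ alters only the mutable variables on the central cycle $M_i$ (it fixes $x_k^\pm$) and permutes only the frozen variables attached to arrows between $M_i$ and its two neighbours $M_{i\mp1}$, and its defining formula \eqref{eq:RonXx} reads only variables supported there. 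When $|i-j|>1$ these supports---both the variables read and those written---are disjoint from the corresponding supports of $\tR_{M_j}$, so the two transformations commute; the only case requiring care is $|i-j|=2$, where the two triples of cycles share $M_{i+1}$, but there the shared cycle carries variables that are merely read (never written) by both maps, so disjointness of the written variables still holds.

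The braid relation for adjacent cycles is the main step. At the level of the frozen variables it is a finite combinatorial check: one traces the arrow-indexed variables around $M_i$ and $M_{i+1}$ through each of $\tR_{M_i}\tR_{M_{i+1}}\tR_{M_i}$ and $\tR_{M_{i+1}}\tR_{M_i}\tR_{M_{i+1}}$ using \eqref{eq:RonX} and verifies that the two resulting permutations coincide. At the level of the mutable variables I would exploit that the frozen variables enter \eqref{eq:RonXx} only as (tropical) coefficients and that the sequence $\tcR_{M_i,j}$ differs from the frozen-free sequence $\mathcal{R}_{M_i,j}$ of \eqref{eq:R-op} only by frozen-vertex permutations. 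Since the frozen variables realise a tropical semifield, Theorem~\ref{thm:periodxy} (together with Theorem~\ref{thm:period}) lifts the period property of the relevant composite sequence from the underlying $y$-seed to the full seed $(\tQ_{n,m},\bx,\bX)$, and the required $y$-seed period is supplied by the frozen-free analysis underlying Theorem~\ref{thm:Rbraidsimple}.

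The hard part will be the bookkeeping of the frozen decorations in the braid relation. Writing $\tR_M=\rho_M\circ\pi_M$, where $\pi_M$ is the frozen permutation \eqref{eq:RonX} and $\rho_M$ is the mutable transformation \eqref{eq:RonXx} with the frozen variables treated as parameters, one has the commutation rule $\pi_i\rho_j=\rho_j^{\pi_i}\pi_i$, where $\rho_j^{\pi_i}$ denotes $\rho_j$ with its frozen parameters permuted by $\pi_i$. Pushing all permutations to the right turns the braid relation into the braid identity $\pi_i\pi_{i+1}\pi_i=\pi_{i+1}\pi_i\pi_{i+1}$ on frozen variables together with a \emph{twisted} braid identity among the $\rho$'s; setting all $X=1$ trivialises the twists and recovers the frozen-free braid relation of Theorem~\ref{thm:Rbraidsimple}, but the full identity requires that the monomial $\prod X$ decorating each term of \eqref{eq:RonXx} be transported to the matching term on the opposite side after the intervening permutations. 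Confirming this---either directly, by a term-by-term comparison of the exponent vectors of the $X$'s, or, more cleanly, by verifying the hypotheses of the period-lifting argument above---is the only genuinely computational ingredient, and it is forced by the specific form of \eqref{eq:RonX}.
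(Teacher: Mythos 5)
Your overall strategy—recast each relation as a $\sigma$-period statement for the enriched seed and lift it through the period machinery—is the same as the paper's, and your treatment of $\tR_{M_i}^2=\id$ and of commutativity for $|i-j|>1$ matches the paper's (which disposes of these by involutivity of mutations and by disjointness of supports). But the braid relation, the heart of the matter, has a genuine gap: you propose to feed the period-lifting machine with "the required $y$-seed period \ldots supplied by the frozen-free analysis underlying Theorem~\ref{thm:Rbraidsimple}." Theorem~\ref{thm:Rbraidsimple}, however, is a statement about the \emph{coefficient-free cluster variables}: it asserts the braid relation for the rational maps $R_{M_i}$ on $\F(Q_{n,m})$, i.e.\ an $x$-period without coefficients. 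Nothing in the paper (and nothing standard) converts a coefficient-free $x$-period into a $y$-seed period: Theorem~\ref{thm:period} only relates the universal, tropical, and quantum $y$-periods to \emph{each other}, and Theorem~\ref{thm:periodxy} goes from tropical $y$-periods to seeds with coefficients, not from $x$-periods to $y$-periods. This is precisely why the paper's logical order is the reverse of yours: it first proves the $y$-seed braid relation, Theorem~\ref{thm:q-YBR}, by an independent direct computation on the initial tropical $y$-seed (via Lemma~\ref{lem:yRcluster-trop} and Theorem~\ref{thm:q-yRcluster}), and only then deduces Theorem~\ref{thm:Rbraid} from it. Your plan, as stated, has no valid source for the $y$-period, and your computational fallback (matching the frozen monomials term by term across the "twisted" braid identity) is not carried out and would amount to reproving the theorem by hand.

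A secondary, more fixable omission: even once the tropical $y$-period is in hand, Theorem~\ref{thm:periodxy} only yields periodicity of the seed $(B,\bx,\boldy)$ with \emph{principal} coefficients, i.e.\ where the $y$-variables attached to mutable vertices are the semifield generators. In $(\tQ_{n,m},\bx,\bX)$ the coefficient at a mutable vertex $v$ is the monomial $\prod_u X_u^{b_{uv}}$ in the frozen generators, not a generator itself, so Theorem~\ref{thm:periodxy} does not apply directly; the paper bridges this with \cite[Theorem 3.7]{CA4} (the separation formula), concluding that a principal-coefficient period is a period for \emph{any} skew-symmetric cluster algebra of geometric type with the same exchange matrix, in particular for $\tQ_{n,m}$ with all $\bX$ frozen. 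Your argument should invoke this step explicitly.
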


See \S \ref{subsec:Rcluster2} and \S \ref{subsec:Rbraid} for the proofs.

%
%

\section{Geometric $R$-matrix}\label{sec:geom}

\subsection{Definition of the geometric $R$-matrix}\label{subsec:geomR}

Let $\p = (p_1,p_2,\ldots,p_n)$ and $\q = (q_1,q_2,\ldots,q_n)$, where indices are taken modulo $n$.
Define the polynomials $\kappa_i(\p,\q) \in \Z[\p,\q]$ for $i \in \Z/n\Z$ by
$$
\kappa_i(\p,\q) = \sum_{j=0}^{n-1} \prod_{\ell=1}^{j} p_{i-\ell} \prod_{\ell=j+2}^{n} q_{i-\ell}.
$$

Define the rational map $R\colon \QQ(\p,\q) \to \QQ(\p,\q)$, called the {\it geometric $R$-matrix} by the formula
$$
R(p_i) = q_i\frac{\kappa_{i+1}(\p,\q)}{\kappa_i(\p,\q)}, \qquad R(q_i) = p_i \frac{\kappa_{i}(\p,\q)}{\kappa_{i+1}(\p,\q)}.
$$

The geometric $R$-matrix appears in the theory of geometric crystals \cite{KNO,Et}, in study of total positivity in loop groups \cite{TP} and in other places \cite{KNY}. 

\begin{example}
 For $n=4$ we have
 $$R(q_1) = p_1\frac{q_1q_2q_3+p_4q_1q_2+p_3p_4q_1+p_2p_3p_4}{q_2q_3q_4+p_1q_2q_3+p_4p_1q_2+p_3p_4p_1}.$$
\end{example}

%
%

%

\subsection{From the cluster $R$-matrix to the geometric $R$-matrix}
\label{sec:xtoq}
We picture the $2n$ variables $\p$ and $\q$ as lying on a cylindrical network as indicated in Figure \ref{fig:clR4}.  This network consists of two parallel vertical wires going around the cylinder, and $n$ horizontal wires going from left to right.  The variables $\p$ and $\q$ are placed at the $2n$ vertices.  

\begin{figure}[ht]
\scalebox{0.7}{\input{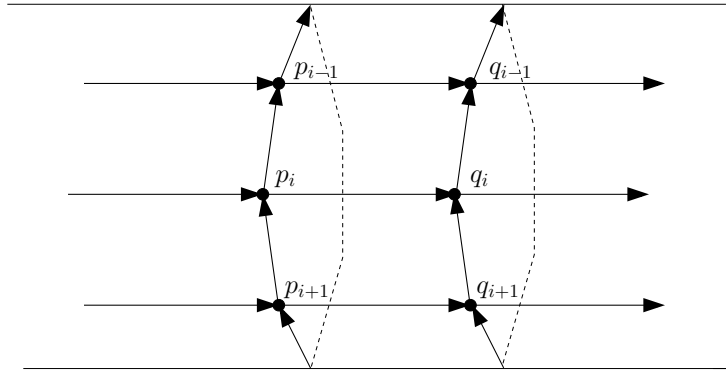}}
    \caption{Adjacent parallel vertical wires on a cylinder.}
    \label{fig:clR4}
\end{figure}

Associate a {\it {chamber variable}}, or {\it {$\tau$-function}} $x$ to each of the chambers into which the wires partition the cylinder.  Index these chamber variables so that the vertex $q_i$ is surrounded by $x_i, x_{i+1}, x_i^+, x_{i+1}^+$, while $p_i$ is surrounded by 
$x_i^-, x_{i+1}^-, x_i, x_{i+1}$; see Figure \ref{fig:clR5}. We may superimpose the triangular grid quiver $Q_{n,2}$ onto the network,
where we identify $M_0,M_1,M_2 \subset Q_{n,2}$ with $M^-,M,M^+$ respectively.
In the corresponding enriched quiver $\tilde Q_{n,2}$ remove all the frozen variables of the form $X_{i,i+1}$ and $X_{i,i^-}$ (or equivalently, set those frozen variables to $1$).  
Call the resulting quiver $\tilde Q'_{n,2}$.

The above procedure is a special case of a general way to associate a quiver to a wiring diagram. Specifically, for any chamber of the wiring diagram there are generically six arrows in the quiver incident to it. They alternate in direction and connect the chamber
to the three ``most forward'' and the three ``most backward'' chambers neighboring it, as shown in Figure \ref{fig:clR13}. In special cases, for example when the original chamber being a triangle for example, some 
of the six arrows cancel each other out and one is left with only four arrows. 
\begin{figure}[ht]
\scalebox{0.5}{\input{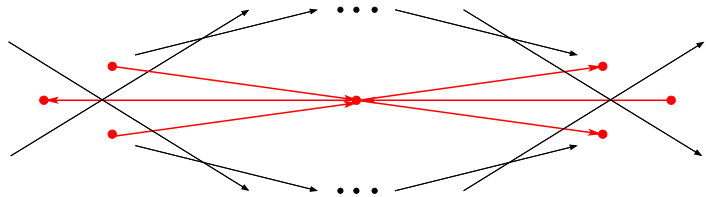}}
    \caption{Associating a quiver to a wiring diagram.}
    \label{fig:clR13}
\end{figure}
This construction is usually done for {\it {planar}} wiring diagrams, see \cite{FZ3} for an equivalent rule.  Our cylindric networks are locally planar, and 
we obtain $\tilde Q'_{n,m}$ applying the construction to a network $N_{n,m}$ 
defined after Theorem~\ref{thm:iota-RR}.

\begin{figure}[ht]
\scalebox{0.7}{\input{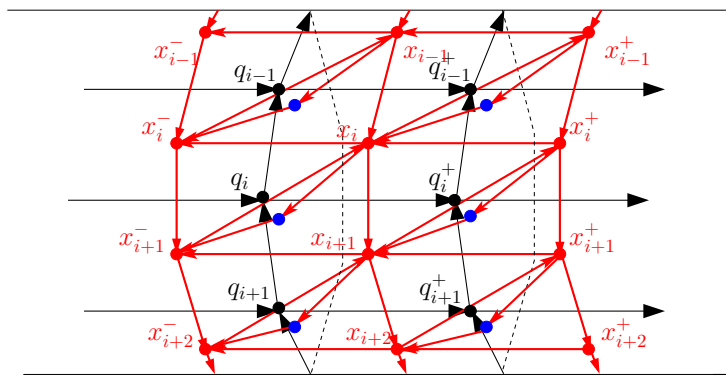}}
    \caption{The triangular grid quiver superimposed with the cylindrical network.}
    \label{fig:clR5}
\end{figure}

Define an algebra map $\iota\colon \QQ(\p,\q) \to \F(\tilde Q'_{n,2}) =\QQ(\x^-,\x,\x^+,X_{i+1^-,i},X_{i+1,i^+})$ by 
\begin{equation}\label{eq:x2q} \iota(p_i) = \frac{x_{i}^- x_{i+1} }{x_{i+1}^- x_i}X_{i+1^-,i},
 \qquad \iota(q_i) = \frac{x_i x_{i+1}^+ }{x_{i+1} x_i^+}X_{i+1,i^+}
\end{equation}
for all $i \in \Z/n\Z$.  It is clear that $\iota$ is an inclusion of fields.  Abusing notation, we still denote the cluster $R$-matrix (obtained by specializing certain frozen variables to $1$ in \eqref{eq:RonXx} and \eqref{eq:RonX}) of $\F(\tilde Q'_{n,2})$ by $\tR_{\x}$.

\begin{thm}\label{thm:iota-RR}
We have $\iota \circ R = \tR_{\x} \circ \iota$.
\end{thm}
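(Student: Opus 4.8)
The plan is to verify the identity on the field generators. Both $\iota\circ R$ and $\tR_\x\circ\iota$ are homomorphisms from $\QQ(\p,\q)$ into $\F(\tilde Q'_{n,2})$: indeed $R$ is a field automorphism, $\iota$ a field embedding, and $\tR_\x$ a field automorphism. Since $\QQ(\p,\q)$ is generated as a field by the $p_i$ and $q_i$, it suffices to check that $\iota(R(p_i))=\tR_\x(\iota(p_i))$ and $\iota(R(q_i))=\tR_\x(\iota(q_i))$ for each $i\in\Z/n\Z$. Throughout I work in $\F(\tilde Q'_{n,2})$, where every frozen variable other than $X_{i+1^-,i}$ and $X_{i+1,i^+}$ is set to $1$; concretely this removes the factors $X_{j,j+1}$, $X_{\ell,\ell^-}$ and $X_{\ell^+,\ell}$ from \eqref{eq:RonXx}, while from \eqref{eq:RonX} one reads off the swap $\tR_\x(X_{i+1^-,i})=X_{i+1,i^+}$ and $\tR_\x(X_{i+1,i^+})=X_{i+1^-,i}$.

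The whole computation rests on the following identity, which I would isolate as a lemma:
\[
\tR_\x(x_i)=\frac{x_i^-x_i^+}{x_i}\,\iota(\kappa_i).
\]
To prove it, reindex $\kappa_i$ by the position $g\in\Z/n\Z$ of the single omitted variable, so that the $g$-th summand is $\bigl(\prod_{a=g+1}^{i-1}p_a\bigr)\bigl(\prod_{b=i}^{g-1}q_b\bigr)$. Applying $\iota$ and telescoping the two chains of ratios, the non-frozen part of this summand collapses to $\frac{x_i^2}{x_i^-x_i^+}\cdot\frac{x_{g+1}^-x_g^+}{x_{g+1}x_g}$, while the frozen part becomes $\prod_{a=g+1}^{i-1}X_{a+1^-,a}\cdot\prod_{b=i}^{g-1}X_{b+1,b^+}$. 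On the other side, the $j=g$ summand of the specialized \eqref{eq:RonXx}, after clearing the global denominator $\prod_{j\neq i}x_j$, has non-frozen part $\frac{x_{g+1}^-x_g^+x_i}{x_{g+1}x_g}$ and frozen part $\prod_{\ell=g+1}^{i-1}X_{\ell+1^-,\ell}\cdot\prod_{\ell=i+1}^{g}X_{\ell,\ell-1^+}$. The non-frozen parts agree up to the common scalar $\frac{x_i}{x_i^-x_i^+}$ (independent of $g$), and after the substitution $b=\ell-1$ the frozen parts agree exactly; summing over $g$ yields the lemma.

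Granting the lemma, the two generator identities are immediate. Using $R(p_i)=q_i\,\kappa_{i+1}/\kappa_i$ and the lemma, $\iota(R(p_i))=\iota(q_i)\cdot\frac{x_{i+1}x_i^-x_i^+}{x_ix_{i+1}^-x_{i+1}^+}\cdot\frac{\tR_\x(x_{i+1})}{\tR_\x(x_i)}$; substituting $\iota(q_i)=\frac{x_ix_{i+1}^+}{x_{i+1}x_i^+}X_{i+1,i^+}$ and cancelling $x_i,x_{i+1},x_i^+,x_{i+1}^+$ in pairs leaves $\frac{x_i^-\tR_\x(x_{i+1})}{x_{i+1}^-\tR_\x(x_i)}X_{i+1,i^+}$, which is exactly $\tR_\x(\iota(p_i))$ by the frozen swap. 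The identity for $q_i$ is the mirror computation, using $R(q_i)=p_i\,\kappa_i/\kappa_{i+1}$ and $\tR_\x(X_{i+1,i^+})=X_{i+1^-,i}$.

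I expect the main obstacle to be the bookkeeping inside the lemma: correctly telescoping both the non-frozen and the frozen monomials around the cylinder, and matching the cyclic index ranges appearing in \eqref{eq:RonXx} with those produced by $\iota(\kappa_i)$. A secondary point that must be checked carefully is that the specialization defining $\tilde Q'_{n,2}$ is consistent with the permutation \eqref{eq:RonX} — that is, each frozen variable set to $1$ is sent by $\tR_\x$ to another variable that is also $1$ — so that the specialized map $\tR_\x$ on $\F(\tilde Q'_{n,2})$ is well defined. Once these are in place, all remaining cancellations are routine.
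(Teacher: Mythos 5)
Your proposal is correct and takes essentially the same route as the paper: the heart of both arguments is the telescoping computation of $\iota(\kappa_i)$ and a cyclic re-indexing matching its summands with those of the specialized formula for $\tR_\x(x_i)$. The only difference is organizational — you isolate this matching as the lemma $\tR_\x(x_i)=\frac{x_i^-x_i^+}{x_i}\,\iota(\kappa_i)$ and then obtain both generator identities by formal cancellation (also checking that the specialization defining $\tilde Q'_{n,2}$ is stable under the frozen-variable permutation, a point the paper leaves implicit), whereas the paper substitutes directly into $\iota(R(p_i))=\iota(q_i)\,\iota(\kappa_{i+1})/\iota(\kappa_i)$ and re-indexes inside the resulting ratio.
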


\begin{proof}
We first compute that
\begin{align*}
\iota(\kappa_i(\p,\q))& = 
\sum_{j=0}^{n-1} 
\left(\prod_{\ell=1}^{j} \frac{x_{i-\ell}^- x_{i+1-\ell} }{x_{i+1-\ell}^- x_{i-\ell}}X_{i+1-\ell^-,i-\ell} \right) \left(\prod_{\ell=j+2}^{n} \frac{x_{i-\ell} x_{i+1-\ell}^+ }{x_{i+1-\ell} x_{i-\ell}^+}X_{i+1-\ell,i-\ell^+}\right)\\
&=\frac{x_i^2}{x_i^-x_i^+}\sum_{j=0}^{n-1} \frac{x^-_{i-j} \, x_{i-j-1}^+}{ x_{i-j}\, x_{i-j-1}} \prod_{\ell=1}^j X_{i+1-\ell^-,i-\ell} \prod_{\ell=j+2}^{n}X_{i+1-\ell,i-\ell^+}.
\end{align*}
Thus
\begin{align*}
&\iota(R(p_i)) \\
&=\iota(q_i)\frac{\iota(\kappa_{i+1}(\p,\q))}{\iota(\kappa_i(\p,\q))} \\
&=\frac{x_i^-\, x_{i+1}}{x_{i+1}^-\, x_i} \, X_{i+1,i^+} \, \frac{\displaystyle{\sum_{j=0}^{n-1} x^-_{i+1-j} \left(\prod_{r \in \Z/n\Z \setminus\{i-j,i+1-j\}} x_i \right) x_{i-j}^+ \prod_{\ell=1}^j X_{i+2-\ell^-,i+1-\ell} \prod_{\ell=j+2}^{n}X_{i+2-\ell,i+1-\ell^+}}}{\displaystyle{\sum_{j=0}^{n-1} x^-_{i-j} \left( \prod_{r \in \Z/n\Z \setminus\{i-1-j,i-j\}} x_i \right) x_{i-j-1}^+ \prod_{\ell=1}^j X_{i+1-\ell^-,i-\ell} \prod_{\ell=j+2}^{n}X_{i+1-\ell,i-\ell^+}}}.
\end{align*}
Re-indexing the summations, we see that this is equal to
\begin{align*}
\tR_\x(\iota(p_i)) &=\frac{x_i^-\, x_{i+1}}{x_{i+1}^-\, x_i} \, X_{i+1,i^+} \, \frac{\displaystyle{\sum_{j=1}^n  x_{j+1}^- \left(\prod_{\ell = j+2}^{j-1} x_{\ell} \right) x_j^+   \prod_{\ell=j+1}^{i}  X_{\ell+1^-,\ell}\prod_{\ell=i+2}^j  X_{\ell,\ell-1^+} }}{\displaystyle{\sum_{j=1}^n  x_{j+1}^- \left(\prod_{\ell = j+2}^{j-1} x_{\ell} \right) x_j^+   \prod_{\ell=j+1}^{i-1}  X_{\ell+1^-,\ell}\prod_{\ell=i+1}^j  X_{\ell,\ell-1^+} }},
\end{align*}
as required.
\end{proof}

Let $N_{n,m}$ denote a network on a cylinder with $n$ parallel horizontal 
wires and $m$ parallel vertical wires $W_1, W_2, \ldots,W_m$ 
which go around the cylinder (oriented as in Figure \ref{fig:clR4}).  Each circle $W_j$ has $n$ crossings with 
the horizontal wires. Let $q_{j,i}$ be a variable assigned to the 
$i$-th crossing on $W_j$, where we assume $i \in \Z/n\Z$.

Let $\Q = \Q(N_{n,m}) := \QQ(\q_1,\ldots,\q_m)$ where $\q_j = (q_{j,i})_{i=1,\ldots,n}$.  Define the quiver $\tilde Q'_{n,m}$ by removing frozen vertices as before.  Define an algebra map $\iota_m \colon \Q \to \F(\tilde Q'_{n,m})$ by
\begin{align}\label{eq:xX-q}
  \iota_m(q_{j,i}) 
  = 
  \frac{x_{j-1,i} \, x_{j,i+1}}{x_{j-1,i+1}\, x_{j,i}} X_{v_{(j-1,i+1),(j,i)}}.
\end{align}
This reminds us of the $\tau$-function substitution, where the cluster variables play the role of $\tau$-functions.
The following result is well known \cite{KNY,Et,TP}.  From Theorem \ref{thm:Rbraid}, we obtain a new proof via the cluster $R$-matrix.

\begin{cor}
Let $R_j$, for $j = 1,2,\ldots,m-1$, act on the variables $\q_j$ and $\q_{j+1}$ by $R$ and fixing all the other variables in $\Q$.  Then we have $R_j R_{j+1} R_j = R_{j+1} R_j R_{j+1}$ and $R_j^2 = \id$.  Furthermore, for $i,j$ such that $|i-j|>1$ we have $R_i R_j = R_j R_i$.  Thus $R_j$ generates an action of $\mS_m$ on $\Q$.
\end{cor}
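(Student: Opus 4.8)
The plan is to deduce the three families of relations among the geometric $R$-matrices $R_j$ from the corresponding relations among the cluster $R$-matrices $\tR_{M_j}$ (Theorem~\ref{thm:Rbraid}), transporting them across the field map $\iota_m\colon \Q \to \F(\tilde Q'_{n,m})$. The crucial ingredient is a global form of the intertwining relation of Theorem~\ref{thm:iota-RR}: for every $j = 1,\ldots,m-1$,
\begin{equation*}
\iota_m \circ R_j = \tR_{M_j} \circ \iota_m,
\end{equation*}
where $\tR_{M_j}$ is the cluster $R$-matrix attached to the consecutive cycles $M_{j-1}, M_j, M_{j+1}$. Once this is in place, everything else is formal.

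To prove the intertwining I would argue by locality. The geometric $R$-matrix $R_j$ only alters $\q_j$ and $\q_{j+1}$ and fixes every other $\q_k$, while $\tR_{M_j}$ only alters the mutable variables on the middle cycle $M_j$, fixing those on $M_{j-1}$ and $M_{j+1}$, and swaps the surviving frozen variables on the two boundaries $(M_{j-1},M_j)$ and $(M_j,M_{j+1})$ of $M_j$. Identifying $\q_j$ with $\p$, $\q_{j+1}$ with $\q$, and $M_{j-1}, M_j, M_{j+1}$ with $M^-, M, M^+$, and matching the surviving frozen variable $X_{v_{(j-1,i+1),(j,i)}}$ appearing in \eqref{eq:xX-q} with the local symbols $X_{i+1^-,i}$ and $X_{i+1,i^+}$, the verification on the generators $q_{j,i}$ and $q_{j+1,i}$ becomes \emph{verbatim} the computation in the proof of Theorem~\ref{thm:iota-RR}. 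For the remaining generators $q_{k,i}$ with $k \neq j,j+1$ both sides act as the identity: $R_j$ fixes $\q_k$, and $\iota_m(q_{k,i})$ is built only from the cluster variables on $M_{k-1},M_k$ and a frozen variable on the boundary $(M_{k-1},M_k)$, none of which lies among the data moved by $\tR_{M_j}$, since those boundaries are at or before $(M_{j-2},M_{j-1})$ (when $k \le j-1$) or at or after $(M_{j+1},M_{j+2})$ (when $k \ge j+2$).

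With the intertwining established I would pull the relations back along $\iota_m$, which is injective since it is a homomorphism of fields. Applying the intertwining three times gives, for the braid relation,
\begin{align*}
\iota_m \circ R_i R_{i+1} R_i
&= \tR_{M_i}\tR_{M_{i+1}}\tR_{M_i} \circ \iota_m \\
&= \tR_{M_{i+1}}\tR_{M_i}\tR_{M_{i+1}} \circ \iota_m
= \iota_m \circ R_{i+1} R_i R_{i+1},
\end{align*}
where the middle equality is Theorem~\ref{thm:Rbraid}; cancelling the injective $\iota_m$ on the left yields $R_i R_{i+1} R_i = R_{i+1} R_i R_{i+1}$. The identical, two-step argument produces $R_j^2 = \id$ from $\tR_{M_j}^2 = \id$, and $R_i R_j = R_j R_i$ for $|i-j|>1$ from the far-commutativity of the $\tR_{M_k}$. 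Since the $R_j$ satisfy the defining relations of $\mS_m$ in the simple transpositions, they generate an $\mS_m$-action on $\Q$.

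The main obstacle is the bookkeeping hidden in the intertwining step rather than any conceptual difficulty. The only genuinely delicate point is confirming that the surviving frozen variables permuted by $\tR_{M_j}$ are precisely those occurring in $\iota_m(q_{j,i})$ and $\iota_m(q_{j+1,i})$, and that the frozen variables surviving in $\iota_m(q_{k,i})$ for $|k-j|$ large are inert under $\tR_{M_j}$; this requires carefully reconciling the index conventions of \eqref{eq:xX-q} with the permutation rules \eqref{eq:RonX}, after the removal of the frozen variables of the form $X_{i,i+1}$ and $X_{i,i^-}$. Everything else is a formal consequence of the single computation already carried out for $m=2$.
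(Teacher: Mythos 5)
Your proposal is correct and takes essentially the same approach as the paper: the paper obtains this corollary precisely by transporting the relations of Theorem~\ref{thm:Rbraid} through the embedding $\iota_m$ of \eqref{eq:xX-q}, whose compatibility with the geometric $R$-matrix is the content of Theorem~\ref{thm:iota-RR}. Your write-up merely makes explicit the locality bookkeeping for general $m$ and the injectivity/cancellation step that the paper leaves implicit.
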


\section{Quantum geometric $R$-matrix} \label{sec:qgeom}
\subsection{Quantum torus}\label{ssec:qtorus}
We refer the reader to \cite[Section 4]{BZ} for a more systematic treatment of quantum tori.

Let $\p = (p_1,p_2,\ldots,p_n)$ and $\q = (q_1,q_2,\ldots,q_n)$ where the indices are taken modulo $n$.
Let $\Z_\e[\p,\q]$ be the noncommutative $\Z[\e,\e^{-1}]$-algebra generated by $p_i,p_i^{-1},q_i,q_i^{-1}$ with the commutation relations
$$
p_iq_j = \begin{cases} \e q_j p_i & \mbox{ if $j= i$ or $j = i-2$,} \\ 
\e^{-2} q_{j} p_i  & \mbox{ if $j = i-1$,} \\
q_j p_i & \mbox{otherwise,}
\end{cases}
$$
and for $\r = \p$ or $\r = \q$, the relations
$$
r_i r_j = \begin{cases} \e r_j r_i & \mbox{if $j = i-1$,} \\
\e^{-1} r_j r_i & \mbox{if $j = i+1$,} \\
r_j r_i & \mbox{otherwise.}
\end{cases}
$$
Note that the relations are preserved by a $\Z/n\Z$-shift of the indices.
The subalgebra $\Z[\e,\e^{-1}]$ is central.  
The ring $\Z_\e[\p,\q]$ is a {\it quantum torus}; it has a distinguished $\Z[\e,\e^{-1}]$-basis given by the monomials $p_1^{a_1}p_2^{a_2} \cdots p_n^{a_n} q_1^{b_1}q_2^{b_2} \cdots q_n^{b_n}$, where $a_i,b_i \in \Z$.  
The quantum torus $\Z_\e[\p,\q]$ is an Ore domain \cite[Appendix]{BZ}. 
It includes into its skew field of fractions $\QQ_\e \langle \p,\q \rangle$.

\subsection{Definition of the quantum geometric $R$-matrix}
Define the {\it quantum geometric $R$-matrix} $R^\e\colon \QQ_\e\langle \p,\q \rangle \to \QQ_\e\langle \p,\q \rangle$ by
\begin{align}\label{eq:R-pq}
  R^\e (p_i) = (\kappa_{i}^\e(\p,\q))^{-1} \cdot q_{i} \cdot \kappa_{i+1}^\e(\p,\q), \qquad R^\e(q_i) = (\kappa_{i+1}^\e(\p,\q))^{-1} \cdot p_i \cdot \kappa_{i}^\e(\p,\q)
\end{align} 
where
\begin{align}\label{eq:q-kappa}
  \kappa_i^\e(\p,\q) 
  := 
  \sum_{j=0}^{n-1} p_{i-1}p_{i-2}\cdots p_{i-j}\, q_{i-j-2} q_{i-j-3} \cdots q_{i-n} \in \QQ_\e \langle \p, \q \rangle,
\end{align}
for $i \in \Z/n\Z$.
Though the polynomials $\kappa_i^\e$ are apparently the same as the classical case, the variables $p_i$ and $q_i$ are now $\epsilon$-commuting.  It follows from Theorem \ref{thm:commpres} below that $R^\e$ is a morphism of 
skew fields.

\subsection{Yang-Baxter relation}\label{ssec:Qe}

Recall from \S\ref{sec:xtoq} the definition of the cylindrical network $N_{n,m}$.
Let $S_k$ be the $k$-th {\it snake path} in $N_{n,m}$, consisting of the variables $q_{j,i}$ satisfying $i+j \equiv 1+k \mod n$.
An example of a snake path is shown in Figure \ref{fig:clR14}.

\begin{figure}[ht]
\scalebox{0.5}{\input{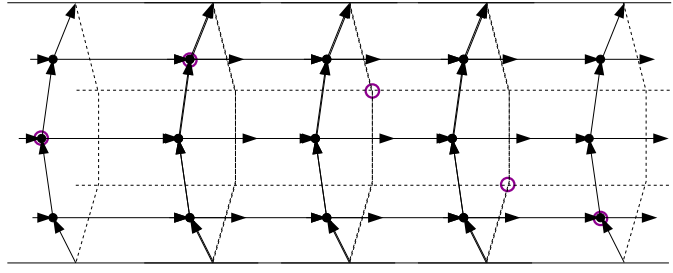}}
    \caption{Circled vertices form a snake path in $N_{5,m}$.}
    \label{fig:clR14}
\end{figure}

\begin{definition}\label{def:Q}
Let $\mathcal{Q}_\e = \QQ_\e\langle \q_1,\q_2,\ldots,\q_m \rangle$ be the skew field of the quantum torus over $\Z[\e,\e^{-1}]$ generated by 
$q_{j,i}~(1 \leq j \leq m, ~i \in \Z/n\Z)$ with the commutation relations:
\begin{align*}
  q_{ji} q_{j'i'} = 
  \begin{cases}
  \e \, q_{j'i'} q_{ji} & q_{ji} \in S_{k+1}, q_{j'i'} \in S_{k}, j \leq j' 
  \\
  \e^{-1} q_{j'i'} q_{ji} & q_{ji} \in S_{k+1}, q_{j'i'} \in S_{k}, j > j'
  \\ 
  \e^{-2} q_{j'i'} q_{ji} & q_{ji}, q_{j'i'} \in S_{k}, j < j'
  \\
  \e^{2} q_{j'i'} q_{ji} & q_{ji}, q_{j'i'} \in S_{k}, j > j' 
  \\
  q_{j'i'} q_{ji} & \text{otherwise}.
  \end{cases}
\end{align*}
\end{definition}
Note that the commutation relations restricted to the variables $\q_j$ and $\q_{j+1}$ are the same as those for $\p$ and $\q$ in $\QQ_\e\langle \p,\q\rangle$.  These relations were in part inspired by \cite{Ber}.

Let $R^\e_j$, for $j = 1,2,\ldots,m-1$, act on the variables $\q_j$ and $\q_{j+1}$ by $R^\e$ and fixing all the other variables in $\Q_\e$.

\begin{thm} \label{thm:commpres}
The map $R^\e_j\colon \Q_\e \to \Q_\e$ is a morphism of skew fields.
Thus if $R^\e(\q_j,\q_{j+1}) = (\q'_j,\q'_{j+1})$ then $(\q_1,\ldots,\q'_j,\q'_{j+1},\ldots,\q_m)$ satisfy identical commutation relations to $(\q_1,\ldots,\q_m)$.  
\end{thm}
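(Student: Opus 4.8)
The plan is to verify directly that the defining formulas \eqref{eq:R-pq} for $R^\e$ respect the commutation relations of the quantum torus $\Z_\e[\p,\q]$. Concretely, I would write $p_i' := R^\e(p_i) = (\kappa_i^\e)^{-1} q_i \kappa_{i+1}^\e$ and $q_i' := R^\e(q_i) = (\kappa_{i+1}^\e)^{-1} p_i \kappa_i^\e$, and check that the $p_i', q_i'$ satisfy exactly the same $\e$-commutation relations as the $p_i, q_i$. Since these relations define the quantum torus (and hence its skew field), establishing them shows that the assignment extends to a well-defined algebra homomorphism on $\QQ_\e\langle \p,\q\rangle$; to see it is an isomorphism of skew fields one notes $R^\e$ is invertible (its inverse having the analogous form, compatible with $R^\e$ being an involution up to the appropriate index shift, mirroring the classical $R$). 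The final sentence about the $\q_j$ then follows because, by Definition \ref{def:Q}, the restriction of the relations in $\Q_\e$ to any adjacent pair $\q_j, \q_{j+1}$ is identical to the $\p,\q$ relations, so $R^\e_j$ inherits the morphism property from $R^\e$ and the images $\q_j', \q_{j+1}'$ inherit the same relations.

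The key technical input is the collection of commutation relations between the monomials $\kappa_i^\e$ and the generators $p_j, q_j$. First I would record, using the definition \eqref{eq:q-kappa} of $\kappa_i^\e$ as a sum of $\e$-ordered monomials in the $p$'s and $q$'s, the commutation relation of each individual monomial summand with a fixed $p_j$ or $q_j$. The point is that each summand $p_{i-1}\cdots p_{i-j}\, q_{i-j-2}\cdots q_{i-n}$ is a product whose total $\e$-weight against $p_j$ (or $q_j$) is controlled by counting nearest-neighbor index interactions; I expect that all summands of a given $\kappa_i^\e$ commute with a fixed $p_j$ (resp. $q_j$) by the \emph{same} power of $\e$, so that $\kappa_i^\e$ is itself $\e$-central up to a scalar against each generator: schematically $\kappa_i^\e\, p_j = \e^{c} p_j\, \kappa_i^\e$ and $\kappa_i^\e\, q_j = \e^{d} q_j\, \kappa_i^\e$ for integers $c,d$ depending only on $i$ and $j$. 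Proving this homogeneity of $\e$-weight across the summands is the crux; it is a finite but delicate bookkeeping of the index-adjacency rules in \S\ref{ssec:qtorus}, and it is where the specific shape of $\kappa_i^\e$ (the telescoping ranges of $p$- and $q$-indices) is essential.

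Once these scalar commutation relations $[\kappa_i^\e, p_j]_\e$ and $[\kappa_i^\e, q_j]_\e$ are in hand, the verification becomes mechanical: to compute, say, $p_i' q_j'$ versus $q_j' p_i'$, one substitutes the products $(\kappa_i^\e)^{-1} q_i \kappa_{i+1}^\e$ and $(\kappa_{j+1}^\e)^{-1} p_j \kappa_j^\e$, and slides the $\kappa$-factors past one another and past the generators $q_i, p_j$ using the scalar relations, collecting the accumulated powers of $\e$. The claim is that the net power of $\e$ matches the prescribed relation for $p_i$ against $q_j$ in \S\ref{ssec:qtorus} (and similarly for the $p'$--$p'$ and $q'$--$q'$ relations). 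I would organize this as a case analysis on $j - i \bmod n$ following the three-case structure of the $p$--$q$ relation, checking that the $\e$-exponents cancel correctly in each case. The main obstacle is ensuring the $\e$-weight bookkeeping is uniform and sign-correct; I would recommend isolating the lemma ``$\kappa_i^\e$ $\e$-commutes by a scalar with each generator, with explicitly computed exponent'' as a standalone statement, since every subsequent computation reduces to substituting those exponents and confirming they telescope to the desired relation.
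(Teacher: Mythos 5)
Your proposed key lemma --- that $\kappa_i^\e$ is $\e$-central up to a scalar against \emph{every} generator, $\kappa_i^\e\, p_k = \e^{c}\, p_k\, \kappa_i^\e$ and $\kappa_i^\e\, q_k = \e^{d}\, q_k\, \kappa_i^\e$ --- is false, and it is false exactly where the theorem is hard: against the generators $\p,\q$ of the two columns being transformed. Take $n=4$, so that
$$\kappa_1^\e = q_3q_2q_1 + p_4q_2q_1 + p_4p_3q_1 + p_4p_3p_2.$$
Writing $mm' = \e^{\alpha(m,m')}m'm$, the four summands satisfy $\alpha(\,\cdot\,,q_2) = 0,\,0,\,-2,\,0$ respectively (for the third: $\alpha(p_4,q_2)=+1$, $\alpha(p_3,q_2)=-2$, $\alpha(q_1,q_2)=-1$), and $\alpha(\,\cdot\,,p_1) = -2,\,-2,\,-2,\,0$. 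So no single power of $\e$ works, and the "mechanical sliding" step of your plan cannot even get started for the relations among $\q_j',\q_{j+1}'$ themselves (nor for sliding one $\kappa$ past another, which your substitution also requires). The paper makes this failure explicit: its proof records the \emph{inhomogeneous} identities
$$p_i \kappa_i^\e = \e^2 \kappa_i^\e p_i + (1-\e^2)\, p_{i-1}p_{i-2}\cdots p_i, \qquad q_i \kappa_i^\e = \e^{-1} \kappa_i^\e q_i + (\e-\e^{-1})\, p_{i-1}\cdots p_{i+1} q_i,$$
whose additive correction terms no scalar-commutation bookkeeping can reproduce.

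What is true --- and what the paper uses --- is scalar commutation of $\kappa_i^\e$ against variables in the \emph{other} columns $\q_k$, $k\neq j,j+1$: there the exponent depends only on snake-path membership, and every summand of $\kappa_i^\e$ occupies the same $n-1$ snake paths (one factor each), so all summands carry the same weight, equal to that of $p_{i+1}^{-1}$ or $q_i^{-1}$. This handles the cross-column relations, and that part of your plan is sound. For the relations inside the two transformed columns the paper proceeds differently: (a) although $\kappa_i^\e$ does not $\e$-commute with $p_i$ or $q_i$ separately, it does with their product, $q_ip_i\cdot \kappa_i^\e = \e\,\kappa_i^\e\cdot q_ip_i$ and $p_iq_i\,\kappa_{i+1}^\e = \e^{-1}\kappa_{i+1}^\e\, p_iq_i$ \eqref{eq:pq-kappa}, which yields $p_i'q_i' = \e\, q_ip_i$ and $q_i'p_i' = \e^{-1} p_iq_i$, hence $\alpha(p_i',q_i')=\alpha(p_i,q_i)$; (b) all the remaining relations are obtained \emph{indirectly}, via the compatibility $R_j^\e\circ\phi_\e = \phi_\e\circ R_{M_j}^\e$ of Theorem \ref{thm:psi-RR}: Fock--Goncharov quantum $y$-mutations are automatically morphisms of quantum tori, so $\alpha(y'_{\cdot},y'_{\cdot})=\alpha(y_{\cdot},y_{\cdot})$ comes for free, and one solves the resulting linear relations among the $\alpha(q',q')$ to recover them one by one. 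To repair your direct approach you would have to replace the false lemma by identities of this combined-product type and find a substitute for step (b); as written, the crux of the proposal fails.
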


\begin{thm}\label{thm:q-qYBR}
We have $R^\e_j R^\e_{j+1} R^\e_j = R^\e_{j+1} R^\e_j R^\e_{j+1}$ and $(R_j^\e)^2 = \id$.  Furthermore, for $i,j$ such that $|i-j|>1$ we have  $R_i^\e R_j^\e = R_j^\e R_i^\e$.  Thus $R^\e_j$ generates an action of $\mS_m$ on $\Q_\e$.
\end{thm}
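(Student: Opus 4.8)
The plan is to reduce all three relations --- the involution $(R^\e_j)^2=\id$, the commutation $R^\e_iR^\e_j=R^\e_jR^\e_i$ for $|i-j|>1$, and the braid relation $R^\e_jR^\e_{j+1}R^\e_j=R^\e_{j+1}R^\e_jR^\e_{j+1}$ --- to the already-established \emph{classical} relations for the cluster $R$-matrix in Theorem~\ref{thm:Rbraid}, using the period equivalence of Theorem~\ref{thm:period} to lift them to the quantum $y$-seed level, and then transporting them to $R^\e_j$ acting on $\Q_\e$ through a quantum $\tau$-function intertwiner. Note first that Theorem~\ref{thm:commpres} guarantees each $R^\e_j$ is a genuine endomorphism of the skew field $\Q_\e$, so that the composites in question are well defined. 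The far commutation is then immediate and does not require the cluster machinery: since $R^\e_i$ alters only $\q_i,\q_{i+1}$ and $R^\e_j$ alters only $\q_j,\q_{j+1}$, for $|i-j|>1$ the two maps act on disjoint blocks of generators, and generators in non-adjacent columns commute in $\Q_\e$ by Definition~\ref{def:Q}; hence $R^\e_iR^\e_j=R^\e_jR^\e_i$. This leaves the involution and the braid relation, both local to three consecutive columns.

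The core step is to set up a quantum intertwiner. Classically, the transformation $R$ underlying $R^\e_j$ is intertwined with the cluster $R$-matrix $\tR_{M_j}$ via the substitution $\iota_m$ of \eqref{eq:xX-q}, under which each $q_{j,i}$ becomes a Laurent monomial in the cluster and frozen variables of $\tilde Q'_{n,m}$. I would construct the quantum analogue: an embedding $\phi\colon\Y'_\e\to\Q_\e$ of skew fields sending the Fock--Goncharov $y$-variables of $\tilde Q'_{n,m}$ to precisely the monomials appearing in the quantized \eqref{eq:xX-q}. The commutation relations of Definition~\ref{def:Q} are engineered exactly so that these images $\e$-commute according to the quiver relations $y_iy_j=\e^{2b_{ji}}y_jy_i$ of \S\ref{ssec:FG}; in particular the images of $\phi$ contain all the generators $q_{j,i}$, so $\phi$ is an isomorphism of $\Y'_\e$ onto a subfield of $\Q_\e$ generating it. Under $\phi$, the quantum geometric $R$-matrix $R^\e_j$ of \eqref{eq:R-pq} corresponds to the quantum cluster $R$-matrix induced by the mutation sequence $\tcR_{M,j}$ of \eqref{eq:tR-op}, i.e. $R^\e_j\circ\phi=\phi\circ(\text{quantum cluster }R\text{-matrix})$.

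Granting this intertwiner, the argument proceeds as follows. By Theorem~\ref{thm:Rbraid} the two mutation sequences realizing $\tR_{M_i}\tR_{M_{i+1}}\tR_{M_i}$ and $\tR_{M_{i+1}}\tR_{M_i}\tR_{M_{i+1}}$, and likewise the sequences for $\tR_{M_i}^2$ and $\id$, induce equal transformations of the classical seed; equivalently, in the language of \S\ref{ssec:periods} they determine the same $\sigma$-period. By the equivalence $(1)\Leftrightarrow(3)$ of Theorem~\ref{thm:period}, together with Theorem~\ref{thm:periodxy} for the $x$-data, these identities then hold verbatim for the Fock--Goncharov quantum $y$-seeds, so the quantum cluster $R$-matrix satisfies the braid relation and squares to the identity. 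Transporting along the injective $\phi$ and using that the $q_{j,i}$ lie in its image and generate $\Q_\e$, two field endomorphisms of $\Q_\e$ agreeing on the $\phi$-images of the generators agree everywhere; hence both $(R^\e_j)^2=\id$ and the braid relation descend to $R^\e_j$ on all of $\Q_\e$, completing the proof that $R^\e_j$ generates an $\mS_m$-action.

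The main obstacle is the construction and verification of the quantum intertwiner $\phi$. One must check that the noncommutative substitution of the quantized \eqref{eq:xX-q} is well defined --- that it carries the defining relations of $\Y'_\e$ to valid relations in $\Q_\e$, matching Definition~\ref{def:Q} including every sign of the $\e$-powers and the correct handling of the remaining frozen variables --- and that it genuinely conjugates $R^\e_j$ into $\tcR_{M,j}$. This is the quantum counterpart of Theorem~\ref{thm:iota-RR} and requires the noncommutative bookkeeping to be carried out with care. Once this dictionary is established, however, Theorem~\ref{thm:period} does the essential work, converting a hard noncommutative braid identity into the already-proved classical combinatorial period statement of Theorem~\ref{thm:Rbraid}.
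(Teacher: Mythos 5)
Your overall architecture is the same as the paper's: transport the quantum cluster Yang--Baxter relation (Theorem~\ref{thm:q-YBR}) through the intertwiner $\phi_\e\colon \Y'_\e \to \Q_\e$ of Proposition~\ref{prop:y-q} and Theorem~\ref{thm:psi-RR}. But the step on which your transport rests is false: the image of $\phi_\e$ does \emph{not} contain the generators $q_{j,i}$ and does \emph{not} generate $\Q_\e$. Indeed $\phi_\e(y_{ji}) = \e^{-1}\,q_{j,i}^{-1}q_{j+1,i-1}$, so $\Y'_\e$ has only $(m-1)n$ generators, all mapping to ``ratios''; at $\e=1$ every element of the image is invariant under the global rescaling $q_{j,i}\mapsto \lambda q_{j,i}$, so no $q_{j,i}$ lies in the image (equivalently, compare transcendence degrees $(m-1)n < mn$). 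Consequently your concluding principle --- two skew-field endomorphisms of $\Q_\e$ that agree on the $\phi_\e$-images of the generators agree everywhere --- does not apply. The paper closes exactly this gap: the braid identity holds on $\phi_\e(\Y'_\e)$ by Theorems~\ref{thm:psi-RR} and \ref{thm:q-YBR}, and it holds trivially on the untouched generators $\q_1,\ldots,\q_{j-1},\q_{j+3},\ldots,\q_m$; for $2\le j\le m-3$ these two collections together generate $\Q_\e$ (from $q_{1,i}$ and the ratios one recovers column $2$, then column $3$, and so on), and a locality argument --- the identity involves only $\q_j,\q_{j+1},\q_{j+2}$, whose commutation relations do not depend on $m$ or on the position of $j$ --- extends it to all $j$. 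The same supplement is needed for $(R_j^\e)^2=\id$. Without this generation-plus-locality step your proof establishes the identities only on a proper subfield.

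A second, logical problem: you derive the quantum cluster braid relation from the classical $x$-variable statement Theorem~\ref{thm:Rbraid} via Theorems~\ref{thm:period} and \ref{thm:periodxy}, but Theorem~\ref{thm:periodxy} points the wrong way (tropical $y$-period $\Rightarrow$ $(B,\bx,\boldy)$-period), and the converse is what you would need; it is not available, and in the paper the implication in fact runs in the opposite direction --- Theorem~\ref{thm:Rbraid} is \emph{deduced from} Theorem~\ref{thm:q-YBR}, which is proved by a direct computation with tropical $y$-variables (Lemma~\ref{lem:yRcluster-trop}) combined with the equivalence $(2)\Leftrightarrow(3)$ of Theorem~\ref{thm:period}. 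So either cite Theorem~\ref{thm:q-YBR} as stated or reproduce its tropical computation; do not route through Theorem~\ref{thm:Rbraid}. Finally, a minor point: your justification of the far commutation is wrong as stated --- generators in non-adjacent columns need not commute (variables on the same snake path $\e^{\pm 2}$-commute at any distance, by Definition~\ref{def:Q}) --- though the conclusion is fine, because each $R^\e_k$ fixes every generator outside its two columns and maps those two columns into the subfield they generate, so the two substitutions cannot interfere.
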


See \S\ref{subsec:commpres} and \S\ref{subsec:YBRqR-yR} for the proofs.

\section{Invariants of the quantum geometric $R$-matrix} \label{sec:inv}

We study the invariants
of the quantum geometric $R$-matrix action on the network $N_{n,m}$. 
For this purpose, we use the notion of {\it highway paths} on $N_{n,m}$,
as studied in \cite{LP}, and we define quantum analogues of the loop symmetric functions of \cite{TP}.

In this section only, we use the notation $q_j^{(i)}:= q_{j,i}$ to more closely match the usual notation in the theory of (loop) symmetric functions.
 
\subsection{Highway Measurements}\label{ssec:highway}

Consider the class of networks which are 
\begin{itemize}
 \item directed graphs embedded on an orientable surface, where edges intersect only at vertices;
 \item have boundary vertices of degree $1$, either sources or sinks depending on the direction of the adjacent edge;
 \item have internal vertices of degree $4$, with exactly $2$ incoming edges and $2$ outgoing edges, and incoming and outgoing edges not interlacing;
 \item have a weight assigned to each vertex, and the weights do not necessarily commute. 
\end{itemize}

The cylindric network $N_{n,m}$ (see \S\ref{sec:geom}) with weights $q_{j}^{(i)}$ is an example of such a network. 
Given such a network $N$, we consider {\it {highway measurements}} $M(N)$ in these networks defined as follows.  We fix ordered collections of source boundary vertices $s_1,s_2,\ldots,s_r$ and sink boundary vertices $t_1,t_2,\ldots,t_r$ of equal cardinality, and for each $i = 1,2,\ldots,r$, we fix a homology class $[\gamma_i]$ of a path from $s_i$ to $t_i$.  We then take the weight generating function
$$
M(N) = M_{s,t,[\gamma]}(N) := \sum_{\P} \wt(\P)
$$
of families of paths $\P = (P_1,P_2,\ldots,P_r)$, where $P_i$ goes from $s_i$ to $t_i$, has homology class $[P_i] = [\gamma_i]$, and the paths
are required to be {\it non-intersecting}, that is, they may not share common edges.  
\begin{figure}[ht]
\scalebox{0.8}{\input{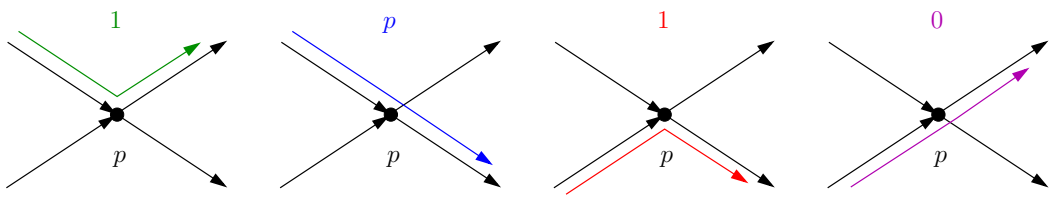}}
    \caption{The four ways a path can pass through a vertex of a network, and the corresponding weight it picks up.}
    \label{fig:clR6}
\end{figure}
The weight assigned to each path $P$ is the product of the weights picked up at each of its vertices, depending on one of the four ways the path passes through the vertex, as shown in Figure \ref{fig:clR6}. 
For each path $P$ we take the product in the order the vertices are visited to obtain the weight $\wt(P)$.  The weight of a family $\P = (P_1,P_2,\ldots,P_r)$ of paths is the product $\wt(\P) = \wt(P_1) \wt(P_2) \cdots \wt(P_r)$.  Note that the ordering of the paths is important since we allow weights to be noncommutative.  We call a path $P$ a {\it highway path} if it has nonzero weight.

In \S\ref{ssec:elem}, we consider path families on $N_{n,m}$ consisting of a single path.  In \S\ref{ssec:loopSchur}, we consider path families on the universal cover $\tilde N_{n,m}$.  In \S\ref{subsec:q-cylSchur}, we consider arbitrary path families on $N_{n,m}$.  In the latter cases, we will impose conditions on the ordering of paths in path families.

%

\subsection{Quantum loop elementary symmetric functions}\label{ssec:elem}

Let $\Q_\e$ be the skew field defined in Definition \ref{def:Q}, with distinguished noncommuting generators now denoted $q_j^{(i)}$.  
Denote by $\Q_\e^{\mS_m}$ the invariants of the quantum $R$-matrix action on $\Q_\e$ from Theorem \ref{thm:q-qYBR}.

For $k = 1,2,\ldots,m$ and $r \in \Z/n\Z$, define the quantum loop elementary symmetric function 
$$
e_k^{(r)}(\q_1,\ldots,\q_m):= \sum_{1 \leq j_1<j_2<\cdots<j_k\leq m} q^{(r+1-j_1)}_{j_1} q^{(r+2-j_2)}_{j_2} \cdots q^{(r+k-j_k)}_{j_k} \in \Q_\e.
$$
For example, with $n = 3$ and $m = 4$, we have
\begin{align*}
e_1^{(1)} &= q_1^{(1)} + q_2^{(3)} + q_3^{(2)} + q_4^{(1)} \\
e_2^{(1)} &= q_1^{(1)}q_2^{(1)} + q_1^{(1)}q_3^{(3)}+ q_1^{(1)}q_4^{(2)} + q_2^{(3)}q_3^{(3)} + q_2^{(3)}q_4^{(2)} + q_3^{(2)}q_4^{(2)} \\ 
e_3^{(1)} &= q_1^{(1)} q_2^{(1)} q_3^{(1)} + q_1^{(1)} q_2^{(1)} q_4^{(3)} +q_1^{(1)} q_3^{(3)} q_4^{(3)} +q_2^{(3)} q_3^{(3)} q_4^{(3)} \\
e_4^{(1)} &=q_1^{(1)} q_2^{(1)} q_3^{(1)} q_4^{(1)}.
\end{align*}
By convention, we have $e_0^{(r)} = 1$ and $e_{s}^{(r)} = 0$ for $s < 0 $ or $s > m$.  The quantum loop elementary symmetric function $e_k^{(r)}$ can be interpreted as a highway measurement: it is the weight generating function of paths in $N_{n,m}$ with a fixed sink and fixed source.  We shall show that $e_k^{(r)} \in \Q_\e^{\mS_m}$.

For two matrices $A = (a_{ij})$ and $B = (b_{ij})$ whose entries lie in a possibly noncommutative ring, we define the product $AB$ by $(AB)_{ij} = \sum_k a_{ij} b_{jk}$, where the ordering of the factors in the product $a_{ij} b_{jk}$ must be kept. 

Let 
$$
M(\bq;t) = \begin{pmatrix} q_1 & 0 & 0 & t \\
1 & q_2 & 0 & 0 \\
0 & \ddots & \ddots &0 \\
0 & 0 & 1 & q_n
\end{pmatrix},
$$
where $t$ is a spectral parameter.  Let $\M(t):= M(\q_1;t)M(\q_2;t) \cdots M(\q_m;t)$.  The following observation follows from the definitions.
\begin{lem}\label{lem:e}
We have
$$
\M_{i,j}(t) = \sum_{s \geq 0} e^{(i)}_{j-i+m-sn} \, t^s.
$$
\end{lem}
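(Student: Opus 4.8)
The plan is to expand the matrix product $\M(t) = M(\q_1;t)\cdots M(\q_m;t)$ entrywise as a weighted sum over lattice paths on the cyclically-indexed set $\{1,\dots,n\}$, and then to match each path with a single monomial in the definition of $e^{(i)}_k$. By the definition of the ordered matrix product,
\[
\M_{i,j}(t) = \sum_{i = k_0,\, k_1,\, \ldots,\, k_m = j}\ \prod_{\ell=1}^{m} M(\q_\ell;t)_{k_{\ell-1},k_\ell},
\]
where the factors are taken in increasing order of $\ell$ and each $k_\ell \in \{1,\dots,n\}$. The only nonzero entries of each $M(\q_\ell;t)$ are the diagonal entries $M(\q_\ell;t)_{a,a} = q^{(a)}_\ell$, the subdiagonal entries $M(\q_\ell;t)_{a,a-1} = 1$ for $2 \le a \le n$, and the corner entry $M(\q_\ell;t)_{1,n} = t$. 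Reading the index modulo $n$, each of the $m$ factors therefore either \emph{stays} (keeps the index $a$, contributing weight $q^{(a)}_\ell$) or \emph{descends} (sends $a \mapsto a-1 \bmod n$, contributing weight $1$, except the wraparound $1 \mapsto n$ which contributes $t$). Thus a contributing path is determined uniquely by the starting index $i$ together with the set $\{j_1 < j_2 < \cdots < j_k\} \subseteq \{1,\dots,m\}$ of indices of the factors at which it stays.

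Next I would read off the weight and the subscript. Since $t$ is a central spectral parameter, all its occurrences may be collected into a single power $t^{s}$, where $s$ counts the wraparound moves. Lifting the index to $\Z$, a path with $d := m-k$ descending steps travels from $i$ to the integer $i-d$; as its endpoint equals $j$ modulo $n$ with $j \in \{1,\dots,n\}$, we get the identity $i - d = j - s n$, which simultaneously records $s$ as the number of wraparounds and rearranges to $k = j - i + m - s n$. Hence each path has exactly $k = j-i+m-sn$ stays for some $s \ge 0$, which accounts for the subscript in the statement.

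It then remains to verify the superscripts and the ordering. Among the first $j_a-1$ factors exactly $a-1$ are stays, so the running index just before the factor $j_a$ equals $i - (j_a - a) = i + a - j_a \bmod n$, and the weight picked up there is $q^{(i+a-j_a)}_{j_a}$. Reading the stays from left to right—which matches the left-to-right order of the matrix product and so respects the noncommutativity of the $q^{(r)}_j$—the weight of the path is
\[
q^{(i+1-j_1)}_{j_1}\, q^{(i+2-j_2)}_{j_2} \cdots q^{(i+k-j_k)}_{j_k}\, t^{s},
\]
which is exactly the monomial indexed by $j_1 < \cdots < j_k$ in $e^{(i)}_{k}$ times $t^s$. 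Summing over all such subsets for fixed $s$ gives $e^{(i)}_{j-i+m-sn}\, t^s$, and summing over $s \ge 0$ yields the claim.

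The only points requiring genuine care, rather than bookkeeping, are that the left-to-right order of the chosen diagonal factors along a path coincides with the order prescribed in $e^{(i)}_k$ (true because matrix multiplication preserves factor order), that the corner weights $t$ may be freely extracted as $t^s$ (true because $t$ is central), and the modular identity $k = j-i+m-sn$ that ties the number of descents to the power of $t$ via the wraparound. Everything else is a direct unwinding of the definitions, consistent with the lemma being stated as an observation following from them.
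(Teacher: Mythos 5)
Your proof is correct and is essentially the argument the paper has in mind: the paper states this lemma as an observation "following from the definitions" (and elsewhere interprets $e_k^{(r)}$ as a weight generating function of paths with fixed source and sink), which is precisely your lattice-path expansion of the ordered matrix product. Your bookkeeping all checks out — the stay/descend dichotomy, the identity $k = j-i+m-sn$ tying descents to the power of the central parameter $t$, the superscripts $q_{j_a}^{(i+a-j_a)}$, and the compatibility of left-to-right factor order with the noncommutative product.
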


Write $(\p',\q') := R^\e(\p,\q)$.  

\begin{lem}\label{lem:RpqM}
We have $M(\p;t)M(\q;t) = M(\p';t) M(\q';t)$.
\end{lem}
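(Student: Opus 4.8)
The plan is to prove the matrix identity entry by entry, reduce it to two families of scalar identities in the quantum torus $\Z_\e[\p,\q]$, and then verify those using the explicit shape of $\kappa_i^\e$ together with the commutation relations of \S\ref{ssec:qtorus}. Throughout I treat the spectral parameter $t$ as central. First I would record the shape of the product. Each $M(\q;t)$ is cyclic lower bidiagonal: its only nonzero entries are $M(\q;t)_{i,i}=q_i$, the subdiagonal entries $M(\q;t)_{i,i-1}=1$ for $i\neq 1$, and the corner $M(\q;t)_{1,n}=t$ (i.e.\ the cyclic subdiagonal entry that wraps around). Multiplying two such matrices and collecting terms, $M(\p;t)M(\q;t)$ is supported on the cyclic diagonal, subdiagonal, and sub-subdiagonal, with diagonal entries $p_iq_i$, subdiagonal entries $p_i+q_{i-1}$ (and the corner value $t(p_1+q_n)$ at position $(1,n)$), and sub-subdiagonal entries equal to $1$ (or $t$ in the two wrap-around positions $(1,n-1)$ and $(2,n)$).

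Since $M(\p';t)M(\q';t)$ has exactly the same band structure, the sub-subdiagonal entries involve no $p$ or $q$ and agree automatically, and the whole identity reduces to the scalar conditions, for all $i\in\Z/n\Z$,
$$p_i'q_i'=p_iq_i, \qquad p_i'+q_{i-1}'=p_i+q_{i-1},$$
the first from the diagonal, the second from the subdiagonal (the corner $(1,n)$ gives $p_1'+q_n'=p_1+q_n$ after cancelling the central $t$). Next I would substitute the definitions $p_i'=(\kappa_i^\e)^{-1}q_i\kappa_{i+1}^\e$ and $q_i'=(\kappa_{i+1}^\e)^{-1}p_i\kappa_i^\e$. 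The diagonal condition becomes $(\kappa_i^\e)^{-1}q_ip_i\kappa_i^\e=p_iq_i$; using $q_ip_i=\e^{-1}p_iq_i$ this is equivalent to the commutation relation $(p_iq_i)\,\kappa_i^\e=\e\,\kappa_i^\e\,(p_iq_i)$. Since both $p_i'$ and $q_{i-1}'=(\kappa_i^\e)^{-1}p_{i-1}\kappa_{i-1}^\e$ carry $(\kappa_i^\e)^{-1}$ on the left, the subdiagonal condition becomes the recursion $q_i\kappa_{i+1}^\e+p_{i-1}\kappa_{i-1}^\e=\kappa_i^\e\,(p_i+q_{i-1})$. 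By the $\Z/n\Z$-shift invariance of the commutation relations and of the definitions of $\kappa_i^\e$ and $R^\e$, it suffices to establish these two identities for a single $i$.

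To prove the first identity I would check that each monomial $\mathbf{m}_j=p_{i-1}\cdots p_{i-j}\,q_{i-j-2}\cdots q_{i-n}$ of $\kappa_i^\e$ satisfies $(p_iq_i)\,\mathbf{m}_j=\e\,\mathbf{m}_j\,(p_iq_i)$, by tallying the $\e$-powers produced as $p_i$ and $q_i$ are moved past each factor of $\mathbf{m}_j$; the contributions of $p_i$ and of $q_i$ against a given factor are arranged to cancel except for one surviving $\e$. For the recursion I would expand both sides, group monomials by which letters occur, and verify that corresponding monomials coincide after reordering, with matching $\e$-prefactors: the extreme pure-$p$ and pure-$q$ terms match verbatim, while the mixed terms match after a single application of the $\e$-commutation relations (as in the $n=3$, $i=1$ case, where the three nontrivial matchings each reduce to one swap).

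The main obstacle is the $\e$-bookkeeping in these two identities in the noncommutative cyclic setting. Because indices are read modulo $n$, the trailing factor $q_{i-n}=q_i$ (and the analogous wrap-arounds) coincides with one of the letters being commuted, and it is precisely at these coincidences that an $\e$-power can be mistracked. The recursion is the more delicate of the two, since it is a sum identity and one must check that the reindexed monomials on the two sides pair up exactly, orderings and scalar $\e$-prefactors included; verifying that the net powers balance after the cyclic wrap is where the real content lies, the rest being the routine entrywise reduction described above.
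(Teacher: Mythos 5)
Your proposal is correct and follows essentially the same route as the paper: the paper's proof is exactly the entrywise reduction to the two scalar identities $p_iq_i = p_i'q_i'$ and $p_{i+1}+q_i = p_{i+1}'+q_i'$, which it asserts follow from \eqref{eq:R-pq} without further detail. Your additional work—recasting these as $(p_iq_i)\,\kappa_i^\e = \e\,\kappa_i^\e\,(p_iq_i)$ and the recursion $q_i\kappa_{i+1}^\e+p_{i-1}\kappa_{i-1}^\e=\kappa_i^\e(p_i+q_{i-1})$—is a correct elaboration of what the paper leaves implicit (the first identity is in fact established in \S\ref{subsec:commpres}).
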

\begin{proof}
We check that $p_i q_i = p_i' q_i'$ and 
$p_{i+1} + q_i = p_{i+1}' + q_i'$ follow from \eqref{eq:R-pq}.	
\end{proof}

\begin{cor} \label{cor:elem}
For $k >0$ and $r \in \Z/m\Z$, we have $e_k^{(r)}(\q_1,\ldots,\q_m) \in \Q_\e^{\mS_m}$.
\end{cor}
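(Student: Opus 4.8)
The plan is to deduce the corollary from Lemma~\ref{lem:RpqM} together with Lemma~\ref{lem:e}, bootstrapping from the two-cycle case to the full symmetric group action. Since $e_k^{(r)}$ depends only on finitely many of the generators, and the generators $R^\e_j$ for $j=1,\ldots,m-1$ generate $\mS_m$ (Theorem~\ref{thm:q-qYBR}), it suffices to show that each $R^\e_j$ fixes every $e_k^{(r)}$. Because $R^\e_j$ acts only on the pair $(\q_j,\q_{j+1})$, I would reduce the claim to a statement about how the product of the two matrix factors $M(\q_j;t)M(\q_{j+1};t)$ transforms.

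First I would record the key algebraic fact: by Lemma~\ref{lem:e}, the generating matrix $\M(t)=M(\q_1;t)\cdots M(\q_m;t)$ has entries whose coefficients (in the spectral parameter $t$) are exactly the $e_k^{(r)}$, up to the indexing $\M_{i,j}(t)=\sum_{s\ge 0} e^{(i)}_{j-i+m-sn}\,t^s$. So it is enough to prove that $\M(t)$ is invariant under each $R^\e_j$. Now write $\M(t)$ by grouping the $j$-th and $(j{+}1)$-th factors: applying $R^\e_j$ replaces the block $M(\q_j;t)M(\q_{j+1};t)$ by $M(\q_j';t)M(\q_{j+1}';t)$, where $(\q_j',\q_{j+1}')=R^\e(\q_j,\q_{j+1})$, and leaves every other factor untouched. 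By Lemma~\ref{lem:RpqM} (applied with $\p=\q_j$, $\q=\q_{j+1}$, using that the commutation relations restricted to $\q_j,\q_{j+1}$ match those of $\p,\q$, as noted after Definition~\ref{def:Q}), this block is unchanged: $M(\q_j;t)M(\q_{j+1};t)=M(\q_j';t)M(\q_{j+1}';t)$. Hence $R^\e_j(\M(t))=\M(t)$ entrywise.

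The step requiring care is the following subtlety about \emph{where} the matrix identity is taking place. Lemma~\ref{lem:RpqM} is an equality of $n\times n$ matrices whose entries live in the quantum torus on $\p,\q$; to splice it into the middle of $\M(t)$ I must check that the untouched outer factors $M(\q_{j'};t)$ for $j'\notin\{j,j+1\}$ genuinely commute past the substitution, i.e.\ that multiplying on the left and right by fixed matrices preserves the equality. This is immediate once Theorem~\ref{thm:commpres} is invoked: $R^\e_j$ is a morphism of skew fields fixing all $q_{j',i}$ with $j'\notin\{j,j+1\}$, so it acts as a ring endomorphism that sends the product $\M(t)$ to the product of the images of its factors, and only the $j,j{+}1$ block is altered. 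The noncommutative matrix product convention introduced just before Lemma~\ref{lem:e} is compatible with this because each $e_k^{(r)}$ is extracted as a single entry, not a sum over reorderings.

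I expect the main obstacle to be purely bookkeeping rather than conceptual: matching the index $r$ appearing in $e_k^{(r)}$ (which is taken mod $n$ in the definition, but mod $m$ in the statement of the corollary) against the entry index $i$ in $\M_{i,j}(t)$, and confirming that the finitely many coefficients $e_k^{(r)}$ truly exhaust all entries of $\M(t)$ across the relevant range of $s$. Once the identification of Lemma~\ref{lem:e} is pinned down and the invariance $R^\e_j(\M(t))=\M(t)$ is established, the conclusion $e_k^{(r)}\in\Q_\e^{\mS_m}$ is immediate: every generator $R^\e_j$ of $\mS_m$ fixes each coefficient of $\M(t)$, hence fixes each $e_k^{(r)}$.
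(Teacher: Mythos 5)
Your proposal is correct and follows essentially the same route as the paper: the paper's proof is exactly the observation that Lemma~\ref{lem:RpqM} makes the entries of $\M(t)$ invariant under each $R^\e_j$ (since only the $j,j{+}1$ block of the product changes and that block is preserved), and then Lemma~\ref{lem:e} identifies the coefficients of those entries with the $e_k^{(r)}$. Your additional care about splicing the block identity into the full product via Theorem~\ref{thm:commpres}, and the remark that $r\in\Z/m\Z$ in the statement should be read as $\Z/n\Z$, are just fuller bookkeeping of the same argument.
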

\begin{proof}
By Lemma \ref{lem:RpqM}, the entries of $\M(t)$ belong to $\Q_\e^{\mS_m}$.  The result follows from Lemma \ref{lem:e}.
\end{proof}

We remark that the classical ($\e =1$) case of $e_k^{(r)}$ was essentially   
studied in \cite[Section 2]{Yam01}.

\subsection{Yang-Baxter move}
We now consider certain {\it {local moves}} that do not change the highway measurements. The most important one is the {\it {Yang-Baxter move}} shown in Figure \ref{fig:clR7}. 
\begin{figure}[ht]
\scalebox{0.8}{\input{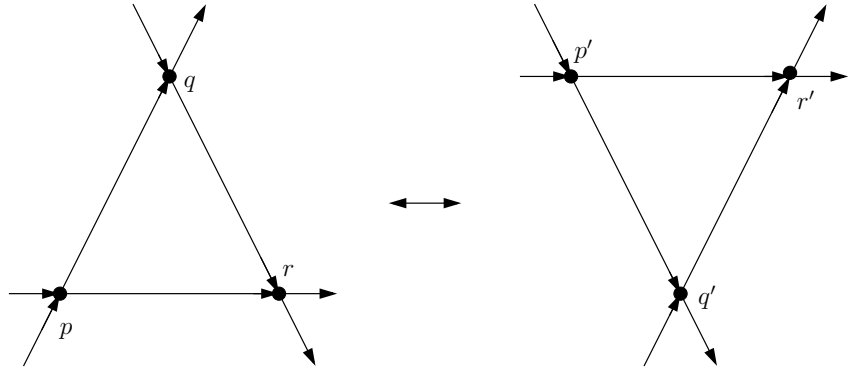}}
    \caption{The Yang-Baxter move.}
    \label{fig:clR7}
\end{figure}

Assume $p,q,r$ are noncommutative parameters which nevertheless satisfy the relation $$pqr=rqp.$$  Define the transformation of parameters in the Yang-Baxter move as follows:\begin{equation}\label{eq:YB}
\begin{cases}
p' = qr(p+r)^{-1} = (p+r)^{-1}rq,\\
q' = p+r,\\
r' = qp(p+r)^{-1} = (p+r)^{-1}pq.
\end{cases}
\end{equation}
In the classical $\e = 1$ case, this Yang-Baxter move was studied in \cite{LP}.  In our setting it can be compared to \cite{Ber}.

\begin{prop} \label{prop:YB} \
\begin{enumerate}
\item 
The Yang-Baxter transformation preserves highway measurements locally, that is, all highway measurements of the two networks shown in Figure \ref{fig:clR7} are identical.
\item
The resulting parameters again satisfy $p'q'r'=r'q'p'$.
\item
The Yang-Baxter transformation is an involution. 
\end{enumerate}
\end{prop}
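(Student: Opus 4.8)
The plan is to dispatch part (1), which carries all the combinatorial content, by reducing to a finite check, and then to derive parts (2) and (3) from a handful of algebraic identities. I would begin with two preliminary observations about \eqref{eq:YB}. First, the two given expressions for $p'$ and for $r'$ agree precisely because of the hypothesis $pqr=rqp$: clearing the factor $(p+r)$ in $qr(p+r)^{-1}=(p+r)^{-1}rq$ gives $(p+r)qr=rq(p+r)$, i.e.\ $pqr=rqp$, and similarly for $r'$. Second, using both forms one reads off directly the conservation laws
\[
q' = p+r, \qquad p'+r' = q, \qquad p'q' = qr, \qquad q'p' = rq, \qquad q'r' = pq, \qquad r'q' = qp .
\]
For instance $p'+r' = q(r+p)(p+r)^{-1} = q$ and $p'q' = qr(p+r)^{-1}(p+r) = qr$. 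These six relations are the only properties of the transformation used below.

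For part (1), the key point is that the two networks in Figure \ref{fig:clR7} coincide outside a disk $D$ containing the three crossings. Consequently, for any fixed sources, sinks, and homology classes, a highway measurement splits as a sum over the boundary data recording which edges of $\partial D$ are traversed, and it suffices to check that the weight generating function of internal (edge-disjoint) highway path families inside $D$ agrees between the two pictures for each such boundary condition. I would organize this finite check by the number of paths crossing $D$. The empty family contributes $1$ on both sides. A single path may traverse $D$ along either of two admissible routes; the two contributions sum to $p+r$ in the left picture and to $q'$ in the right picture, and these agree since $q'=p+r$ (for the opposite orientation the relevant identity is $p'+r'=q$). The two-path configurations are constrained by the local weight rules of Figure \ref{fig:clR6} --- in particular by the forbidden move of weight $0$ --- and, once the vertex weights are read in the order the paths traverse them, collapse to the products $p'q'=qr$, $q'p'=rq$, $q'r'=pq$, and $r'q'=qp$. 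The remaining configuration, in which all three wires carry a path, has a forced routing on each side and matches by virtue of $pqr=rqp$. I expect the main obstacle to be exactly this enumeration: correctly listing the admissible internal routings under the asymmetric highway rule and, because the weights do not commute, keeping careful track of the order in which each path visits the vertices, so that the local generating functions agree term by term rather than merely as commutative polynomials.

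Parts (2) and (3) are then short. For (2), using $p'q'=qr$ and $r'q'=qp$ I would write $p'q'r' = qr\,r'$ and $r'q'p' = qp\,p'$; substituting $r'=(p+r)^{-1}pq$ and $p'=qr(p+r)^{-1}$ and clearing $(p+r)$, the desired equality $p'q'r'=r'q'p'$ reduces once more to $pqr=rqp$, which holds by hypothesis. For (3), I would apply \eqref{eq:YB} a second time to $(p',q',r')$ and use the conservation laws directly: the middle parameter returns as $q''=p'+r'=q$, while $p''=q'r'(p'+r')^{-1}=pq\cdot q^{-1}=p$ and $r''=(p'+r')^{-1}p'q'=q^{-1}\cdot qr=r$. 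Hence the transformation squares to $\id$, as claimed.
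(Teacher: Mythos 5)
Your proposal is correct and takes essentially the same approach as the paper: part (1) is reduced to matching the local weight generating functions of the two pictures, which produces exactly the system of conservation equations ($q'=p+r$, $p'+r'=q$, $p'q'=qr$, $q'r'=pq$, $r'q'=qp$, plus your ordering variant $q'p'=rq$) that the paper lists, and these are then verified directly from \eqref{eq:YB}. The only difference is one of emphasis — the paper declares parts (2) and (3) immediate while you derive them explicitly from the conservation laws, and you organize the local enumeration by path count — but the mathematical content coincides.
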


\begin{proof}
 The only non-trivial part to verify is that the highway measurements are preserved locally. The four essentially different ways to take highway measurements are shown in Figure \ref{fig:clR8}. 
 \begin{figure}[ht]
\scalebox{0.5}{\input{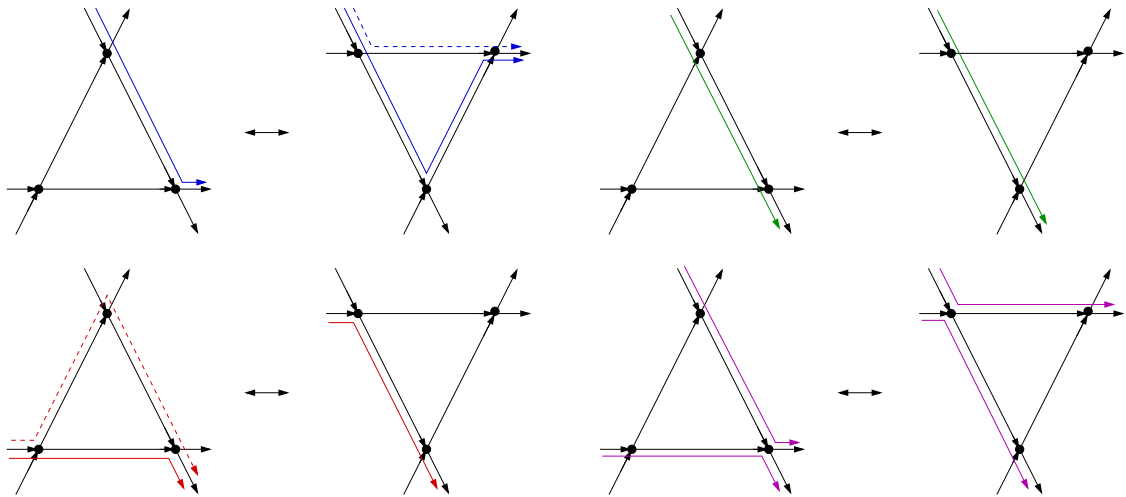}}
    \caption{Non-trivial highway measurements in the Yang-Baxter move.}
    \label{fig:clR8}
\end{figure}
This results in the following system of equations:
$$
\begin{cases}
qr=p'q',\\
p+r=q',\\
q=p'+r',\\
qp=r'q',\\
pq=q'r'.
\end{cases}
$$
Note that we include both equations $qp=r'q'$ and $pq=q'r'$ since we want the measurements to be conserved no matter which of the two orders we choose on the two distinct paths we have in this case. It is easy to check now that the transformation above satisfies all those relations.
\end{proof}

The other local move we want is the {\it {lens creation-annihilation}} move. 
 \begin{figure}[ht]
\scalebox{0.7}{\input{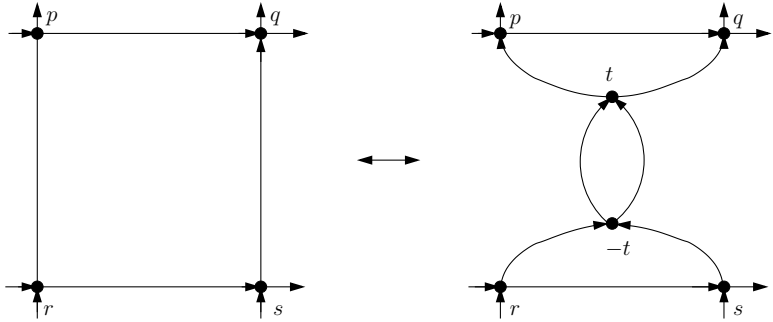}}
    \caption{The lens creation-annihilation move.}
    \label{fig:clR9}
\end{figure}

\begin{prop} \label{prop:lca}
The lens creation-annihilation transformation preserves highway measurements. 
\end{prop}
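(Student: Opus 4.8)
The plan is to mimic the local-verification strategy used for the Yang-Baxter move in Proposition \ref{prop:YB}. Since the lens creation-annihilation move alters the network $N$ only inside a bounded region $D$, leaving unchanged all edges crossing $\partial D$ as well as all vertex weights outside $D$, any highway measurement $M_{s,t,[\gamma]}(N)$ factors as a sum over the boundary data recording which edges of $\partial D$ are used by the path family, each term being the generating function of the global picture outside $D$ times the local generating function of admissible highway configurations inside $D$. It therefore suffices to prove that, for every fixed pattern of used boundary edges of $D$, the local weight generating function inside $D$ is the same for the left-hand network (carrying the four vertex weights $p,q,r,s$) and the right-hand network (the same four vertices together with the lens vertices of weights $t$ and $-t$) of Figure \ref{fig:clR9}.

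First I would enumerate the finitely many admissible boundary patterns, that is, the ways path segments can enter and exit $D$ through the fixed external edges, using the four passage rules of Figure \ref{fig:clR6}; in particular the forbidden turn of weight $0$ eliminates many a priori possible routings. For each such pattern I would then list every interior highway routing on the right-hand network together with its noncommutative weight, organizing the routings according to whether or not they turn into the lens. The core of the argument is the cancellation produced by the opposite weights $t$ and $-t$ on the two lens vertices: a routing that passes straight through the lens region picks up the straight-through weight $1$ at each lens vertex, and hence reproduces exactly the corresponding left-hand routing with its weight unchanged, while the routings that do turn into the lens occur in pairs differing only in whether the turn is taken at the $t$-vertex or at the $-t$-vertex. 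Because the two members of such a pair produce identical vertex sequences apart from the single offending vertex, their weights are the same noncommutative monomial up to the overall sign, so they cancel. Summing over all interior routings, the local generating function on the right equals that on the left for every boundary pattern, which establishes the proposition; annihilation is the same statement read in reverse.

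The step I expect to require the most care is the noncommutative bookkeeping: since the two lens vertices are adjacent and are visited consecutively by any path entering the lens, one must verify that the $t$- and $-t$-weighted terms really are the same monomial in the same left-to-right order, so that they genuinely cancel rather than being additive inverses of distinct expressions. The other point needing attention is completeness of the enumeration, namely that no admissible interior routing is omitted or double-counted; this is a finite check entirely analogous to the four cases treated in Figure \ref{fig:clR8} for the Yang-Baxter move, and I would organize it by the same case analysis.
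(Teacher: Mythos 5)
Your proposal is correct and follows essentially the same route as the paper: the paper's proof simply says the argument is the same local-verification check as for the Yang-Baxter move (Proposition \ref{prop:YB}), noting additionally that multi-path measurements impose no extra relations here, so local preservation already gives global preservation. Your explicit enumeration of boundary patterns and the sign cancellation between the $t$- and $(-t)$-turn routings (with attention to the ordering of noncommutative weights) just fills in the details the paper leaves implicit.
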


\begin{proof}
 The proof is the same as for the the Yang-Baxter move. In fact, in this case the measurements with more than one path do not impose any extra relations, and thus not only local but also global measurements are preserved.  In other words, if we perform the lens creation-annihilation transformation locally in a big network $N$, all highway measurements of $N$ are preserved.
\end{proof}
\subsection{Quantum loop Schur functions}\label{ssec:loopSchur}

Let $\lambda/\mu$ be a skew shape.  Let $T$ be a semistandard Young tableau of shape $\lambda/\mu$ and $r \in \Z/n\Z$.  Define the content $c(b)$ of a box $b$ in the $i$-th row and $j$-th column to be $c(b)=i-j$.
The {\it $r$-reading word} of $T$ is the monomial $\wte^{(r)}(T) = \prod_{b \in \lambda/\mu} q_{T(b)}^{(c(b)-T(b)+r+1)} \in \Q_\e$, where the product is taken by reading the columns from the left to the right, and each column is read from the top to the bottom.  For example, with $n = 3$,
$$
\text{ for } T = \tableau[sY]{1 & 1 & 2 &4 \\ 2 & 3 &3 \\ 4} \qquad \text{we have} \qquad \wte^{(1)} = (q^{(1)}_1 q^{(1)}_2 q^{(3)}_4)(q^{(3)}_1 q^{(2)}_3) (q^{(1)}_2 q^{(1)}_3) (q^{(1)}_4).
$$

Define the {\it quantum loop Schur function} by 
$$
s^{(r)}_{\lambda/\mu}(\q_1,\ldots,\q_m) = \sum_T \wte^{(r)}(T) \in \Q_\e
$$
where the summation is over all semistandard Young tableaux $T$ of shape $\lambda/\mu$ filled with the integers $1,2,\ldots,m$.  Up to a shift in the upper index, our quantum loop Schur functions reduce to the loop Schur functions of \cite{TP} at $\e = 1$.  When $\lambda/\mu$ is a column, the quantum loop Schur function is simply the quantum loop elementary symmetric function.

\begin{thm} \label{thm:schur}
For any skew shape $\lambda/\mu$ and $r \in \Z/n\Z$, 
we have $s_{\lambda/\mu}(\q_1,\ldots,\q_m) \in \Q_\e^{\mS_m}$.
\end{thm}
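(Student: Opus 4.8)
The plan is to realize the quantum loop Schur function $s_{\lambda/\mu}^{(r)}$ as a single highway measurement on the universal cover $\tilde N_{n,m}$, and then deduce invariance from the fact that the generators $R_j^\e$ of the $\mS_m$-action are composites of local network moves that preserve all highway measurements. Since the $\mS_m$-action is generated by the $R_j^\e$, it suffices to prove $R_j^\e(s_{\lambda/\mu}^{(r)}) = s_{\lambda/\mu}^{(r)}$ for each $j$; and because each $R_j^\e$ is a morphism of skew fields (Theorem \ref{thm:commpres}), we have $R_j^\e(s_{\lambda/\mu}^{(r)}) = s_{\lambda/\mu}^{(r)}(R_j^\e\q_1,\ldots,R_j^\e\q_m)$, so it is enough to understand what happens to the defining weight generating function when the weights on two adjacent wires are transformed by $R^\e$.

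First I would set up a weight- and order-preserving bijection between the semistandard Young tableaux $T$ of shape $\lambda/\mu$ with entries in $\{1,\ldots,m\}$ and the non-intersecting families of highway paths $\P=(P_1,\ldots,P_k)$ on $\tilde N_{n,m}$, for suitable source/sink data and homology classes $[\gamma]$ determined by $\lambda,\mu,r$. Each column of $T$ should map to one path (consistent with the column case, where $s_{\lambda/\mu}^{(r)}$ reduces to $e_k^{(r)}$, realized by a single path); an entry $T(b)=j$ in a box $b$ of content $c(b)$ records that the path turns at the crossing on the wire $W_j$ governed by the shift $c(b)-T(b)+r+1$, the turning/weight conventions of Figure \ref{fig:clR6} reproduce the factor $q_{T(b)}^{(c(b)-T(b)+r+1)}$, and the non-intersecting condition translates into semistandardness of $T$. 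The essential point to verify is that the column-by-column, top-to-bottom reading order defining $\wte^{(r)}(T)$ matches the ordered product $\wt(\P)=\wt(P_1)\cdots\wt(P_k)$ \emph{as an element of} $\Q_\e$: within each column the top-to-bottom order must coincide with the vertex-visiting order along the corresponding path, and reordering the path-weights into reading order must introduce no stray powers of $\e$, which is exactly what the snake-path commutation relations of Definition \ref{def:Q} are calibrated to guarantee. Granting this, $s_{\lambda/\mu}^{(r)} = M_{s,t,[\gamma]}(\tilde N_{n,m})$.

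Next I would invoke the quantized \cite{LP} realization of the quantum geometric $R$-matrix as a network transformation $\Phi_j$: sliding the vertical wire $W_{j+1}$ past $W_j$ through a sequence of Yang-Baxter moves (Figure \ref{fig:clR7}, one per horizontal wire) together with lens creation-annihilation moves (Figure \ref{fig:clR9}) carries $\tilde N_{n,m}$ to an isomorphic network in which the weights on the two swapped wires have been transformed by $R^\e$, matching \eqref{eq:R-pq} via \eqref{eq:YB}. By Proposition \ref{prop:YB}(1) and Proposition \ref{prop:lca}, every such local move preserves all highway measurements, so the total measurement is unchanged by the slide. Combining, $R_j^\e(s_{\lambda/\mu}^{(r)}) = M_{s,t,[\gamma]}(\Phi_j\tilde N_{n,m}) = M_{s,t,[\gamma]}(\tilde N_{n,m}) = s_{\lambda/\mu}^{(r)}$, and invariance under all of $\mS_m$ follows from Theorem \ref{thm:q-qYBR}. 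I expect the main obstacle to be the first step: getting the noncommutative bookkeeping exactly right, i.e.\ proving that the ordered product of path weights read off from the geometry of $\tilde N_{n,m}$ equals the column-reading monomial $\wte^{(r)}(T)$ on the nose, with the $\e$-powers arising from crossings and from reordering cancelling precisely as dictated by Definition \ref{def:Q}. A purely algebraic alternative becomes available once this step is framed correctly: since $\Q_\e^{\mS_m}$ is a division subring containing the entries of $\M(t)$ by Corollary \ref{cor:elem}, any noncommutative Jacobi-Trudi-type expression of $s_{\lambda/\mu}^{(r)}$ in the invariant $e_k^{(r)}$ would yield invariance immediately; but the network argument is the one best adapted to the tools developed in this section.
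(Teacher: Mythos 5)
Your overall strategy is exactly the paper's: interpret $s^{(r)}_{\lambda/\mu}$ as a highway measurement on the universal cover $\tilde N_{n,m}$ (one path per column, paths ordered bottom to top), realize $R^\e$ as a lens creation, a sequence of Yang-Baxter pushes around the cylinder, and a lens annihilation, and conclude invariance because the local moves preserve measurements. However, there is a genuine gap in the step where you write that ``by Proposition \ref{prop:YB}(1) and Proposition \ref{prop:lca}, every such local move preserves all highway measurements.'' Proposition \ref{prop:YB}(1) is explicitly a \emph{local} statement: it equates measurements of the two small networks in Figure \ref{fig:clR7}. In the noncommutative setting this does not automatically globalize. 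The problematic case is when two \emph{distinct} paths $P_i$, $P_{i+1}$ of a family each pick up a nontrivial weight at the crossing being moved: in the ordered product $\wt(\P)=\wt(P_1)\cdots\wt(P_r)$ those two weights are separated by the weights of all intervening vertices (the ``middle factors''), so the local identities such as $qr=p'q'$ and $qp=r'q'$ cannot be applied in place. This is precisely what the paper's Proposition \ref{prop:lens}(3) addresses: one must show that the lens weight $r_{i+1}$ of \eqref{eq:r} and $R^\e(q_i)$ satisfy the \emph{same} commutation relations with every middle factor as $p_{i+1}$ and $q_i$ do (using that middle factors come from vertices strictly to the left or right, hence their commutation is governed by snake paths, and that $\prod_i p_i$ and $\prod_i q_i$ are central), so that one can commute one weight through the middle factors, apply the local relation, and commute back. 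Without this argument -- which is the technical heart of the proof and also the reason the bottom-to-top ordering convention matters -- your chain of equalities $R_j^\e(s^{(r)}_{\lambda/\mu}) = M(\Phi_j\tilde N_{n,m}) = M(\tilde N_{n,m})$ is unjustified. Note that you located the ``main obstacle'' in the tableaux-to-paths bookkeeping, which the paper treats as immediate from the definition of $\wte^{(r)}(T)$; the real crux is the local-to-global passage above.

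A secondary, smaller omission: your slide $\Phi_j$ only realizes $R^\e$ if the lens vertex weight is chosen correctly. One must take it to be $r_{i+1}$ as in \eqref{eq:r}, verify that the triple at each stage satisfies the Yang-Baxter hypothesis $pqr=rqp$ (Proposition \ref{prop:lens}(1), which uses \eqref{eq:pq-kappa}), check that each push produces exactly $R^\e(p_i)$, $R^\e(q_i)$, $r_i$ (Proposition \ref{prop:lens}(2)), and use the periodicity $r_{i+n}=r_i$ so that the lens can be annihilated after going around the cylinder (Corollary \ref{lem:qR}). These are computations in the direction you indicate rather than missing ideas, but they are not automatic consequences of \eqref{eq:YB}.
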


We remark that $s_{\lambda/\mu}(\q_1,\ldots,\q_m)$ is not a highway measurement of $N_{n,m}$ but of the universal cover $\tilde N_{n,m}$.  This is because it corresponds to families of paths that do not intersect on $\tilde N_{n,m}$, but whose images in $N_{n,m}$ {\it may} intersect.

\begin{example}\label{ex:loopschur}
Set $n=m=3$, $\lambda = (2,1)$ and $\mu = \emptyset$.
The eight semistandard tableaux of shape $\lambda$ are 
\begin{equation}\label{eq:21tableaux}
\tableau[sY]{1 & 1 \\ 2} \quad \tableau[sY]{1 & 2 \\ 2}
\quad \tableau[sY]{1 & 3 \\ 2}
\quad \tableau[sY]{1 & 1 \\ 3} \quad \tableau[sY]{1 & 2 \\ 3}
\quad \tableau[sY]{1 & 3 \\ 3} \quad \tableau[sY]{2 & 2 \\ 3}
\quad \tableau[sY]{2 & 3 \\ 3} \, .
\end{equation}
Thus we have 
$$
s_\lambda^{(1)}(\q_1,\q_2,\q_3) =
q_1^{(1)} q_2^{(1)} (q_1^{(3)} + q_2^{(2)} + q_3^{(1)})
+ q_1^{(1)} q_3^{(3)} (q_1^{(3)} + q_2^{(2)}+q_3^{(1)})
+ q_2^{(3)} q_3^{(3)} (q_2^{(2)}+q_3^{(1)}),
$$
where each weight in $s_\lambda^{(1)}(\q_1,\q_2,\q_3)$ 
corresponds to a pair of paths in $\tilde N_{3,3}$. 
Note that the pair of paths with weight
$q_1^{(1)} q_2^{(1)} q_3^{(1)}$ intersect in $N_{3,3}$. 
\end{example}

This section will be devoted to the proof of Theorem~\ref{thm:schur}. We follow closely the ideas of \cite{LP}, but in the noncommutative setting.  Our treatment is briefer than the treatment in \cite[Section 6]{LP}.
 \begin{figure}[ht]
\scalebox{0.5}{\input{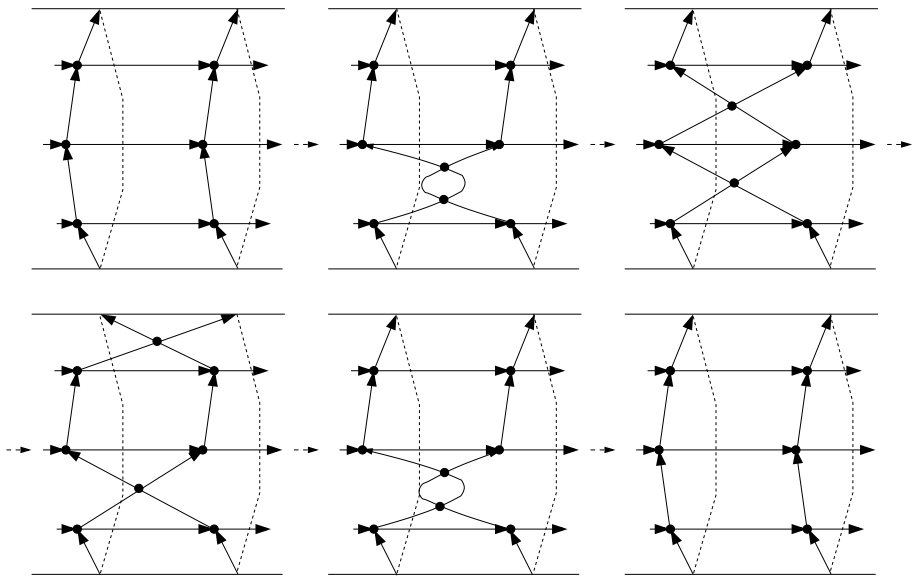}}
    \caption{Realization of quantum geometric $R$-matrix via local moves.}
    \label{fig:clR10}
\end{figure}
The idea is to realize the quantum geometric $R$-matrix by a sequence of local moves of $N_{n,m}$, each of which preserves highway measurements. We pick two adjacent vertical wires $W_j$ and $W_{j+1}$ and create a lens between them.  We push one of the two vertices of the lens around the cylinder through the horizontal wires using the Yang-Baxter move, and then annihilate the lens.
We choose the vertex parameter of the original lens so that it comes out on the other side equal to its original value, thus allowing the annihilation to happen.  The sequence of local moves is illustrated in Figure \ref{fig:clR10}.

Recall the definition of $\kappa_i^\e(\p,\q)$ \eqref{eq:q-kappa}.
Assume that the lens crossing being pushed is currently in the region with parameters $p_i, p_{i+1}, q_i, q_{i+1}$, and its value is equal to 
\begin{equation}\label{eq:r}
r_{i+1} = \left(\prod_{i=n}^1 p_i - \prod_{i=n}^1 q_i \right)(\kappa_{i+1}^{\epsilon})^{-1} = (\kappa_{i+1}^{\epsilon})^{-1} \left(\prod_{i=n}^1 p_i - \prod_{i=n}^1 q_i \right).
\end{equation}
The last equality holds since $\prod_{i=n}^1 p_i$ and $\prod_{i=n}^1 q_i$ are central elements of $\QQ_\e \langle \p, \q \rangle$. 

 \begin{figure}[ht]
\scalebox{0.7}{\input{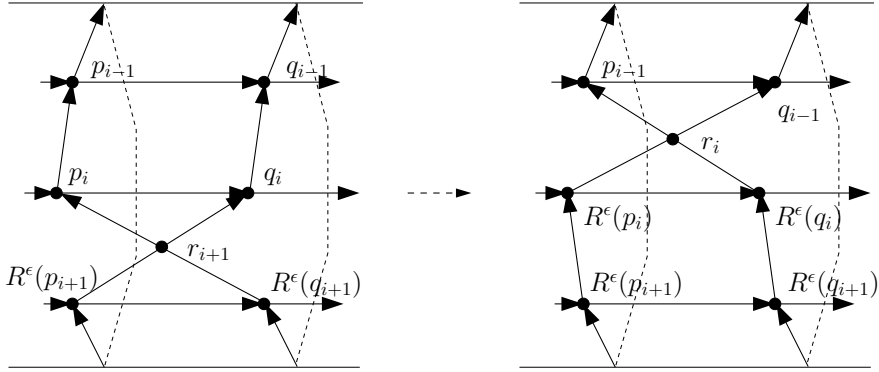}}
    \caption{A local move.}
    \label{fig:clR11}
\end{figure}

Let $\tilde N_{n,m}$ denote the universal cover of $N_{n,m}$.  It is a network embedded into a vertical strip.  We only consider highway measurements of $\tilde N_{n,m}$ where in a non-intersecting family $\P = (P_1,P_2,\ldots,P_r)$, the paths $P_i$ are ordered from bottom to top.

\begin{prop} \label{prop:lens}
With the above notation,
 \begin{enumerate}
  \item the parameters satisfy the relation $r_{i+1} p_i q_i = q_i p_i r_{i+1}$;
  \item applying the Yang-Baxter move to the parameters $r_{i+1}, p_i, q_i$, one obtains the parameters $R^{\epsilon}(p_i), R^{\epsilon}(q_i), r_i$;
  \item the Yang-Baxter move preserves all highway measurements on the universal cover $\tilde N_{n,m}$.
 \end{enumerate}
\end{prop}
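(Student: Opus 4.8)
The plan is to reduce all three parts to two algebraic identities satisfied by the noncommutative polynomials $\kappa_i^\e(\p,\q)$, after which part (3) becomes a direct application of Proposition \ref{prop:YB}(1). Throughout I would write $P = \prod_{i=n}^1 p_i$ and $Q = \prod_{i=n}^1 q_i$ for the two central monomials, so that $r_{i+1} = (P-Q)(\kappa_{i+1}^\e)^{-1} = (\kappa_{i+1}^\e)^{-1}(P-Q)$, and fix the slot assignment $(p,q,r) = (r_{i+1},p_i,q_i)$ for the Yang--Baxter move of \eqref{eq:YB}, so that part (1) is exactly the required hypothesis $pqr = rqp$.

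The first identity I would establish is the noncommutative telescoping relation
\begin{equation}\label{eq:telescope}
p_i \kappa_i^\e = (P - Q) + \kappa_{i+1}^\e\, q_i .
\end{equation}
To prove \eqref{eq:telescope} I would expand $p_i\kappa_i^\e$ and $\kappa_{i+1}^\e q_i$ each as sums of $n$ monomials and match the $j$-th monomial of the former with the $(j{+}1)$-st of the latter; the two boundary terms left over are the all-$p$ monomial $p_ip_{i-1}\cdots p_{i+1}$ and the all-$q$ monomial $q_{i-1}q_{i-2}\cdots q_i$. The crucial point is that these maximally ordered cyclic products do not depend on where the cycle starts: moving the extreme factor once around the cycle meets exactly two consecutive variables and picks up compensating $\e^{+1}$ and $\e^{-1}$ factors, so they collapse to $P$ and $Q$ \emph{on the nose}. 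Granting \eqref{eq:telescope}, the middle and last outputs of the move follow at once: $R^\e(q_i) = (\kappa_{i+1}^\e)^{-1}p_i\kappa_i^\e = (\kappa_{i+1}^\e)^{-1}(P-Q)+q_i = r_{i+1}+q_i = q'$, and feeding $r_{i+1}+q_i = (\kappa_{i+1}^\e)^{-1}p_i\kappa_i^\e$ into $r' = (p+r)^{-1}pq = (r_{i+1}+q_i)^{-1}r_{i+1}p_i$ and cancelling via the centrality of $P-Q$ gives $r' = (P-Q)(\kappa_i^\e)^{-1} = r_i$.

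The second identity is the uniform $\e$-commutation
$$
(p_iq_i)\,\kappa_i^\e = \e\,\kappa_i^\e\,(p_iq_i), \qquad (p_iq_i)\,\kappa_{i+1}^\e = \e^{-1}\,\kappa_{i+1}^\e\,(p_iq_i).
$$
Writing $uv = \e^{\Lambda(u,v)}vu$ for the commutation exponent $\Lambda$ on monomials of $\Z_\e[\p,\q]$, I would show that $\Lambda(p_iq_i,\,\cdot\,)$ is \emph{constant} on the monomials of $\kappa_i^\e$ (resp.\ $\kappa_{i+1}^\e$): two consecutive monomials differ only by exchanging a factor $q_{a-1}$ for the adjacent factor $p_a$, so constancy reduces to the single local identity $\Lambda(p_iq_i,p_a) = \Lambda(p_iq_i,q_{a-1})$, which is nonzero only for $a \in \{i-1,i,i+1,i+2\}$ and is checked directly from the defining relations. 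Evaluating the now-constant pairing on the all-$q$ monomial yields exponents $+1$ and $-1$. The $-1$ case gives part (1): after cancelling the central $P-Q$, the relation $r_{i+1}p_iq_i = q_ip_ir_{i+1}$ is equivalent to $(p_iq_i)\kappa_{i+1}^\e = \e^{-1}\kappa_{i+1}^\e(p_iq_i)$ together with $p_iq_i = \e q_ip_i$. The $+1$ case gives the remaining output $p' = qr(p+r)^{-1} = p_iq_i(r_{i+1}+q_i)^{-1}$: here the factor $\e$ from $(p_iq_i)(\kappa_i^\e)^{-1}$ cancels exactly against the factor from $p_iq_ip_i^{-1} = \e q_i$, leaving $p' = (\kappa_i^\e)^{-1}q_i\kappa_{i+1}^\e = R^\e(p_i)$. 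This completes part (2).

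Finally, for part (3) I would invoke Proposition \ref{prop:YB}(1): part (1) supplies the hypothesis $pqr = rqp$ needed to legitimately apply the move to $(r_{i+1},p_i,q_i)$, and part (2) identifies the output, so the transformation really is an instance of Figure \ref{fig:clR7}. Proposition \ref{prop:YB}(1) already guarantees preservation of all local highway measurements, including the two-path configurations of Figure \ref{fig:clR8} (so the relative ordering of weights is accounted for); since the move changes only the three crossings in question and the bottom-to-top ordering of paths on $\tilde N_{n,m}$ makes the patch contribute a single contiguous block to the global weight in the correct order, local preservation upgrades to global preservation on $\tilde N_{n,m}$. I expect the main obstacle to be \eqref{eq:telescope} rather than this last step: forcing the two boundary monomials to collapse to the central $P$ and $Q$ exactly, rather than to $\e$-multiples of them, is where the specific quantum-torus relations are used essentially, and it is precisely what lets $r_{i+1}$ behave, for the purposes of the Yang--Baxter move, like a scalar multiple of $(\kappa_{i+1}^\e)^{-1}$.
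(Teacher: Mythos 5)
Your parts (1) and (2) are correct and follow essentially the same route as the paper: the telescoping identity $p_i\kappa_i^\e = (P-Q) + \kappa_{i+1}^\e\, q_i$ is exactly the identity the paper uses inline to get $r_{i+1}+q_i = (\kappa_{i+1}^\e)^{-1}p_i\kappa_i^\e = R^\e(q_i)$ and $(r_{i+1}+q_i)^{-1}r_{i+1}p_i = r_i$, and your $\e$-commutations $(p_iq_i)\,\kappa_{i+1}^\e = \e^{-1}\kappa_{i+1}^\e\,(p_iq_i)$ and $(p_iq_i)\,\kappa_i^\e = \e\,\kappa_i^\e\,(p_iq_i)$ are, respectively, the paper's \eqref{eq:pq-kappa} (from which part (1) is deduced exactly as you do) and the identity implicit in the paper's ``the argument for $R^{\epsilon}(p_i)$ is similar.'' Your verification scheme (constancy of the commutation exponent across consecutive monomials of $\kappa^\e$, plus cyclic invariance of the central products) is a fine way to organize the same computation.

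Part (3), however, contains a genuine gap. You assert that ``the bottom-to-top ordering of paths on $\tilde N_{n,m}$ makes the patch contribute a single contiguous block to the global weight in the correct order,'' so that local preservation (Proposition \ref{prop:YB}(1)) upgrades automatically to global preservation. This is false in precisely the only nontrivial case, namely when the three crossings of the move are shared between two distinct paths $P_i$ and $P_{i+1}$ of the family (the configuration of Figure \ref{fig:clR12}). There the factor $r_{i+1}$ picked up by $P_i$ sits inside $\wt_\e(P_i)$, while the factor $p_i$ picked up by $P_{i+1}$ sits inside $\wt_\e(P_{i+1})$, so in the global product $\wt_\e(P_1)\cdots\wt_\e(P_r)$ these two factors are separated by ``middle factors'' --- the tail of $P_i$'s weight after the patch and the head of $P_{i+1}$'s weight before it --- and the patch does \emph{not} contribute a contiguous block. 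The local identities $r_{i+1}p_i = R^{\epsilon}(q_i)\,r_i$ and $p_i r_{i+1} = r_i\, R^{\epsilon}(q_i)$ can only be applied after commuting $r_{i+1}$ (or $p_i$) through these noncommuting middle factors and back, and this is legitimate only if the new parameters satisfy the \emph{same} $\e$-commutation relations with the middle factors as the old ones. Establishing this claim --- using that the middle factors come from vertices strictly to the left or right of the patch, so their commutations are governed by snake paths, and that the terms of $\kappa_{i+1}^{\epsilon}$ differ from the central products $\prod_i p_i$, $\prod_i q_i$ by omitting either $p_{i+1}$ or $q_i$ --- is the main content of the paper's proof of (3) and is entirely absent from yours; indeed your closing remark that the telescoping identity, rather than this step, is the main obstacle misjudges where the real difficulty lies.
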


\begin{proof}
 First, $p_i q_i = \epsilon q_i p_i$. Next, $(\prod_{j=i-1}^{i+1} q_i) q_i =  q_i (\prod_{j=i-1}^{i+1} q_i)$ and $(\prod_{j=i-1}^{i+1} q_i) p_i = \epsilon^{-1} p_i (\prod_{j=i-1}^{i+1} q_i)$. All other terms in $\kappa_{i+1}$ satisfy the same commutation relations 
 with $p_i$ and $q_i$, giving $$(\kappa_{i+1}^{\epsilon} - \prod_{j=i-1}^{i+1} q_i) q_i =  \epsilon^{-2} q_i (\kappa_{i+1}^{\epsilon} - \prod_{j=i-1}^{i+1} q_i) \;\;\; \text{and} \;\;\;
 (\kappa_{i+1}^{\epsilon} - \prod_{j=i-1}^{i+1} q_i) p_i = \epsilon p_i (\kappa_{i+1}^{\epsilon} - \prod_{j=i-1}^{i+1} q_i).$$
 Combining, we see that 
\begin{align}\label{eq:pq-kappa}
p_i q_i (\kappa_{i+1}^{\epsilon})  = \epsilon^{-1} (\kappa_{i+1}^{\epsilon}) p_i q_i.
\end{align}
Using $p_i q_i = \epsilon q_i p_i$, we obtain (1). 
 
 For the second claim, we apply the formulae \eqref{eq:YB} of the Yang-Baxter move. We have 
 $$(r_{i+1}+q_i)^{-1} r_{i+1} p_i  = \left[ (\kappa_{i+1}^{\epsilon})^{-1} \left( \prod_j p_j - \prod_j q_j +  \kappa_{i+1}^{\epsilon} q_i \right) \right]^{-1} (\kappa_{i+1}^{\epsilon})^{-1} \left( \prod_j p_j - \prod_j q_j\right) p_i   $$
 $$=  \left[  \prod_j p_j - \prod_j q_j +  \kappa_{i+1}^{\epsilon} q_i \right]^{-1} p_i \left( \prod_j p_j - \prod_j q_j\right)  =  \left[  p_i \kappa_{i}^{\epsilon} \right]^{-1} p_i \left( \prod_j p_j - \prod_j q_j\right)  = r_i.$$
 Also, in the process we saw that 
 $$r_{i+1}+q_i = (\kappa_{i+1}^{\epsilon})^{-1}  p_i \kappa_{i}^{\epsilon} = R^{\epsilon}(q_i),$$ and the argument for $R^{\epsilon}(p_i)$ is similar.  This proves (2).
 
 Finally, we argue that these Yang-Baxter moves preserve all highway measurements of $N_{n,m}$. The only non-trivial case is the case when there are two paths running through the parameters of the Yang-Baxter move, both picking up a non-trivial weight. 
 The reason this case is non-trivial is as follows: while we know from Proposition \ref{prop:YB} that highway measurements are preserved locally, in the highway measurement of $N_{n,m}$ those two parameters are separated by several other (noncommuting) factors, since they belong to different paths. 
 We call these factors {\it {the middle factors}}.
 This case is shown in Figure \ref{fig:clR12}. 
    \begin{figure}[ht]
\scalebox{0.7}{\input{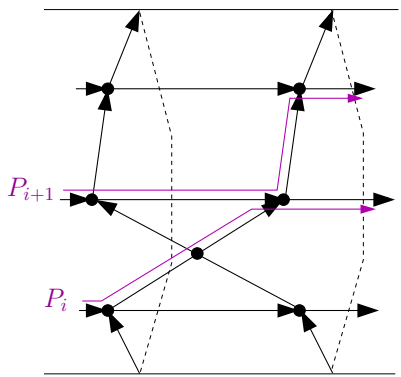}}
    \caption{The non-trivial case to consider for preservation of global measurements. }
    \label{fig:clR12}
\end{figure}

Since we assumed that paths in a non-intersecting family are ordered from bottom to top, the two highway paths involved in Figure \ref{fig:clR12} must be $P_i$ and $P_{i+1}$ for some $i$.
In this case the weight of the family of highway paths has the following form:
$$
\wt_\e(P_1) \wt_\e(P_2) \cdots \wt_\e(P_{i-1}) \dotsc r_{i+1}\, ( \text{ middle factors } )\, p_i \dotsc \wt_\e(P_{i+2}) \cdots \wt_\e(P_{r}).
$$ 
By Proposition \ref{prop:YB}(1), we have $r_{i+1} p_i = R^{\epsilon}(q_i) r_i$ and $p_i r_{i+1}  = r_i R^{\epsilon}(q_i)$. We still need to commute $r_{i+1}$ or $p_i$ to through the middle factors in order to be able to apply these
 relations, and then we need to commute the result back. However, we claim the following: $r_{i+1}$ and $R^{\epsilon}(q_i)$ satisfy the same commutation relations with the middle factors as $p_{i+1}$ and $q_i$ do. Indeed, it is clear from Figure \ref{fig:clR12} that all the middle factors come from vertices that are 
 strictly to the left or strictly to the right of the $p$-s and the $q$-s. This means the commutation relations are determined by the snake paths on which the variables lie. To see then that $(\kappa_{i+1}^{\epsilon})^{-1}$ satisfies the same relation as $p_{i+1}$ and $q_i$
 is a matter of a simple verification, which can be shortened by remembering that $\prod_i p_i$ and $\prod_i q_i$ are central, and that terms in $\kappa_{i+1}^{\epsilon}$ differ from these products by omitting either $p_{i+1}$ or $q_i$. 
\end{proof}

\begin{cor}\label{lem:qR}
The sequence of local moves in Figure \ref{fig:clR10}, with lens vertex weight given by \eqref{eq:r}, acts on the vertex weights of $N_{n,m}$ by the quantum geometric $R$-matrix.
\end{cor}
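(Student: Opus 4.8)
The plan is to assemble the three local-move results---the lens creation-annihilation move (Proposition~\ref{prop:lca}), the Yang-Baxter move (Proposition~\ref{prop:YB}), and their refinement in Proposition~\ref{prop:lens}---into the single sequence of moves depicted in Figure~\ref{fig:clR10}, and then to track the vertex weights through it. First I would fix the two adjacent vertical wires $W_j, W_{j+1}$ on which $R^\e_j$ acts and identify the weights on $W_j$ (resp.\ $W_{j+1}$) with $\p$ (resp.\ $\q$); this is legitimate because, as noted after Definition~\ref{def:Q}, the commutation relations among $\q_j,\q_{j+1}$ in $\Q_\e$ coincide with those of $\p,\q$ in $\QQ_\e\langle\p,\q\rangle$. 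All other wires are left untouched, so it suffices to analyze the local picture on $W_j \cup W_{j+1}$.

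Next I would create a lens between $W_j$ and $W_{j+1}$ in the region bounded by the crossings carrying $p_i,p_{i+1},q_i,q_{i+1}$, assigning its two new vertices the weights $r_{i+1}$ and $-r_{i+1}$, with $r_{i+1}$ given by \eqref{eq:r}. By Proposition~\ref{prop:lca} this preserves all highway measurements, and it is valid for an arbitrary (here noncommutative) weight. I then push the vertex carrying $r_{i+1}$ once around the cylinder, crossing the $n$ horizontal wires one at a time. Each crossing is exactly the configuration of Figure~\ref{fig:clR11}, namely a Yang-Baxter move applied to the triple $(r_{i+1},p_i,q_i)$: the relation $pqr=rqp$ needed to run the move is supplied by Proposition~\ref{prop:lens}(1), and Proposition~\ref{prop:lens}(2) identifies the output as $(R^\e(p_i),R^\e(q_i),r_i)$. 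Thus the two crossing weights are replaced by their $R^\e$-images while the lens weight drops from $r_{i+1}$ to $r_i$.

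Applying this step $n$ times, the lens weight cycles $r_{i+1}\to r_i\to\cdots\to r_{i+2}\to r_{i+1}$ with indices taken mod $n$, so after a full loop it returns to its initial value $r_{i+1}$, while each of the $n$ crossings on $W_j$ and on $W_{j+1}$ has been hit exactly once and sent to its $R^\e$-image. The returned lens vertex now carries $r_{i+1}$ and its partner still carries $-r_{i+1}$, so Proposition~\ref{prop:lca} lets me annihilate the lens, recovering the network $N_{n,m}$ with the wire weights $\p,\q$ replaced by $R^\e(\p),R^\e(\q)$. Since lens creation/annihilation and every Yang-Baxter step preserve highway measurements (Propositions~\ref{prop:lca} and \ref{prop:lens}(3)), the whole sequence does, and its net effect on vertex weights is precisely $R^\e$, as claimed.

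The technical heart of this argument already resides in Proposition~\ref{prop:lens}, so the remaining work is mostly bookkeeping. The most delicate point is the \emph{global} (not merely local) preservation of highway measurements in Proposition~\ref{prop:lens}(3): when two paths both pass through the Yang-Baxter region, the two crossings being exchanged are separated along the measured family by several noncommuting ``middle factors,'' and one must check that $r_{i+1}$ and $R^\e(q_i)$ commute past these factors exactly as $p_{i+1}$ and $q_i$ do. That in turn reduces to the centrality of $\prod_i p_i$ and $\prod_i q_i$ together with the snake-path commutation pattern of Definition~\ref{def:Q}. For the corollary itself, the only genuinely new verification is the cyclic consistency---that pushing $r_{i+1}$ once around the cylinder returns it to $r_{i+1}$ so the lens can close---which is immediate from the index shift $r_{i+1}\mapsto r_i$ in Proposition~\ref{prop:lens}(2).
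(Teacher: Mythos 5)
Your proposal is correct and follows essentially the same route as the paper: the paper's proof likewise combines Proposition~\ref{prop:lens}(2) (each Yang--Baxter push sends $(r_{i+1},p_i,q_i)$ to $(R^\e(p_i),R^\e(q_i),r_i)$) with the periodicity $r_{i+n}=r_i$ from \eqref{eq:r} to close the lens, the only difference being that the paper states this in two sentences while you spell out the bookkeeping (lens creation via Proposition~\ref{prop:lca}, the identification of wire weights with $\p,\q$, and the full cycle of indices).
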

\begin{proof}
Since $r_{i+n} = r_i$ in \eqref{eq:r}, when the parameter is pushed through all $n$ wires, it acquires the original value, and thus one can carry out the lens annihilation move. Thus by Proposition \ref{prop:lens}(2), the sequence local moves in Figure \ref{fig:clR10} indeed realizes the quantum geometric $R$-matrix.
\end{proof}


\begin{proof}[Proof of Theorem \ref{thm:schur}]
It follows from the definition of $\wte^{(r)}(T)$ that quantum loop Schur functions are highway measurements of the universal cover $\tilde N_{n,m}$ of the network $N_{n,m}$.  Namely, we consider families of non-intersecting paths on $\tilde N_{n,m}$ with fixed sources and sinks, with the paths ordered from bottom to top.  Corollary \ref{lem:qR} applies  also to $\tilde N_{n,m}$.  By Proposition \ref{prop:lens}(3), it follows that the quantum loop Schur function is preserved by the quantum geometric $R$-matrix.
\end{proof}

\subsection{Quantum cylindric loop Schur functions}
\label{subsec:q-cylSchur}
To avoid certain degenerate situations, in this section we assume that $n > 2$.  Cylindric loop Schur functions were defined in \cite{LPS} where they were shown to be invariants of the geometric $R$-matrix.  We define their quantum analogues.

Fix an integer $s$.  Let $\Z^2$ denote the integer lattice with points $(x,y)$.  Define the cylinder $\CC_s$ to be the following quotient of integer lattice: 
$$\CC_s = \Z^2/(n-s,s)\Z.$$ 
In other words, $\CC_s$ is the quotient of $\Z^2$ by the shift that sends $(x,y)$ into $(x+n-s,y+s)$. The set $\CC_s$ inherits a natural partial order from that on $\Z^2$ given by the transitive closure of the cover relations $(x,y)<(x+1,b)$ and $(x,y)<(x,y-1)$.  A box in the $i$-th row and $j$-th column of a Young diagram has coordinates $(j,-i)$.

A {\it {cylindric skew shape}} $D$ is a finite convex subposet of $\CC_s$.  A semistandard Young tableau of a cylindric skew shape $D$ is a map $T\colon D \to \Z_{>0}$ satisfying $T(x,y)\leq T(x+1,y)$ and $T(x,y)<T(x,y-1)$ whenever the corresponding boxes lie in $D$. 

Let $D$ be a cylindric skew shape and $r \in \Z/n\Z$.  Viewing $D$ as an infinite periodic skew shape in $\Z^2$, fix a fundamental domain consisting of $n-s$ consecutive columns of $D$, and order these columns from left to right.
For a cylindric semistandard Young tableau $T$ of shape $D$, the {\it $r$-reading word} of $T$ is the monomial $\wte^{(r)}(T) = \prod_{b \in D} q_{T(b)}^{(c(b)-T(b)+r+1)} \in \Q_\e$, where the product is taken by reading the columns in the chosen order, and each column is read from the top to the bottom.  

\begin{example}
Set $n=4$, $s=2$, and consider the shape $D$ in $\CC_2 = \Z^2/(2,2)\Z$, whose restriction to two adjacent columns is given by the shape $\lambda = (2,1,1)$. 
For example, a cylindric semistandard Young tableau $T$ of shape $D$ is 
given by extending
$$
T = \tableau[sY]{1 & 4 \\ 2  \\ 4}
$$
periodically,  and we have 
$\wte^{(1)}(T) = (q^{(1)}_1 q^{(1)}_2q^{(4)}_4)(q^{(1)}_4).$
Note that some semistandard Young tableaux of shape $\lambda$ may not give cylindric semistandard Young tableaux, for example,
$$
\tableau[sY]{1 & 4  \\ 2 \\ 3}.
$$ 
\end{example}


For a cylindric skew shape $D$, we define the {\it {quantum cylindric loop Schur function}} by
\begin{align*}
s^{(r)}_{D}(\q_1,\ldots,\q_m) = \sum_{T} x^{\wte^{(r)}(T)} \in \Q_\e,
\end{align*}
where the summation is over all semistandard Young tableaux of cylindric skew shape $D$ filled with the integers $1,2,\ldots,m$.  Up to a shift in the upper index, our quantum cylindric loop Schur functions reduce to the cylindric loop Schur functions of \cite{LPS} at $\e = 1$.

\begin{thm}\label{thm:q-cylindric-loop-schur}
For any cylindric skew shape $D$ and $r \in \Z/n\Z$, we have $s^{(r)}_{D}(\q_1,\ldots,\q_m) \in \Q_\e^{\mS_m}$.
\end{thm}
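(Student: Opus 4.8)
The plan is to adapt the proof of Theorem~\ref{thm:schur} to the cylindric setting, replacing the lifted path families on the universal cover $\tilde N_{n,m}$ by genuine path families on the cylinder $N_{n,m}$. As anticipated at the start of \S\ref{ssec:highway}, the quantum cylindric loop Schur function $s^{(r)}_D$ should be realized as a highway measurement $M(N_{n,m})$ of arbitrary, possibly winding, non-intersecting path families, and once this is done, invariance under $R^\e$ follows from the same local-move machinery used for $s^{(r)}_{\lambda/\mu}$.

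First I would establish the combinatorial dictionary: a weight-preserving bijection between cylindric semistandard Young tableaux $T$ of shape $D$ and non-intersecting families of highway paths on $N_{n,m}$ with suitably fixed sources and sinks, under which $\wt_\e(\P) = \wte^{(r)}(T)$. Each column of $D$, read across a fundamental domain, corresponds to a single path winding around the cylinder; the strictly increasing entries of the column record the vertical wires $W_{T(b)}$ at which the path turns, and the content $c(b)$ records the horizontal wire, so that the factor $q_{T(b)}^{(c(b)-T(b)+r+1)}$ is precisely the weight picked up at the corresponding crossing, as in Figure~\ref{fig:clR6}. The crucial compatibility is that the $r$-reading word---columns left to right, each read top to bottom---coincides with the noncommutative product order $\wt_\e(P_1)\cdots\wt_\e(P_r)$ of the path family; this is the cylindric replacement for the bottom-to-top convention of \S\ref{ssec:loopSchur}.

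With this realization in hand, I would invoke Corollary~\ref{lem:qR}: the sequence of local moves of Figure~\ref{fig:clR10}, with lens weight given by \eqref{eq:r}, acts on the vertex weights of $N_{n,m}$ exactly by $R^\e$. It then suffices to show that each local move preserves the cylindric highway measurement defining $s^{(r)}_D$. Lens creation-annihilation is immediate from Proposition~\ref{prop:lca}, which preserves even global measurements. For the Yang-Baxter move the only delicate case, exactly as in Proposition~\ref{prop:lens}(3), is when two paths of the family traverse the move region simultaneously: one commutes $r_{i+1}$ or $p_i$ through the intervening ``middle factors,'' applies $r_{i+1}p_i = R^\e(q_i)\,r_i$ and $p_i r_{i+1} = r_i\,R^\e(q_i)$, and commutes back. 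The argument succeeds because $r_{i+1}$ and $R^\e(q_i)$ obey the same commutation relations with the middle factors as $p_{i+1}$ and $q_i$ do, a fact depending only on the snake paths of the relevant crossings together with the centrality of $\prod_i p_i$ and $\prod_i q_i$.

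The main obstacle will be the ordering bookkeeping on the cylinder. On the universal cover the paths carry a canonical bottom-to-top order and the two strands entering a Yang-Baxter region are automatically consecutive, whereas on $N_{n,m}$ the paths may wind, the two strands meeting at the move need not be adjacent in the reading-word order, and a single path may revisit the move region as it wraps. I would resolve this by showing that the reading-word order induces, at each application of the Yang-Baxter move, a local linear order in which the two incident strands are consecutive, so that the commutation computation of Proposition~\ref{prop:lens}(3) applies verbatim to each crossing; the snake-path description of the commutation relations in Definition~\ref{def:Q} is what makes this ordering argument uniform, and it is here that the quantization of the \cite{LPS} proof requires the most care. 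Once invariance under all local moves is established, Corollary~\ref{lem:qR} yields $R^\e_j(s^{(r)}_D) = s^{(r)}_D$ for each $j$, whence $s^{(r)}_D \in \Q_\e^{\mS_m}$ by Theorem~\ref{thm:q-qYBR}.
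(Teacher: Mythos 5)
Your reduction is not the paper's proof: the paper never extends the local-move machinery to the cylinder. After the identification of $s^{(r)}_D$ with a highway measurement of $N_{n,m}$ (your first step, which is fine and matches the paper), the paper instead runs a tail-swapping involution of Lindstr\"om--Gessel--Viennot type on ordered, possibly intersecting path families, controls the resulting powers of $\e$ via Lemmas \ref{lem:PP}, \ref{lem:m_k} and \ref{lem:PQ} and the bookkeeping function $\beta$, and arrives at the expansion \eqref{eq:M} of the measurement as a signed, $\e$-twisted sum of products $e(a_1,b_1')\cdots e(a_k,b_k')$ of quantum loop elementary symmetric functions; invariance then follows from Corollary \ref{cor:elem}. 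Your route would be a genuinely different (and shorter) argument if it worked, but it has a gap at exactly the step you flag and then assert away.

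The gap is the claim that the reading-word order makes the two strands incident to each Yang--Baxter move consecutive, so that Proposition \ref{prop:lens}(3) applies verbatim. On the cylinder the two strands are adjacent only in the \emph{cyclic} order of the family, and the order of the noncommutative product $\wt_\e(P_1)\cdots\wt_\e(P_r)$ is fixed once and for all by the definition of $s^{(r)}_D$; you cannot re-cut it move by move without tracking how re-cutting changes the product. Since the lens is pushed through all $n$ horizontal wires, its Yang--Baxter region sweeps once around the cylinder and therefore at some step crosses the cut defining your order; at that step the two incident strands lie in $P_1$ and $P_r$, and the ``middle factors'' separating their local weights consist of the weights of \emph{all} the other paths, in particular weights $q_j^{(i'')}$, $q_{j+1}^{(i'')}$ picked up on the very wires $W_j, W_{j+1}$ at other rows. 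This breaks the commutation step of Proposition \ref{prop:lens}(3), which requires every middle factor to lie strictly to the left or strictly to the right of $W_j,W_{j+1}$: the lens weight $(\kappa^\e_{i+1})^{-1}\left(\prod_i p_i - \prod_i q_i\right)$ quasi-commutes (via the snake-path rules of Definition \ref{def:Q}) only with such factors, whereas individual weights on $W_j,W_{j+1}$ do \emph{not} quasi-commute with $\kappa^\e_{i+1}$ --- the proof of Theorem \ref{thm:commpres} records, e.g., $p_i\kappa_i^\e = \e^2\kappa_i^\e p_i + (1-\e^2)\,p_{i-1}p_{i-2}\cdots p_i$ --- so ``commute through the middle factors and back'' produces no uniform power of $\e$. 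Repairing this forces you to quantify globally how the product changes under cyclic re-cutting of the family, which is precisely the $\beta$-bookkeeping of the paper's proof; at that point you have reconstructed the paper's argument rather than a local-move one. (A smaller omission: you should also rule out a single winding path meeting the move region twice; here the transit weights save you, since passing straight down through a crossing has weight $0$, so a highway path crosses each vertical wire in a single transit --- but this needs to be said, as it is what makes the two-strand analysis exhaustive.)
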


\begin{example}\label{ex:loopcylindric}
Set $n=m=3$, $s=1$, and consider the shape $D$ in $\CC_2 = \Z^2/(2,1)\Z$ whose restriction to two adjacent columns is given by the
shape $\lambda = (2,1)$. 
The seven possible cylindric semistandard tableaux of shape $D$ are given by extending the tableaux from
\eqref{eq:21tableaux} periodically, except for the third tableau in that list.
Then we have 
$$
s_D^{(1)}(\q_1,\q_2,\q_3) =
q_1^{(1)} q_2^{(1)} (q_1^{(3)} + q_2^{(2)})
+ q_1^{(1)} q_3^{(3)} (q_1^{(3)} + q_2^{(2)}+q_3^{(1)})
+ q_2^{(3)} q_3^{(3)} (q_2^{(2)}+q_3^{(1)}). 
$$
\end{example}

The rest of this subsection is devoted to the proof of Theorem \ref{thm:q-cylindric-loop-schur}.  The idea is to write $s^{(r)}_{D}(\q_1,\ldots,\q_m)$ as a polynomial in the quantum loop elementary symmetric functions $e_k^{(r)} \in \Q_\e$.  There is some similarity to the proof of \cite[Theorem 3.5]{TP}.

%
%
%

For a highway path $P$ in $N_{n,m}$ we abuse notation by also writing $P$ for its weight $\wt(P)$.  If $\P = (P_1,P_2,\ldots,P_k)$ is an ordered family of (possibly intersecting) highway paths on $N_{n,m}$, its weight $\wt(\P)$ is the concatenation $P_1P_2 \cdots P_k$.    

For two monomials $m, m' \in \Q_\epsilon$, let us define $\alpha(m,m')$ by $mm' = \e^{\alpha(m,m')} m'm$.  Note that $\alpha(m',m) = - \alpha(m,m')$.  

Let $\tilde N_{n,m}$ denote the universal cover of $N_{n,m}$, where the vertices have weights $q_j^{(i)}$ arranged periodically.  Note that the snake paths (see \S\ref{ssec:Qe}) of $\tilde N_{n,m}$ are naturally indexed by $\Z$.  For a highway path $P$ on the universal cover $\tilde N_{n,m}$, we write $s(P) \subset \Z$ for the set of snake paths in $\tilde N_{n,m}$ that make a contribution to the weight $\wt(P)$.  For a highway path $P$ such that $s(P) \neq \emptyset$,
$s(P)$ is a set of consecutive numbers.
When $s(P) = [a,b]$, the path $P$ picks up one $q_{ji}$ on each snake path $S_k$ for $k \in [a,b]$, and the monomial $P$ has degree $b-a+1$. 
%

Define a function $\chi$ on $\Z$ by 
$$
\chi(z) = \chi_n(z) = \begin{cases} 1 & \mbox{if $z \equiv 0 \mod n$,} \\
0 & \mbox{otherwise.}
\end{cases}
$$
For a path $P$ and the weight $q$ of an interior vertex of $\tilde N_{n,m}$, 
we write $q \in P$ if $P$ picks up $q$.  Note that the weights of $\tilde N_{n,m}$ are now $q_j^{(i)}$ where $i \in \Z$, but the commutation relations for $q_j^{(i)}$ only depends on $i \in \Z/n\Z$.

\begin{lem}\label{lem:PP}
Let $P = P^{(1)} P^{(2)}$ be a decomposition of a highway path, where $s(P^{(1)}) = [a,b]$ and $s(P^{(2)}) = [b+1,c]$.  Then
$\alpha(P^{(1)},P^{(2)}) = 1 - \chi(a-b-1) - \chi(b-c) + \chi(a-c-1)$.
\end{lem}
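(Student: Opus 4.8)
The plan is to expand $\alpha(P^{(1)},P^{(2)})$ by bilinearity of the commutation form and then reduce everything to a residue count. Write $P=u_a u_{a+1}\cdots u_c$, where $u_k$ denotes the single weight that $P$ contributes on the snake path $S_k$, so that $P^{(1)}=u_a\cdots u_b$ and $P^{(2)}=u_{b+1}\cdots u_c$. Since the commutation exponent on the quantum torus $\Q_\e$ is additive in each argument, the first step is
\[
\alpha(P^{(1)},P^{(2)})=\sum_{k=a}^{b}\ \sum_{\ell=b+1}^{c}\alpha(u_k,u_\ell),
\]
so it suffices to understand the pairwise exponents $\alpha(u_k,u_\ell)$ for $k<\ell$.

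The key input is geometric: along a single highway path each vertical wire carries at most one weight-contributing crossing, and the vertical-wire index of $u_k$ strictly increases with $k$. Thus for every $k<\ell$ the variable $u_k$ lies on a strictly smaller vertical wire than $u_\ell$. Feeding this one inequality into the five cases of Definition~\ref{def:Q} collapses them: the sign choices governed by ``$j\le j'$'' versus ``$j>j'$'' are all resolved the same way, and $\alpha(u_k,u_\ell)$ ends up depending only on $\ell-k\bmod n$, namely
\[
\alpha(u_k,u_\ell)=
\begin{cases}
+1 & \ell-k\equiv \pm 1\pmod n,\\
-2 & \ell-k\equiv 0\pmod n,\\
0 & \text{otherwise.}
\end{cases}
\]
I expect this to be the main obstacle: it is precisely the strict monotonicity of the wire index that removes all dependence on the fine geometry of $P$ (same-wire runs, where one is tempted to record exponents $-1$ or $0$ instead) and makes the answer depend only on $a,b,c$. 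I would therefore isolate and prove this monotonicity statement first, using the transfer-matrix description underlying Lemma~\ref{lem:e}, in which the only weight-bearing move at a vertical wire is ``going straight through''.

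It then remains to evaluate the double sum, which is now purely combinatorial. Set $A=b-a+1=|s(P^{(1)})|$ and $B=c-b=|s(P^{(2)})|$, and substitute $k=b-x$, $\ell=b+1+y$, so that $\ell-k=1+x+y$ with $0\le x\le A-1$ and $0\le y\le B-1$. Writing $c(s)=\#\{(x,y):x+y=s\}$ for the trapezoidal sequence $\mathbf 1_{[0,A-1]}\ast\mathbf 1_{[0,B-1]}$, the sum becomes
\[
\alpha(P^{(1)},P^{(2)})=\sum_{s\ge 0}c(s)\,\big([s\equiv 0]+[s\equiv -2]-2[s\equiv -1]\big)\pmod n,
\]
where $[\,\cdot\,]$ is $1$ when the congruence mod $n$ holds and $0$ otherwise. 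The bracketed factor is the discrete second difference of $s\mapsto[s\equiv 0\bmod n]$, so summing by parts transfers the second difference onto $c$. Using $\nabla\mathbf 1_{[0,A-1]}=\delta_0-\delta_A$ together with the convolution identity $\nabla^2 c=(\delta_0-\delta_A)\ast(\delta_0-\delta_B)=\delta_0-\delta_A-\delta_B+\delta_{A+B}$, the sum collapses to $\sum_{s\equiv 0}\nabla^2 c(s)=1-[A\equiv 0]-[B\equiv 0]+[A+B\equiv 0]\pmod n$. Finally $[A\equiv 0]=\chi(a-b-1)$, $[B\equiv 0]=\chi(b-c)$, and $[A+B\equiv 0]=\chi(a-c-1)$, which is exactly the claimed formula. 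The only substantive content is the geometric monotonicity step; the rest is the bookkeeping above, which I would pin down by checking signs on a small case such as $n=3$, $A=B=3$.
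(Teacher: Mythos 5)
Your proof is correct and follows essentially the same route as the paper's: you expand $\alpha(P^{(1)},P^{(2)})$ bilinearly into pairwise exponents, obtain the identical pairwise formula $\alpha(q_i,q_j)=\chi(i+1-j)-2\chi(i-j)+\chi(i-1-j)$ from the same geometric input the paper invokes (each later weight of a highway path sits on a strictly larger vertical wire, i.e.\ ``to the right''), and then evaluate the double sum. The only differences are cosmetic: the paper telescopes the sum directly where you reorganize it as a convolution and sum by parts, and you make explicit (and propose to verify via the transfer-matrix picture) the wire-monotonicity that the paper simply asserts in one clause.
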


\begin{proof}
Write $P^{(1)} = q_a q_{a+1}\cdots q_b$ and 
$P^{(2)} = q_{b+1} q_{b+2}\cdots q_c$, where $q_i$ is the weight of 
a vertex belonging
to the $i$-th snake path.  
For $q_i \in P^{(1)}$ and $q_j \in P^{(2)}$ we have
\begin{align}\label{eq:q-chi}  
\alpha(q_i,q_j) = \chi(i+1-j) - 2 \chi(i-j) + \chi(i-1-j), 
\end{align}
since on the cylinder the vertex with weight $q_j$ is to the right
of the vertex with weight $q_i$.
Therefore, $\alpha(P^{(1)},P^{(2)})$ is calculated by summing up 
$\alpha(q_i,q_j)$:
\begin{align*}
  \alpha(P^{(1)},P^{(2)})
  &= 
  \sum_{i=a}^b \sum_{j=b+1}^c \alpha(q_i,q_j)
  \\
  &=
  \chi(0) - \chi(a-b-1) - \chi(b-c) + \chi(a-c-1).
\end{align*}    
\end{proof}

We write $(j,i)$ for the intersection of the $i$-th
horizontal wire and the $j$-th vertical wire in $\tilde N_{n,m}$.
For $k \in \Z$ we define a map $m_k$ on the highway paths, 
which locally changes a highway path $P$ to $P'$ in the following way.
If $k \in s(P)$ and if there exists $j>1$ 
such that $q_{j}^{(k+1-j)} \in P$ and $q_{j-1}^{(k+2-j)} \notin P$,
remove the two edges $(j-1,k+2-j) \to (j-1,k+1-j) \to (j,k+1-j)$ from the path $P$ 
and add the two edges $(j-1,k+2-j) \to (j,k+2-j) \to (j,k+1-j)$.
Otherwise, $m_k$ does not change $P$.
Consequently we obtain $P'$ by replacing $q_{j}^{(k+1-j)}$ with $q_{j-1}^{(k+2-j)}$ in $P$. We remark that $q_{j}^{(k+1-j)}, q_{j-1}^{(k+2-j)} \in S_k$.

\begin{figure}[ht]
\unitlength=1.2mm
\begin{picture}(100,40)(0,0)

\multiput(5,5)(0,10){3}{\vector(1,0){30}}
\multiput(10,0)(10,0){3}{\vector(0,1){30}}

\put(8,31){\scriptsize $j-2$}
\put(18,31){\scriptsize $j-1$}
\put(30,31){\scriptsize $j$}
\put(-3,25){\scriptsize $k-j$}
\put(-7,15){\scriptsize $k-j+1$}
\put(-7,5){\scriptsize $k-j+2$}

\put(25,16){$P$}

{\color{red} 
\put(5,4){\line(1,0){16}}
\put(21,4){\line(0,1){10}}
\put(21,14){\vector(1,0){13}}
}
\put(42,15){$\stackrel{m_k}{\longrightarrow}$}

\multiput(65,5)(0,10){3}{\vector(1,0){30}}
\multiput(70,0)(10,0){3}{\vector(0,1){30}}

\put(68,31){\scriptsize $j-2$}
\put(78,31){\scriptsize $j-1$}
\put(90,31){\scriptsize $j$}
\put(57,25){\scriptsize $k-j$}
\put(53,15){\scriptsize $k-j+1$}
\put(53,5){\scriptsize $k-j+2$}

\put(85,6){$P'$}

{\color{red} 
\put(65,4){\line(1,0){26}}
\put(91,4){\line(0,1){10}}
\put(91,14){\vector(1,0){3}}
}

\end{picture}
    \caption{Action of $m_k$.}
\end{figure}
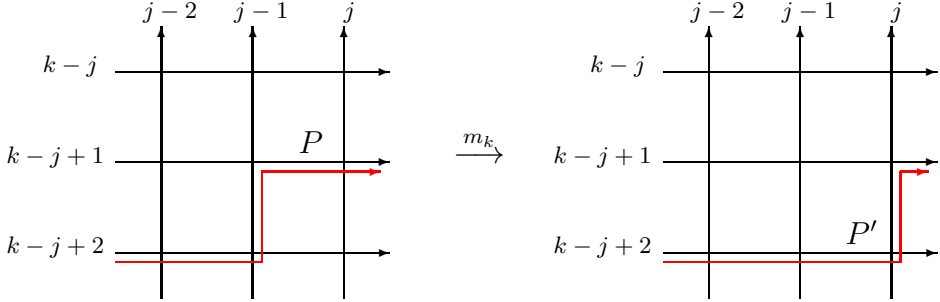

\begin{lem}\label{lem:m_k}
Let $P$ and $Q$ be highway paths on $\tilde N_{n,m}$ that do not intersect on $N_{n,m}$, with $s(P) = [a,b]$ and $s(Q) = [c,d]$.  Assume that $P$ and $Q$ have distinct endpoints on the same vertical line, but that $Q$ starts to the left of $P$.  Also assume that $c< a$.
Then, the following statements hold.
\begin{enumerate}
\item 
If $P$ and $m_k(Q)$ do not intersect on $N_{n,m}$, then $\alpha(P,Q) = \alpha(P,m_k(Q))$.
\item
If $m_k(P)$ and $Q$ do not intersect on $N_{n,m}$, then $\alpha(P,Q) = \alpha(m_k(P),Q)$.
\end{enumerate} 
\end{lem}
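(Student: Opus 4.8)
The plan is to localize the difference $\alpha(P,Q)-\alpha(P,m_k(Q))$ to the single letter of $\wt(Q)$ that $m_k$ alters. By construction $m_k$ replaces exactly one factor of $\wt(Q)$, namely $u:=q_j^{(k+1-j)}$ (read off at the vertex $(j,k+1-j)$), by $v:=q_{j-1}^{(k+2-j)}$ (read off at $(j-1,k+2-j)$), and both $u$ and $v$ lie on the snake path $S_k$. Since $\alpha(P,Q)=\sum_{q,q''}\alpha(q,q'')$, the sum being over the letters $q$ of $\wt(P)$ and $q''$ of $\wt(Q)$, it suffices to prove
\[
\sum_{q\in P}\bigl(\alpha(q,u)-\alpha(q,v)\bigr)=0 .
\]
Part (2) is the mirror statement, reducing to $\sum_{q''\in Q}\bigl(\alpha(u',q'')-\alpha(v',q'')\bigr)$ for the corresponding letters $u',v'$ of $\wt(P)$; I will carry out (1) and indicate the symmetric changes for (2) at the end.

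Next I would determine which letters $q\in P$ actually contribute. Because $u,v\in S_k$, any letter of $P$ lying on a snake path $S_\ell$ with $\ell\not\equiv k,\,k\pm1 \pmod n$ commutes identically with $u$ and with $v$ by Definition \ref{def:Q}, hence drops out. For $\ell\equiv k,\,k\pm1$ I would read the exponents directly off Definition \ref{def:Q} (equivalently \eqref{eq:q-chi}), keeping track of the vertical-wire index. The bookkeeping gives: the letters of $P$ on $S_k$ contribute in cancelling pairs, once one observes that $P$ cannot itself pick up $u$ or $v$ (these are picked up by $Q$, resp.\ $m_k(Q)$); the only surviving terms are a $+2$ when $P$ picks up $q_j^{(k+2-j)}$ (the vertex of $S_{k+1}$ on wire $j$) and a $+2$ when $P$ picks up $q_{j-1}^{(k+1-j)}$ (the vertex of $S_{k-1}$ on wire $j-1$).

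Finally I would eliminate these two terms using the non-intersection hypotheses together with the weight rule of Figure \ref{fig:clR6}: a highway path acquires the vertex weight $q_{j'}^{(i')}$ exactly when it runs straight horizontally through $(j',i')$, i.e.\ traverses both incident horizontal edges. If $P$ picked up $q_{j-1}^{(k+1-j)}$ it would use the edge $(j-1,k+1-j)\to(j,k+1-j)$, one of the two edges of $Q$ removed by $m_k$; this contradicts that $P$ and $Q$ do not intersect on $N_{n,m}$. If $P$ picked up $q_j^{(k+2-j)}$ it would use the edge $(j-1,k+2-j)\to(j,k+2-j)$, one of the two edges added to form $m_k(Q)$; this contradicts that $P$ and $m_k(Q)$ do not intersect. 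Hence both surviving contributions vanish and the sum is $0$, giving (1). For (2) the identical computation produces the same two critical vertices, now blocked by the edges of $P$ and of $m_k(P)$, using $Q\cap P=\emptyset$ and $Q\cap m_k(P)=\emptyset$.

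The hard part will be the middle step: organizing the commutation count over $S_{k-1},S_k,S_{k+1}$ so that every term cancels except the two boundary vertices, and then recognizing that these two vertices are precisely the ones guarded by the two edge-disjointness assumptions. The ordering hypotheses ($c<a$, that $Q$ starts to the left of $P$, and the common terminal vertical line) are what guarantee that $m_k$ acts on $Q$ in the stated local form, so that the critical removed and added edges genuinely belong to $Q$ and to $m_k(Q)$; once this is in place, both the cancellation and the edge-conflict argument are short.
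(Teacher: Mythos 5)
Your proof is correct, and it shares the paper's skeleton: expand $\alpha(P,Q)$ bilinearly over letters, note that only letters of $P$ on snake paths congruent to $k-1,k,k+1$ interact with the moved letter, and use non-intersection to kill the configurations where the exponent could change. The execution differs in a worthwhile way, though. The paper first normalizes the relative position of the two paths (translating $Q$ by a deck transformation so that it lies above $P$ and within $n$ positions) and then proves, for each relevant letter $p_i$ of $P$, one-sided horizontal-position inequalities such as $h(p_k) < h(q_k)-1$, showing that every individual pairwise exponent $\alpha(p_i,q_k)$ is literally unchanged when $q_k$ moves one step left. You instead observe that the difference $\alpha(q,u)-\alpha(q,v)$ vanishes automatically for every letter $q$ of $P$ except one sitting at two specific cylinder vertices (the $S_{k+1}$ vertex on wire $j$ and the $S_{k-1}$ vertex on wire $j-1$, each contributing $+2$), and you exclude exactly those two by shared-edge contradictions: the first would force $P$ to traverse the horizontal edge $(j-1,k+2-j)\to(j,k+2-j)$ added in forming $m_k(Q)$, the second the edge $(j-1,k+1-j)\to(j,k+1-j)$ that $Q$ uses; I checked both identifications against the definition of $m_k$ and the straight-through-horizontally weight rule, and they are right. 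Your route buys two things: it avoids the normalization and the one-sided inequalities (whose derivation from non-intersection the paper leaves implicit), and it makes visible that the lemma's ordering hypotheses (common terminal vertical line, $Q$ starting to the left, $c<a$) are never actually invoked — only the two edge-disjointness hypotheses enter. What the paper's version buys is the stronger term-by-term conclusion that each exponent $\alpha(p_i,q_k)$ is separately preserved, in a style matching the positional bookkeeping of the neighboring Lemmas \ref{lem:PP} and \ref{lem:PQ}. One phrasing quibble: the $S_k$-letters of $P$ do not cancel ``in pairs''; each such letter contributes $0$ individually, since $\alpha(q,u)$ and $\alpha(q,v)$ are both $-2$ or both $+2$ once the ties $q=u$ and $q=v$ are excluded (which your edge argument also supplies).
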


\begin{proof}
By translating $Q$ up or down $tn$ positions for some $t \in \Z$, we may assume that $Q$ lies above $P$ but is never more than $n$ positions above $P$.
In the same manner as in the proof of Lemma~\ref{lem:PP}, 
we write $P = p_a p_{a+1} \cdots p_b$ and 
$Q = q_c q_{c+1} \cdots q_d$.
For the weight $q^{(i)}_{j}$ at a crossing $(j,i)$, we write $h(q^{(i)}_{j}) = j$ for the horizontal position.

We prove (1).   
Assume that $k \in [c,d]$, and let $Q' = m_k(Q)$. Then $h(q_k') = h(q_k)-1$.
Note that  $\alpha(p_i,q_k) \neq 0$ holds only when $i \equiv k, k \pm 1 \mod n$.
We need to check that $\alpha(p_i,q_k) = \alpha(p_i,q_k')$ in these cases.
 
When $i \equiv k \mod n$, as $P$ does not intersect $Q$ and $Q'$ on $N_{n,m}$,
we have the following:
\begin{itemize}
\item if $k \in [a,b]$, then $h(p_k) < h(q_k)-1$,
\item for all $j > 0$ such that $k-jn \in [a,b]$, 
we have $h(p_{k-jn}) < h(q_k)-n-1$,
\item for all $j > 0$ such that $k+jn \in [a,b]$, 
we have $h(p_{k+jn}) > h(q_k)$,
\end{itemize}
From the first property, it follows that $h(p_k) < h(q_k')$, 
and we obtain $\alpha(p_k,q_k) = \alpha(p_k,q_k') = -2$.
From the second and third properties, 
it follows that $h(p_{k-jn}) < h(q_k')-n$ or $h(p_{k+jn}) > h(q_k')$,
and we obtain $\alpha(p_{k \mp jn},q_k) = \alpha(p_{k \mp jn},q_k') = \mp 2$.

When $i \equiv k \pm 1 \mod n$, in the same manner  
we obtain $\alpha(p_i,q_k) = \alpha(p_i,q_k')$ from 
\begin{itemize}
\item if $k+1 \in [a,b]$, then $h(p_{k+1}) < h(q_k)$,
\item for all $j > 0$ such that $k+1-jn \in [a,b]$, 
then $h(p_{k+1-jn}) < h(q_k)-n$,
\item for all $j > 0$ such that $k+1+jn \in [a,b]$, 
then $h(p_{k+1+jn}) > h(q_k)$,
 
\item if $k-1 \in [a,b]$, then $h(p_{k-1}) < h(q_k)-2$,
\item for all $j > 0$ such that $k-1-jn \in [a,b]$, 
then $h(p_{k-1-jn}) < h(q_k)-n-2$,
\item for all $j > 0$ such that $k-1+jn \in [a,b]$, 
then $h(p_{k-1+jn}) > h(q_k)$.
\end{itemize}

The proof of (2) is similar.
%
%
%
\end{proof}

\begin{lem}\label{lem:PQ}
Let $P$ and $Q$ be highway paths whose images in $N_{n,m}$ are non-intersecting, with $s(P) = [a,b]$ and $s(Q) = [c,d]$.  Assume that $P$ and $Q$ have distinct endpoints on the same vertical line, but that $Q$ starts to the left of $P$.  Assume that $c< a$.  
Then we have $\alpha(P,Q) =  - \chi(c-b-1) + \chi(c-a) +\chi(d-a+1)$.  
\end{lem}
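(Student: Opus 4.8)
The plan is to compute $\alpha(P,Q)$ as a double sum over the vertex weights of the two paths, after first using the moves $m_k$ to normalize their shapes. Write $P = p_a p_{a+1} \cdots p_b$ and $Q = q_c q_{c+1}\cdots q_d$, where $p_i$ (resp.\ $q_j$) denotes the weight picked up by $P$ (resp.\ $Q$) on the snake path $S_i$ (resp.\ $S_j$), exactly as in the proof of Lemma~\ref{lem:PP}. Since $\alpha$ is additive in each factor, $\alpha(P,Q) = \sum_{i=a}^{b}\sum_{j=c}^{d}\alpha(p_i,q_j)$. By Lemma~\ref{lem:m_k}, applying any admissible move $m_k$ (or its inverse) to $P$ or to $Q$ changes neither $\alpha(P,Q)$ nor the snake sets $s(P)=[a,b]$ and $s(Q)=[c,d]$; hence I am free to deform $P$ and $Q$ into a canonical configuration in which the relative horizontal positions of all interacting weights are easy to read off, for instance by pushing $Q$ as far up-and-left along its snakes, and $P$ as far down-and-right, as the non-intersecting hypothesis permits.

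Each individual commutator $\alpha(p_i,q_j)$ is nonzero only when $i \equiv j$ or $i \equiv j\pm 1 \pmod n$, and in that case it equals $\pm\bigl(\chi(i+1-j) - 2\chi(i-j) + \chi(i-1-j)\bigr)$, the sign being $-1$ when $q_j$ lies to the left of $p_i$ and $+1$ when it lies to the right. This is simply the left--right reflection of the local formula~\eqref{eq:q-chi} used in Lemma~\ref{lem:PP}, combined with $\chi(z)=\chi(-z)$. Thus everything reduces to deciding, for each interacting pair, on which side of $p_i$ the weight $q_j$ sits in the canonical configuration.

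Determining these signs is the main obstacle, and it is here that the wraparound of the cylinder enters. Because $Q$ starts to the left of $P$, lies (after translating by a suitable multiple of $n$, as in the normalization opening the proof of Lemma~\ref{lem:m_k}) above $P$ but within $n$ rows of it, and $c<a$, a pair $p_i,q_j$ on literally adjacent snakes of the universal cover has $q_j$ to the left of $p_i$; however, a pair in the same residue class $\pmod n$ whose snake indices differ by a full period can be forced to the opposite side. I would pin down the exact left/right pattern by the same position bookkeeping used in Lemma~\ref{lem:m_k}, comparing the horizontal coordinates $h(\cdot)$ of the relevant weights on each residue class $i-j \equiv 0,\pm 1 \pmod n$, with the non-intersecting hypothesis supplying the needed inequalities. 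The point to verify is that this sign pattern depends on $(i,j)$ only through the linear order along the snakes, so that it remains compatible with telescoping.

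With the signs in hand, $\sum_{i,j}\alpha(p_i,q_j)$ becomes a signed double sum of the discrete second difference $\chi(i+1-j) - 2\chi(i-j) + \chi(i-1-j)$, which collapses over the rectangle $[a,b]\times[c,d]$ exactly as in the computation proving Lemma~\ref{lem:PP}: summing over $i$ telescopes to the corner values $\chi(b+1-j),\chi(a-j),\chi(b-j),\chi(a-1-j)$, and summing over $j$ telescopes these to boundary values of $\chi$. After applying $\chi(z)=\chi(-z)$, I expect the surviving terms to combine to $-\chi(c-b-1)+\chi(c-a)+\chi(d-a+1)$, the asymmetry relative to the symmetric Lemma~\ref{lem:PP} formula being produced precisely by the wraparound pairs of the previous step. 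Once the sign bookkeeping is settled, the telescoping itself is routine.
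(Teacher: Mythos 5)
Your strategy is the same as the paper's: use the moves $m_k$ of Lemma \ref{lem:m_k} to normalize the two paths, reduce $\alpha(P,Q)$ to a double sum of local commutators $\pm\bigl(\chi(i+1-j)-2\chi(i-j)+\chi(i-1-j)\bigr)$ whose signs record whether $q_j$ sits to the left or right of $p_i$, and then telescope; your local sign rule ($+$ when $q_j$ is to the right, $-$ when to the left) is also correct. The problem is that your write-up stops exactly where the proof has to start. You never exhibit a canonical configuration, never state the resulting sign pattern, and never perform the telescoping: the decisive steps are replaced by ``I would pin down the exact left/right pattern'' and ``I expect the surviving terms to combine to'' the stated formula. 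In the paper the normalization is completely explicit -- both paths are straightened into horizontal paths on the two adjacent rows $c$ and $c+1$, with $Q$ occupying columns $1,\dots,d-c+1$ of row $c$ and $P$ occupying consecutive columns of row $c+1$ beginning at column $a-c$ -- and in that configuration the sign pattern is simply $+c(i,j)$ for $j\in[i-1,d]$ and $-c(i,j)$ for $j\in[c,i-2]$. Since the final formula is asymmetric precisely because of this pattern, an unspecified sign pattern means the computation cannot even be checked; this is a genuine gap, not a routine omission.

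A second, related gap: the telescoping is not ``routine'' and does not collapse ``exactly as in the computation proving Lemma \ref{lem:PP}.'' There the signs are uniform over the rectangle; here they flip across the diagonal $j=i-1$, and carrying out the sum produces (i) diagonal boundary terms involving $\chi(1)$ and $\chi(2)$, which vanish only because this section assumes $n>2$, and (ii) the corner contribution $\chi(a-d-1)-\chi(b-d)+\chi(a-c)-\chi(b-c+1)$, which contains an extra term $-\chi(b-d)$ that does not appear in the lemma's formula. To discard it one must use the hypothesis that $P$ and $Q$ have distinct endpoints on the same vertical line together with non-intersection of their images in $N_{n,m}$, which forces $b\not\equiv d \bmod n$ and hence $\chi(b-d)=0$; only then does the sum reduce to $\chi(d-a+1)+\chi(c-a)-\chi(c-b-1)$ via $\chi(z)=\chi(-z)$. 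Your proposal shows no awareness of either of these extra terms, so even granting your plan, the ``expected'' answer could not be reached without further argument.
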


\begin{proof}
By applying the $m_k$ for some $k \in [a,b] \cup [c,d]$, and using Lemma~\ref{lem:m_k}, we may assume that the paths $P$ and $Q$ are 
\begin{align*}
&P = q_{a-c}^{(c+1)} q_{a-c+1}^{(c+1)} \cdots q_{b-c+1}^{(c+1)},
\\
&Q = q_{1}^{(c)} q_{2}^{(c)} \cdots q_{d-c+1}^{(c)}. 
\end{align*}
We write $c(i,j)$ for the right-hand side of \eqref{eq:q-chi}.
For $q_{i-c}^{(c+1)} ~(i \in [a,b])$ and $q_{j-c+1}^{(c)}~(j \in [c,d])$ we have 
$$
  \alpha(q_{i-c}^{(c+1)}, q_{j-c+1}^{(c)})
  =
  \begin{cases}
    c(i,j) & j \in [i-1,d], \\
    -c(i,j) & j \in [c,i-2], 
  \end{cases} 
$$
and $\alpha(P,Q)$ is calculated as follows:
\begin{align*}
\alpha(P,Q) 
&= 
\sum_{i=a}^b \bigl( \sum_{j=i-1}^d c(i,j) - \sum_{j=c}^{i-2} c(i,j) \bigr)
\\
&=
\chi(a-d-1) - \chi(b-d) + \chi(a-c) - \chi(b-c+1) + 2 \chi(2) - 2 \chi(1)
\\
&=
\chi(a-d-1) + \chi(a-c) - \chi(b-c+1). \qedhere
\end{align*}
\end{proof}




\begin{proof}[Proof of Theorem~\ref{thm:q-cylindric-loop-schur}]
We first note that quantum cylindric loop Schur functions $s^{(\ell)}_{D}$ are exactly the highway measurements of $N_{n,m}$ (see \cite{LPS} for more details).  Let us fix source vertices $s_1 > s_2 > \cdots > s_r> s_1-n$ and sink vertices $t_1 > t_2 > \cdots > t_1-n$ on $\tilde N_{n,m}$.  Denote by $M = M(s_1,s_2,\ldots,s_r|t_1,t_2,\ldots,t_r)$ the highway measurement (as defined in \S\ref{ssec:highway}) consisting of families $\P = (P_1,\ldots,P_r)$ of non-intersecting highway paths in $N_{n,m}$ where $P_i$ lifts to a path on $\tilde N_{n,m}$ that goes from $\bar s_i$ (the image of $s_i$ in $N_{n,m}$) to $\bar t_i$.  This condition fixes the homology classes $[P_i]$.  Then $M$ is a quantum cylindric loop Schur function and every quantum cylindric loop Schur function occurs in this way.  For each $i \in \{1,2,\ldots,r\}$, define $a_i,b_i \in \Z$ by $s(P) = [a_i,b_i]$ where $P$ is a highway path in $\tilde N_{n,m}$ going from $s_i$ to $t_i$.

In the following, we obtain a canonical lifting to $ \tilde N_{n,m}$ of every path $P$ in $N_{n,m}$ considered by requiring that they start at one of the $s_i$.

Let $\P = (P_1,P_2,\ldots,P_r)$ be an ordered family of highway paths on $N_{n,m}$ where $P_i$ goes from $\bar s_i$ to some $\bar t_j$, such that each of $\bar t_1,\bar t_2,\ldots,\bar t_r$ is used once. 
Suppose $\P$ has intersections.  Among all pairs of paths $(P_i,P_j)$ that intersect, choose the pair where $j = i+1$ and $i$ is minimal.  Let $e$ be the right most edge on $P_i \cap P_j$.   Let $P_i = P_i^{(1)} P_i^{(2)}$ and $P_{i+1} = P_{i+1}^{(1)} P_{i+1}^{(2)}$ be the decompositions of the two paths, where $P_\ast^{(1)}$ contains $e$ and all edges in $P_\ast$ preceding it, while $P_\ast^{(2)}$ contains all edges after $e$.  Thus $P_i^{(2)}$ and $P_{i+1}^{(2)}$ do not intersect on $N_{n,m}$.   Let us suppose that $s(P_i^{(1)}) = [a,x-1]$, $s(P_i^{(2)}) = [x,b]$, $s(P_{i+1}^{(1)}) = [c,x'-1]$, and $s(P_{i+1}^{(2)}) = [x',d]$, where $x \equiv x' \mod n$.

%



Let $\P'$ be obtained from $\P$ by setting $P'_i = P_i^{(1)}P_{i+1}^{(2)}$ and $P'_{i+1} = P_{i+1}^{(1)}P_i^{(2)}$, and leaving the rest of the paths unchanged.  We compute $\alpha$ such that $\wt(\P) = \e^{\alpha} \wt(\P')$.  
By Lemma \ref{lem:PP} and \ref{lem:PQ}, we have
\begin{align}
\alpha &= \alpha(P_{i+1}^{(1)},P_{i+1}^{(2)}) + \alpha(P_i^{(2)},P_{i+1}) \nonumber \\ \label{eq:alpha}
&= (1-\chi(c - x') - \chi(x'-1-d) + \chi(c -d - 1)) \\ \nonumber 
& \qquad + (-\chi(c-b-1)+\chi(c-x)+\chi(d-x+1)) \\ 
&= 1+\chi(c-d-1)-\chi(c - b - 1).\nonumber 
\end{align}

Define 
$$\beta(\a,\b) = \beta((a_1,b_1),(a_2,b_2),\ldots,(a_r,b_r)) := \sum_{i < j \mid b_i < b_j} 
\left(-1-\chi(a_j-b_j-1)+\chi(a_j-b_i-1)\right)$$ 
and 
$\beta(\P) := \beta(s(P_1),\ldots,s(P_r))$.  Then \eqref{eq:alpha} gives
\begin{equation}\label{eq:PP}
\e^{\beta(\P)} \wt(\P) = \e^{\beta(\P')} \wt(\P').
\end{equation}

Let $e(a,b) = \sum_P P$ be the weight generating function of highway paths such that $s(P) = [a,b]$.  Then $\P \mapsto \P'$ sets up an involution on intersecting families of highway paths, and \eqref{eq:PP} allows us to cancel weights, giving
\begin{equation}\label{eq:M}
M = \sum_{\b'} (-1)^{\sign(\b')} \e^{\beta(\a,\b')-\beta(\a,\b)} e(a_1,b'_1) e(a_2,b'_2) \cdots e(a_k,b'_k)
\end{equation}
where the summation is over all $\b'$ such that a permutation of $\sigma(\b')$ is congruent to $\b$ modulo $n$\, and such that $\sum_i b_i = \sum_i b'_i$.  Here $\sign(\b') = \sign(\sigma)$.
The theorem follows from Corollary \ref{cor:elem} and the observation that $e(a,b)$ is nothing but a quantum loop elementary symmetric function $e^{(a)}_{b-a+1}$.
\end{proof}

\begin{example}
Let us consider the case of Example~\ref{ex:loopcylindric}.
Set $\a = (4,3)$ and $\b=(5,3)$. In \eqref{eq:M} we have three choices of 
$\b'$ as $(5,3), (3,5)$ and $(6,2)$, and obtain  
$$
  M(\a,\b) = e(4,5) e(3,3) - \e^{-2} e(4,3) e(3,5) - e(4,6) e(3,2).
$$
By using the formulae
\begin{align*}
&e(4,5) = e_2^{(4)} = q_1^{(1)} q_2^{(1)} + q_2^{(3)} q_3^{(3)} 
+ q_1^{(1)} q_3^{(3)}, 
\\ 
&e(3,3) = e_1^{(3)} = q_1^{(3)} + q_2^{(2)}+q_3^{(1)}, 
\\
&e(a,a+2) = e_3^{(a)} = q_1^{(i)} q_2^{(i)} q_3^{(i)}, \text{ for } i \equiv a \mod 3,
\end{align*}
and $e(a,a-1) = e^{(a)}_0=1$, 
we see that $M(\a,\b) = s_D^{(1)}(\q_1,\q_2,\q_3)$.
\end{example}

\section{Quantum cluster $R$-matrix} \label{sec:qclus}
As mentioned in the introduction, we will use Fock and Goncharov's quantization of
cluster $y$-seeds, and we begin by writing the cluster $R$-matrix in $y$-variables.

\subsection{Definition of the cluster $R$-matrix in $y$-variables}
Let $\mathbb{P}_{\rm univ}(\boldy)$ denote the universal semifield generated by $y_v$ for $v \in Q_{n,m}$. 
As in \S\ref{subsec:x-R}, for three adjacent closed vertical cycles $(M^-,M,M^+)$ 
in $Q_{n,m}$, we denote the $y$-variables $y_v$ for
$v \in M^-,M,M^+$ by $y_i^-, y_i, y_i^+$ respectively, where $i$ takes values modulo $n$ and the arrows around the vertex $i \in M$ are given by \eqref{eq:arrows}.  

\begin{thm}\label{thm:yRcluster}
Let $(M^-,M,M^+) = (M_{i-1},M_i,M_{i+1})$ denote three adjacent cycles in $Q_{n,m}$.  For any $j \in \Z/n \Z$, the operator $\mathcal{R}_{M,j}$ of \eqref{eq:R-op} acts on the $y$-seed $(Q_{n,m}, \boldy)$ by $\mathcal{R}_{M,j}(Q_{n,m}, \boldy) = (Q_{n,m}, \boldy')$, where 
\begin{align}
  \label{eq:yRcluster1}
  y_i' 
  &= (\alpha_{i+2})^{-1} \cdot y_{i+1}^{-1} \cdot \alpha_i,
  \\
  \label{eq:yRcluster2}
 y_{i^-}' 
  &= (\alpha_{i})^{-1} \cdot y_i \, y_i^- \cdot \alpha_{i+1},
  \\
  \label{eq:yRcluster3}
  y_{i^+}' 
  &= (\alpha_{i+1})^{-1} \cdot y_{i+1} \, y_i^+ \cdot \alpha_{i+2},
\end{align}
and the other $y_{v} ~({v} \notin M \cup M^+ \cup M^-)$ are unchanged.
%
Here we set
\begin{align}\label{eq:alpha-i}
\alpha_i =\alpha_i(M) := 1 + \sum_{k=1}^{n-1} y_i \, y_{i+1} \cdots 
y_{i+k-1} \in \mathbb{P}_{\rm univ}(\boldy).
\end{align}
The action of $\mathcal{R}_{M_0,j}$ (resp. $\mathcal{R}_{M_m,j}$) are given by \eqref{eq:yRcluster1} and
\eqref{eq:yRcluster3} with $M = M_0$ and $M^+ = M_1$ (resp. \eqref{eq:yRcluster1} and
\eqref{eq:yRcluster2} with $M = M_m$ and $M^- = M_{m-1}$).
\end{thm}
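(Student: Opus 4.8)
The plan is to compute the action of $\mathcal{R}_{M,j}$ on the $y$-seed directly in the universal semifield $\mathbb{P}_{\rm univ}(\boldy)$, by tracking each $y$-variable through the $2n-2$ mutations in \eqref{eq:R-op} and then applying the transposition $s_{j-2,j-1}$. Since the quiver-level statement $\mathcal{R}_{M,j}(Q_{n,m})=Q_{n,m}$ is already contained in Theorem~\ref{thm:Rcluster}, the sequence of intermediate quivers produced by the first sweep $\mu_{j-1}\circ\cdots\circ\mu_{j+1}\circ\mu_j$ and the second sweep $\mu_j\circ\cdots\circ\mu_{j-3}$ is exactly the one recorded in the proof of that theorem, and I would reuse it, reading off the sign of each $b_{k,v}$ at the moment the vertex $k$ is mutated. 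Note that in $\mathbb{P}_{\rm univ}(\boldy)$ all $y$-variables commute, so the noncommutative ordering displayed in \eqref{eq:yRcluster1}--\eqref{eq:yRcluster3} is irrelevant here and the targets are simply the subtraction-free expressions $y_{i+1}^{-1}\alpha_i\alpha_{i+2}^{-1}$, $y_iy_i^-\alpha_{i+1}\alpha_i^{-1}$, and $y_{i+1}y_i^+\alpha_{i+2}\alpha_{i+1}^{-1}$. Finally, by the $\Z/n\Z$ rotational symmetry $\rho\colon i\mapsto i+1$ of $Q_{n,m}$ one has $\rho\,\mathcal{R}_{M,j}\,\rho^{-1}=\mathcal{R}_{M,j+1}$, and since the claimed formulas are $\rho$-equivariant (with $\rho(\alpha_i)=\alpha_{i+1}$), it suffices to carry out the computation for a single convenient value of $j$; this also establishes the asserted independence of the result from $j$.

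The key structural point is how $\alpha_i$ emerges. When $\mu_k$ is applied, \eqref{eq:classicalY} multiplies each neighbouring $y_v$ by a factor $(1\oplus y_k^{-1})^{-1}=y_k(1+y_k)^{-1}$ or $(1\oplus y_k)$, according to the sign of the current $b_{k,v}$. As the first sweep proceeds around the cycle $M$, the already-mutated value $y_{j+1}$ becomes $y_{j+1}y_j(1+y_j)^{-1}$, so that the next factor $1+y'_{j+1}=(1+y_j+y_jy_{j+1})(1+y_j)^{-1}$ produces the partial sum $1+y_j+y_jy_{j+1}$; iterating, the mutations $\mu_j,\mu_{j+1},\ldots,\mu_{j-1}$ build up the telescoping partial sums $1+y_j+y_jy_{j+1}+\cdots$ whose value, after a full turn around the cycle of length $n$, is exactly $\alpha_j$ of \eqref{eq:alpha-i}. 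I would make this precise by an induction over the first sweep, carrying as inductive data the ``running'' value of the $y$ on the just-mutated vertex together with the accumulated partial sum attached to each side vertex of $M^-$ and $M^+$.

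With the partial sums identified, the remaining work is to feed them through the second sweep and the transposition and to collect the factors landing on each of $y_i$, $y_{i^-}$, $y_{i^+}$. The second sweep $\mu_j\circ\cdots\circ\mu_{j-3}$ (which omits $j-1$ and $j-2$) together with $s_{j-2,j-1}$ is precisely what restores the quiver to $Q_{n,m}$ and closes up the cyclic bookkeeping at the ``seam''; tracking the two inversions that each $y_i$ undergoes and the factors deposited on it by its neighbours yields $y_i'=y_{i+1}^{-1}\alpha_i\alpha_{i+2}^{-1}$, while the side variables $y_{i^-},y_{i^+}$, never themselves mutated, merely collect products of neighbouring factors, giving \eqref{eq:yRcluster2} and \eqref{eq:yRcluster3}. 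The boundary cycles $M_0$ and $M_m$ are handled by the same computation with the absent neighbour (and its $\alpha$-term) deleted, which yields the stated reductions.

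The main obstacle is the quiver bookkeeping: mutating around a cycle creates transient ``long'' arrows inside $M$ and arrows directly between $M^-$ and $M^+$, so the signs of the $b_{k,v}$ that govern which factor of \eqref{eq:classicalY} contributes change from step to step; keeping these signs correct throughout both sweeps, and verifying that the partial sums telescope cleanly into $\alpha_i$ at the seam, is the delicate part. As a conceptual guide and an independent check, I would use the dictionary with the geometric $R$-matrix: under the substitution $y_v\mapsto\prod_u x_u^{b_{uv}}$ one computes $y_i=q_{i-1}p_i^{-1}$ in the coordinates of \S\ref{sec:xtoq} (up to frozen variables), and $\alpha_i$ is the cluster avatar of the denominator $\kappa_i(\p,\q)$; this predicts the final formulas and confirms the computation. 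However, because $B(Q_{n,m})$ is degenerate (indeed $B\mathbf{1}=0$, as every vertex has equal in- and out-degree), this substitution is not injective and does not by itself furnish a proof without first passing to the frozen-variable enrichment of Theorem~\ref{thm:Rcluster2}, so I treat it only as a verification and carry out the actual argument by the direct telescoping computation above.
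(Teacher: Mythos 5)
Your proposal is correct, and its computational core --- reusing the intermediate quivers from \S\ref{subsec:Rcluster} to read off the signs $b_{kv}$, and the telescoping recursion in which each mutation around $M$ turns the running factor $1+y'_{k}$ into the next partial sum of $\alpha_j$ --- is exactly the mechanism the paper uses (your recursion is the $\e=1$ specialization of the paper's \eqref{eq:yj} and the identity $(1+\e y_1)(1+\e y_2[1])\cdots(1+\e y_j[j-1])=1+\sum_{k=1}^{j}\e^k y_1\cdots y_k$). Where you genuinely diverge is in the scaffolding. The paper does not prove Theorem \ref{thm:yRcluster} directly: it deduces it by setting $\e=1$ in the quantum version (Theorem \ref{thm:q-yRcluster}), whose proof first establishes the tropical formula (Lemma \ref{lem:yRcluster-trop}) by an explicit computation, then invokes the tropical/quantum periodicity transfer (Theorem \ref{thm:period}) to get independence of $j$, and only then carries out the telescoping computation for one representative variable of each of the three types. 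You instead stay entirely in the commutative universal semifield and obtain $j$-independence (equivalently, $\rho$-equivariance of the answer) from the rotational automorphism $\rho$ of $Q_{n,m}$ via $\rho\,\mathcal{R}_{M,j}\,\rho^{-1}=\mathcal{R}_{M,j+1}$; this is a valid and more elementary reduction, resting only on the standard equivariance of $y$-seed mutation under quiver automorphisms, and it spares you both the tropical lemma and Theorem \ref{thm:period} (indeed the paper's step ``it is enough to check for one $i$ of each'' tacitly uses the same rotational symmetry). What the paper's longer route buys is reusability: the quantum formulas and the tropical Lemma \ref{lem:yRcluster-trop} are needed anyway for Theorems \ref{thm:q-YBR} and \ref{thm:psi-RR} and ultimately for the quantum Yang--Baxter relation, so proving the quantum statement first makes the classical one a free corollary; your argument proves only the classical statement. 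Finally, your closing caveat is well taken and correct: since every vertex of $Q_{n,m}$ has equal in- and out-degree, the exchange matrix is degenerate and the substitution $y_v\mapsto\prod_u x_u^{b_{uv}}$ is not injective, so the dictionary with the geometric $R$-matrix can serve only as a consistency check, exactly as you treat it.
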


%
%
%

Theorem~\ref{thm:yRcluster} 
follows from its quantum version Theorem~\ref{thm:q-yRcluster}, stated in the next subsection.

We define the cluster $R$-matrix on $y$-variables as the rational morphism $R_{M_j} \colon \mathbb{P}_{\rm univ}(\boldy) \to \mathbb{P}_{\rm univ}(\boldy)$ 
for $j=0,\ldots,m$ given by 
Theorem~\ref{thm:yRcluster}.

\begin{example}\label{example:y}
When $n=3$, the formulae for $R_M(y_1)$, $R_M(y_1^-)$ and $R_M(y_1^+)$ are
\begin{align*}
R_M(y_1) &= (1 + y_3 + y_3 y_1)^{-1} y_2^{-1} 
            (1 + y_1 + y_1 y_2),
\\
R_M(y_1^-) &= (1 + y_1 + y_1 y_2)^{-1} y_1 y_1^-
                (1 + y_2 + y_2 y_3),
\\
R_M(y_1^+) &= (1 + y_2 + y_2 y_3)^{-1} y_2 y_1^+
                (1 + y_3 + y_3 y_1).
\end{align*} 
\end{example}



\subsection{Quantization}

In the following we study the quantization of $R_M$ 
using Fock-Goncharov quantization (see \S\ref{ssec:FG}) corresponding to the quiver $Q_{n,m}$. 
Let $\mathcal{Y}_\e := \mathcal{Y}_\e(Q_{n,m})$ be 
the ambient skew field of the quantum torus 
generated by
$y_{ji} ~(0 \leq j \leq m, ~i \in \Z/n\Z)$ with the relations:
\begin{align}
  y_{ji} y_{j+1,i} &= \e^2 y_{j+1,i} y_{ji} & 0 \leq j \leq m-1,
  \\
  y_{ji} y_{j,i+1} &= \e^{-2} y_{j,i+1} y_{ji} & 0 \leq j \leq m,
  \\
  y_{ji} y_{j+1,i-1} &= \e^{-2} y_{j+1,i-1} y_{ji} & 0 \leq j \leq m-1.
\end{align}

For any closed vertical cycle $M$ in $Q_{n,m}$ and $j \in \Z/n \Z$, we define a sequence of a permutation and 
$2n-2$ quantum mutations by 
$$
\mathcal{R}^\e_{M,j} 
:= s_{j-2,j-1} \circ 
(\mu_{j}^\e \circ \mu_{j+1}^\e \cdots \circ \mu_{j-4}^\e \circ \mu_{j-3}^\e) \circ 
(\mu_{j-1}^\e \circ \mu_{j-2}^\e \circ \cdots \circ \mu_{j+1}^\e \circ \mu_j^\e).
$$

\begin{thm}\label{thm:q-yRcluster}
Let $(M^-,M,M^+) = (M_{i-1},M_i,
M_{i+1})$ denote three adjacent cycles in $Q_{n,m}$.
Then, for any $j$ we have $\mathcal{R}^\e_{M,j}(Q_{n,m},\boldsymbol{y}) 
= (Q_{n,m},\boldsymbol{y}')$ where 
\begin{align}
  \label{eq:q-yRcluster1}
  y_i' 
  &= (\alpha_{i+2}^\e)^{-1} \cdot y_{i+1}^{-1} \cdot \alpha_i^\e,
  \\
  \label{eq:q-yRcluster2}
  y_{i^-}' 
  &= (\alpha_{i}^\e)^{-1} \cdot \e\, y_i \, y_i^- \cdot \alpha_{i+1}^\e,
  \\
  \label{eq:q-yRcluster3}
  y_{i^+}' 
  &= (\alpha_{i+1}^\e)^{-1} \cdot \e \,  y_{i+1} \, y_i^+ \cdot \alpha_{i+2}^\e,
\end{align}
and the other $y_{v} ~({v} \notin \cup M^- \cup M \cup M^+)$ are unchanged.
Here we set 
$$
  \alpha_i^\e = \alpha_i^\e(M) 
  := 1 + \sum_{k=1}^{n-1} \e^k y_i \, y_{i+1} \cdots y_{i+k-1} \in \Y_\e.
$$
The action of $\mathcal{R}_{M_0,j}^\e$ (resp. $\mathcal{R}_{M_m,j}^\e$) is given by \eqref{eq:q-yRcluster1} and
\eqref{eq:q-yRcluster3} with $M = M_0$ and $M^+ = M_1$ (resp. \eqref{eq:q-yRcluster1} and
\eqref{eq:q-yRcluster2} with $M = M_m$ and $M^- = M_{m-1}$).
\end{thm}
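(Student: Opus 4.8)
The plan is to prove the quantum statement Theorem~\ref{thm:q-yRcluster} directly, by carrying out the sequence of $2n-2$ quantum mutations defining $\mathcal{R}^\e_{M,j}$ while tracking both the quiver and the $y$-variables; the classical Theorem~\ref{thm:yRcluster} then follows simply by specializing $\e \to 1$ (consistently with the equivalence in Theorem~\ref{thm:period}). Since only the vertices of $M$ are mutated and only the $y$-variables attached to $M \cup M^- \cup M^+$ are affected, I would first localize the computation to the neighbourhood of $M$ shown in Figure~\ref{fig:clR2}, handling the boundary cases $M = M_0$ and $M = M_m$ (where $M^-$ or $M^+$ is absent) by the identical argument with the corresponding arrows deleted. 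Moreover the whole configuration --- the quiver $Q_{n,m}$, the $\e$-commutation relations of $\Y_\e$, and the target formulas \eqref{eq:q-yRcluster1}--\eqref{eq:q-yRcluster3} --- is invariant under the rotation $i \mapsto i+1$, which conjugates $\mathcal{R}^\e_{M,j}$ into $\mathcal{R}^\e_{M,j+1}$ and carries the formula for index $i$ to that for index $i+1$; hence it suffices to verify the claim for a single convenient value of $j$.

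I would organize the computation as two sweeps. First I would record how the local exchange matrix evolves under the $\e$-independent quiver-mutation rule \eqref{eq:B-mutation} as the first sweep $\mu_{j-1}^\e \circ \cdots \circ \mu_j^\e$ mutates all $n$ vertices of $M$ in increasing cyclic order, followed by the second sweep of $n-2$ mutations (omitting $j-1$ and $j-2$). The aim of this bookkeeping is to show that after all $2n-2$ mutations the quiver equals $Q_{n,m}$ with the labels $j-2$ and $j-1$ transposed, so that the concluding transposition $s_{j-2,j-1}$ restores $Q_{n,m}$ exactly; this simultaneously establishes that $\mathcal{R}^\e_{M,j}$ is a genuine $s_{j-2,j-1}$-period and guarantees that the output seed again carries the commutation relations of $Q_{n,m}$. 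Knowing the matrix at every step lets me apply the quantum $y$-mutation rule \eqref{eq:quantumY} and prove, by induction on the number of completed mutations, an explicit ansatz for the intermediate $y$-variables. The structural feature to isolate is that the products $(1 + \e^{2m-1} y_k^{\pm 1})$ appearing in \eqref{eq:quantumY} accumulate across the first sweep into the $\e$-graded polynomials $\alpha_i^\e = 1 + \sum_{k=1}^{n-1}\e^k\, y_i y_{i+1}\cdots y_{i+k-1}$ of \eqref{eq:alpha-i} and their inverses in a telescoping pattern, which the shorter second sweep and the transposition then assemble into \eqref{eq:q-yRcluster1}--\eqref{eq:q-yRcluster3}.

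The delicate, and I expect hardest, part is controlling the powers of $\e$. Because the $y_i$ do not commute, every time one factor is moved past another in applying \eqref{eq:quantumY} or in collecting an $\alpha_i^\e$, a definite power of $\e$ is produced, and one must check that these powers conspire to yield exactly the weights $\e^k$ inside each $\alpha_i^\e$ and the single extra factor of $\e$ that appears in \eqref{eq:q-yRcluster2} and \eqref{eq:q-yRcluster3} but not in \eqref{eq:q-yRcluster1}. I would keep this under control by carrying, alongside each intermediate monomial, its commutation data relative to the current exchange matrix, and by checking the $\e$-exponents against the relations of $\Y_\e$ at each inductive step; the centrality of $\Z[\e,\e^{-1}]$ and the required $\e \to 1$ consistency with Theorem~\ref{thm:yRcluster} provide useful running checks. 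Once the inductive ansatz is verified through the final mutation, evaluating it after the transposition $s_{j-2,j-1}$ produces \eqref{eq:q-yRcluster1}--\eqref{eq:q-yRcluster3}, which completes the proof.
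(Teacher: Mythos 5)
Your plan is viable and, if the induction were carried through, would prove the theorem — but it takes a genuinely different route from the paper's. You propose an all-quantum computation: quiver bookkeeping (which is indeed $\e$-independent, so the classical quiver lemmas of \S\ref{subsec:Rcluster} apply verbatim), a rotation-symmetry argument to reduce to a single $j$, and then an explicit inductive ansatz for \emph{every} intermediate quantum $y$-variable. The paper instead first proves a tropical version of the statement (Lemma \ref{lem:yRcluster-trop}) by explicit computation in $\mathbb{P}_{\rm trop}(\boldy)$, and then invokes the tropical/quantum periodicity equivalence (Theorem \ref{thm:period}) to conclude that $\mathcal{R}^\e_{M,j}(Q_{n,m},\boldy)$ is independent of $j$; combined with rotation covariance, this means only one instance of each of \eqref{eq:q-yRcluster1}--\eqref{eq:q-yRcluster3} needs to be verified quantum-mechanically, and the paper computes exactly three variables, $y_{n^-}[\overline 0]$, $y_{n-1^+}[\overline 0]$ and $y_{n-1}[\overline 0]$. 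Be aware that your rotation trick alone cannot achieve this second reduction: conjugation by the rotation turns $\mathcal{R}^\e_{M,j}$ into $\mathcal{R}^\e_{M,j+1}$, so for a \emph{fixed} $j$ it does not relate the formulas for different $i$; that reduction requires knowing a priori that the operator is $j$-independent, which is precisely what Theorem \ref{thm:period} supplies. Consequently your route must verify the ansatz at every vertex, i.e.\ carry the full noncommutative $\e$-power bookkeeping that you correctly flag as the hardest step and leave unexecuted; the paper's recursions (such as \eqref{eq:yj}, \eqref{eq:ynj} and \eqref{eq:y-a1}--\eqref{eq:y-a3}) confirm that the telescoping accumulation of the $\alpha_i^\e$ you anticipate is what actually occurs, so there is no obstruction, only substantially more labor. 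In short, your approach is self-contained (no appeal to the Fock--Goncharov/Kashaev--Nakanishi/IIKKN periodicity transfer), while the paper's outsources almost all of the delicate noncommutative computation to a semiclassical tropical calculation plus a general theorem; your treatment of the boundary cases $M_0,M_m$ (rerunning the argument with arrows deleted) also works, though the paper's restriction-to-a-subquiver fact is cleaner, and your final step (deducing the classical Theorem \ref{thm:yRcluster} by $\e \to 1$) coincides with the paper's.
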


\begin{example}
We demonstrate the calculation of 
$\mathcal{R}_{M,1}^\e(Q_{n,m},\boldy) = (Q_{n,m},\boldy')$ in the case of 
$n=3$. Due to Proposition~\ref{prop:R-AB}
we have a sequence of seeds as 
$$
(Q_{3,m},\boldy) \stackrel{\mu_1^\e}{\mapsto} (Q_1,\boldy[1])
\stackrel{\mu_2^\e}{\mapsto} (Q_2,\boldy[2]) 
\stackrel{\mu_3^\e}{\mapsto} (Q_3,\boldy[3]) 
\stackrel{s_{2,3}}{\mapsto} (Q_1,\boldy[\bar 1]) 
\stackrel{\mu_1^\e}{\mapsto} (Q_{3,m},\boldy'). 
$$
In the following we write only $y$-variables $y_{v}$ of ${v} \in M^- \cup M \cup M^+$.
By starting with 
$$
  \boldy = 
  \begin{pmatrix}
  y_{1^-} & {\color{red}y_1} &y_{1^+}
  \\
  y_{2^-} & y_2 &y_{2^+}
  \\
  y_{3^-} & y_3 & y_{3^+}
  \end{pmatrix},
$$
we obtain 
{\small 
\begin{align*}
&\boldy[1] =
\begin{pmatrix}
y_1^-\frac{1}{1+\e y_1^{-1}} & y_1^{-1} & y_1^+(1+\e y_1) \\
y_2^-(1+\e y_1) & {\color{red}y_2 \frac{1}{1+\e y_1^{-1}}} & y_2^+ \\
y_3^- & y_3 (1+\e y_1) & y_3^+\frac{1}{1+\e y_1^{-1}}
\end{pmatrix},
\\[2mm]
&\boldy[2] =
\begin{pmatrix}
y_1^-\frac{1}{1+\e y_1^{-1}} & y_1^{-1}\frac{1}{1+\e {y_2[1]}^{-1}} & y_1^+(1+\e y_1) \\
y_2^-(1+\e y_1) & {y_2[1]}^{-1} & y_2^+(1+\e y_2[1]) \\
y_3^- (1+\e y_2[1]) & {\color{red}y_3 (1+\e y_1)} & 
y_3^+\frac{1}{1+\e y_1^{-1}}
\end{pmatrix},
\\[2mm]
&\boldy[3] =
\begin{pmatrix}
y_1^-\frac{1}{1+\e y_1^{-1}} & y_1^{-1}\frac{1}{1+\e {y_2[1]}^{-1}}(1+\e y_3[2]) 
& y_1^+(1+\e y_1) \\
y_2^-(1+\e y_1) & {y_2[1]}^{-1} & y_2^+(1+\e y_2[1])\frac{1}{1+\e y_3[2]^{-1}} \\
y_3^- (1+\e y_2[1])\frac{1}{1+\e y_3[2]^{-1}}& y_3[2]^{-1} & y_3^+\frac{1}{1+\e y_1^{-1}}
\end{pmatrix},
\\[2mm]
&\boldy[\bar 1] =
\begin{pmatrix}
y_1^-\frac{1}{1+\e y_1^{-1}} & {\color{red}y_1^{-1}\frac{1}{1+\e {y_2[1]}^{-1}}(1+\e y_3[2])} 
& y_1^+(1+\e y_1) \\
y_2^-(1+\e y_1) & {y_3[2]}^{-1} & y_2^+(1+\e y_2[1])\frac{1}{1+\e y_3[2]^{-1}} \\
y_3^- (1+\e y_2[1]) \frac{1}{1+\e y_3[2]^{-1}}& y_2[1]^{-1} & y_3^+\frac{1}{1+\e y_1^{-1}}
\end{pmatrix},
\\[2mm]
&\boldy' =
\begin{pmatrix}
y_1^-\frac{1}{1+\e y_1^{-1}}(1+\e y_1[3]) & y_1[3]^{-1} 
& y_1^+(1+\e y_1) \frac{1}{1+\e y_1[3]^{-1}}\\
y_2^-(1+\e y_1) \frac{1}{1+\e y_1[3]^{-1}} ~ & {y_3[2]}^{-1}(1+\e y_1[3]) & ~ y_2^+(1+\e y_2[1])\frac{1}{1+\e y_3[2]^{-1}} \\
y_3^- (1+\e y_2[1]) \frac{1}{1+\e y_3[2]^{-1}} ~& y_2[1]^{-1}\frac{1}{1+\e y_1[3]^{-1}}& ~y_3^+\frac{1}{1+\e y_1^{-1}}(1+\e y_1[3])
\end{pmatrix}.
\end{align*}
}
Here the $y$-variables corresponing to mutated vertices are indicated in red.
By arranging $\boldy'$, we obtain
\begin{align*}
R^\e_M(y_1) &= (1 + \e y_3 + \e^2 y_3 y_1)^{-1} y_2^{-1} 
            (1 + \e y_1 + \e^2 y_1 y_2),
\\
R^\e_M(y_1^-) &= (1 + \e y_1 + \e^2 y_1 y_2)^{-1} 
                  \e \, y_1 y_1^-
                (1 + \e y_2 + \e^2 y_2 y_3),
\\
R^\e_M(y_1^+) &= (1 + \e y_2 + \e^2 y_2 y_3)^{-1} 
                  \e \,y_2 y_1^+
                (1 + \e y_3 + \e^2 y_3 y_1).
\end{align*}
In the limit of $\e \to 1$, these reduce to Example~\ref{example:y}.
\end{example}


The following is the key lemma in the proof of Theorem~\ref{thm:q-yRcluster}.

\begin{lem}\label{lem:yRcluster-trop}
Assume that the $y$-variables lie in the tropical semifield 
$\mathbb{P}_{\rm trop}(\boldy)$.
Then for any $j \in \Z/n \Z$, the operator $\mathcal{R}_{M,j}$ defined at \eqref{eq:R-op} acts on the initial $y$-seed $(Q_{n,m}, \boldy)$ as 
$\mathcal{R}_{M,j}(Q_{n,m}, \boldy) = (Q_{n,m}, \boldy')$, where
\begin{align}\label{eq:yRcluster-trop}
  y'_{v} = 
  \begin{cases}
  (y_{i+1})^{-1}  
  & \mbox{if ${v} = i \in M$,}
  \\
  y_i \, y_i^-
  & \mbox{if $v = i^- \in M^-$,} 
  \\
  y_{i+1} \, y_i^+
  & \mbox{if $v = i^+ \in M^+$,}
  \\
  y_{v} & \mbox{otherwise.} 
  \end{cases}
\end{align}
\end{lem}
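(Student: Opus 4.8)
The plan is to evaluate $\mathcal{R}_{M,j}$ directly on the tropical $y$-seed, tracking each $y$-variable through the $2n-2$ mutations of \eqref{eq:R-op} and then applying the transposition $s_{j-2,j-1}$. The decisive simplification is that in $\mathbb{P}_{\rm trop}(\boldy)$ every $y$-variable occurring at any intermediate seed is a Laurent monomial $\prod_v y_v^{n_v}$, and for such a monomial one has $1 \oplus \prod_v y_v^{n_v} = \prod_v y_v^{\min(0,n_v)}$. Because each variable we mutate at turns out to be a monomial all of whose exponents share one sign (sign-coherence of $c$-vectors, which can also be verified directly as the computation proceeds), the factor $1 \oplus y_k^{\pm 1}$ in the classical $y$-mutation \eqref{eq:classicalY} is always either $1$ or $y_k^{\pm 1}$. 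Hence each tropical mutation collapses to the monomial operation $\tilde y_k = y_k^{-1}$ together with $\tilde y_i = y_i\, y_k^{\max(0,\,\varepsilon b_{ki})}$ for $i \neq k$, where $\varepsilon = +1$ if $y_k$ has all exponents $\geq 0$ and $\varepsilon = -1$ otherwise.

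First I would record the restriction of the exchange matrix of $Q_{n,m}$ to $M^- \cup M \cup M^+$ straight from the arrow list \eqref{eq:arrows}, giving $b_{i,i+1} = b_{i,i^-} = b_{i,(i-1)^+} = 1$ and $b_{i,i-1} = b_{i,(i+1)^-} = b_{i,i^+} = -1$. I would then run the two blocks of \eqref{eq:R-op} in order: the first mutates once at each vertex of $M$ in the cyclic order $j, j+1, \ldots, j-1$, and the second mutates a second time at $j-3, j-4, \ldots, j$, so that $j-1$ and $j-2$ are mutated exactly once while all remaining vertices of $M$ are mutated twice. At each individual mutation I would read off the sign $\varepsilon$ of the variable being mutated from the monomial accumulated so far, apply the collapsed rule above, and simultaneously update the exchange matrix by \eqref{eq:B-mutation}.

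The step I expect to be the main obstacle is the bookkeeping of the intermediate quivers: the exponent $b_{ki}$ entering each mutation depends on how the arrows around vertex $k$ have been modified by all earlier mutations, and the cyclic indexing forces several nearly identical cases to be separated, especially at the ``seam'' near $j-1, j-2, j-3$ where the two passes meet and where the permutation $s_{j-2,j-1}$ acts. To keep this under control I would reuse the intermediate quiver computation from the proof of Theorem~\ref{thm:Rcluster} (\S\ref{subsec:Rcluster}), which already establishes that this exact sequence returns $Q_{n,m}$ to itself up to the relabeling $s_{j-2,j-1}$; taking those exchange matrices as given lets me focus only on the monomial accumulation in the $y$-variables rather than recomputing arrows from scratch.

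Finally I would assemble the accumulated monomials. The $M^-$ and $M^+$ vertices are never mutated and only change as neighbors of the mutated $M$-vertices, so the products of factors they pick up over the two passes should collapse to $y'_{i^-} = y_i\, y_i^-$ and $y'_{i^+} = y_{i+1}\, y_i^+$; the twice-mutated $M$-vertices should return the inverted value $y'_i = y_{i+1}^{-1}$, while everything outside $M^- \cup M \cup M^+$ is fixed. Applying $s_{j-2,j-1}$ then restores the original labelling, placing the seed back on $Q_{n,m}$, and the manifest absence of $j$ from the resulting formulas yields exactly \eqref{eq:yRcluster-trop}. As an a priori plausibility check, these formulas are precisely the shadow of \eqref{eq:yRcluster1}--\eqref{eq:yRcluster3} under $\alpha_i \mapsto 1$, which is forced in the tropical semifield since each $\alpha_i = 1 \oplus y_i \oplus y_i y_{i+1} \oplus \cdots$ is a tropical sum of $1$ with positive monomials and hence equals $1$.
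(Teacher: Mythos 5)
Your plan coincides with the paper's own proof: the paper likewise reduces each tropical mutation to the collapsed monomial rule (multiply the neighbors along arrows out of the mutation point whenever the $y$-variable there is a nonnegative monomial in the initial variables), reuses the intermediate quivers from \S\ref{subsec:Rcluster} via Proposition~\ref{prop:R-AB} exactly as you propose, and then explicitly tracks the accumulated monomials through the $2n-2$ mutations (done for $j=1$, with $j$-independence read off from the final formulas). The only cosmetic difference is that the paper never needs sign-coherence of $c$-vectors as an external input: the explicit computation shows the mutation-point variable is always a \emph{nonnegative} monomial, which is the route you yourself flag as the fallback.
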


\begin{proof} 
Due to Proposition~\ref{prop:R-AB}, we compute $\mathcal{R}_{M,1}(Q_{n,m},\boldy)$ by the following sequence of $y$-seeds 
\begin{align}\label{eq:y-seq}
\begin{split}
(Q_0,\boldy[0]) &= (Q_{n,m},\boldy)
\stackrel{\mu_1}{\mapsto}
(Q_1,\boldy[1])
\stackrel{\mu_2}{\mapsto}
\cdots
\stackrel{\mu_{n}}{\mapsto}
(Q_{n},\boldy[n])
\stackrel{s_{n-1,n}}{\mapsto}
(Q_{n-2},\boldy[\overline{n-2}])
\\
&\qquad \stackrel{\mu_{n-2}}{\mapsto}
(Q_{n-3},\boldy[\overline{n-3}])
\stackrel{\mu_{n-3}}{\mapsto}
\cdots
\stackrel{\mu_{1}}{\mapsto}
(Q_0,\boldy[\overline{0}]) = \mathcal{R}_{M,1}(Q_{n,m},\boldy).
\end{split} 
\end{align}

Using the lemmas in \S\ref{subsec:Rcluster}, we explicitly calculate 
the change of tropical $y$-variables in \eqref{eq:y-seq}
by using the following general fact:
assume that a quiver $Q'$ has only simply weighted arrows,
and consider a tropical mutation $(Q'', \boldy'') = \mu_j(Q', \boldy')$
at $j \in Q'$.
If the $y$-variable at the mutation point, $y'_j$, 
is a nonnegative power of the initial $y$-variables,
then $\boldy'' = (y''_i)_i$ is determined by the following rule:
$$
  y''_i = 
  \begin{cases}
    y'_i y'_j & \text{if $j \to i$ in $Q'$},
    \\
    y'_i & \text{otherwise}.
  \end{cases} 
$$ 

In the following we write only $y_{v}[j]$ of 
${v} \in M^- \cup M \cup M^+$ as 
$$
  \boldy[0] = 
  \begin{pmatrix}
  y_{1^-} & {\color{red}y_1} &y_{1^+}
  \\
  \vdots & \vdots & \vdots
  \\
  y_{n^-} & y_n & y_{n^+}
  \end{pmatrix},
$$
where the mutation point in the sequence \eqref{eq:y-seq} is indicated in red.  Let $y_{[i,j]}:= y_i y_{i+1} \cdots y_j$.
By direct calculation we obtain the following,
where one sees that the `if' part of the above fact always holds 
in the sequence of mutations \eqref{eq:y-seq}.
\begin{align*}
  \boldy[k] &= 
  \begin{pmatrix}
  y_{1^-} y_1 & y_2 & y_{1^+}
  \\
  y_{2^-} & y_3 & y_{2^+}
  \\
  \vdots & \vdots & \vdots 
  \\
  y_{k-1^-} & y_k & y_{k-1^+}
  \\
  y_{k^-} & y_{[1,k]}^{-1} & y_{k^+}
  \\
  y_{k+1^-} & {\color{red}y_{[1,k+1]}} & y_{k+1^+}
  \\
  y_{k+2^-} & y_{k+2} & y_{k+2^+}
  \\
  \vdots & \vdots & \vdots 
  \\
  y_{n-1^-} & y_{n-1} & y_{n-1^+}
  \\
  y_{n^-} & y_{n} & y_{n^+} y_1
  \end{pmatrix}
  \qquad \text{for $k=1,2,\ldots,n-2$}, 
  \\[3mm] \displaybreak[0]
  \boldy[n-1] &=
  \begin{pmatrix}
  y_{1^-} y_1 & y_2 & y_{1^+}
  \\
  y_{2^-} & y_3 & y_{2^+}
  \\
  \vdots & \vdots & \vdots 
  \\
  y_{n-2^-} & y_{n-1} & y_{n-2^+}
  \\
  y_{n-1^-} & y_{[1,n-1]}^{-1} & y_{n-1^+}
  \\
  y_{n^-} & {\color{red}y_{n}} & y_{n^+} y_1
  \end{pmatrix},
  \qquad 
  \boldy[n] =
  \begin{pmatrix}
  y_{1^-} y_1 & y_2 & y_{1^+}
  \\
  y_{2^-} & y_3 & y_{2^+}
  \\
  \vdots & \vdots & \vdots 
  \\
  y_{n-2^-} & y_{n-1} & y_{n-2^+}
  \\
  y_{n-1^-} & y_{[1,n-1]}^{-1} & y_{n-1^+} y_n
  \\
  y_{n^-} y_n & y_{n}^{-1} & y_{n^+} y_1
  \end{pmatrix},
  \\[3mm] \displaybreak[0]
  \boldy[\overline{k}] &= 
  \begin{pmatrix}
  y_{1^-} y_1 & y_2 & y_{1^+}
  \\
  y_{2^-} & y_3 & y_{2^+}
  \\
  \vdots & \vdots & \vdots 
  \\
  y_{k^-} & {\color{red}y_{k+1}} & y_{k^+}
  \\
  y_{k+1^-} & y_{k+2}^{-1} & y_{k+1^+} y_{k+2}
  \\
  y_{k+2^-} y_{k+2} & y_{k+3}^{-1} & y_{k+2^+} y_{k+3}
  \\
  \vdots & \vdots & \vdots 
  \\
  y_{n-1^-} y_{n-1} & y_{n}^{-1} & y_{n-1^+} y_{n}
  \\
  y_{n^-} y_n & y_{[1,k+1]}^{-1} & y_{n^+} y_1
  \end{pmatrix}    
  \qquad \text{for $k=n-2,n-3,\ldots,0$.}
\end{align*}
From $\boldy[\overline{0}]$ we obtain \eqref{eq:yRcluster-trop} 
for the $j=1$ case, which is obviously independent of $j$.
\end{proof}

\begin{proof}[Proof of Theorem~\ref{thm:q-yRcluster}]
First we suppose that $M$ is not equal to either $M_0$ or $M_m$.
We use the notations $(Q_k, \boldy[k])$ and $(Q_k, \boldy[\overline{k}])$ 
defined in the proof of Lemma~\ref{lem:yRcluster-trop}.
From Theorem~\ref{thm:period} and Lemma~\ref{lem:yRcluster-trop},
it follows that $\mathcal{R}^\e_{M,j}(Q_{n,m},\boldsymbol{y})$
is independent of $j$. Therefore it is enough to check \eqref{eq:q-yRcluster1}--\eqref{eq:q-yRcluster3} for one $i$ of each. 
We explicitly calculate 
$y_{n^-}[\overline{0}]$, $y_{n-1^+}[\overline{0}]$ and $y_{n-1}[\overline{0}]$.

From the lemmas in \S\ref{subsec:Rcluster}, we obtain
\begin{align}
  \label{eq:yn-}
  y_{n^-}[\overline{0}] = y_{n^-}[n]
  &=
  y_{n^-} ( 1+\e \,y_{n-1}[n-2])(1+\e \,y_n[n-1]^{-1})^{-1},
  \\
  \label{eq:yn+}
  y_{n-1^+}[\overline{0}] = y_{n-1^+}[n]
  &= 
  y_{n-1^+} ( 1+\e \,y_{n-1}[n-2])(1+\e\, y_n[n-1]^{-1})^{-1},
  \\
  \label{eq:yn}
  y_{n-1}[\overline{0}] = y_{n-1}[\overline{n-3}]
  &=
  y_n[n-1]^{-1} \left( 1+\e \, y_{n-2}[\overline{n-2}] \right),
\end{align} 
and 
\begin{align}
  \label{eq:yj}
  y_{j+1}[j] &= y_{j+1} \left(1 + \e\,y_j[j-1]^{-1}  \right)^{-1}
  \quad j=1,\ldots,n-2,
  \\
  \label{eq:ynj}
  y_n[j] &= y_n[j-1](1+ \e\,y_{j}[j-1]) \quad j=1,\ldots,n-2,
  \\
  \label{eq:yn-2}
  y_{n-2}[[\overline{n-2}]= y_{n-2}[n] 
  &= 
  y_{n-2}[n-3]^{-1} ( 1+\e \,y_{n-1}[n-2]^{-1})^{-1}(1+\e \,y_n[n-1]).
\end{align}
By using \eqref{eq:yj} recursively, we obtain 
$$
  (1+\e \,y_1) (1+\e \,y_2[1]) \cdots (1 + \e\,y_{j}[j-1]) 
  =
  1 + \sum_{k=1}^{j} \e^k y_1 y_2 \cdots y_{k}, 
$$
for $j=1,\ldots,n-1$,
and it follows that
\begin{align}
  \label{eq:y-a1}
  y_{n-1}[n-2] &= \e^{n-2} (\alpha_1')^{-1} y_1 y_2 \cdots y_{n-1},
  \\
  \label{eq:y-a2}
  y_{n-2}[n-3] &= \e^{n-3} (\alpha_1'')^{-1} y_1 \cdots y_{n-2}.
\end{align}
Here we set 
$\alpha_1' = \alpha_1\e - \e^{n-1} y_1 \cdots y_{n-1}$,
$\alpha_1'' = \alpha_1' - \e^{n-2} y_1 \cdots y_{n-2}$. 
Furthermore, from \eqref{eq:ynj} and the fact that $y_n[n-1] = y_n[n-2]$ 
we obtain 
\begin{align}
\label{eq:y-a3}
  y_n[n-1] = y_n \alpha_1'.
\end{align}

Let us calculate $y_{n^-}[\overline{0}]$, $y_{n-1^+}[\overline{0}]$
and $y_{n-1}[\overline{0}]$.
The quiver $Q_{n-2}$ tells us (see \S\ref{ssec:FG}) that 
$y_n[n-2]$ and $y_{n-1}[n-2]$ commute and that 
$y_n[n-2] \cdot y_{n^-} = \e^{-2} y_{n^-} \cdot y_n[n-2]$. 
Thus, \eqref{eq:yn-} with \eqref{eq:y-a1} and \eqref{eq:y-a3} gives
\begin{align*}
  y_{n^-}[\overline{0}] 
  &=
  (1+\e^{-1} y_n[n-1]^{-1})^{-1} y_{n^-} ( 1+\e \,y_{n-1}[n-2]) 
  \\
  &=
  (1 + \e^{-1} (y_n \alpha_1')^{-1})^{-1} y_{n^-} (1+ \e^{n-1} (\alpha_1')^{-1} y_1 y_2 \cdots y_{n-1})
  \\
  &= 
  (\alpha_n^\e)^{-1} \cdot \e \, y_n y_{n^-} \cdot \alpha_1^\e, 
\end{align*}
where we have used that $\alpha_n^\e = 1 + \e \,y_n \alpha_1'$ and that
$\alpha_1'$ commutes with $y_{n^-}$.
In a similar way, from \eqref{eq:yn+} we obtain 
$y_{n-1^+}[\overline{0}] 
= (\alpha_n^\e)^{-1} \cdot \e \, y_n y_{n-1^+} \cdot \alpha_1^\e$.
From \eqref{eq:yn} and \eqref{eq:yn-2}, we get 
$$
  y_{n-1}[\overline{0}] 
  =
  \bigl[ 1+\e^{-1} (1+\e^{-1} y_{n-1}[n-2]^{-1})^{-1} y_{n-2}[n-3]^{-1} 
  (1+ \e\,y_n[n-2]) \bigr]
  y_n[n-2]^{-1}. 
$$
By substituting \eqref{eq:y-a1}--\eqref{eq:y-a3}, $y_{n-1}[\overline{0}]$   
is calculated to be
\begin{align*}
 &(y_n \alpha_1')^{-1} + \e^{-1}(\underline{1+\e^{-n+1}(y_1 \cdots y_{n-1})^{-1} 
 \alpha_1'})^{-1} \e^{-n+3} (y_1 \cdots y_{n-2})^{-1} \alpha_1''
 ((y_n \alpha_1')^{-1} + \e)
 \\
 &=(y_n \alpha_1')^{-1} + \e^{-1} (\alpha_1^\e)^{-1} \underline{\e^{n-1}(y_1 \cdots y_{n-1})
   \e^{-n+3} (y_1 \cdots y_{n-2})^{-1}} \alpha_1''((y_n \alpha_1')^{-1} + \e)
 \\
 &=
 (\alpha_1^\e)^{-1} \bigl[
  (\alpha_1' + \underline{\e^{n-1}y_1\cdots y_{n-1}})(y_n \alpha_1')^{-1} + \e^{-1} y_{n-1}
  \alpha_1''(\underline{(y_n \alpha_1')^{-1}} + \e) \bigr]
 \\
 &=
 (\alpha_1^\e)^{-1} \bigl[y_n^{-1} + \e^{-1} y_{n-1} \alpha_1' (y_n \alpha_1')^{-1} + y_{n-1} \alpha_1'' \bigr] 
 \\
  &= (\alpha_1^\e)^{-1} \bigl[y_n^{-1} + q^{-1} y_{n-1} y_n^{-1} + y_{n-1} \alpha_1'' \bigr]
  = (\alpha_1^\e)^{-1} y_n^{-1} \alpha_{n-1}^\e.
\end{align*}
Here the underlined parts denote key parts of the calculation.  This completes the proof for $M = M_k$ where $k = 1,2,\ldots,m-1$.

Finally, the last statement of the theorem follows from the following general fact: suppose $Q$ is a quiver and $\tilde Q \subset Q$ is the quiver restricted to some subset of vertices.  Let $A = \mu_{i_1} \circ \cdots \circ \mu_{i_r}$ be a sequence of mutations where $i_1,i_2,\ldots,i_r \in \tilde Q$.  Then $A(Q)$ restricted to the vertices in $\tilde Q$  is the same quiver as $A(\tilde Q)$.  We apply this to $M \cup M^+ \subset M^- \cup M \cup M^+$ (resp. $M^- \cup M \subset M^- \cup M \cup M^+$) to obtain the statement for $M = M_0$ (resp. $M = M_m$).
\end{proof}

\subsection{Yang-Baxter relation}
For $j=1,\ldots,m-1$ we define the quantum cluster $R$-matrix on $y$-variables 
as the morphism $R_{M_j}^{\e} \colon \mathcal{Y}_\e \to \mathcal{Y}_\e$ 
given by Theorem~\ref{thm:q-yRcluster}.

\begin{thm}\label{thm:q-YBR}
We have 
$R_{M_j}^\e R_{M_{j+1}}^\e R_{M_j}^\e = R_{M_{j+1}}^\e R_{M_j}^\e R_{M_{j+1}}^\e$ and $(R_j^\e)^2 = \id$. Furthermore, for $i,j$ such that $|i-j|>1$ we have
$R_{M_i}^\e R_{M_j}^\e = R_{M_j}^\e R_{M_i}^\e$.  Thus $R_{M_j}^\e$ generate an action of $\mS_{m}$ on $\mathcal{Y}_\e$.
\end{thm}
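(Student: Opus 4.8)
The plan is to deduce all three relations not by computing in the noncommutative field $\mathcal{Y}_\e$ directly, but from the corresponding statements at the level of cluster seeds, exploiting the equivalence of periods across the quantum, universal, and tropical settings furnished by Theorem~\ref{thm:period}. The starting observation is that, by Theorem~\ref{thm:q-yRcluster}, each $R_{M_j}^\e$ is realized by the quantum mutation sequence $\mathcal{R}^\e_{M_j,k}$ (the result being independent of the starting vertex $k$), and that this sequence returns the quiver $Q_{n,m}$ to itself. Consequently, each of the three desired identities, once written as an equality of two composites of quantum mutations and permutations applied to the $y$-seed $(Q_{n,m},\boldy)$, becomes the assertion that a single combined sequence $\mu_{\bf i}$ is a $\sigma$-period of $(Q_{n,m},\boldy)$ for a permutation $\sigma$ recording the net relabeling of vertices: for the involution I take $\mathcal{R}^\e_{M_i}$ composed with itself, and for the braid relation I take the sequence for one side composed with the reverse sequence for the other.

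First I would dispose of far-commutativity $R_{M_i}^\e R_{M_j}^\e = R_{M_j}^\e R_{M_i}^\e$ for $|i-j|>1$. Although the two operators both alter $y$-variables on a shared cycle when $|i-j|=2$, the \emph{mutations} comprising $\mathcal{R}^\e_{M_i}$ and $\mathcal{R}^\e_{M_j}$ are performed at vertices lying on the non-adjacent cycles $M_i$ and $M_j$, between which $Q_{n,m}$ carries no arrows (and none is created during either sequence). Hence all the quantum mutations involved pairwise commute by the remark following \eqref{eq:quantumY}, and the permutations act on disjoint cycles, which settles this case. For the involution $(R_{M_i}^\e)^2=\id$ and the braid relation I would then invoke Theorem~\ref{thm:period}: the $\sigma$-period statements for the quantum $y$-seed $(Q_{n,m},\boldy)$ are equivalent to the same $\sigma$-period statements for the tropical (and universal) $y$-seed. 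This is the decisive step, whose entire purpose is to trade a noncommutative computation in $\mathcal{Y}_\e$ for a piecewise-linear one.

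It then remains only to verify the braid relation and the involution tropically. Here Lemma~\ref{lem:yRcluster-trop} supplies the explicit monomial action \eqref{eq:yRcluster-trop} of each $R_{M_j}$ on tropical $y$-variables, so both identities reduce to finite, explicit computations with Laurent monomials supported on the cycles $M_{i-1},\ldots,M_{i+2}$; alternatively, by Theorem~\ref{thm:periodxy} a tropical $y$-period is equivalent to a period of the full seed $(Q_{n,m},\bx,\boldy)$, and the braid relation together with $R_{M_i}^2=\id$ for that seed is the seed-level content already established in Theorem~\ref{thm:Rbraidsimple}. The main obstacle I anticipate is not any of these transfers, which are formal once the framework is in place, but the careful bookkeeping of the permutations $s_{j-2,j-1}$ occurring inside $\mathcal{R}^\e_{M,j}$: one must conjugate them past the intervening mutations so as to exhibit each side of the braid relation as a genuine $\sigma$-period with $\sigma$ correctly identified, and confirm that the combined loop truly returns $Q_{n,m}$ to itself. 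Once that combinatorial periodicity is pinned down, Theorem~\ref{thm:period} delivers the quantum statement at no further cost.
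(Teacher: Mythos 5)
Your central strategy for the braid relation --- invoke Theorem~\ref{thm:period} to replace the quantum identity by the corresponding $\sigma$-period statement for the initial \emph{tropical} $y$-seed, then verify that statement by an explicit monomial computation with \eqref{eq:yRcluster-trop} on the cycles near $M_j$ --- is precisely the paper's proof, and your treatment of far-commutativity (the mutation sequences are supported on non-adjacent cycles, between which no arrows ever exist or are created, so all the quantum mutations and permutations involved commute) is the detailed version of what the paper dismisses as immediate. So on the main point the two arguments coincide.

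However, one of your subsidiary steps would fail as written: the involution. Lemma~\ref{lem:yRcluster-trop} computes the action on the \emph{initial} tropical seed only, and its monomial formula cannot be iterated blindly. After one application of $R_M$ the cycle $M$ carries the values $y_{i+1}^{-1}$, which are negative tropical monomials; plugging these into \eqref{eq:yRcluster-trop} a second time would give $y_i \mapsto (y_{i+2}^{-1})^{-1} = y_{i+2}$, which contradicts $(R_M)^2 = \id$ rather than proving it. The correct tropical computation at the new point requires re-tropicalizing the universal formulas of Theorem~\ref{thm:yRcluster}, where the factors $\alpha_i$ no longer tropicalize to $1$. This is exactly why the paper does \emph{not} treat the involution tropically: it observes that $(R^\e_{M_j})^2 = \id$ follows in one line from the involutivity of quantum mutations, using the decomposition $\mathcal{R}^\e_{M,j} = A^{-1}\circ B \circ A$ of \S\ref{subsec:Rcluster} (conjugating the permutation in $B$ through gives $B^2 = \id$). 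The same subtlety is lurking in the braid computation itself: iterating the monomial formula there is legitimate only because, at each of the three applications, the values on the cycle being mutated happen to remain positive monomials --- a fact your write-up should verify rather than assume.

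Your fallback route also has a gap. Theorem~\ref{thm:periodxy} is the implication ``tropical $y$-period $\Rightarrow$ period of the seed $(B,\bx,\boldy)$,'' which is the opposite of the direction you need; and Theorem~\ref{thm:Rbraidsimple} concerns the coefficient-free dynamics on $\F$, which is not the $x$-dynamics of a seed with tropical coefficients (there the exchange relation involves the $y_k$'s). Nothing in the paper lets you convert a coefficient-free $x$-identity into a tropical $y$-period, so the tropical verification of the braid relation has to be carried out directly, as the paper does.
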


\begin{remark}
If we include $R_{M_0}^\e$ and $R_{M_m}^\e$, Theorem \ref{thm:q-YBR} extends to give an action of $\mS_{m+2}$.
\end{remark}
  
\begin{proof}
The equality $(R_j^\e)^2 = \id$ follows from the involutivity of quantum mutations.

Suppose $j \in \{1,\ldots,m-2\}$. Set $M=M_j$ and consider  
four adjacent cycles $M^-$, $M$, $M^+$ and $M^{++}$ 
in this order on $Q_{n,m}$. 
Due to Theorem~\ref{thm:period}, to prove $R_M R_{M^+} R_M = R_{M^+} R_{M} R_{M^+}$ it is enough to show 
that the relation $R^\e_M R^\e_{M^+} R^\e_M = R^\e_{M^+} R^\e_{M} R^\e_{M^+}$ hold for the initial tropical $y$-seed $(Q_{n,m},\boldy)$.

In the following, we explicitly write only $y_{v}$ of $v \in M^- \cup M \cup M^+ \cup M^{++}$, since the quiver $Q_{n,m}$ and the others $y$-variables are unchanged by $R_M$ and $R_{M^+}$. 
By using \eqref{eq:yRcluster-trop} we calculate $R_M R_{M^+} R_M(\boldy)$: 
\begin{align*}
  (y_{i^-},y_i,y_{i^+},y_{i^{++}})
  &\stackrel{R_M}{\mapsto}
  (y_{i^-} y_i, y_{i+1}^{-1}, y_{i+1} y_{i^+},y_{i^{++}})
  \\
  &\stackrel{R_{M^+}}{\mapsto}
   (y_i y_{i^-}, y_{i^+},(y_{i+2} y_{i+1^+})^{-1},
     y_{i+2} y_{i+1^+} y_{i^{++}})
  \\
  &\stackrel{R_M}{\mapsto}
  (y_i y_{i^-} y_{i^+}, y_{{i+1}^+}^{-1},y_{i+2}^{-1},
     y_{i+2} y_{i+1^+} y_{i^{++}}).
\end{align*}
Next, we calculate $R_{M^+} R_{M} R_{M^+}(\boldy)$:
\begin{align*}
  (y_{i^-},y_i,y_{i^+},y_{i^{++}})
  &\stackrel{R_{M^+}}{\mapsto}
  (y_{i^-}, y_i y_{i^+}, y_{{i+1}^+}^{-1},y_{{i+1}^+} y_{i^{++}})
  \\
  &\stackrel{R_{M}}{\mapsto}
  (y_{i^-} y_i y_{i^+}, (y_{i+1} y_{{i+1}^+})^{-1},
   y_{i+1},y_{{i+1}^+} y_{i^{++}})
  \\
  &\stackrel{R_{M^+}}{\mapsto}
  (y_{i^-} y_i y_{i^+}, y_{{i+1}^+}^{-1},y_{i+2}^{-1},
     y_{i+2} y_{i+1^+} y_{i^{++}}).
\end{align*}
Thus $R_M R_{M^+} R_M(\boldy) = R_{M^+} R_{M} R_{M^+}(\boldy)$ holds.
%

When $M$ and $M'$ are not adjacent, the identity
$R^\e_M R^\e_{M'} = R^\e_{M'}R^\e_M$ is immediate.
\end{proof}

\subsection{From quantum geometric $R$ to quantum cluster $R$}
In this section, we show that the quantum geometric $R$-matrix $R_j^\e$ and 
the quantum cluster $R$-matrix $R_{M_j}^{\e}$ are compatible.
In particular, we shall show that Theorem~\ref{thm:q-qYBR} follows from
Theorem~\ref{thm:q-YBR}. 

Let $\mathcal{Y}'_\e \subset \Y_\e$ be the skew subfield generated by $y_{ji}$ where $j \in 1,2,\ldots,m-1$.  We note that $\mathcal{Y}'_\e$ is closed under the operations $R_{M_1}^{\e},R_{M_2}^\e,\ldots,R_{M_{m-1}}^\e$.

\begin{prop}\label{prop:y-q}
The following transformation $\phi$ induces an embedding of 
skew fileds $\phi_\e \colon \mathcal{Y}'_\e \to \mathcal{Q}_\e(N_{n,m})$:
\begin{align}\label{eq:y-q}
y_{ji} \mapsto \e^{-1} \, q_{j,i}^{-1} q_{j+1,i-1} \qquad j=1,\ldots,m-1, ~i \in \Z/n\Z
\end{align}
\end{prop}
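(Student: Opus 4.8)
The plan is to verify directly that the assignment $\phi_\e$ in \eqref{eq:y-q} respects the commutation relations of the quantum torus $\Y'_\e$, which is what is needed to obtain a well-defined algebra homomorphism, and then to check injectivity. First I would record the commutation relations among the target elements $z_{ji} := \e^{-1} q_{j,i}^{-1} q_{j+1,i-1}$ inside $\Q_\e$. For this I use the commutation relations of Definition~\ref{def:Q}, noting that $q_{j,i}$ and $q_{j+1,i-1}$ lie on the same snake path $S_k$ (since $i + j \equiv (i-1)+(j+1) \bmod n$), while variables on neighboring vertical wires differing by the pattern in \eqref{eq:y-q} lie on adjacent snake paths $S_k$ and $S_{k\pm 1}$. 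The task reduces to computing $\alpha$-exponents $\e^{\alpha}$ for the three types of nontrivial adjacencies in $\Y'_\e$, namely $y_{ji}y_{j+1,i}=\e^2 y_{j+1,i}y_{ji}$, $y_{ji}y_{j,i+1}=\e^{-2}y_{j,i+1}y_{ji}$, and $y_{ji}y_{j+1,i-1}=\e^{-2}y_{j+1,i-1}y_{ji}$, as listed just before Theorem~\ref{thm:q-yRcluster}.

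The key computation is then purely bookkeeping: for each such pair I expand $z_{ji}z_{j'i'}$, move all factors past each other using Definition~\ref{def:Q}, and collect the powers of $\e$, remembering that the scalar $\e^{-1}$ prefactors contribute nothing to commutators since $\e$ is central. I expect each of the three cases to produce exactly the matching $\e$-power on the target side; for instance the relation $y_{ji}y_{j+1,i}=\e^2 y_{j+1,i}y_{ji}$ should follow because the four involved $q$'s split into two on snake path $S_k$ and two on $S_{k+1}$ in a way that the cross-terms from Definition~\ref{def:Q} sum to $2$. After checking all three adjacency types (and the trivial case, where the supports are too far apart and everything commutes), one concludes that $\phi_\e$ sends the defining relations of $\Y'_\e$ to valid identities in $\Q_\e$, hence extends to an algebra homomorphism of the quantum tori, and then, since both are Ore domains with skew fields of fractions, to a homomorphism of skew fields.

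Finally I would establish that $\phi_\e$ is an \emph{embedding}, i.e.\ injective. Since any nonzero homomorphism of skew fields is automatically injective, it suffices to check that $\phi_\e$ is nonzero and well-defined on the quantum torus level; more concretely I would argue that the images $z_{ji}$ generate a sub-quantum-torus of $\Q_\e$ isomorphic to $\Y'_\e$ by exhibiting that the monomials in the $z_{ji}$ remain $\Z[\e,\e^{-1}]$-linearly independent. This independence follows from the fact that the exponent vectors of the $z_{ji}$ in terms of the distinguished basis $\{q_{j,i}\}$ of $\Q_\e$ are themselves linearly independent over $\Z$: the map sending the generator index $(j,i)$ to the lattice vector $-e_{(j,i)} + e_{(j+1,i-1)}$ is injective on the relevant index set $\{1,\dots,m-1\}\times \Z/n\Z$.

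The main obstacle I anticipate is not conceptual but careful index-tracking: the snake-path membership $q_{j,i}\in S_k$ with $i+j\equiv 1+k \bmod n$ must be matched precisely against the five cases of Definition~\ref{def:Q}, and one must be vigilant about the ordering conditions $j\le j'$ versus $j>j'$ and $j<j'$ versus $j>j'$, since these flip the sign of the $\e$-exponent. In particular the mixed case $y_{ji}y_{j+1,i-1}$ involves four $q$-factors all potentially interacting, and getting the net power right requires summing several contributions from Definition~\ref{def:Q}; this is exactly the kind of computation where a sign error is easy, so I would organize it as a short table of the pairwise $\alpha$-values before summing.
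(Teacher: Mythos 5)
Your proposal is correct in structure, and it is the same approach as the paper's: the paper simply declares the proposition ``easily proved by direct calculation'' and omits the details, and your plan --- verify that the elements $z_{ji}=\e^{-1}q_{j,i}^{-1}q_{j+1,i-1}$ satisfy the defining $\e$-commutation relations of $\Y'_\e$ (using that $q_{j,i}$ and $q_{j+1,i-1}$ lie on the same snake path), then pass to skew fields via Ore localization --- is exactly that calculation. The pairwise checks do close up as you predict: writing $z_{ji}=\e^{-1}a^{-1}b$, $z_{j+1,i}=\e^{-1}c^{-1}d$, the relevant exponents are $\bigl(\alpha(a,c),\alpha(a,d),\alpha(b,c),\alpha(b,d)\bigr)=(1,1,-1,1)$ and they enter as $\alpha(a,c)-\alpha(a,d)-\alpha(b,c)+\alpha(b,d)=2$, matching $y_{ji}y_{j+1,i}=\e^{2}y_{j+1,i}y_{ji}$; the other two families work the same way. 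One caution on the ``otherwise'' cases: they are not all instances of far-apart supports. For example $z_{ji}$ and $z_{j+1,i+1}$ share the wire $W_{j+1}$, and when $n=3$ their snake paths are adjacent, so these checks are genuine (the four cross-terms cancel to $0$) rather than vacuous.

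The one step whose justification is insufficient as written is injectivity. What you need is that the exponent vectors $v_{ji}=-e_{(j,i)}+e_{(j+1,i-1)}$, for $1\le j\le m-1$ and $i\in\Z/n\Z$, are $\Z$-linearly independent, so that distinct $z$-monomials land on distinct $q$-monomials of the distinguished basis. Injectivity of the map $(j,i)\mapsto v_{ji}$ --- i.e.\ mere distinctness of these vectors --- does not imply independence: difference vectors such as $e_2-e_1,\ e_3-e_2,\ e_1-e_3$ are distinct yet sum to zero, which is exactly what would go wrong if the wire index $j$ wrapped around modulo $m$. The correct argument is a triangularity induction in $j$: in a vanishing combination $\sum_{j,i}a_{ji}v_{ji}=0$, the coefficient of $e_{(1,i)}$ is $-a_{1,i}$ (there is no $j=0$ term), forcing $a_{1,i}=0$ for all $i$, and then inductively $a_{j,i}=0$ for all $j$. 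This independence is not only what makes $\phi_\e$ an embedding; it is already required to define $\phi_\e$ on all of $\Y'_\e$, since the universal property of the Ore localization demands that nonzero elements of the quantum torus map to invertible (hence nonzero) elements of $\Q_\e$.
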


This proposition is easily proved by direct calculation.
We omit the proof.

\begin{thm}\label{thm:psi-RR}
For $j = 1,\ldots, m-1$, we have $R_j^\e \circ \phi_\e = \phi_\e \circ R_{M_{j}}^{\e}$.  Thus the quantum geometric $R$-matrix and 
the quantum cluster $R$-matrix are compatible. 
\end{thm}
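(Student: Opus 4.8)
The plan is to verify the intertwining identity on the multiplicative generators $y_{j'i}$ of $\mathcal{Y}'_\e$. Since $R_j^\e$, $R_{M_j}^\e$ and $\phi_\e$ are all morphisms of skew fields (the first by Theorem~\ref{thm:commpres}, the second by Theorem~\ref{thm:q-yRcluster}, and $\phi_\e$ an embedding by Proposition~\ref{prop:y-q}), it suffices to check $R_j^\e(\phi_\e(y_{j'i})) = \phi_\e(R_{M_j}^\e(y_{j'i}))$ for each generator. By Theorem~\ref{thm:q-yRcluster}, $R_{M_j}^\e$ fixes every $y_{j'i}$ with $j' \notin \{j-1,j,j+1\}$; for such a generator $\phi_\e(y_{j'i}) = \e^{-1}q_{j',i}^{-1}q_{j'+1,i-1}$ involves no variable of $\q_j$ or $\q_{j+1}$, so it is fixed by $R_j^\e$ as well, and both sides agree. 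Thus only the three families of generators lying on $M^-=M_{j-1}$, $M=M_j$ and $M^+=M_{j+1}$ require attention, with the understanding that near the boundary $y_{0,i}$ and $y_{m,i}$ are not generators of $\mathcal{Y}'_\e$ and may be ignored.

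Throughout I abbreviate $\p=\q_j$ and $\q=\q_{j+1}$, so that $\phi_\e(y_{ji})=\e^{-1}p_i^{-1}q_{i-1}$ and $R_j^\e$ acts by the formulas \eqref{eq:R-pq}. The technical heart of the argument is to rewrite the image under $\phi_\e$ of the quantum polynomials $\alpha_i^\e(M_j)$ of Theorem~\ref{thm:q-yRcluster} in terms of the polynomials $\kappa_\bullet^\e(\p,\q)$ of \eqref{eq:q-kappa}. Applying $\phi_\e$ to $\alpha_i^\e(M_j)=1+\sum_{k=1}^{n-1}\e^k y_{ji}\cdots y_{j,i+k-1}$ termwise, each summand becomes $\e^k\prod_{\ell=0}^{k-1}(\e^{-1}p_{i+\ell}^{-1}q_{i+\ell-1})$; using the quantum-torus relations of \S\ref{ssec:qtorus} to move every $p^{-1}$ to the left, one checks that the leftover powers of $\e$ cancel against the prefactor $\e^k$, leaving a single normal-ordered monomial. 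Collecting the $n$ terms, I expect to obtain
\begin{equation}\label{eq:phialpha}
\phi_\e(\alpha_i^\e(M_j)) = m_i\,\kappa_{i-1}^\e(\p,\q), \qquad m_i := p_i^{-1}p_{i+1}^{-1}\cdots p_{i+n-2}^{-1},
\end{equation}
an explicit Laurent monomial $m_i$ in $\p$ times $\kappa_{i-1}^\e$; this is the quantum lift of the classical substitution \eqref{eq:xxxx} and can be confirmed directly (for $n=3$ one finds $\phi_\e(\alpha_1^\e)=1+p_1^{-1}q_3+p_1^{-1}p_2^{-1}q_1q_3 = p_1^{-1}p_2^{-1}\kappa_3^\e$).

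With \eqref{eq:phialpha} in hand the three remaining cases reduce to monomial identities. For $y_{ji}$, substituting \eqref{eq:phialpha} into $\phi_\e(R_{M_j}^\e(y_{ji}))=\phi_\e(\alpha_{i+2}^\e)^{-1}\phi_\e(y_{j,i+1})^{-1}\phi_\e(\alpha_i^\e)$ produces outer factors $(\kappa_{i+1}^\e)^{-1}$ and $\kappa_{i-1}^\e$ that match those appearing in $R_j^\e(\phi_\e(y_{ji}))=\e^{-1}(\kappa_{i+1}^\e)^{-1}q_i^{-1}p_{i-1}\kappa_{i-1}^\e$; after cancelling these the claim becomes the purely monomial relation $\e^{-2}q_i^{-1}p_{i-1} = m_{i+2}^{-1}q_i^{-1}p_{i+1}m_i$, which follows from the commutation relations alone. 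The generators $y_{j-1,i}$ and $y_{j+1,i}$ on $M^-$ and $M^+$ are treated identically, now using \eqref{eq:q-yRcluster2} and \eqref{eq:q-yRcluster3}; the external variable ($q_{j-1,i}$, respectively $q_{j+2,i-1}$) occurring in $\phi_\e$ is a spectator that $R_j^\e$ fixes and that is carried along unchanged on both sides. The main obstacle is the bookkeeping of powers of $\e$, both in establishing \eqref{eq:phialpha} and in the final monomial identities, since every use of a commutation relation contributes an $\e$-power that must cancel exactly. Once the intertwining is proved, Theorem~\ref{thm:q-qYBR} follows formally: as $\phi_\e$ is injective and conjugates $R_{M_j}^\e$ to $R_j^\e$, the Yang--Baxter relations of Theorem~\ref{thm:q-YBR} transport to the quantum geometric $R$-matrices.
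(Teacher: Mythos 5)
Your proposal follows essentially the same route as the paper's own proof: reduce to the generators on $M_{j-1},M_j,M_{j+1}$ (all others being fixed on both sides), establish the key identity expressing $\phi_\e(\alpha_i^\e)$ as a Laurent monomial in $\p$ times $\kappa_{i-1}^\e(\p,\q)$ (your \eqref{eq:phialpha} is exactly the paper's relation $\phi_\e(\alpha_{i+1}^\e(M)) = (q_{i-1}^- \cdots q_{i-n+1}^-)^{-1}\kappa_i^\e$, rewritten), and then reduce the intertwining to monomial identities checked from the commutation relations. The identities you leave as $\e$-bookkeeping do hold, and the paper treats them at the same level of explicitness, so this is a faithful reconstruction of the published argument.
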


\begin{proof}
Let $(W^{- -}, W^-, W, W^+) = (W_{j-1},W_{j},W_{j+1},W_{j+2})$ be 
four adjacent vertical cycles on $N_{n,m}$ 
where $q_i^{- -}$, $q_i^-$, $q_i$ and $q_i^+$ are located respectively,
and let $(M^-,M, M^+) = (M_{j-1},M_j,M_{j+1})$ be the corresponding three adjacent closed cycles on $Q_{n,m}$ 
where $y_i^-$, $y_i$ and $y_i^+$ are located respectively.   It is enough to show that $R_j^\e \circ \phi_\e(y_v)) = \phi_\e \circ R_{M_{j}}^{\e}(y_v)$ for $v \in M^- \cup M \cup M^+$.  

The map $\phi_\e$ sends the $y$-variables on $(M^-,M, M^+)$ to
$$
  y_i^- \mapsto \e^{-1} (q_{i}^{- -})^{-1} q_{i-1}^-, 
  \quad
  y_i \mapsto \e^{-1} (q_{i}^{-})^{-1} q_{i-1} ,
  \quad 
  y_i^+ \mapsto \e^{-1} (q_{i})^{-1} q_{i-1}^+ .
$$
We use the relation
$$
  \phi_\e(\alpha_{i+1}^\e(M)) = (q_{i-1}^- q_{i-2}^- \cdots q_{i-n+1}^-)^{-1} 
  \kappa_{i}^\e({\bf q^-},{\bf q}),
$$
which can be easily checked.  

We compute
\begin{align*}
\phi_\e(R_M^{\e}(y_i))  &= \phi_\e\left((\alpha_{i+2}^\e)^{-1} y_{i+1}^{-1} \alpha_i^\e \right) \\
&=(\kappa_{i+1}^\e)^{-1} q_{i}^- q_{i-1}^- \cdots q_{i-n+2}^- \cdot 
   \e q_{i}^{-1} q_{i+1}^{-} \cdot (q_{i-2}^- q_{i-3}^- \cdots q_{i-n}^-)^{-1} 
  \kappa_{i-1}^\e \\
&=(\kappa_{i+1}^\e)^{-1} \e^{-1} q_{i}^{-1} q_{i-1}^-
  \kappa_{i-1}^\e \\
&= \e^{-1} \cdot \left((\kappa_{i}^\e)^{-1}q_{i} \kappa_{i+1}^\e\right)^{-1} \cdot\left((\kappa_{i}^\e)^{-1} q_{i-1}^- \kappa_{i-1}^\e\right) \\
&= \e^{-1} R_j^\e(q_{i}^-)^{-1} \cdot R_j^\e(q_{i-1}) \\
&= R_j^\e(\phi_\e(y_i))
\end{align*}
and
\begin{align*}
\phi_\e(R_M^{\e}(y_i^+)) &= \phi_\e\left((\alpha_{i+1}^\e)^{-1} \cdot \e y_{i+1} y_i^+ \cdot \alpha_{i+2}^\e \right)\\
&=  (\kappa_{i}^\e)^{-1} q_{i-1}^- q_{i-2}^- \cdots q_{i-n+1}^-  \cdot  \e^{-1} (q_{i+1}^{-})^{-1} q_{i-1}^+ \cdot (q_{i}^- q_{i-1}^- \cdots q_{i-n+2}^-)^{-1} 
  \kappa_{i+1}^\e \\
&= \e^{-1} (\kappa_{i}^\e)^{-1}  (q_{i}^-)^{-1}  \kappa_{i+1}^\e q_{i-1}^+\\
&= \e^{-1} R_j^\e(q_{i})^{-1} \cdot q_{i-1}^+ \\
&= R_j^\e(\phi_\e(y_i^+)).
\end{align*}
The calculation for $y_i^-$ is similar.
\end{proof}

The following is an immediate consequence of Theorem~\ref{thm:psi-RR}.

\begin{cor}\label{cor:psi-RRclassical}
Let $\mathcal{Y}'$ be the universal semifield associated with the quiver
$Q_{n,m-1}$, generated by (commutative) 
$y$-variables $y_{ji}~(j=1,\ldots,m-1, ~i \in \Z/n\Z)$.
Let $\phi$ be the embedding map $\phi \colon \mathcal{Y}' \to \mathcal{Q}(N_{n,m})$
given by $y_{ji} \mapsto q_{j,i}^{-1} q_{j+1,i-1}$.
Then, the classical version of Theorem~\ref{thm:psi-RR} also holds:
for $j=1,\ldots,m-1$ we have 
$R_j \circ \phi = \phi \circ R_{M_j}$. 
\end{cor}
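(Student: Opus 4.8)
The plan is to deduce the corollary by specializing the quantum identity of Theorem~\ref{thm:psi-RR} at $\e = 1$. The first thing I would do is record that each classical object in the statement is the $\e = 1$ limit of its quantum counterpart. For the embedding, the formula \eqref{eq:y-q} reads $\phi_\e(y_{ji}) = \e^{-1} q_{j,i}^{-1} q_{j+1,i-1}$, which specializes to $q_{j,i}^{-1} q_{j+1,i-1} = \phi(y_{ji})$. For the geometric $R$-matrix, comparing \eqref{eq:R-pq} with the definition of $R$ in \S\ref{subsec:geomR}, and using that $\kappa_i^\e$ and $\kappa_i$ are given by the same sum \eqref{eq:q-kappa}, at $\e = 1$ the generators commute and $(\kappa_i)^{-1} q_i \kappa_{i+1} = q_i \kappa_{i+1}/\kappa_i$, so $R_j^\e$ reduces to $R_j$. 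For the cluster $R$-matrix on $y$-variables, comparing Theorem~\ref{thm:q-yRcluster} with Theorem~\ref{thm:yRcluster}, the extra factors $\e$ in \eqref{eq:q-yRcluster2}--\eqref{eq:q-yRcluster3} become $1$ and $\alpha_i^\e$ reduces to $\alpha_i$, so $R_{M_j}^\e$ reduces to $R_{M_j}$.

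Next I would argue that the specialization $\e \mapsto 1$ sends the whole picture to its commutative shadow compatibly. The defining relations of the two quantum tori, all of the form (monomial) $=\e^{(\text{integer})}$ (monomial reversed), become commutative at $\e = 1$; hence $\mathcal{Y}'_\e$ and $\mathcal{Q}_\e(N_{n,m})$ specialize to the commutative field of fractions of $\mathbb{P}_{\rm univ}(\boldy)$ (that is, $\mathcal{Y}'$) and to $\mathcal{Q}(N_{n,m}) = \QQ(\q_1,\ldots,\q_m)$ respectively, and $\phi_\e$ specializes to $\phi$. Since Theorem~\ref{thm:psi-RR} asserts $R_j^\e \circ \phi_\e = \phi_\e \circ R_{M_j}^\e$ as morphisms $\mathcal{Y}'_\e \to \mathcal{Q}_\e(N_{n,m})$, evaluating both sides on a generator $y_{ji}$ yields one and the same element of $\mathcal{Q}_\e(N_{n,m})$, namely a rational expression in the $q$'s and $\e$ that is regular at $\e = 1$. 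Setting $\e = 1$ then gives $R_j(\phi(y_{ji})) = \phi(R_{M_j}(y_{ji}))$ for every generator, which is the asserted identity.

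The only delicate point, and the one I would be careful about, is justifying that the passage $\e \to 1$ is legitimate at the level of skew fields of fractions, i.e. that no denominator appearing in the intermediate expressions degenerates at $\e = 1$. Since the classical maps $\phi$, $R_j$, and $R_{M_j}$ are already independently well-defined (the first as an inclusion of fields, the latter two from Theorem~\ref{thm:yRcluster} and from the definition of the geometric $R$-matrix), I expect no difficulty. If one prefers to avoid the specialization argument entirely, one can simply rerun the computation in the proof of Theorem~\ref{thm:psi-RR} verbatim with every $\e$ set to $1$: this is strictly easier, since in the commutative setting the bridging relation $\phi(\alpha_{i+1}(M)) = (q_{i-1}^- q_{i-2}^- \cdots q_{i-n+1}^-)^{-1} \kappa_i(\mathbf{q}^-,\mathbf{q})$ holds with no reordering, and the telescoping cancellations go through exactly as before. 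This direct commutative verification is what I would ultimately write down.
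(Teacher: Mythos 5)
Your proposal is correct and matches the paper's approach: the paper offers no written argument beyond declaring the corollary "an immediate consequence of Theorem~\ref{thm:psi-RR}," which is precisely the specialization at $\e=1$ that you carry out (the paper has already noted in \S\ref{ssec:FG} that quantum $y$-mutation reduces to classical mutation at $\e=1$, and the quantum objects $\phi_\e$, $R_j^\e$, $R_{M_j}^\e$ visibly reduce to their classical counterparts). Your extra care about regularity of denominators at $\e=1$, and the fallback of rerunning the proof of Theorem~\ref{thm:psi-RR} commutatively, are both sound and only make explicit what the paper leaves implicit.
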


By combining Theorem~\ref{thm:iota-RR} and Corollary~\ref{cor:psi-RRclassical}, we get to the following:

\begin{thm}\label{thm:y-q-x}
We have a commutative diagram:
\begin{align*}
\xymatrix{
\mathcal{Y}' \ar[d]_{R_{M_j}} \ar[r]^{\phi \quad} & 
\mathcal{Q}(N_{n,m}) \ar[d]_{R_j}  
\ar[r]^{\iota_m} & \mathcal{F}(\tilde Q_{n,m}')   
\ar[d]^{\widetilde{R}_{M_{j}}}
\\
\mathcal{Y}' \ar[r]^{\phi ~~} & 
\mathcal{Q}(N_{n,m}) \ar[r]_{\iota_m} 
& \mathcal{F}(\tilde Q_{n,m}') \\
}
\end{align*}
for $j=1,\ldots,m-1$, where $\iota_m$ is given by \eqref{eq:xX-q}.
In particular,
we obtain the expression of $y_{ji}$ in $\mathcal{F}(\tilde Q_{n,m}')$,
\begin{align}\label{eq:y-xX}
  \iota_m \circ \phi (y_{ji})
  =
  \frac{x_{j-1,i+1} \, x_{j,i-1} \, x_{j+1,i}}{x_{j-1,i} \, x_{j,i+1} \, x_{j+1,i-1}} \cdot 
  \frac{X_{v_{(j,i),(j+1,i-1)}}}{X_{v_{(j-1,i+1),(j,i)}}}.
\end{align}
\end{thm}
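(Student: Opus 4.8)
The plan is to verify the two squares of the diagram separately and then read off the displayed formula by a direct substitution. The left square is exactly Corollary~\ref{cor:psi-RRclassical}, which asserts $R_j \circ \phi = \phi \circ R_{M_j}$ for $j = 1,\ldots,m-1$, so nothing further is needed there. For the right square I would invoke Theorem~\ref{thm:iota-RR}, which establishes $\iota \circ R = \tR_\x \circ \iota$ on the window of three adjacent vertical cycles. Both $R_j$ and $\widetilde{R}_{M_j}$ act only on the cycles indexed near $j$ and fix every other variable, so the local identity of Theorem~\ref{thm:iota-RR} extends verbatim to the global identity $\iota_m \circ R_j = \widetilde{R}_{M_j} \circ \iota_m$ on all of $\mathcal{Q}(N_{n,m})$. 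Composing the two commuting squares yields the full diagram.

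To obtain \eqref{eq:y-xX} I would compute $\iota_m \circ \phi(y_{ji})$ explicitly. By the definition of $\phi$ in Corollary~\ref{cor:psi-RRclassical} we have $\phi(y_{ji}) = q_{j,i}^{-1} q_{j+1,i-1}$, so I substitute the relevant instances of \eqref{eq:xX-q},
$$
  \iota_m(q_{j,i}) = \frac{x_{j-1,i}\, x_{j,i+1}}{x_{j-1,i+1}\, x_{j,i}}\, X_{v_{(j-1,i+1),(j,i)}},
  \qquad
  \iota_m(q_{j+1,i-1}) = \frac{x_{j,i-1}\, x_{j+1,i}}{x_{j,i}\, x_{j+1,i-1}}\, X_{v_{(j,i),(j+1,i-1)}},
$$
the second obtained by the shift $j \mapsto j+1$, $i \mapsto i-1$. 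Forming $\iota_m(q_{j,i})^{-1}\,\iota_m(q_{j+1,i-1})$, the factor $x_{j,i}$ cancels in the monomial of cluster variables and the two frozen variables combine into the ratio $X_{v_{(j,i),(j+1,i-1)}}/X_{v_{(j-1,i+1),(j,i)}}$, giving precisely \eqref{eq:y-xX}. All of this is a commutative computation, since we work with the classical $y$-seed of Corollary~\ref{cor:psi-RRclassical}.

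The only delicate point, and the mild obstacle, is the passage from $\tilde Q'_{n,2}$ in Theorem~\ref{thm:iota-RR} to $\tilde Q'_{n,m}$ here: I must confirm that $\iota_m$ restricts, on the three-cycle window around $j$, to the map $\iota$ of \eqref{eq:x2q}, and that the frozen-variable specializations defining the two primed quivers agree under the index identification. Once this index-matching is in place, the locality of the $R$-matrix action makes the extension immediate, so I expect no genuine difficulty beyond careful bookkeeping.
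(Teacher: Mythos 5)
Your proposal is correct and follows essentially the same route as the paper, which obtains this theorem precisely by combining Theorem~\ref{thm:iota-RR} (giving the right square, extended from the three-cycle window to $\tilde Q'_{n,m}$ by locality) with Corollary~\ref{cor:psi-RRclassical} (giving the left square), and then reading off \eqref{eq:y-xX} by the substitution you perform. Your attention to the index bookkeeping in passing from $\tilde Q'_{n,2}$ to $\tilde Q'_{n,m}$ is exactly the detail the paper leaves implicit.
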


We remark that the formula \eqref{eq:y-xX} can be written as follows:
for a non-frozen vertex $v$ in $\tilde Q'_{n,m}$ we have
\begin{equation}\label{eq:xy}
  \iota_m \circ \phi (y_{v}) = \prod_{u \in \tilde Q'_{n,m}} x_{u}^{b_{uv}}.
\end{equation}
Here $B = (b_{uv})$ is the exchange matrix corresponding to $\tilde Q'_{n,m}$, and in the right-hand side we take both frozen and non-frozen vertices for $u$.
See Figure~\ref{fig:clR5}.  Equation \eqref{eq:xy} should, for example, be compared to \cite[(11)]{FockGoncharov03}.

\subsection{Proof of Theorem~\ref{thm:commpres}}
\label{subsec:commpres}
Assume we are applying $R_j^{\e}$. We saw in the proof of Proposition \ref{prop:lens} that $q_{j,i+1}$ and $q_{j+1,i}$ satisfy the same commutation relations with any $q_{k,i}$ where $k \neq j,j+1$ 
as $(\kappa_{i+1}^{\e})^{-1}$ does. Let $q$ be such a $q_{k,i}$.  Recall that for monomials $m, m' \in \Q_{\e}$, we have defined the integer $\alpha(m,m')$ by
$m m' = \e^{\alpha(m,m')} m' m$.  We may use the same notation even when $m$ is not a monomial, as long as $m,m'$ commute up to a power of $\e$.

We see from the definition of $R_j^{\e}(q_{i,j+1})$ that 
\begin{align*}
\alpha(R_j^{\e}(q_{j+1,i}),q) 
&= \alpha((\kappa_{i+1}^{\e})^{-1},q) + \alpha(q_{i,j},q) + \alpha(\kappa_{i}^{\e},q)
\\
&= \alpha(q_{j+1,i},q) + \alpha(q_{j,i},q) - \alpha(q_{j,i},q) = \alpha(q_{j+1,i},q).
\end{align*}
We obtain $\alpha(R_j^{\e}(q_{j,i}),q) = \alpha(q_{j,i},q)$ in the same way.

It remains to argue that the commutation relations are preserved between the parameters in $\q_j$ and $\q_{j+1}$.

First we show $\alpha(q_{j,i},q_{j+1,i})$ is preserved.
Recall the notations in the proof of Proposition~\ref{prop:lens}, 
and write $p_i$ and $q_i$ for $q_{j,i}$ and $q_{j+1,i}$ respectively.
By a direct calculation we have 
\begin{align*}
&p_i \kappa_i^\e = \e^2 \kappa_i^\e p_i + (1-\e^2) p_{i-1} p_{i-2} \cdots p_i,
\\
&q_i \kappa_i^\e = \e^{-1} \kappa_i^\e q_i + (\e-\e^{-1}) p_{i-1} p_{i-2} \cdots p_{i+1} q_i. 
\end{align*}
Therefore we see that $q_i p_i \cdot \kappa_i^\e = \e \, \kappa_i^\e \cdot q_i p_i$.
By using this formula and \eqref{eq:pq-kappa}, we obtain
\begin{align*}
&p_i' q_i' = (\kappa_{i}^\e)^{-1} q_i p_i \kappa_i^\e = \e q_i p_i,
\\
&q_i' p_i' = (\kappa_{i+1}^\e)^{-1} p_i q_i \kappa_{i+1}^\e = \e^{-1} p_i q_i.
\end{align*}
Hence $\alpha(p_i',q_i') = \alpha(p_i,q_i) = 1$ follows.

Next we show that the other cases are preserved.
For elements $m, m' \in \Y_{\e}$, we define $\alpha(m,m')$ by
$m m' = \e^{\alpha(m,m')} m' m$, whenever $m,m'$ commute up to a power of $\e$.
Let $\boldy' = R_{M_{j}}^\e(\boldy)$.
We have $\alpha(y'_{h,i},y'_{j,k})=\alpha(y_{h,i},y_{j,k})$ since quantum mutations induce morphisms of quantum tori.

By Theorem~\ref{thm:psi-RR} and $\alpha(y'_{j,i},y'_{j+1,i})=\alpha(y_{j,i},y_{j+1,i})$ we obtain
\begin{align*}
&\alpha(q'_{j,i},q'_{j+1,i}) - \alpha(q'_{j,i},q'_{j+2,i-1})
- \alpha(q'_{j+1,i-1},q'_{j+1,i}) + \alpha(q'_{j+1,i-1},q'_{j+2,i-1}) 
\\
&=
\alpha(q_{j,i},q_{j+1,i}) - \alpha(q_{j,i},q_{j+2,i-1})
- \alpha(q_{j+1,i-1},q_{j+1,i}) + \alpha(q_{j+1,i-1},q_{j+2,i-1})
\end{align*}
assuming that all the parameters mentioned quasicommute.  But we have already shown that the first, second and fourth terms on each side of the equality are equal.  Thus it follows that $q'_{j+1,i-1}$ and $q'_{j+1,i}$ quasicommute and that $\alpha(q'_{j+1,i-1},q'_{j+1,i}) = \alpha(q_{j+1,i-1},q_{j+1,i})$.
In the same manner, from $\alpha(y'_{j-1,i},y'_{j,i}) = \alpha(y_{j-1,i},y_{j,i})$ 
it follows that $\alpha(q'_{j,i-1},q'_{j,i}) = \alpha(q_{j,i-1},q_{j,i})$.  This computes $\alpha(q,\tilde q)$ whenever $q$ and $\tilde q$ have distance one on $N_{n,m}$.

Next, we consider pairs of parameters that have distance two on $N_{n,m}$.  From $\alpha(y'_{j,i},y'_{j+1,i-1}) = \alpha(y_{j,i},y_{j+1,i-1})$ we obtain
$\alpha(q'_{j,i},q'_{j+1,i-1}) = \alpha(q_{j,i},q_{j+1,i-1})$
and 
from $\alpha(y'_{j-1,i},y'_{j+1,i})=\alpha(y_{j-1,i},y_{j+1,i})$ we obtain
$\alpha(q'_{j,i-1},q'_{j+1,i}) = \alpha(q_{j,i-1},q_{j+1,i})$.
Also, from
$\alpha(y'_{j,i},y'_{j-1,i-1}) = \alpha(y_{j,i},y_{j-1,i-1})$ we obtain
$\alpha(q'_{j,i},q'_{j,i-2}) = \alpha(q_{j,i},q_{j,i-2})$
and 
from $\alpha(y'_{j+1,i},y'_{j,i-1})=\alpha(y_{j+1,i},y_{j,i-1})$ we obtain
$\alpha(q'_{j+1,i},q'_{j+1,i-2}) = \alpha(q_{j+1,i},q_{j+1,i-2})$.
Repeating this argument we obtain the claim.

\subsection{Proof of Theorem~\ref{thm:q-qYBR}}
\label{subsec:YBRqR-yR}
From the Yang-Baxter relation for $y$-variables in Theorem~\ref{thm:q-YBR} we have $R_{M_j}^\e R_{M_{j+1}}^\e R_{M_j}^\e = R_{M_{j+1}}^\e R_{M_{j}}^\e R_{M_{j+1}}^\e$.  By Theorem~\ref{thm:psi-RR}, we deduce that
\begin{align}\label{eq:YBR-psi}
 R_j^\e R_{j+1}^\e R_j^\e  \circ \phi_\e
  =  R_{j+1}^\e  R_{j}^\e R_{j+1}^\e \circ \phi_\e.
\end{align}
Thus $R_j^\e R_{j+1}^\e R_j^\e =  R_{j+1}^\e  R_{j}^\e R_{j+1}^\e$ holds on the image of $\phi_\e$, and it holds trivially when acting on $\q_1,\ldots,\q_{j-1},\q_{j+3},\ldots,\q_m$.  Assuming that $2 \leq j\leq m-3$, it follows easily from the definition of $\phi_\e$ that $R_j^\e R_{j+1}^\e R_j^\e =  R_{j+1}^\e  R_{j}^\e R_{j+1}^\e$ holds on $\Q_\e$.  Since this identity is `local' in the sense that $R_j^\e$ and $R_{j+1}^\e$ act only on $\q_j,\q_{j+1},\q_{j+2}$, we see that the identity holds for all $j$.

This establishes the braid relations.  The involutivity of $R_j^\e$ is proved in a similar manner, and the equality $R_i^\e R_j^\e = R_j^\e R_i^\e$ for $|i-j|>1$ is clear.

\section{Proofs} \label{sec:proof}
\subsection{Proof of Theorem \ref{thm:Rcluster}}\label{subsec:Rcluster}
Let $A$ denote the sequence of mutations at $1,2,\ldots,n-2$ in that order.  Let $A_i$ denote the first $i$ steps of these mutations.  Let $B$ denote the mutations $n-1, n$ followed by the permutation $s_{n-1,n}$.  Then the sequence of mutations in $\mR_{M,1}$ is exactly $A^{-1} \circ B \circ A$.

Let $Q$ be the initial quiver, restricted to $M^- \cup M \cup M^+$.
Let us now describe $Q_i := A_i(Q)$, and set $Q' = Q_{n-2} = A(Q)$.  This is described in three levels.  We first describe the quiver restricted to $M$.  Then we describe all arrows between $M$ and $M^\pm$.  Lastly, we describe what happens to arrows within $M^+ \cup M^-$.

The following results are proved by a straightforward induction.
\begin{lem}
For $i < n-2$, the quiver $Q_i|_M$ consists of:
\begin{enumerate}
\item
a possibly empty directed path $1 \to 2 \to \cdots \to i$,
\item
an oriented triangle $i \to n \to i+1 \to i$,
\item
an oriented $(n-i)$-gon $i+1 \to \cdots \to n \to i+1$.
\end{enumerate}
The edge $n \to i+1$ is mentioned twice, but it only occurs once.
\end{lem}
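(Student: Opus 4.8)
The plan is to prove this by induction on $i$, tracking only the arrows of $Q_i$ that are internal to $M$, after first isolating the fact that these evolve in a self-contained way. For the base case, note that by \eqref{eq:arrows} the only arrows of $Q$ between two vertices of $M$ are the edges $i \to i+1$, so $Q_0|_M = Q|_M$ is the directed $n$-cycle $1 \to 2 \to \cdots \to n \to 1$; this is exactly the claimed picture for $i=0$ (empty path, degenerate triangle, full $n$-gon). Since the $i=0$ triangle is degenerate I would verify the first passage $Q_0|_M \rightsquigarrow Q_1|_M$ by hand (mutating at vertex $1$ turns the $n$-cycle into $1 \to n \to 2 \to 1$ together with $2 \to 3 \to \cdots \to n \to 2$, matching the $i=1$ case), and then run a uniform inductive step for $1 \le i < n-2$.

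The crux of the argument, which I would state explicitly, is that $Q_{i+1}|_M = \mu_{i+1}(Q_i)|_M$ depends only on $Q_i|_M$, and not on any arrows between $M$ and $M^\pm$. Indeed, step (1) of the quiver mutation rule can create an arrow between two vertices $a,b$ of $M$ only from a $2$-path $a \to (i+1) \to b$ with both legs internal to $M$; and the number of such arrows is the product of the multiplicities of the internal arrows at $i+1$. Likewise the $2$-cycle cancellations in step (3) involving two vertices of $M$ see only $M$-internal arrows. Hence the mutation of $Q|_M$ at $i+1$ can be computed purely inside $M$.

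For the inductive step, assume $Q_i|_M$ has the stated form for some $1 \le i < n-2$. Reading off the arrows at vertex $i+1$, the triangle and the $(n-i)$-gon contribute the single incoming edge $n \to i+1$ and the two outgoing edges $i+1 \to i$ and $i+1 \to i+2$. Mutation at $i+1$ then (1) adds $n \to i$ and $n \to i+2$; (2) reverses those three edges to $i+1 \to n$, $i \to i+1$, and $i+2 \to i+1$; and (3) cancels the $2$-cycle formed by the new $n \to i$ against the old triangle edge $i \to n$, while $n \to i+2$ survives since $i+2 \ne n$. Collecting the surviving $M$-internal edges, the old path $1 \to \cdots \to i$ is extended by $i \to i+1$ to $1 \to \cdots \to i+1$; the edges $i+1 \to n$, $n \to i+2$, $i+2 \to i+1$ form the new triangle $(i+1) \to n \to (i+2) \to (i+1)$; and the remaining old gon edges $i+2 \to \cdots \to n$ together with $n \to i+2$ form the $(n-i-1)$-gon. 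This is precisely the claimed form for index $i+1$, with $n \to i+2$ shared by the new triangle and gon.

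The hypothesis $i < n-2$ is exactly what guarantees that $i$, $i+1$, $i+2$, $n$ are distinct and that the gon has length at least three, so that all the edges above are genuine and $\{i \to n, n \to i\}$ is the only $2$-cycle to cancel; I would check these small non-degeneracies directly. The one place requiring genuine care — as opposed to bookkeeping — is the self-containedness observation of the second paragraph, together with confirming that at no stage do multiple arrows arise inside $M$ (each edge added or reversed occurs with multiplicity one), so that $Q_i|_M$ stays a simply-laced quiver throughout. The companion descriptions of the $M$--$M^\pm$ arrows and of the arrows internal to $M^\pm$ would be obtained by the same simultaneous induction, but they are not needed for the present statement.
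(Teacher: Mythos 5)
Your overall strategy---first observing that mutation at a vertex of $M$ restricted to $M$ depends only on $Q_i|_M$, then inducting along the sequence $A$---is exactly the ``straightforward induction'' the paper appeals to (it gives no further details), and your self-containedness observation is a correct general fact about full subquivers, one the paper itself records at the end of its proof of Theorem~\ref{thm:q-yRcluster}. The computation in your inductive step is also correct in the range where the lemma actually needs it. However, there is a genuine off-by-one error in your non-degeneracy analysis. You run the step for all $1 \le i < n-2$ and justify the survival of the new arrow $n \to i+2$ by ``$i+2 \neq n$,'' asserting that $i < n-2$ (gon length at least $3$) guarantees that $\{i \to n,\, n \to i\}$ is the only $2$-cycle to cancel. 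That is the wrong criterion: the threat to $n \to i+2$ is not that it be a loop, but that a pre-existing arrow $i+2 \to n$ cancel it, and such an arrow is present in $Q_i|_M$ exactly when the $(n-i)$-gon is a triangle, i.e.\ when $i = n-3$---a value your stated range allows. In that case the gon edge $i+2 \to n$ (that is, $n-1 \to n$) kills $n \to i+2$ as well, and mutation at $i+1 = n-2$ produces the quiver of the paper's \emph{next} lemma (the path $1 \to \cdots \to n-2$ together with $n-2 \to n$ and $n-1 \to n-2$), not the path/triangle/gon pattern at index $n-2$. This is consistent with the fact that the pattern at index $n-2$ would contain a $2$-gon, which is not even a valid quiver; it is precisely why the lemma stops at $i < n-2$.

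The repair is cheap, which is why I would call this a fixable flaw rather than a wrong approach. Since the lemma only asserts the shape of $Q_j|_M$ for $j \le n-3$, the inductive step is only needed for source indices $i \le n-4$. In that range the gon has length at least $4$, the unique gon arrow leaving $i+2$ is $i+2 \to i+3$ with $i+3 \neq n$, so there is no arrow $i+2 \to n$, all created arrows have multiplicity one, and exactly one $2$-cycle cancels---your computation then goes through verbatim. So: restrict the step to target indices $i+1 < n-2$, and replace the justification ``$i+2 \neq n$'' by ``no arrow $i+2 \to n$ exists because the gon has length at least $4$.'' With those two corrections your proof is complete and coincides with the induction the paper intends.
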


\begin{lem}
The quiver $Q'|_M$ consists of
\begin{enumerate}
\item
the directed path $1 \to 2 \to \cdots \to n-2$, 
\item
the edges $n-2 \to n$ and $n-1 \to n-2$.
\end{enumerate}
\end{lem}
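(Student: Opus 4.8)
The plan is to obtain the description of $Q'|_M = Q_{n-2}|_M$ by applying the single mutation $\mu_{n-2}$ to the quiver $Q_{n-3}|_M$ furnished by the previous lemma (the case $i = n-3$). The first thing I would record is that restriction to $M$ commutes with mutation at a vertex $k \in M$: in the matrix formulation the new entry $\tilde b_{ab} = b_{ab} + \tfrac{1}{2}(|b_{ak}|\,b_{kb} + b_{ak}\,|b_{kb}|)$ for $a,b \in M$ involves only the entries $b_{ab}, b_{ak}, b_{kb}$, all of which lie in the principal block indexed by $M$. Hence $(\mu_k B)|_{M \times M} = \mu_k(B|_{M \times M})$, and in particular no arrows from $n-2$ to the adjacent cycles $M^-, M^+$ can create new arrows internal to $M$. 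This reduces the whole computation to a mutation of the small quiver $Q_{n-3}|_M$.

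Next I would write $Q_{n-3}|_M$ explicitly. Taking $i = n-3$ in the previous lemma, it consists of the directed path $1 \to 2 \to \cdots \to (n-3)$, the oriented triangle $(n-3) \to n \to (n-2) \to (n-3)$, and the oriented triangle $(n-2) \to (n-1) \to n \to (n-2)$ (the edge $n \to (n-2)$ being shared and counted once). The arrows incident to the mutation vertex $n-2$ are therefore the single in-arrow $n \to (n-2)$ and the two out-arrows $(n-2) \to (n-3)$ and $(n-2) \to (n-1)$.

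I would then carry out $\mu_{n-2}$ by the three-step quiver rule. Step (1) adds the shortcuts $n \to (n-3)$ and $n \to (n-1)$ coming from the paths $n \to (n-2) \to (n-3)$ and $n \to (n-2) \to (n-1)$. Step (2) reverses the three arrows at $n-2$, producing $(n-2) \to n$, $(n-3) \to (n-2)$ and $(n-1) \to (n-2)$. Step (3) then cancels the two newly created shortcuts against the already-present arrows $(n-3) \to n$ and $(n-1) \to n$, each forming a $2$-cycle. What survives is the path $1 \to \cdots \to (n-3) \to (n-2)$, i.e.\ $1 \to \cdots \to (n-2)$, together with $(n-2) \to n$ and $(n-1) \to (n-2)$, which is exactly the asserted description of $Q'|_M$.

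The only genuine point to watch --- and the one I expect to be the main (if modest) obstacle --- is the cancellation bookkeeping in Step (3): one must be sure that the shortcuts $n \to (n-3)$ and $n \to (n-1)$ exactly annihilate the pre-existing reverse arrows rather than contributing to a net multiplicity, and that the reversed arrow $(n-3) \to (n-2)$ splices onto the existing path to extend it to $(n-2)$. The commuting-with-restriction observation above is what guarantees that nothing hidden in the arrows to $M^-$ and $M^+$ interferes with this count. I would also check the small cases $n = 3, 4$ directly, where the path or one of the triangles degenerates, so that the case $i = n-3$ of the previous lemma is legitimately available.
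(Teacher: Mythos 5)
Your proof is correct and follows essentially the same route as the paper, which simply asserts these lemmas "by a straightforward induction": you instantiate the final inductive step by applying $\mu_{n-2}$ to the $i=n-3$ case of the preceding lemma, and your observation that restriction to $M$ commutes with mutation at vertices of $M$ (plus checking the degenerate small-$n$ cases) is exactly the bookkeeping the paper leaves implicit.
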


\begin{lem}
Suppose $i \geq 1$.  The edges of $Q_i$ between $M$ and $M^-$ are:
\begin{enumerate}
\item
$1^- \to 1$,
\item
$(1 \to 2^-), (2 \to 3^-), \ldots, ((i-1) \to i^-), (i \to (i+1)^-)$,
\item
$(3^- \to 1), (4^- \to 2), \ldots, ((i+1)^- \to i-1)$, if $i \geq 2$,
\item 
$((i+2) \to (i+2)^-), ((i+3) \to (i+3)^-), \ldots, (n \to n^-)$,
\item
$((i+3)^- \to i+2), ((i+4)^- \to i+3), \ldots, (n^- \to (n-1))$.
\end{enumerate}
\end{lem}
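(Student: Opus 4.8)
The plan is to prove the statement by induction on $i$, following the sequence of mutations $\mu_1,\mu_2,\ldots$ that builds up $Q_i = A_i(Q)$ and tracking, at each stage, only the arrows between $M$ and $M^-$. The three-step quiver mutation rule (add shortcuts, reverse arrows at the mutated vertex, delete $2$-cycles) is what drives the bookkeeping, and the point is that the changes to the $M$-$M^-$ arrows caused by $\mu_{i+1}$ are completely controlled by the neighbours of $i+1$ lying in $M \cup M^-$.

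For the base case $i=1$ I would compute $Q_1 = \mu_1(Q)$ directly. From \eqref{eq:arrows}, the arrows between $M$ and $M^-$ in $Q = Q_0$ are exactly $\{k \to k^- : k \in \Z/n\Z\}$ together with $\{(k+1)^- \to k : k \in \Z/n\Z\}$. At vertex $1$ the in-neighbours are $n,\,2^-,\,1^+$ and the out-neighbours are $2,\,1^-,\,n^+$, so the only effects on $M$-$M^-$ arrows are: the reversals $1 \to 1^-$ to $1^- \to 1$ and $2^- \to 1$ to $1 \to 2^-$; the shortcut $n \to 1^-$, which forms a $2$-cycle with the existing $1^- \to n$ and is deleted; and the analogous cancellation of the shortcut $2^- \to 2$ against the existing $2 \to 2^-$. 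This produces precisely the asserted edge set for $i=1$.

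For the inductive step I would pass from $Q_i$ to $Q_{i+1} = \mu_{i+1}(Q_i)$. The complete list of neighbours of $i+1$ in $M \cup M^-$ follows from two assumable inputs: the description of $Q_i|_M$ (the first lemma above), which gives the $M$-neighbours of $i+1$ as the single in-arrow $n \to i+1$ and the out-arrows $i+1 \to i$, $i+1 \to i+2$; and the inductive hypothesis, which shows that the unique $M$-$M^-$ arrow incident to $i+1$ is the in-arrow $(i+2)^- \to i+1$. Because a mutation at $i+1$ creates an $M$-$M^-$ shortcut only when both the relevant in- and out-neighbour lie in $M \cup M^-$, the $M^+$-neighbours of $i+1$ play no role and may be ignored. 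Carrying out $\mu_{i+1}$ then yields exactly three changes among the $M$-$M^-$ arrows: the reversal of $(i+2)^- \to i+1$ to $i+1 \to (i+2)^-$; the new shortcut $(i+2)^- \to i$; and the shortcut $(i+2)^- \to i+2$, which pairs with the existing $i+2 \to (i+2)^-$ into a $2$-cycle and is deleted along with it. These are precisely the transitions relating the edge sets for $i$ and $i+1$, and every other $M$-$M^-$ arrow is untouched.

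The main obstacle will be verifying that this list of changes is complete, i.e. that no further $M$-$M^-$ shortcuts are created and that the only $2$-cycle cancellation is the one between $(i+2)^- \to i+2$ and $i+2 \to (i+2)^-$; in particular one must check that the reversed arrow $i+1 \to (i+2)^-$ and the new shortcut $(i+2)^- \to i$ do not themselves form $2$-cycles with surviving arrows. This reduces to observing, from the inductive hypothesis, that $Q_i$ contains no arrow $i \to (i+2)^-$ and no second arrow between $i+1$ and $(i+2)^-$. Finally I would treat the boundary situations separately: the degeneracies of the oriented triangle and $(n-i)$-gon in $Q_i|_M$ at $i=1$ and at the final mutation $i+1 = n-2$ (producing $Q' = Q_{n-2}$), together with the wraparound of indices modulo $n$ and the resulting special behaviour at the vertex $n$.
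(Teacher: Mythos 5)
Your method — induction on $i$, with the three-step mutation rule used to track how $\mu_{i+1}$ alters the $M$--$M^-$ arrows through the neighbours of the mutated vertex — is exactly what the paper intends: its entire proof of this lemma is the sentence ``proved by a straightforward induction,'' and your bookkeeping of reversals, shortcuts, and $2$-cycle cancellations is the right way to flesh that out. Your base-case and inductive-step computations are, as computations, correct.

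However, there is a genuine mismatch you glossed over. Your base case does \emph{not} ``produce precisely the asserted edge set for $i=1$'': mutation at $1$ leaves the arrow $3^- \to 2$ untouched (neither endpoint is the mutated vertex, and no shortcut through vertex $1$ involves it), yet for $i=1$ the statement's item (3) is empty and item (5) begins at $4^- \to 3$, so $3^- \to 2$ appears nowhere in the printed list. The same off-by-one recurs for every $i$: the arrow $(i+2)^- \to i+1$ survives in $Q_i$, and your own inductive step depends on it — it is the in-arrow from $M^-$ at vertex $i+1$ that supplies the factor $x_{i+2}^-$ in the exchange relation $\tilde x_{i+1}\, x_{i+1} = x_n\, x_{i+2}^-\, x_{i+1}^+ + \tilde x_i\, x_{i+2}$, which is what reproduces the paper's own formula \eqref{eq:half} — but it is absent from the statement as printed, whose item (5) begins at $((i+3)^- \to i+2)$. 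In other words, the lemma as stated has a typo: item (5) should read $((i+2)^- \to i+1),\ ((i+3)^- \to i+2),\ \ldots,\ (n^- \to (n-1))$. Your argument in fact proves this corrected statement, and your inductive step silently takes the corrected version as the inductive hypothesis; had you used the printed hypothesis literally, vertex $i+1$ would have no incident $M$--$M^-$ arrow and the step would collapse. So the mathematics of your induction is sound, but as a proof of the statement as written it is internally inconsistent: you must flag that your computed edge sets contradict the printed item (5), state the corrected list, and prove that instead.
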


\begin{lem}
Suppose $i \geq 1$.  The edges of $Q_i$ between $M$ and $M^+$ are:
\begin{enumerate}
\item
$(n^+\to 1)$,
\item
$(1 \to 1^+), (2 \to 2^+),\ldots, (i \to i^+)$,
\item
$(2^+ \to 1), (3^+ \to 2), \ldots, (i^+ \to (i-1))$,
\item 
$((i+2) \to (i+1)^+), ((i+3) \to (i+2)^+) , \ldots, (n^+ \to (n-1))$,
\item
$((i+1)^+ \to (i+1)), ((i+2)^+ \to (i+2)), \ldots, ((n-1)^+ \to (n-1))$.
\end{enumerate}
\end{lem}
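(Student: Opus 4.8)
The plan is to prove this lemma by induction on $i$, run \emph{simultaneously} with the companion lemmas describing $Q_i|_M$ and the edges of $Q_i$ between $M$ and $M^-$. The four statements must be established together because the single mutation $\mu_i$ carrying $Q_{i-1}$ to $Q_i$ is controlled by the entire local configuration at the vertex $i \in M$: applying the three-step quiver mutation rule requires the complete list of arrows incident to $i$, and these arrows live in all three cycles $M^-, M, M^+$. At each step I would therefore freeze all three inductive hypotheses at index $i-1$ and read off the arrows at $i$.

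For the base case $i=1$ I would compute $Q_1 = \mu_1(Q_0)$ directly from \eqref{eq:arrows}: the arrows into $1$ are $n \to 1$, $2^- \to 1$, $1^+ \to 1$ and the arrows out of $1$ are $1 \to 2$, $1 \to 1^-$, $1 \to n^+$, and a short computation (in which the $2$-cycle on the pair $\{n, n^+\}$ and the $2$-cycle on the pair $\{2, 1^+\}$ both cancel) produces exactly the asserted edges. For the inductive step the crucial input from the companion lemmas at index $i-1$ is that the vertex $i$ is extremely sparse in $Q_{i-1}$: inside $M$ its only incident arrows are $n \to i$ (the arrow shared by the triangle and the $(n-i+1)$-gon of $Q_{i-1}|_M$), $i \to i-1$ and $i \to i+1$; it has \emph{no} arrows to $M^-$; and its only arrow to $M^+$ is $i^+ \to i$. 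Hence the incoming arrows at $i$ are $\{\,n \to i,\ i^+ \to i\,\}$ and the outgoing arrows are $\{\,i \to i-1,\ i \to i+1\,\}$.

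Applying $\mu_i$ is then mechanical. Step (1) creates $n \to i-1$, $n \to i+1$, $i^+ \to i-1$, $i^+ \to i+1$, of which only the last two lie between $M$ and $M^+$; step (2) reverses $i^+ \to i$ into $i \to i^+$; step (3) removes the one relevant $2$-cycle, formed by the new arrow $i^+ \to i+1$ and the old arrow $(i+1) \to i^+$ from item (4). Reassembling the surviving edges, $i \to i^+$ lengthens item (2) to $1 \to 1^+, \dots, i \to i^+$, the arrow $i^+ \to i-1$ lengthens item (3), the $2$-cycle cancellation deletes the leading arrow of item (4), and the reversal of $i^+ \to i$ deletes the leading arrow of item (5); this is precisely the claimed description of $Q_i$. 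I expect the main obstacle to be organizational rather than substantive: one must genuinely interleave the four inductions and keep scrupulous track of which $2$-cycles are created and annihilated at each mutation, as the cyclic indexing modulo $n$ makes off-by-one slips easy.
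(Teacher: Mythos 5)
Your overall strategy (a simultaneous induction on $i$ over the companion lemmas, driven by the local neighborhood of the vertex being mutated) is the same as the paper's, and your base case and your item-by-item bookkeeping are fine. However, the inductive step rests on a false claim: it is \emph{not} true that in $Q_{i-1}$ the vertex $i$ has no arrows to $M^-$. The arrow $(i+1)^- \to i$ of the initial quiver survives in $Q_{i-1}$: neither of its endpoints is mutated by $\mu_1,\ldots,\mu_{i-1}$, and no earlier mutation can create the opposite arrow $i \to (i+1)^-$ (that would require some $j<i$ with both $i \to j$ and $j \to (i+1)^-$ in $Q_{j-1}$, whereas the only arrows from $M$ into $j$ there are $j-1 \to j$ and $n \to j$). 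Concretely, for $n=4$ one checks directly that $3^- \to 2$ is an arrow of $Q_1=\mu_1(Q_0)$, so the in-neighbors of $2$ in $Q_1$ are $\{4,\,3^-,\,2^+\}$, not $\{4,\,2^+\}$. (The likely source of your claim is item (5) of the paper's companion lemma on edges between $M$ and $M^-$, which as printed has an off-by-one error: it should begin at $(i+2)^- \to (i+1)$ rather than $(i+3)^- \to (i+2)$; taking it at face value yields exactly your sparsity statement, but it contradicts direct computation.)

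This error happens not to change the conclusion for the $M$--$M^+$ edges: under $\mu_i$ the extra in-neighbor $(i+1)^-$ only creates the arrows $(i+1)^- \to (i-1)$ and $(i+1)^- \to (i+1)$, which lie between $M^-$ and $M$, so your items (1)--(5) for $Q_i$ come out the same \emph{once this is checked} --- but your proof never makes that check, and an exhaustive enumeration of created arrows and $2$-cycle cancellations is precisely what the inductive step consists of. More seriously, the interleaved induction you describe cannot close as stated: with your sparsity claim, $\mu_i$ would leave the $M$--$M^-$ edge set of $Q_{i-1}$ completely unchanged, contradicting the $M$--$M^-$ companion lemma at index $i$, which gains $i \to (i+1)^-$ and $(i+1)^- \to (i-1)$ and loses $(i+1) \to (i+1)^-$; it is exactly the neglected arrow $(i+1)^- \to i$ that produces these changes. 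To repair the argument, take the in-neighbors of $i$ in $Q_{i-1}$ to be $\{n,\,(i+1)^-,\,i^+\}$ and the out-neighbors to be $\{i-1,\,i+1\}$ (and correct the $M^-$ companion lemma accordingly); rerunning the mutation then gives the $M$--$M^+$ statement exactly as you wrote it, with the additional observation that the arrows created from $(i+1)^-$ never touch $M^+$.
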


\begin{lem}
Suppose $i \geq 1$.  The edges of $Q_i$ within $M^-$ are:
\begin{enumerate}
\item The directed path $2^- \to 3^- \to \cdots \to n^- \to 1^-$.
\end{enumerate}
\end{lem}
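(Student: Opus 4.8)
The plan is to prove this as the last of the five companion descriptions of $Q_i = A_i(Q)$, by induction on $i$, using the preceding lemma on the edges of $Q_i$ between $M$ and $M^-$. Since the claimed answer is the \emph{same} for every $i \geq 1$, I would verify the base case $i = 1$ directly and then show that each subsequent mutation $\mu_i$ (for $i \geq 2$) leaves the edges inside $M^-$ unchanged.

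For the base case, recall that in the initial quiver $Q = Q_0$ the edges within $M^-$ form the oriented cycle $1^- \to 2^- \to \cdots \to n^- \to 1^-$ (applying \eqref{eq:arrows} to the triple $(M^{--},M^-,M)$). I would then mutate at the vertex $1 \in M$. By \eqref{eq:arrows} the only arrows joining $1$ to $M^-$ are $2^- \to 1$ and $1 \to 1^-$, so $2^- \to 1 \to 1^-$ is the unique directed path through $1$ with both endpoints in $M^-$. Step (1) of the quiver mutation rule therefore creates exactly one new arrow inside $M^-$, namely $2^- \to 1^-$, which pairs off in step (3) against the existing arrow $1^- \to 2^-$; step (2) reverses only arrows incident to $1$, none of which lie inside $M^-$. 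Hence exactly the arrow $1^- \to 2^-$ is removed, leaving the path $2^- \to 3^- \to \cdots \to n^- \to 1^-$.

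For the inductive step, I would assume $i \geq 2$ and that $Q_{i-1}$ has the asserted edges within $M^-$. The crucial observation, read off from the companion lemma describing the $M$--$M^-$ edges of $Q_{i-1}$, is that the vertex $i \in M$ has \emph{no} arrows to or from $M^-$ in $Q_{i-1}$: the sources of the $M \to M^-$ arrows lie in $\{1,\dots,i-1\}\cup\{i+1,\dots,n\}$ and the targets of the $M^- \to M$ arrows lie in $\{1,\dots,i-2\}\cup\{i+1,\dots,n-1\}$, and $i$ belongs to none of these. Consequently no directed path through $i$ has both endpoints in $M^-$, so mutation at $i$ creates no new arrow inside $M^-$; and since $i \notin M^-$, it reverses no arrow inside $M^-$ and forms no $2$-cycle there. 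Thus $\mu_i$ fixes the edges within $M^-$, closing the induction.

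I expect the only delicate point to be the bookkeeping in the base case: one must confirm that the single new arrow $2^- \to 1^-$ genuinely cancels $1^- \to 2^-$ and that the remaining arrows $2^- \to 3^- \to \cdots \to n^- \to 1^-$ are untouched. This is immediate once one notes that vertex $1$ meets $M^-$ only at $1^-$ and $2^-$, so all of its interaction with $M^-$ is confined to those two vertices.
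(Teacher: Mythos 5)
Your proof is correct and takes essentially the same approach as the paper: the paper dispatches this lemma and its companions with the single remark that they are ``proved by a straightforward induction,'' and your argument---base case $i=1$ checked directly via the cancellation of $2^-\to 1^-$ against $1^-\to 2^-$, inductive step noting that vertex $i$ has no arrows to or from $M^-$ in $Q_{i-1}$ so $\mu_i$ cannot create, reverse, or cancel any edge inside $M^-$---is exactly that induction carried out in detail.
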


\begin{lem}
Suppose $i \geq 1$.  The edges of $Q_i$ within $M^+$ are:
\begin{enumerate}
\item The directed path $1^+ \to 2^+ \to \cdots \to n^+$.
\end{enumerate}
\end{lem}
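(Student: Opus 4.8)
The plan is to prove this lemma not in isolation but as one component of the simultaneous induction on $i$ that establishes all six structural lemmas at once; the statement about edges inside $M^+$ is genuinely coupled to the statement about edges between $M$ and $M^+$, so the two must be carried together.

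First I would verify the base case $i = 1$ by computing $\mu_1(Q)$ directly. From the definition of $Q_{n,m}$, the restriction of the initial quiver to $M^+$ is the full oriented cycle $1^+ \to 2^+ \to \cdots \to n^+ \to 1^+$, and by \eqref{eq:arrows} the only arrows between the mutation vertex $1 \in M$ and $M^+$ are the incoming arrow $1^+ \to 1$ and the outgoing arrow $1 \to n^+$. Mutating at $1$ therefore adds a single arrow $1^+ \to n^+$, arising from the path $1^+ \to 1 \to n^+$; this forms a $2$-cycle with the existing arrow $n^+ \to 1^+$, which is then cancelled. The net effect inside $M^+$ is to delete the arrow $n^+ \to 1^+$ from the cycle, leaving exactly the directed path $1^+ \to 2^+ \to \cdots \to n^+$, as claimed. (The same bookkeeping on the $M^-$ side, using $1 \to 1^-$ and $2^- \to 1$, produces the companion path $2^- \to \cdots \to n^- \to 1^-$, which is a useful consistency check.)

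For the inductive step I would assume that $Q_i$ (with $i \geq 1$) has the structure asserted by all six lemmas and analyze $Q_{i+1} = \mu_{i+1}(Q_i)$. The key observation is that, by the preceding lemma describing the edges of $Q_i$ between $M$ and $M^+$, the vertex $i+1 \in M$ carries the incoming arrow $(i+1)^+ \to (i+1)$ but \emph{no} outgoing arrow into $M^+$ (the outgoing arrows from $M$ into $M^+$ emanate only from the vertices $1,\ldots,i$ and from $i+2,\ldots,n$). Consequently there is no directed path $a \to (i+1) \to b$ with both $a,b \in M^+$, so the edge-addition step of mutation at $i+1$ produces no new edge inside $M^+$. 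Since the reversal step only flips arrows incident to $i+1 \in M$, and no $2$-cycle lying inside $M^+$ can thereby be created, the edges within $M^+$ are left completely untouched. Thus the path $1^+ \to \cdots \to n^+$ is preserved, completing the step.

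The hard part is not any single computation but arranging the interdependence correctly: the claim for $M^+$ in $Q_{i+1}$ relies on the already-established shape of the $M$-to-$M^+$ arrows in $Q_i$, so this lemma cannot be decoupled from its neighbor and both must be advanced in the same induction. Once the quiver mutation rule is applied with \eqref{eq:arrows} and the companion lemma in hand, the ``no outgoing arrow from the mutation vertex into $M^+$'' phenomenon renders every mutation after the first inert on $M^+$, so the only genuine change to the $M^+$-internal structure is the single cancellation occurring in the base case.
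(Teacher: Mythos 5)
Your proposal is correct and follows essentially the same route as the paper: the paper simply asserts that these structural lemmas "are proved by a straightforward induction," and your simultaneous induction on $i$ — with the base case computation $\mu_1(Q)$ cancelling $n^+ \to 1^+$ against the new arrow $1^+ \to n^+$, and the inductive step using the companion lemma to see that the mutation vertex $i+1$ has the incoming arrow $(i+1)^+ \to (i+1)$ but no outgoing arrow into $M^+$, so nothing inside $M^+$ can be created, reversed, or cancelled — is exactly that induction carried out in detail.
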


\begin{lem}
Suppose $i \geq 1$.  The edges of $Q_i$ between $M^-$ and $M^+$ are:
\begin{enumerate}
\item $(1^+ \to 1^-)$,
\item $(2^- \to n^+)$.
\end{enumerate}
\end{lem}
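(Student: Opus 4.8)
The plan is to treat this lemma as the last and easiest clause of the simultaneous induction on $i$ that establishes all the structural lemmas above: the two arrows joining $M^-$ and $M^+$ are created at the very first mutation $\mu_1$ and are then never touched. First I would settle the base case $i=1$ by inspecting $\mu_1(Q)$ directly. In $Q$, the vertex $1$ receives arrows from $n$, $2^-$ and $1^+$, and emits arrows to $2$, $1^-$ and $n^+$; step (1) of the mutation procedure adds an arrow from each source to each target, and of these nine arrows the only two running between $M^-$ and $M^+$ are $2^- \to n^+$ and $1^+ \to 1^-$. Since the cycles $M_{i-1}$ and $M_{i+1}$ are non-adjacent in $Q_{n,m}$, the quiver $Q$ has no arrows at all between $M^-$ and $M^+$, so $Q_1$ has exactly these two, as claimed.

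For the inductive step I would show that the mutation $\mu_{i+1}$ passing from $Q_i$ to $Q_{i+1}$ neither creates nor destroys arrows between $M^-$ and $M^+$. The one fact I need is the description of the arrows of $Q_i$ between $M$ and $M^-$: reading off that lemma, the $M$-endpoints occurring there are exactly the vertices $1,\dots,i$ and $i+2,\dots,n$, so the vertex $i+1$ is incident to no arrow meeting $M^-$. Hence any arrow produced by step (1) of $\mu_{i+1}$ comes from a directed path of length two through $i+1$, and no such path can have one end in $M^-$ and the other in $M^+$; so no new $M^-$-$M^+$ arrow appears. The two existing arrows $1^+ \to 1^-$ and $2^- \to n^+$ lie inside $M^- \cup M^+$ and are not incident to the mutated vertex $i+1 \in M$, so step (2) leaves them unchanged, and they survive step (3) because a $2$-cycle cancellation would require $\mu_{i+1}$ to manufacture the reversed arrow $1^- \to 1^+$ (resp. $n^+ \to 2^-$), which again would need an $M^-$-edge at $i+1$.

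The step I expect to be the only real obstacle is the bookkeeping of the joint induction: the argument invokes the $M$-versus-$M^-$ edge lemma for $Q_i$, so one must keep the order of the simultaneous induction straight and make sure that statement is already in hand at stage $i$ before it is used to analyze the mutation producing $Q_{i+1}$. Everything else reduces to the single clean observation that $i+1$ is precisely the unique vertex of $M$ not yet joined to $M^-$ in $Q_i$, which simultaneously rules out the creation of new $M^-$-$M^+$ arrows and the cancellation of the two that are already present.
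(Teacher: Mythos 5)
Your proof is correct and follows the same route the paper intends: the paper disposes of this lemma (together with the neighboring structural lemmas) with the remark that they are ``proved by a straightforward induction,'' and your argument is exactly that induction carried out in detail --- base case by inspecting $\mu_1(Q)$, inductive step by noting that $i+1$ is the unique vertex of $M$ with no arrow to $M^-$ in $Q_i$, so $\mu_{i+1}$ can neither create nor cancel $M^-$--$M^+$ arrows. The bookkeeping concern you raise is handled by running the induction simultaneously over all the lemmas, which is evidently what the paper does.
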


\begin{prop}\label{prop:R-AB}
We have $B(Q') = Q'$.  Thus $A^{-1} \circ B \circ A$ sends the quiver $Q$ to itself.
\end{prop}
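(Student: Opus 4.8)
The plan is to reduce the whole statement to the single quiver identity $B(Q')=Q'$. The second sentence is then formal: since $Q'=A(Q)$ and cluster mutation is involutive, knowing $B(Q')=Q'$ gives $(A^{-1}\circ B\circ A)(Q)=A^{-1}(B(Q'))=A^{-1}(Q')=A^{-1}(A(Q))=Q$, so $A^{-1}\circ B\circ A$ fixes $Q$. Hence only $B(Q')=Q'$ needs an argument.

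First I would read off from the lemmas above, specialized to $i=n-2$ (together with the lemma computing $Q'|_M$), all arrows of $Q'$ incident to the two vertices $n-1$ and $n$ that $B$ touches. Inside $M$ these are $n-1\to n-2$ and $n-2\to n$; the incoming arrows at $n-1$ are $n^-\to(n-1)$ and $(n-1)^+\to(n-1)$; and the outgoing arrows at $n$ are $n\to n^-$ and $n\to(n-1)^+$. Two features drive the proof: $n-1$ and $n$ are \emph{not} adjacent, and they meet the same three neighbors $n-2$, $n^-\in M^-$ and $(n-1)^+\in M^+$ but with opposite orientations ($n-1$ receives from $n^-,(n-1)^+$ and sends to $n-2$, while $n$ receives from $n-2$ and sends to $n^-,(n-1)^+$). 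Every other arrow of $Q'$ --- in particular all arrows inside $M^-$, inside $M^+$, or between $M^-$ and $M^+$ --- is incident to neither $n-1$ nor $n$, and is therefore fixed by $B$.

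Next I would compute $B=s_{n-1,n}\circ\mu_n\circ\mu_{n-1}$ on this local picture by the graphical mutation rule. Mutating at $n-1$ reverses its three arrows and creates the shortcuts $n^-\to(n-2)$ and $(n-1)^+\to(n-2)$; no $2$-cycle cancels, since $Q'$ contains neither $(n-2)\to n^-$ nor $(n-2)\to(n-1)^+$. Mutating at $n$ reverses its three arrows and creates the opposite shortcuts $(n-2)\to n^-$ and $(n-2)\to(n-1)^+$, which now cancel, in $2$-cycles, the two arrows produced by $\mu_{n-1}$. After this mutual annihilation the net effect of $\mu_n\circ\mu_{n-1}$ is merely to reverse all six arrows at $n-1$ and $n$, interchanging their local roles, so that the transposition $s_{n-1,n}$ relabels them back to $Q'$; a one-by-one check of the six arrows (for instance $n-1\to n-2$ is carried to $n-2\to n$, and $n-2\to n$ to $n-1\to n-2$) completes the verification. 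I expect the only real difficulty to be bookkeeping --- tracking every arrow at $n-1$, $n$, their common $M$-neighbor $n-2$, and the vertices $n^-,(n-1)^+$, and confirming that the four shortcut arrows through $n-2$ genuinely pair off and cancel, since a single reversed arrow would break this. As the lemmas above pin down $Q'$ and each intermediate quiver explicitly, the computation is finite and uniform in $n\ge 3$.
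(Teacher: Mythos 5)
Your proof is correct and takes essentially the same route as the paper: the paper's argument consists of the explicit lemmas describing $Q' = A(Q)$ plus the (unwritten) local verification that $B(Q')=Q'$, and your two-step mutation computation at $n-1$ and $n$ — shortcut arrows created by $\mu_{n-1}$ cancelling against those created by $\mu_n$, followed by the relabelling $s_{n-1,n}$ — is exactly that verification, with the reduction of the second sentence to the first being formal in both treatments. One remark: your local picture of $Q'$ (in particular the arrow $n^-\to (n-1)$ into vertex $n-1$) is the correct one, even though item (5) of the paper's lemma on arrows between $M$ and $M^-$, read literally, is empty at $i=n-2$ because of an off-by-one misprint; the intended list is $(k^-\to k-1)$ for $k=i+2,\ldots,n$, as a direct check for small $n$ confirms.
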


We now prove that $\mR_{M,1}$ acting on the variables $x_i$ is given by the formula $R_M$ from \eqref{eq:Rx}.

\begin{prop}
The effect of $A$ on the cluster variables is given by the formula $x_i \mapsto \tilde x_i$, where
\begin{align}\label{eq:half}
\begin{split}
\tx_i &= \frac{1}{x_1x_2\cdots x_i} \left(x_1^-(x_2 x_3 \cdots x_{i+1})x_n^+ + \sum_{k=3}^{i+2} x_{k-1}^- (x_k x_{k+1} \cdots x_{i+1} x_n x_1 \cdots x_{k-2}) x_{k-2}^+\right) \\
&=  x_1^-\frac{x_{i+1}}{x_1} x_n^+ + \sum_{k=3}^{i+2} x_{k-1}^- \frac{x_{i+1} x_n}{x_{k-1}x_{k-2}} x_{k-2}^+
\end{split}
\end{align}
for $i =1,2,\ldots,n-2$.  Note that there are $(i+1)$ terms in this formula.
\end{prop}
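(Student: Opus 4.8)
The plan is to establish \eqref{eq:half} by induction on $i$, building up $\tx_i$ one mutation at a time. Recall that $A_i=\mu_i\circ\cdots\circ\mu_1$ produces the seed with quiver $Q_i$, in which the variables at the vertices $1,2,\ldots,i$ of $M$ are already the mutated ones $\tx_1,\ldots,\tx_i$, whereas the variables at $i+1,\ldots,n$ and all of $M^\pm$ are still the original $x_j,x_j^\pm$. To carry out the $(i+1)$-st step I would mutate at the vertex $i+1$ of $Q_i$; the whole computation therefore reduces to reading off the arrows incident to $i+1$ in $Q_i$ and substituting into the exchange relation.

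The incident arrows are exactly what the structural lemmas of this subsection provide. Since only indices $i\le n-3$ occur in the steps needed, the description of $Q_i|_M$ shows that within $M$ the vertex $i+1$ has a single incoming arrow $n\to i+1$ and two outgoing arrows $i+1\to i$ and $i+1\to i+2$; the lemma on arrows between $M$ and $M^+$ contributes the single incoming arrow $(i+1)^+\to i+1$, and the lemma on arrows between $M$ and $M^-$ contributes the single incoming arrow $(i+2)^-\to i+1$. Thus $i+1$ has three incoming arrows (from $n$, $(i+1)^+$, $(i+2)^-$) and two outgoing arrows (to $i$ and $i+2$). As the variable currently sitting at vertex $i$ is $\tx_i$ while that at $i+2$ is still $x_{i+2}$, the mutation formula yields the recursion
\[
\tx_{i+1}=\frac{x_n\,x_{i+2}^-\,x_{i+1}^+ + \tx_i\,x_{i+2}}{x_{i+1}}.
\]
For the base case $i=1$ one instead mutates at vertex $1$ of the original quiver, whose arrows are listed in \eqref{eq:arrows}: incoming $n\to1$, $2^-\to1$, $1^+\to1$ and outgoing $1\to2$, $1\to1^-$, $1\to n^+$, giving $\tx_1=(x_n x_2^- x_1^+ + x_2 x_1^- x_n^+)/x_1$, which is precisely \eqref{eq:half} for $i=1$.

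It then remains to feed the inductive hypothesis into the recursion. Every summand of $\tx_i$ in \eqref{eq:half} carries exactly one factor $x_{i+1}$ in its numerator, so multiplying $\tx_i$ by $x_{i+2}/x_{i+1}$ merely replaces $x_{i+1}$ by $x_{i+2}$ throughout and reproduces exactly the $k=1,\ldots,i+2$ summands of the asserted formula for $\tx_{i+1}$; the extra term $x_n x_{i+2}^- x_{i+1}^+/x_{i+1}$ is then recognized as the new $k=i+3$ summand $x_{i+2}^-\frac{x_{i+2}x_n}{x_{i+2}x_{i+1}}x_{i+1}^+$. I expect the only genuine obstacle to be the arrow bookkeeping at $i+1$: one must be certain that after $i$ mutations no spurious arrows have appeared and, in particular, that the arrow $(i+2)^-\to i+1$ survives, since it is responsible for the factor $x_{i+2}^-$ in the incoming monomial (without it the recursion would undercount by that factor). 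This is exactly the content the preceding lemmas are arranged to supply, after which the remaining manipulation is the routine telescoping indicated above.
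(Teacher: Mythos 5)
Your proof is correct and follows essentially the same route as the paper: induct along the mutation sequence $A$, use the structural lemmas to read off the arrows at the next mutation vertex, and verify that the closed formula \eqref{eq:half} satisfies the resulting exchange recursion $\tx_{i+1} = \bigl(x_n\,x_{i+2}^-\,x_{i+1}^+ + \tx_i\,x_{i+2}\bigr)/x_{i+1}$. One caveat is worth flagging, precisely at the spot you identified as the only genuine obstacle: the incoming arrow $(i+2)^- \to i+1$ that you need is \emph{not} actually listed in the paper's lemma on edges between $M$ and $M^-$ (its item (5) starts at $(i+3)^- \to i+2$, and items (3),(4) skip vertex $i+1$ entirely), so you cannot literally cite that lemma for it. However, your claim about the quiver is the correct one: a direct computation of $\mu_1$ applied to $Q$ shows that $3^-\to 2$ survives in $Q_1$ (mutation at $1$ neither reverses it nor creates a cancelling arrow $2\to 3^-$), and inductively $(i+2)^-\to i+1$ survives in $Q_i$; moreover it must be present, since without it the exchange at vertex $2$ would give $\tx_2 = (x_n x_2^+ + \tx_1 x_3)/x_2$, missing the factor $x_3^-$ demanded by \eqref{eq:half}. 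So the paper's $M$--$M^-$ lemma as printed has an off-by-one omission, and your bookkeeping — which is what the proposition actually requires — is the right correction of it.
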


It is easy to check that $B$ acting on the variables $\tx_1,\tx_2,\ldots,\tx_{n-2},x_{n-1},x_n$ sends $x_{n-1}$ and $x_n$ to $R_M(x_{n-1})$ and $R_M(x_n)$.  Since all mutations act as invertible rational transformations, to prove that $\mR_{M,1} = R_M$, it suffices to check that $A \circ R_M = B \circ A$, where $A$ and $B$ are thought of as operators acting on an ordered sequence of $n$ variables.  In other words, we need to compute $\tx_i(R_M(x_1),\ldots,R_M(x_n))$.  

Let us substitute the formula for $R_M$ into \eqref{eq:half}.  Note that $R_M(x_i) = Sx_i$ for some expression $S$.  Since the formula for $\tx_i$ is homogeneous in $x_1,x_2,\ldots,x_n$, we see that $\tx_i(R_M(x_1),\ldots,R_M(x_n)) = \tx_i$.  This completes the proof of Theorem \ref{thm:Rcluster}.

\subsection{Proof of Theorem \ref{thm:Rbraidsimple}}
\label{subsec:Rbraidsimple}
Let 
\begin{equation}\label{eq:S}
S = \frac{\sum_{j=1}^n x_{j+1}^- 
            \left(\prod_{\ell = j+2}^{j-1} x_{\ell} \right) x_j^+}
           {\prod_{j =1}^n x_j}.
\end{equation}  Then $R_M(x_i) = Sx_i$.  Note that $R_M(S) = S(R_M(x_1),\ldots,R_M(x_n)) = S/S^2 = 1/S$.  Thus $$R_M^2(x_i) = S(R_M(x_1),\ldots,R_M(x_n)) R_M(x_i) = x_i.$$

\label{sec:Rbraid}
We have $R_{M^+}(S) = S|_{x_i^+ \mapsto S^+ x_i^+} = S^+ S$ since $S$ is homogeneous of degree one in the $+$-variables.  Similarly $R_{M}(S^+) = S S^+$.  Thus
$$
R_M R_{M^+} R_M(x_i) = R_M R_{M^+} (S x_i) = R_M(S^+ S x_i) = (SS^+)(1/S) Sx_i = SS^+x_i.
$$
Also,
$$
R_{M^+} R_M R_{M^+} (x_i) = R_{M^+}R_M  (x_i) = R_{M^+}(S x_i) =SS^+x_i.
$$
Similarly, $R_M R_{M^+} R_M(x_i^+) = R_{M^+} R_M R_{M^+} (x_i^+)$.  We conclude that $R_M R_{M^+} R_M = R_{M^+} R_M R_{M^+}$.

When $M$ and $M'$ are nonadjacent closed vertical cycles, the equality $R_M R_{M'} = R_{M'} R_M$ is clear.

\subsection{Proof of Theorem~\ref{thm:Rcluster2}}
\label{subsec:Rcluster2}

Let $\tilde{Q}$ be the subquiver of $\tilde Q_{n,m}$, 
restricted to $M^- \cup M \cup M^+$ together with all the frozen vertices
neighbouring $M$. 
Let $Q_L$ be the quiver obtained from $\tilde{Q}$ by removing
the vertices corresponding to the frozen variables
$X_{i^-,i}$, $X_{i,i^+}$, $X_{i+1^-,i}$ and $X_{i+1,i^+}$ for $i=1,\ldots,n$.
In the same manner, let $Q_C$ be the quiver without the vertices corresponding to the frozen variables
$X_{i,i+1}$, $X_{i+1^-,i}$ and $X_{i+1,i^+}$,
and let $Q_D$ be the quiver without the vertices corresponding to the frozen variables
$X_{i,i+1}$, $X_{i^-,i}$ and $X_{i,i^+}$.
We write $\bx_L$, $\bx_C$ and $\bx_D$ 
for the clusters (including the remaining frozen variables)
corresponding to the quivers $Q_L$, $Q_C$ and $Q_D$ respectively.
Similar to the proof in \S\ref {subsec:Rcluster}, we decompose $\tcR_{M,1} = A^{-1} \circ B \circ A$.
The following lemma is easily seen.

\begin{lem}\label{lem:sigma-op}
Let $\sigma\colon (\tilde{Q},\bx) \mapsto \sigma(\tilde{Q},\bx)$ be the operation that acts on the mutable
vertices by
$i^+ \mapsto i+1^-$, $i^- \mapsto i-1^+$, and acts on the frozen vertices by
$v_{i^-,i} \mapsto v_{i,i-1^+}$, $v_{i,i-1^+} \mapsto v_{i^-,i}$, 
$v_{i,i^+} \mapsto v_{i+1^-,i}$, $v_{i+1^-,i} \mapsto v_{i,i^+}$
for all $i=1,\ldots,n$, fixing all other vertices.  The action on the cluster variables $\bx$ is the natural induced one.  Then we have $\sigma(\tilde{Q},\bx) = (\tilde Q,\bx)$,
and $\sigma(Q_C,\bx_C) = (Q_D,\bx_D)$.   
\end{lem}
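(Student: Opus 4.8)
The plan is to verify directly that $\sigma$ is an automorphism of the quiver $\tilde Q$, from which both assertions will follow. First I would pin down the combinatorial data of $\tilde Q$: its mutable vertices are $i^-,i,i^+$ for $i\in\Z/n\Z$, and its frozen vertices neighbouring $M$ are $v_{i,i+1}$, $v_{i,i^-}$, $v_{i^+,i}$, $v_{i,i-1^+}$, $v_{i+1^-,i}$, one family for each of the five arrow-types of $Q_{n,m}$ incident to $M$. I would record the convention that a frozen vertex is attached to the (undirected) edge carrying the corresponding arrow, so that $v_{i^-,i}=v_{i,i^-}$ and $v_{i,i^+}=v_{i^+,i}$, and list the two arrows $a'\to v_{a,a'}\to a$ at each frozen vertex. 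I would also note that $\sigma$ fixes $M$ pointwise, interchanges $M^+$ and $M^-$ via $i^+\mapsto (i+1)^-$, $i^-\mapsto (i-1)^+$, swaps $v_{i,i^-}\leftrightarrow v_{i,i-1^+}$ and $v_{i^+,i}\leftrightarrow v_{i+1^-,i}$ while fixing $v_{i,i+1}$, and that $\sigma^2=\id$ (a one-line check using these shifts).

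The core step is to run through every arrow of $\tilde Q$ and confirm its image under $\sigma$ is again an arrow. For the mutable-mutable arrows this is a short case analysis: the cycle arrow $i\to i+1$ in $M$ is fixed; a cycle arrow $i^-\to (i+1)^-$ of $M^-$ is sent to $(i-1)^+\to i^+$, a cycle arrow of $M^+$; and the four cross arrows $i\to i^-$, $(i+1)^-\to i$, $i\to (i-1)^+$, $i^+\to i$ are permuted among themselves, e.g.\ $i\to i^-$ maps to $i\to (i-1)^+$ and $(i+1)^-\to i$ maps to $i^+\to i$. For the frozen arrows I would check each swap compatibly: under $\sigma$ the pair $i^-\to v_{i,i^-}\to i$ is carried to $(i-1)^+\to v_{i,i-1^+}\to i$, which are exactly the arrows attached to $v_{i,i-1^+}$, and similarly $i\to v_{i^+,i}\to i^+$ is carried to $i\to v_{i+1^-,i}\to (i+1)^-$. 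Here the essential point is that the index shift built into $\sigma$ on $M^\pm$ matches the index shift relating the two frozen families being swapped; this is the only place where care is needed, and I expect the bookkeeping of the $\pm 1$ shifts together with the direction convention on the $v_{a,a'}$ to be the main (though still routine) obstacle. Having checked all arrows, $\sigma$ is a quiver automorphism of $\tilde Q$, and since the cluster variables are merely relabelled along the vertices, this is precisely the statement $\sigma(\tilde Q,\bx)=(\tilde Q,\bx)$.

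Finally, for $\sigma(Q_C,\bx_C)=(Q_D,\bx_D)$ I would only need to track which frozen vertices survive the two deletions. By definition $Q_C$ retains the families $v_{i,i^-}$ and $v_{i^+,i}$, while $Q_D$ retains $v_{i,i-1^+}$ and $v_{i+1^-,i}$; the automorphism $\sigma$ sends the former pair of families onto the latter. Moreover it sends the frozen vertices deleted in forming $Q_C$ (namely $v_{i,i+1}$, $v_{i+1^-,i}$, $v_{i,i-1^+}$) onto those deleted in forming $Q_D$ (namely $v_{i,i+1}$, $v_{i,i^-}$, $v_{i^+,i}$). Since $\sigma$ is an automorphism of $\tilde Q$ respecting this partition of the frozen vertices, its restriction is a quiver isomorphism $Q_C\to Q_D$ carrying $\bx_C$ to $\bx_D$, which is the second assertion.
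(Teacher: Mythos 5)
Your proof is correct. The paper offers no argument for this lemma at all (it is introduced with ``the following lemma is easily seen''), and your direct verification — fixing the undirected-edge convention for the frozen labels, checking arrow-by-arrow that $\sigma$ is an involutive quiver automorphism of $\tilde{Q}$, and then confirming that it exchanges the retained and deleted frozen families of $Q_C$ and $Q_D$ — is precisely the routine check the authors leave implicit.
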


Quiver mutation produces no new arrows between frozen vertices.  Thus the contribution of the frozen vertices in a sequence of mutations of $(\tilde{Q}, \bx)$ is the superposition of the three distinct 
contributions in 
the corresponding mutations of $(Q_L,\bx_L)$, $(Q_C,\bx_C)$ and $(Q_D,\bx_D)$.

\begin{prop}
We have 
$\tcR_{M,1}(Q_\ast) = Q_\ast$ for $\ast = L,C$ and $D$.
\end{prop}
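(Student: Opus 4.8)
The plan is to treat the three cases $\ast = L, C, D$ by the decomposition $\tcR_{M,1} = A^{-1} \circ B \circ A$ already used in \S\ref{subsec:Rcluster}, with the two frozen permutations $\prod_i s_{v_{i^+,i},v_{i+1,i+1^-}}$ and $\prod_i s_{v_{i,i-1^+},v_{i^-,i-1}}$ appended at the end. These permutations act only on frozen vertices, so they commute with every mutation $\mu_k$ and with $s_{n-1,n}$, and their placement is immaterial. Because quiver mutation never creates an arrow between two frozen vertices, the only new data to track beyond the mutable subquiver are the arrows joining a frozen vertex of group $\ast$ to the mutable vertices of $M^- \cup M \cup M^+$. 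Since the mutable subquiver is preserved by Proposition~\ref{prop:R-AB}, I would reuse the explicit descriptions of the intermediate quivers $Q_i = A_i(Q)$ from the lemmas of \S\ref{subsec:Rcluster} and, at each step $\mu_k$, read off the change: an arrow $a \to v$ (resp.\ $v \to a$) is created for each $2$-path $a \to k \to v$ (resp.\ $v \to k \to a$), the arrows at $k$ are reversed, and $2$-cycles are removed in pairs.

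For the case $\ast = L$ the frozen vertices are the $v_{i,i+1}$, each attached to $M$ by $i+1 \to v_{i,i+1} \to i$, and no frozen permutation of $\tcR_{M,1}$ touches them (only $s_{n-1,n}$, acting on $M$, relabels their endpoints). I would prove by the same induction as in \S\ref{subsec:Rcluster} a description of the arrows incident to the $v_{i,i+1}$ in each $A_i(Q_L)$, apply $B$ (the mutations at $n-1,n$ followed by $s_{n-1,n}$) and then $A^{-1}$, and verify the configuration returns to that of $Q_L$. For the case $\ast = C$ the frozen vertices are the $v_{i^+,i}$ and $v_{i,i^-}$, joining $M$ to $M^+$ and $M^-$ to $M$; here only the permutation $\prod_i s_{v_{i^+,i},v_{i+1,i+1^-}}$ acts (the other frozen swap involves vertices absent from $Q_C$). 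The crux of this case is that the relabelling of these frozen vertices produced by the $2n-2$ mutations is exactly compensated by that permutation, giving $\tcR_{M,1}(Q_C) = Q_C$.

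The case $\ast = D$ I would deduce from the case $C$ using the symmetry $\sigma$ of Lemma~\ref{lem:sigma-op}. Since $\sigma$ fixes every vertex of $M$ and restricts to an isomorphism $\sigma\colon Q_C \to Q_D$, it commutes with each $\mu_k$ for $k \in M$ and with $s_{n-1,n}$; moreover $\sigma$ carries the $C$-permutation $\prod_i s_{v_{i^+,i},v_{i+1,i+1^-}}$ to the $D$-permutation $\prod_i s_{v_{i,i-1^+},v_{i^-,i-1}}$ and back. Hence $\sigma \circ \tcR_{M,1} = \tcR_{M,1} \circ \sigma$ on these quivers, and $\tcR_{M,1}(Q_D) = \tcR_{M,1}(\sigma(Q_C)) = \sigma(\tcR_{M,1}(Q_C)) = \sigma(Q_C) = Q_D$.

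I expect the main obstacle to be the bookkeeping in cases $L$ and $C$: one must track the arrows created from $2$-paths through each mutation point together with the $2$-cycle cancellations, and, most delicately, confirm the matching at the ``wrap-around'' where $s_{n-1,n}$ and the frozen swaps intervene. The remaining work is either a direct reuse of the intermediate-quiver lemmas of \S\ref{subsec:Rcluster} or the formal symmetry argument above, after which the ``no arrows between frozen vertices'' principle lets the three cases be superimposed to recover the full statement.
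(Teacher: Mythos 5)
Your plan is correct and follows the paper's own proof essentially step for step: the same decomposition $\tcR_{M,1} = A^{-1}\circ B \circ A$ with the frozen swaps, the same principle that mutation creates no new arrows between frozen vertices (so one only tracks frozen-to-mutable arrows on top of the intermediate-quiver lemmas of \S\ref{subsec:Rcluster}), the same inductive bookkeeping for $\ast = L, C$ with the frozen permutation exactly compensating the relabelling in case $C$, and the same reduction of case $D$ to case $C$ via Lemma~\ref{lem:sigma-op}. The only cosmetic difference is that the paper concludes by checking $B \circ A(Q_\ast) = A(Q_\ast)$ (up to the frozen relabelling), so that applying $A^{-1}$ is automatic, rather than pushing the computation through $A^{-1}$ explicitly as you propose; the two are equivalent.
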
  
 
\begin{proof}
We calculate the change of arrows related to the frozen vertices, caused by 
$\tcR_{M,1}$, in addition to the change of $Q$ studied in 
\S\ref{subsec:Rcluster}.

The case of $\ast = L$:
for $i=1,\ldots,n-2$, the arrows related to $X_{i,i+1}$ are changed 
only by $\mu_i$ in $A$:
\begin{align*}
(i+1 \to X_{i,i+1} \to i) \stackrel{\mu_i}{\mapsto}
  \begin{cases}
    (1 \to X_{1,2}), (X_{1,2} \to 1^-), (X_{1,2},n^+) & i=1,
    \\
    (i \to X_{i,i+1} \to i-1) & i=2,\ldots,n-2,     
  \end{cases} 
\end{align*}
and the resulted arrows are invariant under $B$.
For $i=n-1$, the arrows related to $X_{n-1,n}$ are not affected by $A$,
and invariant under $B$:  
\begin{align*}
&(n \to X_{n-1,n} \to n-1) \stackrel{\mu_{n-1}}{\mapsto} 
(n-1 \to X_{n-1,n}), (n \to X_{n-1,n}) \\
&\stackrel{\mu_{n}}{\mapsto}
(n-1 \to X_{n-1,n} \to n) \stackrel{s_{n-1,n}}{\mapsto}
(n \to X_{n-1,n} \to n-1).
\end{align*}  
For $i=n$, the arrows related to $X_{n,1}$ are changed only by $\mu_1$ in $A$:
$$
(1 \to X_{n,1} \to n) \stackrel{\mu_1}{\mapsto}
(1^+,X_{n,1}),(2^- \to X_{n,1}),(X_{n,1} \to 1),
$$ 
and invariant under $B$.
Then we have $B \circ A (Q_L) = A(Q_L)$.

The case of $\ast = C$:
for $i=1,\ldots,n-2$, the arrows related to $X_{i,i^+}$ are changed 
only by $\mu_i$ in $A$:
\begin{align}\label{eq:C+1}
(i \to X_{i,i^+} \to i^+) \stackrel{\mu_i}{\mapsto}
    (n \to X_{i,i^+}), (i+1^- \to X_{i,i^+}),(X_{i,i^+} \to i).
\end{align}
It is further changed by $B$ into
\begin{align}\label{eq:C+41}
(i+1^- \to X_{i,i^+}), (n-2 \to X_{i,i^+}),
(X_{i,i^+} \to i), (X_{i,i^+} \to n-1),
\end{align}
for $i=1,\ldots,n-3$, and 
\begin{align}\label{eq:C+42}
(n-1^- \to X_{n-2,n-2^+} \to n-1), 
\end{align}
for $i=n-2$.
For $i=n,n-1$, the arrows related to $X_{i,i^+}$ are not affected by $A$
but by $\mu_i$ and $s_{n-1,n}$ in $B$ as  
\begin{align}
&(n-1 \to X_{n-1,n-1^+} \to n-1^+) 
\stackrel{\mu_{n-1}}{\mapsto} 
(X_{n-1,n-1^+} \to n-1), (n^- \to X_{n-1,n-1^+}) \nonumber \\ 
& \quad \stackrel{s_{n-1,n} \circ \mu_{n}}{\mapsto}
(n^- \to X_{n-1,n-1^+} \to n).
\label{eq:C+2}
\\
&(n \to X_{n,n^+} \to n^+) 
\stackrel{\mu_{n}}{\mapsto} 
(X_{n,n^+} \to n), (X_{n,n^+} \to n^+),(n-2 \to X_{n,n^+}) \nonumber \\
& \quad \stackrel{s_{n-1,n}}{\mapsto}
(X_{n,n^+} \to n-1), (X_{n,n^+} \to n^+),(n-2 \to X_{n,n^+}).  
\label{eq:C+3}
\end{align}  

The arrows related to $X_{i^-,i} ~(i=1,\ldots,n-2)$ are changed by 
$\mu_i, \mu_{i+1},\ldots, \mu_{n-2}$ in $A$.
For $i=1$ and $j=1,\ldots,n-2$ we have 
\begin{align}\label{eq:C-11}
(1^- \to X_{1^-,1} \to 1) \stackrel{\mu_j \circ \cdots \circ \mu_1}{\mapsto}
(j \to X_{1^-,1}),(X_{1^-,1} \to j+1),(X_{1^-,1} \to n^+), 
\end{align}
and for $i=2,\ldots,n-2$ and $j=i,\ldots,n-2$ we have 
\begin{align}\label{eq:C-12}
(i^- \to X_{i^-,i} \to i) \stackrel{\mu_j \circ \cdots \circ \mu_i}{\mapsto}
(i^- \to X_{i^-,i}),(j \to X_{i^-,i}),(X_{i^-,i} \to i-1),(X_{i^-,i} \to j+1). 
\end{align}
Furthermore, the action of $B$ changes \eqref{eq:C-11} into
\begin{align}\label{eq:C-13}
(n \to X_{1^-,1} \to n^+)
\end{align}
and \eqref{eq:C-12} of $j=n-2$ is changed into 
\begin{align}\label{eq:C-14}
(i^- \to X_{i^-,i}),(n \to X_{i^-,i}),(X_{i^-,i} \to i-1).
\end{align}
The arrows related to $X_{i^-,i} ~(i=n-1,n)$ are not affected by $A$
but by $\mu_i$ and $s_{n-1,n}$ in $B$:
\begin{align}
&(n-1^- \to X_{n-1^-,n-1} \to n-1) \nonumber \\
&\quad \stackrel{\mu_{n-1}}{\mapsto} 
(n-1^- \to X_{n-1^-,n-1}),(n-1,X_{n-1^-,n-1}),(X_{n-1^-,n-1},n-2) 
\nonumber \\
& \quad \stackrel{s_{n-1,n} \circ \mu_{n}}{\mapsto}
(n-1^- \to X_{n-1^-,n-1}),(n,X_{n-1^-,n-1}),(X_{n-1^-,n-1},n-2),
\label{eq:C-21} 
\\
&(n^- \to X_{n^-,n} \to n) \stackrel{\mu_{n}}{\mapsto}    
(n \to X_{n^-,n} \to n-1^+) 
\stackrel{s_{n,n-1}}{\mapsto} (n-1 \to X_{n^-,n} \to n-1^+).      
\label{eq:C-22} 
\end{align}

We can check $B \circ A (Q_C) = A(Q_C)$ by comparing 
\eqref{eq:C+1}--\eqref{eq:C+3} and \eqref{eq:C-11}--\eqref{eq:C-22}
in the following way.   

The arrows \eqref{eq:C-14} with $i=2$ in $B \circ A (Q_C)$ corresponds to
the last part of \eqref{eq:C+1} with $i=1$ in $A(Q_C)$. 
Similarly, \eqref{eq:C-14} $(i=3,\ldots,n-2)$ and  
the last part of \eqref{eq:C-21} in $B \circ A (Q_C)$ 
respectively corresponds to 
the last part of \eqref{eq:C+1} $(i=2,\ldots,n-2)$ in $A (Q_C)$.
Eq.~\eqref{eq:C-13} corresponds to the first part of \eqref{eq:C+3},
and \eqref{eq:C-22} corresponds to the first part of \eqref{eq:C+2}.
Here we see that the frozen variable $X_{i^-,i}$ appearing in 
$B \circ A (Q_C)$ corresponds to $X_{i-1,i-1^+}$ appearing in $A(Q_C)$,
from which it follows that $\tcR_{M,1}$ replaces $X_{i-1,i-1^+}$
with $X_{i^-,i}$ in $\bx_L$.

Moreover, \eqref{eq:C+41} $(i=1,\ldots,n-3)$ in $B \circ A (Q_C)$ respectively
corresponds to the last part of \eqref{eq:C-12} $(i=2,\ldots,n-2)$ in $A(Q_C)$
with $j=n-2$. Eq.~\eqref{eq:C+42} and \eqref{eq:C+2} in $B \circ A (Q_C)$
respectively corresponds to the first part of \eqref{eq:C-21} and 
\eqref{eq:C-22} in $B (Q_C)$.
The last part of \eqref{eq:C+3} in $B \circ A(Q_C)$ corresponds to
that of \eqref{eq:C-11} with $j=n-2$  in $A(Q_C)$.    
Here we see that the frozen variable $X_{i,i^+}$ appearing in 
$B \circ A (Q_C)$ corresponds to $X_{i+1^-,i+1}$ appearing in $A(Q_C)$,
from which it follows that $\tcR_{M,1}$ replaces $X_{i+1^-,i+1}$
with $X_{i,i^+}$.

The case of $\ast = D$ follows from the case of $\ast = C$ and 
Lemma~\ref{lem:sigma-op}.
\end{proof}

From the change of arrows studied in the above proof,
the following lemmas are obtained.

\begin{lem}\label{lem:x-L}
For the cluster variables $x_i$ in $\bx_L$, we have that $\tx_i = A(x_i)$ is given by
\begin{align*}
\tx_i 
=  
\sum_{k=0}^{i} x_{k+1}^- \frac{x_{i+1} x_n}{x_{k+1}x_{k}} x_{k}^+
X_{k,k+1}; \quad i =1,2,\ldots,n-2.
\end{align*}
Furthermore, $B$ changes $x_{n-1}$ and $x_n$ into
\begin{align*}
&\tx_{n-1} = \frac{\tx_{n-2}}{x_{n}} + \frac{x_{n}^- x_{n-1}^+}{x_{n}}
              X_{n-1,n}, 
\\
&\tx_{n} = \frac{\tx_{n-2}}{x_{n-1}} + \frac{x_{n}^- x_{n-1}^+}{x_{n-1}}
              X_{n-1,n}.
\end{align*}
\end{lem}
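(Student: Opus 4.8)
The plan is to run the same induction on the mutation sequence $A = \mu_{n-2}\circ\cdots\circ\mu_1$ used to establish the frozen-free formula \eqref{eq:half}, carrying the frozen variables along. Because $\mu_j$ changes only the cluster variable at vertex $j$, the variable $\tx_i = A(x_i)$ is already fixed after the $i$-th mutation, so it suffices to read off the exchange relation at vertex $i$ in the quiver obtained from $Q_L$ after the first $i-1$ mutations. The positions of the mutable neighbours of $i$ (outgoing $i-1, i+1$ and incoming $n, (i+1)^-, i^+$) are exactly as recorded in the lemmas of \S\ref{subsec:Rcluster}; the only extra ingredient is the frozen data, which I would extract from the arrow computations in the proof of Theorem \ref{thm:Rcluster2}.

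The key observation is that, among the frozen vertices surviving in $Q_L$, vertex $i$ is originally adjacent to $v_{i,i+1}$ (carrying $X_{i,i+1}$, incoming to $i$) and to $v_{i-1,i}$ (carrying $X_{i-1,i}$, outgoing from $i$). By the analysis in the proof of Theorem \ref{thm:Rcluster2}, the arrows around $v_{i,i+1}$ are untouched until $\mu_i$, so $v_{i,i+1}$ is still an incoming neighbour of $i$ at that step, whereas $\mu_{i-1}$ has already moved $v_{i-1,i}$ off the vertex $i$ (having let it contribute $X_{i-1,i}$ to $\tx_{i-1}$). Hence at $\mu_i$ the factor $X_{i,i+1}$ enters the incoming monomial and no frozen variable enters the outgoing monomial, yielding the recursion
\begin{equation*}
\tx_i = \frac{x_{i+1}\,\tx_{i-1} + x_n\,x_{i+1}^-\,x_i^+\,X_{i,i+1}}{x_i}.
\end{equation*}
The base case is special only in that at $\mu_1$ there is no preceding mutation to remove the outgoing frozen vertex $v_{n,1}$, so $\tx_1 = (x_1^-x_2x_n^+X_{n,1} + x_2^-x_nx_1^+X_{1,2})/x_1$ carries the seed factor $X_{n,1}=X_{0,1}$. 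A short induction then solves the recursion and produces the claimed closed form, the $k=0$ summand being precisely the term carrying $X_{n,1}$ that is transported unchanged from the base case.

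For the second assertion I would evaluate $B = s_{n-1,n}\circ\mu_n\circ\mu_{n-1}$ directly on the cluster $(\tx_1,\ldots,\tx_{n-2},x_{n-1},x_n)$ of $Q' = Q_{n-2}$, just as in the frozen-free check at the end of \S\ref{subsec:Rcluster}. From the description of $Q'|_M$, and the fact (again from the proof of Theorem \ref{thm:Rcluster2}) that the arrows around $v_{n-1,n}$ make it an incoming neighbour of $n-1$ in $Q'$ and return to $n\to v_{n-1,n}\to n-1$ after $B$, the mutation $\mu_{n-1}$ sends $x_{n-1}$ to $(\tx_{n-2}+x_n^-x_{n-1}^+X_{n-1,n})/x_{n-1}$ and $\mu_n$ sends $x_n$ to $(\tx_{n-2}+x_n^-x_{n-1}^+X_{n-1,n})/x_n$; the transposition $s_{n-1,n}$ then interchanges these, giving the stated formulas for $\tx_n$ and $\tx_{n-1}$ respectively.

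The main obstacle is the frozen bookkeeping: at each mutation one must pin down exactly which frozen vertex is incident to the mutation vertex, and on which side it lies, since this is what decides whether $X_{i,i+1}$ appears in the numerator of $\tx_i$. All of this incidence information has, however, already been computed in the proof of Theorem \ref{thm:Rcluster2}, so the present lemma amounts to overlaying that frozen-variable data on the frozen-free recursion for \eqref{eq:half}; the underlying recursion itself is unchanged.
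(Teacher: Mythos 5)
Your proof is correct and takes essentially the same route as the paper: the paper obtains this lemma precisely by overlaying the frozen-arrow bookkeeping from the proof of Theorem \ref{thm:Rcluster2} (case $\ast=L$, where $(i+1 \to X_{i,i+1} \to i)$ becomes $(i \to X_{i,i+1} \to i-1)$ under $\mu_i$, while $v_{n-1,n}$ is untouched by $A$) onto the exchange-relation recursion used for \eqref{eq:half}, which is exactly your recursion $\tx_i = \bigl(x_{i+1}\tx_{i-1} + x_n x_{i+1}^- x_i^+ X_{i,i+1}\bigr)/x_i$ with the $X_{n,1}$-carrying base case, the analogue of the recursion \eqref{eq:CA-x} written out for case $C$. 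Your direct evaluation of $B=s_{n-1,n}\circ\mu_n\circ\mu_{n-1}$ on $(\tx_1,\ldots,\tx_{n-2},x_{n-1},x_n)$, including the observation that the transposition is what makes $\mu_{n-1}(x_{n-1})$ land at vertex $n$ and $\mu_n(x_n)$ at vertex $n-1$, matches the paper's computation as well.
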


\begin{lem}
For the cluster variables $x_i$ in $\bx_C$, we have that $\tx_i = A(x_i)$ is given by
\begin{align}
\label{eq:RC-xi}
\tx_i 
=
\sum_{k=0}^{i} x_{k+1}^- \frac{x_{i+1} x_n}{x_{k+1}x_{k}} x_{k}^+
\prod_{\ell=1}^k X_{\ell^-,\ell} \prod_{\ell=k+1}^i X_{\ell,\ell^+}
; \quad i =1,2,\ldots,n-2.
\end{align}
Furthermore, $B$ changes $x_{n-1}$ and $x_n$ into 
\begin{align}
\label{eq:RC-xn-1}
&\tx_{n-1} 
= 
\frac{\tx_{n-2}}{x_{n}} X_{n^-,n} 
+ \frac{x_{n}^- x_{n-1}^+}{x_{n}} \prod_{\ell \neq n-1} X_{\ell,\ell^+},
\\
\label{eq:RC-xn}
&\tx_{n} 
= 
\frac{\tx_{n-2}}{x_{n-1}} X_{n-1^-,n-1} 
+ \frac{x_{n}^- x_{n-1}^+}{x_{n-1}} \prod_{\ell \neq n} X_{\ell^-,\ell}.
\end{align}
\end{lem}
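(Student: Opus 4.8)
The plan is to prove \eqref{eq:RC-xi} by induction on the mutation sequence $A = \mu_{n-2} \circ \cdots \circ \mu_1$, running exactly parallel to the frozen-free computation \eqref{eq:half} in \S\ref{subsec:Rcluster} but carrying the frozen variables along. At each step one applies the exchange relation at the vertex $i \in M$ inside the quiver $Q_{i-1}$. Because quiver mutation creates no arrows between frozen vertices, every exchange monomial factors as a mutable part, a product of the $x^-,x,x^+$ variables, times a frozen part, a monomial in the $X_{\ell^-,\ell}$, $X_{\ell,\ell^+}$ and $X_{\ell,\ell+1}$. The mutable factors are governed by exactly the same incidences among $M^-\cup M\cup M^+$ as in \S\ref{subsec:Rcluster}, so after reindexing (using $x_0 = x_n$ and $x_0^+ = x_n^+$, under which the $k=0$ summand of \eqref{eq:RC-xi} collapses to $x_1^-\,\frac{x_{i+1}}{x_1}\,x_n^+$) the mutable part of $\tx_i$ is identical to \eqref{eq:half}. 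Thus the only genuinely new content is to identify, for each of the $i+1$ terms, the attached frozen monomial.

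First I would record the frozen incidences. The arrows incident to the frozen vertices $X_{\ell^-,\ell}$ and $X_{\ell,\ell^+}$ in the quivers $Q_j = A_j(Q_C)$ are precisely those tabulated in the proof of the preceding proposition, namely \eqref{eq:C+1}--\eqref{eq:C-22}. Reading off which of these point into the mutated vertex $i$ at step $\mu_i$ determines which frozen variable enters each numerator monomial. For the inductive step this shows that passing from $\tx_{i-1}$ to $\tx_i$ multiplies the frozen decoration of every existing summand by the single new variable $X_{i,i^+}$ (the upper index of $\prod_{\ell=k+1}^{i} X_{\ell,\ell^+}$ advancing from $i-1$ to $i$), while creating one new summand, indexed $k=i$, whose frozen decoration is the full product $\prod_{\ell=1}^{i} X_{\ell^-,\ell}$. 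This is exactly the recursion encoded in \eqref{eq:RC-xi}, and the base case $i=1$ is checked directly from \eqref{eq:arrows} and the frozen incidences at vertex $1$.

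Finally, for the map $B$ (the mutations at $n-1$ and $n$ followed by $s_{n-1,n}$) I would read the frozen data from \eqref{eq:C+2}, \eqref{eq:C+3}, \eqref{eq:C-21} and \eqref{eq:C-22}; together with the already-known mutable parts these yield the decorations $X_{n^-,n}$ and $\prod_{\ell \neq n-1} X_{\ell,\ell^+}$ in $\tx_{n-1}$, and $X_{n-1^-,n-1}$ and $\prod_{\ell \neq n} X_{\ell^-,\ell}$ in $\tx_n$, giving \eqref{eq:RC-xn-1} and \eqref{eq:RC-xn}. The main obstacle is not conceptual but combinatorial: one must verify that the frozen arrows pointing into each mutated vertex assemble, at every step, into exactly the telescoping products $\prod_{\ell=1}^{k} X_{\ell^-,\ell}\prod_{\ell=k+1}^{i} X_{\ell,\ell^+}$, with the correct boundary behaviour at $k=0$ and at the triangle vertex of $Q_{i-1}|_M$; once the incidences \eqref{eq:C+1}--\eqref{eq:C-22} are in hand this is a routine check.
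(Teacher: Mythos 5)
Your proposal is correct and takes essentially the same route as the paper's proof: the paper also reads the frozen-arrow incidences \eqref{eq:C+1}--\eqref{eq:C-22} off the preceding proposition and obtains the recursion
$\tx_i = \tx_{i-1}\,x_{i+1}\,X_{i,i^+}/x_i + x_{i+1}^-\,x_n\,x_i^+\,\bigl(\prod_{\ell=1}^i X_{\ell^-,\ell}\bigr)/x_i$,
which is precisely your ``multiply every existing summand's decoration by $X_{i,i^+}$ and create one new $k=i$ summand'' step, then unrolls it to get \eqref{eq:RC-xi} and extracts \eqref{eq:RC-xn-1}, \eqref{eq:RC-xn} from the same arrow data for $B$.
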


\begin{proof}
From \eqref{eq:C+1} and \eqref{eq:C-11}, we recursively obtain
\begin{align}
  &\tx_1 = \frac{x_1^- x_2 x_n^+ X_{1,1^+}}{x_1} 
          + \frac{x_2^- x_n x_1^+ X_{1^-,1}}{x_1},
  \nonumber \\
  \label{eq:CA-x}
  &\tx_i = \frac{\tx_{i-1} x_{i+1} X_{i,i^+}}{x_i} 
          + \frac{x_{i+1}^- x_n x_{i}^+ 
             \prod_{\ell=1}^i X_{\ell^-,\ell}}{x_i},
\end{align}
for $i=2,\ldots,n-2$, then \eqref{eq:RC-xi} follows. 
The expression of $\tx_{n-1}$ and $\tx_n$ are obtained 
from \eqref{eq:C+1}--\eqref{eq:C-22}.
\end{proof}

\begin{prop}\label{prop:nearly}
We have 
$\tcR_{M,1}(Q_\ast,\bx_\ast) = (Q_\ast, \tR_{M,\ast}(\bx))$ 
for $\ast = L,C$ and $D$,
where $\tR_{M,\ast}(\bx)$ is obtained from $\tR_M(\bx)$ \eqref{eq:RonXx}, 
\eqref{eq:RonX} by setting all frozen variables in $\bx \setminus \bx_\ast$
to be $1$.
\end{prop}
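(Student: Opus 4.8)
The plan is to treat the three cases $\ast=L,C,D$ separately, in each case mimicking the argument of \S\ref{subsec:Rcluster} that proved Theorem~\ref{thm:Rcluster}, but now carefully tracking the frozen variables. As there, I decompose $\tcR_{M,1}=A^{-1}\circ B\circ A$, where $A=\mu_{n-2}\circ\cdots\circ\mu_1$ is the forward sweep and $B=s_{n-1,n}\circ\mu_n\circ\mu_{n-1}$; the frozen-vertex transpositions $\prod_i s_{v_{i^+,i},v_{i+1,i+1^-}}\circ\prod_i s_{v_{i,i-1^+},v_{i^-,i}}$ sit on top of this composition and will account precisely for the frozen swaps \eqref{eq:RonX}. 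The preceding Proposition already yields $\tcR_{M,1}(Q_\ast)=Q_\ast$, so the two seeds in the asserted identity carry the same quiver and only the cluster variables remain to be matched. Since every mutation acts as an invertible rational transformation of $\F(Q_\ast)$, proving $\tcR_{M,1}=\tR_{M,\ast}$ reduces to verifying $B\circ A=A\circ\tR_{M,\ast}$ on each generator.

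For the $L$ case, Lemma~\ref{lem:x-L} records $A(x_i)=\tx_i$ for $i\le n-2$ together with the effect of $B$ on $x_{n-1},x_n$. As in the simple case, each half-formula $\tx_i$ is homogeneous of degree $0$ in the mutable variables $x_1,\ldots,x_n$, whereas $\tR_{M,L}$ rescales every $x_j$ by a common factor of \eqref{eq:S}-type and fixes both the boundary variables $x_j^{\pm}$ and the surviving frozen variables $X_{j,j+1}$ (which, by \eqref{eq:RonX}, $\tR$ does not move). Hence substituting $x_j\mapsto\tR_{M,L}(x_j)$ into $\tx_i$ returns $\tx_i$ unchanged, and comparing with \eqref{eq:RonXx} specialized to $\bx_L$ gives $B\circ A=A\circ\tR_{M,L}$ on all generators. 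The $C$ case is identical in structure, now using the half-formulas \eqref{eq:RC-xi} for $A$ and \eqref{eq:RC-xn-1}--\eqref{eq:RC-xn} for $B$; the extra content is that the telescoping frozen products $\prod_{\ell=1}^k X_{\ell^-,\ell}\prod_{\ell=k+1}^i X_{\ell,\ell^+}$ appearing in \eqref{eq:RC-xi} must reassemble, after the backward sweep $A^{-1}$ together with the frozen transpositions, into the frozen products of \eqref{eq:RonXx} restricted to $\bx_C$. The $D$ case then follows from the $C$ case by transport along the involution $\sigma$ of Lemma~\ref{lem:sigma-op}, which carries $(Q_C,\bx_C)$ to $(Q_D,\bx_D)$ and intertwines $\tcR_{M,1}$ with itself.

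The main obstacle is the frozen-variable bookkeeping in the $C$ case (and hence, through $\sigma$, in $D$). In the frozen-free setting of \S\ref{subsec:Rcluster}, homogeneity alone closes the argument, because $\tR_M$ merely rescales the mutable variables; here one must in addition check, term by term, that the monomials in the frozen variables $X_{\ell^-,\ell}$ and $X_{\ell,\ell^+}$ generated along the mutation sequence agree with the frozen products of \eqref{eq:RonXx}, and that the two transpositions $s_{v_{i^+,i},v_{i+1,i+1^-}}$ and $s_{v_{i,i-1^+},v_{i^-,i}}$ realize exactly the swaps \eqref{eq:RonX}. This is precisely the data supplied by the arrow-by-arrow analysis carried out in the proof of the preceding Proposition, so the verification amounts to matching the frozen endpoints recorded there against the indices in \eqref{eq:RonXx}--\eqref{eq:RonX}; once the homogeneity reduction has stripped away the mutable variables, no idea beyond careful indexing is required.
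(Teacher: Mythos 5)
Your skeleton matches the paper's: the decomposition $\tcR_{M,1}=A^{-1}\circ B\circ A$, the reduction to checking $A\circ\tR_{M,\ast}=B\circ A$ on generators, the homogeneity argument for $\ast=L$, and the transport of case $C$ to case $D$ via the involution $\sigma$ of Lemma~\ref{lem:sigma-op} are all exactly what the paper does.

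The genuine gap is your case $C$ (and hence $D$). You claim it is ``identical in structure'' to case $L$, with homogeneity stripping away the mutable variables so that only frozen-variable indexing remains. But the homogeneity argument is unavailable for $C$: it requires $\tR_{M,\ast}(x_j)=S\,x_j$ with a \emph{common} factor $S$ independent of $j$, which holds for $\ast=L$ (the surviving variables $X_{j,j+1}$ enter every term of \eqref{eq:RonXx} in a way independent of $i$) but fails for $\ast=C$. After setting the removed frozen variables to $1$ in \eqref{eq:RonXx}, one has $\tR_{M,C}(x_i)=S_i\,x_i$, where $S_i$ contains frozen products whose index ranges depend on $i$ (compare the products $\prod_{\ell=1}^{k}X_{\ell^-,\ell}\prod_{\ell=k+1}^{i}X_{\ell,\ell^+}$ in \eqref{eq:RC-xi}), so the $S_i$ genuinely differ. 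Substituting into the degree-zero terms $x_{k+1}^-\frac{x_{i+1}x_n}{x_{k+1}x_k}x_k^+$ of \eqref{eq:RC-xi} multiplies the $k$-th term by $S_{i+1}S_n/(S_{k+1}S_k)$, which depends on the summation index $k$ and does not cancel; there is no ``homogeneity reduction'' after which only frozen bookkeeping survives. The identity $A\circ\tR_{M,C}=B\circ A$ is a nontrivial rational-function identity mixing mutable and frozen variables, and the paper proves it by a downward induction on $i=n-2,\ldots,1$ through the backward sweep: with $\tilde A_i=\mu_i\circ\cdots\circ\mu_{n-2}$ one shows $x_i'=\tR_{M,C}(x_i)$ using the exchange recursion \eqref{eq:RC-xevol}, eliminating $\tx_{i-1}$ via \eqref{eq:CA-x}, and verifying that both sides of the resulting identity reduce to $x_{i+1}^-x_i^+\tx_i\bigl(\prod_{\ell=1}^n X_{\ell,\ell^+}-\prod_{\ell=1}^n X_{\ell^-,\ell}\bigr)$. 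This induction and its cancellation identity are the actual content of the proposition and are absent from your proposal. Note also that the arrow-by-arrow analysis of the preceding Proposition determines only the quivers along the mutation sequence, not the cluster-variable identities; one must run the exchange relations themselves, which is precisely what the induction does.
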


\begin{proof}
As in the proof of Theorem~\ref{thm:Rcluster},
it is enough to prove that $A \circ \tR_{M,\ast} = B \circ A$.
In this proof, for simplicity we write $X_{i^+}$ and $X_{i^-}$ for 
$X_{i,i^+}$ and $X_{i^-,i}$ respectively.

Case $\ast =L$:
From Lemma~\ref{lem:x-L}, it is easy to check 
that $\tilde x_{n-1} = \tR_{M,L}(x_n)$ and $\tilde x_{n} = \tR_{M,L}(x_{n-1})$.
For $i=1,\ldots,n$ we have $\tR_{M,L}(x_i) = S' x_i$ where $S'$ is 
a Laurent polynomial of the cluster variables in $\bx_L$ independent of $i$. 
Then we obtain $A \circ \tR_{M,L} = B \circ A$,
in the same manner as Theorem~\ref{thm:Rcluster}.

Case $\ast =C$: 
For $i=n-1,n$, it is easy to check that $\tx_i = \tR_{M,C}(x_i)$
by using \eqref{eq:RC-xn-1} and \eqref{eq:RC-xn}. 
Define $\tilde A_i:= \mu_i \circ \mu_{i+1} \circ \cdots \circ \mu_{n-2}$, 
the first $n-1-i$ steps of the $n-2$ mutations in $A^{-1}$.
Let $x_i'$ be the cluster variable given by $\tx_i \mapsto x_i'$ by the 
action of $\tilde A_i$. 
In the following we show that
$\tR_{M,C}(x_i) = x_i'$ by induction on $i$, from $i=n-2$ to $1$. 
By \eqref{eq:CA-x}, for $i=1,\ldots,n-2$ we have 
\begin{align}\label{eq:RC-xevol}
  x'_i = \frac{\tx_{i-1} x_{i+1}' X_{i+1^-}}{\tx_i} 
             + \frac{x_{i+1}^- \tx_n x_{i}^+ X_{n^+} X_{1^+}\cdots X_{i-1^+}}{x'_i},
\end{align}
where we denote $x_{n-1}' = \tx_{n-1}$.
Here we have used that $\tilde A_i \circ B \circ A (Q_C) 
= \mu_{i-1} \circ \cdots \circ \mu_1(Q_C)$ and that 
the frozen variables 
$X_{i^+}$ and $X_{i+1^-}$ are interchanged by $B \circ A$.
When $i=n-2$, \eqref{eq:RC-xevol} includes $\tx_{n-1}, \tx_{n},\tx_{n-3}$. From \eqref{eq:CA-x} of $i=n-2$, \eqref{eq:RC-xn-1} and \eqref{eq:RC-xn}
respectively, we obtain the expression of $\tx_{n-3}$, $\tx_{n-1}$ and 
$\tx_{n}$ in terms of $\tx_{n-2}$. We substitute them into \eqref{eq:RC-xevol},
and show $x_{n-2}' = \tR_{M,C}(x_{n-2})$.
Now, assume that $x_{j}' = \tR_{M,C}(x_{j})$ for $j=i+1,\ldots,n-3$.
Then we have
$$
  x'_{i} \tx_i = \tx_{i-1} \tR_{M,C}(x_{i+1}) X_{i+1^-} 
             + x_{i+1}^- \tx_n x_{i}^+ X_{n^+} X_{1^+}\cdots X_{i-1^+}. 
$$ 
By using \eqref{eq:CA-x} we erase $\tx_{i-1}$ in the above, and what we have to
show is 
\begin{align*}
&\left( \tR_{M,C}(x_{i}) x_{i+1} X_{i^+} - \tR_{M,C}(x_{i+1}) x_i X_{i+1^-}
\right) \tx_i
\\
& \quad 
= x_{i+1}^- x_{i}^+ 
\left( \tR_{M,C}(x_{n}) x_{i+1} X_{n^+}X_{1^+}\cdots X_{i^+}
       - \tR_{M,C}(x_{i+1}) x_{n} X_{1^-}\cdots X_{i+1^-}
\right).
\end{align*}       
Both sides turn out to be
$$
  x_{i+1}^- x_{i}^+ \tx_i 
  \left( \prod_{\ell=1}^n X_{\ell^+}
        - \prod_{\ell=1}^n X_{\ell^-} \right),
$$
and we obtain $\tR_{M,C}(x_i) = x_i'$.

The case of $\ast =D$ follows from the case of $\ast = C$ and 
Lemma~\ref{lem:sigma-op}.
\end{proof}

To complete the proof of Theorem \ref{thm:Rcluster2}, we use the following fundamental result of Lee and Schiffler \cite{LS}, which we formulate for skew-symmetric cluster algebras of geometric type.

\begin{thm}\label{thm:LS}
Let $(x_1,x_2,\ldots,x_n,x_{n+1},\ldots,x_{n+m})$ be the cluster variables of a seed in a skew-symmetric cluster algebras of geometric type, where $x_1,\ldots,x_n$ are the mutable variables and $x_{n+1},\ldots,x_{n+m}$ are the frozen variables.  Let $y$ be any other cluster variable.  Then $y$ is a Laurent polynomial in $\Z[x_1^{\pm 1},\ldots,x_{n+m}^{\pm 1}]$ with nonnegative integer coefficients.
\end{thm}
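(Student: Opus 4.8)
The plan is to reduce the statement to the positivity of $F$-polynomials and then to invoke the representation-theoretic description of $F$-polynomials available in the skew-symmetric case. Since the cluster algebra is of geometric type, the frozen variables $x_{n+1},\ldots,x_{n+m}$ play the role of coefficients in the tropical semifield they generate. First I would pass to principal coefficients at the initial seed and apply the separation-of-additions formula of Fomin and Zelevinsky \cite{CA4}: every cluster variable $y$ can be written as
\begin{equation*}
y = F(\hat y_1,\ldots,\hat y_n)\,\frac{\prod_{i=1}^{n+m} x_i^{g_i}}{F|_{\mathbb{P}_{\rm trop}}(y_1,\ldots,y_n)},
\end{equation*}
where $F$ is the $F$-polynomial of $y$, the $g_i$ form its $g$-vector, and the monomials $\hat y_j = \prod_{i} x_i^{b_{ij}}$ are as in \cite{CA4}. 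In geometric type each $\hat y_j$ is a Laurent monomial in $x_1,\ldots,x_{n+m}$, while the denominator $F|_{\mathbb{P}_{\rm trop}}$ evaluates tropically to a single monomial in the frozen variables, and $\prod_i x_i^{g_i}$ is a Laurent monomial. Since multiplying and dividing by Laurent monomials preserves both the Laurent property and positivity of coefficients, the whole statement reduces to the single claim that the coefficients of $F$ are nonnegative integers.

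The $F$-polynomial depends only on the principal part of the exchange matrix, that is, on the quiver $Q = Q(B)$ attached to the mutable variables. Because $Q$ is skew-symmetric I would associate to it a nondegenerate quiver with potential and pass to the corresponding Jacobian algebra, following Derksen, Weyman and Zelevinsky. Their mutation theory for decorated representations produces, for the cluster variable $y$, a representation $\mathcal{M}$ of the Jacobian algebra such that
\begin{equation*}
F(y_1,\ldots,y_n) = \sum_{e} \chi\bigl(\mathrm{Gr}_e(\mathcal{M})\bigr)\,\prod_{j} y_j^{e_j},
\end{equation*}
where $\mathrm{Gr}_e(\mathcal{M})$ is the quiver Grassmannian of subrepresentations of dimension vector $e$ and $\chi$ denotes its topological Euler characteristic. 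Thus the positivity of $F$ becomes the assertion that all of these Euler characteristics are nonnegative.

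The main obstacle is precisely this last assertion: Euler characteristics of projective varieties need not be nonnegative, so one cannot argue formally, and this is exactly the content of the theorem of Lee and Schiffler \cite{LS}. The route I would follow is their combinatorial one, which sidesteps the computation of $\chi(\mathrm{Gr}_e(\mathcal{M}))$ altogether. One builds, directly from a fixed mutation sequence reaching $y$, an explicit finite combinatorial model (a weighted family of ``compatible pairs'' in an associated graph) whose generating function is a Laurent polynomial all of whose coefficients equal $1$, and then verifies by induction on the length of the mutation sequence that this manifestly positive expression satisfies the exchange relation of Section~\ref{sec:clust}. By uniqueness of the Laurent expansion this expression must equal $y$, which yields positivity without ever evaluating an Euler characteristic. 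I expect the technical heart of the argument to be the bookkeeping that matches this combinatorial model against the exchange recursion, in particular controlling how the family of compatible pairs transforms under a single mutation.
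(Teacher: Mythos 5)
The first thing to note is that the paper does not prove this statement at all: Theorem \ref{thm:LS} is imported as ``the following fundamental result of Lee and Schiffler \cite{LS}'', so there is no internal proof to compare yours against, and your write-up correctly identifies the statement as the Lee--Schiffler positivity theorem. Your opening reduction is sound: by the separation formula of \cite{CA4} it suffices to prove nonnegativity of the coefficients of $F$-polynomials, since in geometric type the monomials $\hat y_j$, the $g$-vector monomial, and the tropical evaluation $F|_{\mathbb{P}_{\rm trop}}$ are all Laurent monomials in $x_1,\ldots,x_{n+m}$, and multiplying or dividing by such monomials preserves positivity. The middle paragraph, however, is a detour: in the Derksen--Weyman--Zelevinsky framework the nonnegativity of $\chi(\mathrm{Gr}_e(\mathcal{M}))$ is a \emph{consequence} of positivity rather than a usable ingredient, and Lee and Schiffler's proof never touches it, as you yourself acknowledge before abandoning that route.

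The genuine gap sits exactly where the entire difficulty of \cite{LS} lives. Your final paragraph asserts the existence of a combinatorial model (``a weighted family of compatible pairs in an associated graph'') whose generating function is manifestly positive and satisfies the exchange recursion, and then defers the verification to ``bookkeeping''. But constructing that model and proving its compatibility with mutation \emph{is} the content of Lee and Schiffler's long Annals argument: their induction is not simply on the length of a mutation sequence reaching $y$, but propagates positivity from a seed to adjacent seeds via explicit expansion formulas in rank-$2$ subpatterns, where the compatible pairs live on maximal Dyck paths; moreover the resulting coefficients arise as \emph{counts} of compatible pairs of a given weight, so they are nonnegative integers but by no means ``all equal to $1$''. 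As it stands, then, your proposal is a correct roadmap that, like the paper itself, ultimately rests on citing \cite{LS}; it is not an independent proof, because the single nontrivial step is named rather than carried out.
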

By Theorem \ref{thm:Rcluster}, we know that $$\tcR_{M,1}(x_i)|_{\bX \to 1} = \frac{\sum_{j=1}^n x_{j+1}^- 
            \left(\prod_{\ell = j+2}^{j-1} x_{\ell} \right) x_j^+}
           {\prod_{j \neq i} x_j}$$ when all the frozen variables are set to 1.  All the $n$ (Laurent) monomials in this formula are distinct and have coefficient one.  It follows from Theorem \ref{thm:LS} that the formula for $\tcR_{M,1}(x_i)$ {\it with frozen variables} is of the form
$$\tcR_{M,1}(x_i) = \frac{\sum_{j=1}^n x_{j+1}^- 
            \left(\prod_{\ell = j+2}^{j-1} x_{\ell} \right) x_j^+ \beta_j} {\prod_{j \neq i} x_j}$$
for some Laurent monomials $\beta_j$ in the frozen variables $\bX$.  Theorem \ref{thm:Rcluster2} then follows from Proposition \ref{prop:nearly}.

\subsection{Proof of Theorem \ref{thm:Rbraid}}
\label{subsec:Rbraid}
The equality $\tR_M^2 = \id$ follows from the fact that cluster mutations are involutive.  The commutativity $\tR_{M_i} \tR_{M_j} = \tR_{M_j} \tR_{M_i}$ for $|i-j| > 1$ is clear from the definitions.

The equality $\tR_{M_j}\tR_{M_{j+1}} \tR_{M_j} = \tR_{M_{j+1}} \tR_{M_j} \tR_{M_{j+1}}$ follows from the braid relations of Theorem \ref{thm:q-YBR}, evaluated at $\e =1$.  To see this, we note that the braid relation of Theorem \ref{thm:q-YBR} can be interpreted as saying that some particular $I$-sequence $\i$ is a $\sigma$-period for a $y$-seed $(Q_{n,m},\boldy)$, where $\sigma$ is a permutation of the vertices of $Q_{n,m}$, in the sense of (any of the equivalent statements of) Theorem \ref{thm:period}.  Here, $\i$ is the sequence of mutations corresponding to $\tR_{M_j}\tR_{M_{j+1}} \tR_{M_j} \tR_{M_{j+1}} \tR_{M_j} \tR_{M_{j+1}}$.

It follows from Theorem \ref{thm:periodxy} that $\i$ is also a $\sigma$-period of $(B,\bx,\boldy)$, where $Q_{n,m} = Q(B)$ and $\boldy$ lives in the tropical semifield $\mathbb{P}_{\rm trop}(\boldy)$.  By definition, the mutation of $x$-variables in $(B,\bx,\boldy)$-seeds is exactly the mutation for the principal coefficient cluster algebra with exchange matrix $B$ \cite{CA4}.  It follows from \cite[Theorem 3.7]{CA4} that $\i$ is a $\sigma$-period for the initial seed of any skew-symmetric cluster algebra of geometric type with exchange matrix $B$, that is, for any cluster algebra obtained from adding frozen variables to $(B,\bx)$.  In particular, this applies to the cluster algebra associated to $\tQ_{n,m}$, where we consider all variables $\bX$ frozen.


\end{document}